\documentclass[12pt]{amsart}

\usepackage{amsmath, enumerate, amsthm, amssymb, textcomp, booktabs}
\usepackage{anysize}

\usepackage[shortlabels]{enumitem}
\usepackage{url, hyperref}

\marginsize{2cm}{2cm}{1.5cm}{1cm}

\hypersetup{citecolor=blue, linkcolor=blue, colorlinks=true}

\setlist[description]{labelindent=0em, leftmargin=!}

 \newtheorem{theorem}{Theorem}[section]
 \newtheorem{lemma}[theorem]{Lemma}
 \newtheorem{corollary}[theorem]{Corollary}

\theoremstyle{remark}
\newtheorem{definition}[theorem]{Definition}
\newtheorem{remark}[theorem]{Remark}

\newcommand\PG{\mathsf{PG}}

\newcommand\PGL{\mathsf{PGL}}

\newcommand\Aut{\mathsf{Aut}}

\newcommand{\cyclic}{\mathcal{C}}

\newcommand{\axial}{order-preserving axial}

\renewcommand\ge{\geqslant}

\title{Simpler foundations for the hyperbolic plane}

\author{John Bamberg}
\address{ %
Department of Mathematics and Statistics\\
The University of Western Australia\\
Crawley, WA 6009, Australia.}
\email{john.bamberg@uwa.edu.au}

\author{Tim Penttila}
\address{ %
School of Mathematical Sciences\\
The University of Adelaide\\
Adelaide, SA 5005, Australia.}

\email{tim.penttila@adelaide.edu.au}




\begin{document}

\begin{abstract}
H. L. Skala (1992) gave the first elegant first-order axiom system for hyperbolic geometry by 
replacing Menger's axiom involving projectivities with the theorems of Pappus and Desargues for the hyperbolic plane.
In so doing, Skala showed that hyperbolic geometry is incidence geometry.
We improve upon Skala's formulation by doing away with Pappus and Desargues altogether, by substituting for them two simpler
axioms. 
\end{abstract}

\subjclass[2010]{51A20, 51A30, 51A45, 51G05, 51M10}

\maketitle

\section{Introduction}

The independent discovery by Lobachevsky and Bolyai of hyperbolic geometry in the 1830's was
followed by slow acceptance of the subject from the 1860's on, with the publications of relevant parts of the
correspondence of Gauss. A new phase was entered from 1903, when Hilbert \cite{Hilbert:1903gf}, in his work introducing
the \emph{calculus of ends}, introduced an axiomatisation for hyperbolic plane geometry by adding a hyperbolic parallel
axiom to the axioms for plane absolute geometry. (A detailed survey of axiomatisations for hyperbolic geometry
is given in \cite{Pambuccian:2006mz}).

In 1938, Menger \cite{Menger:1938ul} made the important discovery that in hyperbolic geometry the concepts of 
betweenness and equidistance can be defined in terms of point-line incidence. Since an axiom system obtained by replacing all occurrences of betweenness and equidistance with their definitions in terms of incidence would look highly unnatural, Menger and his students looked for a more natural axiom system. 

\begin{quote}
\emph{``This research, which is still in progress, will result, I believe, in a system of fairly simple assumptions in terms of the projective operations for the whole Bolyai-Lobachevsky geometry.''}\hfill{\footnotesize{Karl Menger, 1940 \cite{Menger:1940ly}}.}
\end{quote}

This task was carried out by Menger \cite{Menger:1938ul,Menger:1940ly,Menger:1946gf,Menger:1971ve}, and by his students Abbott  \cite{Abbott:1941qv,Abbott:1942ty,Abbott:1944nr}, DeBaggis \cite{De-Baggis:1946vn,DeBaggis:1948rt}  and Jenks \cite{Jenks:1940bh,Jenks:1940pd,Jenks:1941lq}, but even their most polished axiom system contained a statement on projectivities that was not reducible to a first-order statement. In 1992, Menger's student Skala \cite{Skala:1992rr} showed that the
statement on projectivities can be replaced by the axioms of Pappus and Desargues for the hyperbolic plane, thereby producing the first elegant first-order axiom system for hyperbolic geometry based on incidence alone. Since one of the Menger school's other axioms was `Pascal's Theorem' on the rimpoints, this axiom system looks a little excessive. In 1905, Hessenberg \cite{Hessenberg:1905mi} had shown that, in a projective plane, the axiom of Pappus implies
the axiom of Desargues. In 1966, Buekenhout \cite{Buekenhout:1966hc} had shown that, in a projective plane, the axiom of Pascal for an oval
implies the axiom of Pappus for the plane. This context suggests that the axiom of Pascal for the rimpoints should
be sufficient without the need for the axioms of Pappus and Desargues. This paper establishes that suggestion to be valid, fulfilling Menger's dream, but for
two additional axioms that ensures that midpoints exist, and that perpendicularity of lines is well-defined.

We will be considering an incidence structure that fulfils the axioms (A1) through (A7) plus (A10) and (A11) below. The first
four axioms allow us to have a ternary relation of \emph{betweenness}.
For three collinear points $A, B, C$,  the point $B$ is said to be {\bf between} $A$ and $C$, if every line through $B$ intersects at least one line of each pair of intersecting lines which pass through $A$ and $C$, respectively. Using the concept of betweenness one can define \emph{ray} and \emph{segment} in the usual manner. Two rays $r$ and $s$ with endpoints $R$ and $S$, respectively, are said to be (strictly) {\bf parallel} if $r$ and $s$ are noncollinear and if every line that meets one of the rays meets either the other ray or the segment $\overline{RS}$. Two lines, or a ray and a line, are said to be parallel if they contain rays that are parallel. 
Then it can be shown that there are exactly two lines through a given point which are parallel to a given line \cite[Corollary 2.4]{DeBaggis:1948rt}. By (A6) below, 
two non-collinear rays have a common parallel line.

A pair $(a,b)$ of parallel lines is called a \textbf{rimpoint} if there is a line $\ell$
parallel to both $a$ and $b$, and there is a line which intersects $a$, $b$ and $\ell$.
A rimpoint $(a,b)$ is equal to another rimpoint $(c,d)$ if every pair of lines from
$\{a,b,c,d\}$ are identical or parallel. A rimpoint $(a,b)$ is incident with a line $\ell$ if $\ell=a$, $\ell=b$ or $\ell$ is parallel to both $a$ and $b$,
and there exists a line that intersects $a$, $b$, and $\ell$. From (A1) -- (A7) below, every line of our incidence structure
has two rimpoints \cite[Theorem 1.7]{Abbott:1941qv}. It is useful to the reader to have in mind the examples of hyperbolic
planes characterised by our main result: the Cayley-Klein hyperbolic planes over Euclidean fields. Thus, one constructs
such a geometry by first considering the unit circle $\mathcal{O}$ centred at the origin of the Cartesian plane $\mathbb{E}^2$, where $\mathbb{E}$ is a Euclidean field.
The finite points of the hyperbolic plane are the points of $\mathbb{E}$ lying within the interior of circle,
the lines of the hyperbolic plane are the chords of the circle, and the rimpoints are (essentially) the points of the circle. Two lines are parallel if they intersect
in a point on the circle, and two lines are perpendicular if they are conjugate with respect to the circle.

We will use the term `point' for both a point 
and a rimpoint, and we will refer to points of our incidence structure as `finite points'. 
If $A$ and $B$ are points, then we write $AB$ for the unique line incident with both $A$ and $B$.
We will often write a polygon in an order that distinguishes sides of the polygon. For example, 
$ABCDEF$ is a hexagon with distinguished sides $AB$, $BC$, $CD$, and so forth.

\begin{theorem}[Skala \cite{Skala:1992rr}]\label{Skala}
Every model of axioms (A1) through (A9) is a Cayley-Klein hyperbolic plane over a Euclidean field.

\begin{enumerate}[{\rm ({A}1)}]
\item Any two distinct points are incident with a unique line.
\item Each line is on at least one point.
\item There exist three non-collinear points, and three collinear points.
\item (Betweenness) Of three collinear points, at least one has the property that every line through 
it intersects at least one of each pair of intersecting lines through the other two. 
\item (Hyperbolicity) If $P$ is not on $\ell$, then there exist two distinct lines on $P$ not meeting $\ell$ and such that each line meeting $\ell$ meets at least one of those two lines.
\item (Parallelism) Any two non-collinear rays have a common parallel line.
\item (Pascal's Theorem on rimpoints)
If $A$, $B$, $C$, $D$, $E$, $F$ are rimpoints such that the three diagonal points $AB\cap DE$, $BC\cap EF$, $CD\cap FA$
of the hexagon $ABCDEF$ exist, then the diagonal points are collinear.
\item (Pappus' Theorem) Let $a$ and $b$ be lines containing points $A_1$, $A_2$, $A_3$, and $B_1$, $B_2$, $B_3$, respectively. If the points 
$A_1B_2\cap A_2B_1$, $A_1B_3\cap A_3B_1$, $A_2B_3\cap A_3B_2$ exist, then they are collinear.
\item (Desargues' Theorem)  Let the lines $a$, $b$, $c$ be concurrent in a point and contain pairs of points 
$\{A_1, A_2\}$, $\{B_1,B_2\}$ and $\{C_1, C_2\}$, respectively. If the points
$A_1B_1\cap A_2B_2$, $A_1C_1\cap A_2C_2$, $B_1C_1\cap B_2C_2$ exist, then they are collinear.
\end{enumerate}
\end{theorem}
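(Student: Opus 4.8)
The plan is to recover, from the hyperbolic incidence data, an ambient projective plane, to coordinatise it by a field, and then to exhibit the finite points as the interior of a conic over that field, from which the Cayley--Klein description is immediate. In effect this reorganises the programme of Menger's school as consolidated by Skala, so I would lean on the cited papers of Abbott, DeBaggis and Jenks for the underlying bookkeeping. The first stage is order-theoretic: from (A1)--(A4) the betweenness relation defined in the introduction satisfies the usual order axioms, so that rays, segments and parallelism behave as expected; (A5) yields the hyperbolic parallel property that a point off a line lies on exactly two parallels to it \cite{DeBaggis:1948rt}; and (A6) supplies a common parallel to any two non-collinear rays. From (A1)--(A7) one then recovers that every line carries exactly two rimpoints \cite{Abbott:1941qv} and that no three distinct rimpoints are collinear, so the set $\conic$ of all rimpoints is an \emph{oval} (no three of its points collinear).

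The second and central stage is the projective completion $\Pi$. Its points are the finite points, the rimpoints, and one \emph{hyperideal point} for each pencil of lines sharing a common perpendicular (perpendicularity of two intersecting lines being defined in incidence terms by a harmonic condition on their four rimpoints, which is single-valued by virtue of Desargues); each hyperbolic line is completed by its two rimpoints and by the hyperideal points it determines, and one adjoins the new lines that consist only of ideal points. The task is then to verify that $\Pi$ satisfies the projective-plane axioms --- this is where (A6) is needed, to supply enough lines joining ideal points and to make incidence of the hyperideal points well defined --- and, in parallel, to run the standard ``small implies large'' arguments that promote the hyperbolic-plane statements (A7), (A8) and (A9) to the full plane versions of the theorems of Pascal, Pappus and Desargues in $\Pi$.

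Coordinatisation is then classical: Desargues in $\Pi$ gives $\Pi\cong\PG(2,D)$ for a division ring $D$, Pappus forces $D$ to be commutative, so $\Pi\cong\PG(2,K)$ for a field $K$; the full Pascal property for the oval $\conic$ now identifies $\conic$ as a conic, by the classical projective characterisation of conics among ovals (cf.\ \cite{Buekenhout:1966hc}). The betweenness order makes $K$ an ordered field, and one extracts that $K$ must in fact be Euclidean: since every hyperbolic line carries two rimpoints \cite{Abbott:1941qv}, every chord of $\conic$ through a finite point must actually meet $\conic$ in two $K$-rational points, and since such chords have positive discriminant this forces every positive element of $K$ to be a square. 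With $K$ Euclidean one identifies the finite points with the interior of $\conic$, obtaining the Cayley--Klein plane over $K$; the converse, that every such plane satisfies (A1)--(A9), is a routine model-theoretic verification.

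I expect the main obstacle to be the middle stage: building $\Pi$ and checking that it is a projective plane, together with the ``small implies large'' promotions of the Pascal, Pappus and Desargues axioms. The difficulty is structural --- the hyperbolic plane is only the interior of $\conic$, so most of the joins and meets one wishes to form land among the ideal points, and one must check by hand that the extended incidence is consistent and that the hyperideal points (hence perpendicularity) are well defined before any coordinates are available. A secondary difficulty is to extract exactly the Euclidean property, rather than merely an ordered Pythagorean field, from the axioms; equivalently, to recognise the interior of $\conic$, viewed purely as an incidence structure, as precisely what (A1)--(A9) describe.
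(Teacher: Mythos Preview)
The paper does not supply its own proof of Theorem~\ref{Skala}: it is stated as a cited result of Skala \cite{Skala:1992rr}, and the paper's own work concerns the replacement of (A8)$\wedge$(A9) by (A10)$\wedge$(A11) (Theorem~\ref{main}). So there is no proof in the paper to compare your proposal against directly.

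That said, your outline is essentially the classical programme of the Menger school as completed by Skala: recover order and rimpoints from (A1)--(A6); complete the plane projectively by adjoining rimpoints and hyperideal points; promote the local (A7), (A8), (A9) to full Pascal, Pappus and Desargues in the completion; coordinatise over a commutative field $K$; identify the rimpoints as a conic; and read off that $K$ is ordered and Euclidean from betweenness and the fact that every secant through an interior point meets the conic in two $K$-rational points. You have correctly located the two genuine difficulties --- the construction of the completion $\Pi$ (and in particular the well-definedness of hyperideal points and of the new incidences before coordinates are available) and the ``small implies large'' promotions --- and these are exactly where Skala's paper does the real work.

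It is worth noting, by way of contrast, that the present paper's machinery offers a genuinely different route to the same conclusion. Rather than completing to a projective plane and coordinatising, the paper builds half-turns and line-reflections directly from (A1)--(A7) together with (A10), (A11), verifies Bachmann's metric-plane axioms (in particular the three-reflection theorems (M3) and (M4)), and then invokes Bachmann's characterisation (Theorem~\ref{bachmann}) or, alternatively, the abstract-oval theorem (Theorem~\ref{BambergHarrisPenttila}). Since (A8)$\wedge$(A9) is asserted to be stronger than (A10)$\wedge$(A11), this would in principle yield another proof of Skala's theorem as a corollary, but the paper does not carry out that deduction explicitly; it merely remarks (concluding remark~2) that, conversely, Skala's theorem can be used to re-prove Theorem~\ref{main}. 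Your approach is the historically authentic one and is self-contained in a way the metric-plane route is not, at the cost of the heavy projective-completion bookkeeping you anticipate.
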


As Pambuccian observed in footnote 7 of \cite{Pambuccian:2001ve}, Skala's original Axiom 4 follows from her Axiom 6. That is
why we have presented only nine of her ten axioms here. We should also point out that Skala's statement of (A9) was incorrect, 
but that her correct statement of the `Dual of Desargues' Theorem' \cite[p. 259]{Skala:1992rr} remedied this anomaly. We will be considering the following axiom in our formulation:

\begin{enumerate}
\item[\rm (A10)] \textit{
Given distinct finite points $P$, $Q$ there exist lines $\ell$ on $P$ and $m$ on $Q$ such that
two common parallels to $\ell$ and $m$ meet on $PQ$.}
\end{enumerate}

This axiom allows us to define (unique) midpoints of line segments in our incidence structure: the \emph{midpoint} of the segment
$\overline{PQ}$ is defined to be the point of intersection $\ell\cap m$ in the statement of (A10). Moreover (A10) leads us to
a new notion of perpendicularity of lines (see Definition \ref{defperp}), that is independent of a notion of congruence, and 
based purely on the properties of \emph{axial maps}.
This approach was motivated by the work of Rigby \cite{Rigby:1969ab}. Our relation on lines has the usual properties
of perpendicularity that holds for Cayley-Klein hyperbolic planes: (a) two non-intersecting non-parallel lines have a unique common perpendicular line,
(b) for each point $P$ and line $\ell$ there exists a unique line through $P$ perpendicular to $\ell$, (c) there are no rectilaterals.
To ensure that our definition (in a later section) of perpendicularity is well-defined, we need another axiom. In the following, if $X$ is a rimpoint
and $A$ is a finite point, then $X^A$ is the other rimpoint on the line $XA$ joining $X$ and $A$.

\begin{enumerate}
\item[\rm (A11)]\textit{
Let $X$ be a rimpoint not on a line $\ell$. 
Then there is a unique point $R$ on $\ell$ such that for every finite point $S$ on $\ell$,
 two common parallels to $\ell$ and $X^{SR}X^{RS}$ meet on $XR$.}
\end{enumerate}

We show that we can substitute $\mathrm{(A8)}\wedge \mathrm{(A9)}$ for the weaker 
$(\mathrm{A10})\wedge (\mathrm{A11})$, in order to 
completely characterise hyperbolic planes over Euclidean fields.

\begin{theorem}\label{main}
Every model of axioms (A1) through (A7), plus (A10) and (A11), is a hyperbolic plane and conversely. 
Moreover, these models are precisely the Cayley-Klein hyperbolic planes over Euclidean fields.
\end{theorem}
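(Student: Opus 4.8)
The plan is to show that the axiom system $\{(\mathrm{A1}),\dots,(\mathrm{A7}),(\mathrm{A10}),(\mathrm{A11})\}$ has exactly the same models as Skala's system $\{(\mathrm{A1}),\dots,(\mathrm{A9})\}$; by Theorem \ref{Skala} and its (routine) converse these are precisely the Cayley--Klein hyperbolic planes over Euclidean fields, which yields both assertions of the theorem. The converse inclusion is the easier half: if $\mathcal{H}$ is a Cayley--Klein hyperbolic plane over a Euclidean field $K$, then $\mathcal{H}$ is a model of $(\mathrm{A1})$--$(\mathrm{A9})$ by the well-known converse of Theorem \ref{Skala}, so only $(\mathrm{A10})$ and $(\mathrm{A11})$ need checking, and these I would verify by a direct computation in the standard coordinate model. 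The hypothesis that $K$ is Euclidean is used here in an essential way: the configuration in $(\mathrm{A10})$ witnesses the existence of the midpoint of a segment, and that in $(\mathrm{A11})$ the existence of the foot of the perpendicular dropped from a rimpoint onto a line, and the coordinates of these points are obtained by extracting square roots of positive elements of $K$.

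For the main direction, let $\mathcal{H}$ be a model of $(\mathrm{A1})$--$(\mathrm{A7})$, $(\mathrm{A10})$ and $(\mathrm{A11})$. First I would marshal the structural facts that $(\mathrm{A1})$--$(\mathrm{A7})$ already deliver via the work of Abbott, DeBaggis and Menger \cite{Abbott:1941qv, DeBaggis:1948rt}: betweenness is well behaved, each point has precisely two parallels to a given non-incident line, each line carries precisely two rimpoints, and no three rimpoints are collinear. Next, using $(\mathrm{A10})$ and $(\mathrm{A11})$, I would establish the perpendicularity relation of Definition \ref{defperp} --- defined purely through axial maps, in the spirit of Rigby \cite{Rigby:1969ab} --- together with its three basic properties (a), (b), (c); this is exactly the role those two axioms are designed to play, with $(\mathrm{A11})$ securing well-definedness and $(\mathrm{A10})$ supplying midpoints and hence point-reflections. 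With perpendicularity available I would build the projective completion $\Pi$ of $\mathcal{H}$ in the familiar Cayley--Klein fashion: adjoin the ultra-ideal points (the poles of lines, realised through the perpendicularity relation), extend each hyperbolic line by its two rimpoints and the ultra-ideal points on it, adjoin the polars of the finite points and the tangents at the rimpoints as the remaining lines, and check --- by a somewhat lengthy case analysis on the types of the points and lines involved --- that $\Pi$ is a projective plane in which the set $\mathcal{O}$ of rimpoints is an oval whose tangent at each rimpoint is that rimpoint's polar.

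It then remains to show that $\Pi$ is Pappian. Axiom $(\mathrm{A7})$ is Pascal's theorem for the oval $\mathcal{O}$, but only for those inscribed hexagons all three of whose diagonal points are points of $\mathcal{H}$; Buekenhout's theorem needs Pascal for \emph{every} inscribed hexagon, including the ones with ultra-ideal diagonal points. I would close this gap with the $\mathcal{O}$-preserving collineations of $\Pi$: the point-reflections furnished by $(\mathrm{A10})$, and their products, move configurations around freely enough that an arbitrary inscribed hexagon can be carried to one with all three diagonal points interior, where $(\mathrm{A7})$ applies, and then carried back. Granting unrestricted Pascal for $\mathcal{O}$, Buekenhout's theorem \cite{Buekenhout:1966hc} gives that $\Pi$ is Pappian and $\mathcal{O}$ is a conic, Hessenberg's theorem \cite{Hessenberg:1905mi} gives that $\Pi$ is Desarguesian, and so Pappus $(\mathrm{A8})$ and Desargues $(\mathrm{A9})$ hold in $\mathcal{H}$, being restrictions to $\mathcal{H}$ of the corresponding theorems of $\Pi$. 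Theorem \ref{Skala} now applies to $\mathcal{H}$ and completes the argument.

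I expect the main obstacle to be this passage from the restricted Pascal axiom $(\mathrm{A7})$ to unrestricted Pascal for $\mathcal{O}$ in $\Pi$ --- equivalently, producing a supply of $\mathcal{O}$-preserving collineations of $\Pi$ rich enough to push any Pascal configuration into the region where $(\mathrm{A7})$ can be invoked --- delicate because a priori we have only $(\mathrm{A10})$ and $(\mathrm{A11})$ to work with and have not yet coordinatised anything. If a clean transformation argument proves elusive, the fallback is to reprove Buekenhout's theorem under a restricted Pascal hypothesis, arranging the auxiliary configurations that define addition and multiplication on a line so that every intersection point used stays interior to $\mathcal{O}$; either way this step is where the real work lies. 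A secondary, more routine but still laborious, point is the verification that the adjoined ultra-ideal points and polar lines make $\Pi$ a genuine projective plane and that Definition \ref{defperp} really does yield an everywhere-defined, symmetric perpendicularity --- but this is precisely what $(\mathrm{A10})$ and $(\mathrm{A11})$ are built to guarantee.
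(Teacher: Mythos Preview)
Your primary strategy for closing the gap between the restricted Pascal axiom (A7) and full Pascal for $\mathcal{O}$ in $\Pi$ does not work. Any $\mathcal{O}$-preserving collineation of $\Pi$ that arises from an automorphism of $\mathcal{I}$ --- and the half-turns, reflections, and their products are all of this type (Theorem \ref{automorphism}, Lemma \ref{linerefauto}(v)) --- preserves the set of finite points, hence preserves whether two secant lines meet in a finite point. So if an inscribed hexagon has an ultra-ideal diagonal point, its image under any such collineation still has an ultra-ideal diagonal point; you cannot push a bad hexagon to a good one and back. Your fallback (``reprove Buekenhout under restricted Pascal'') points in a workable direction but is not a plan: the difficulty is precisely that the auxiliary intersections Buekenhout uses need not be interior, and you have given no mechanism for arranging that they are.

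The paper avoids this trap by not building $\Pi$ at all before coordinatising. Instead it stays inside $\mathcal{I}$ and develops enough group theory of half-turns and reflections to verify Bachmann's axioms for a metric plane directly, then invokes Bachmann's characterisation (Theorem \ref{bachmann}). The substitute for ``extend Pascal'' is Theorem \ref{A7star}: if only two diagonal points $P,Q$ of an inscribed hexagon exist, then the third pair of opposite sides and the line $PQ$ share a common perpendicular. This weakened Pascal, proved from (A7) together with the midpoint axiom (A10), is what drives the verification of the three-reflection axiom (M3) via Theorem \ref{nocommonperp} and Lemma \ref{A7impliesM3}. Only after Bachmann's theorem delivers the Pappian projective completion does the paper observe (in the concluding remarks) that (A8) and (A9) now follow and Skala's theorem could be invoked --- but by then the work is already done. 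The alternative route in Section \ref{section:abstractovals} is closer in spirit to your fallback: one shows that half-turns and reflections together form a regular abstract oval whose \emph{secant} lines are Pascalian (Theorem \ref{haveabstractoval}), and then appeals to a theorem of Bamberg--Harris--Penttila (Theorem \ref{BambergHarrisPenttila}) that this already forces an abstract conic; but note that ``secant lines Pascalian'' here is a statement about products of three involutions swapping a fixed pair of rimpoints, and its proof again rests on (M4) and Theorem \ref{Buekenhout}, not on moving hexagons.
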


The proof of this result appears in Section \ref{section:UsingBachmann} and it draws together the ideas of Bachmann \cite{Bachmann:1973aa}, Buekenhout \cite{Buekenhout:1966hc} and Rigby \cite{Rigby:1969ab} 
to develop a theory of half-turns and reflections for an incidence structure satisfying the above axioms. Thus Sections \ref{section:cyclicorder}, \ref{section:halfturns}, \ref{section:definedperp},
\ref{section:gliderefs}, \ref{section:UsingBachmann} develop this extra structure on our incidence structure. 
We show that axioms (A1) -- (A7) plus (A10) and (A11) enable us to affirm the axioms of a \emph{metric plane}, 
and then with a bit more work, we use Bachmann's characterisation theorem for hyperbolic planes 
(Theorem \ref{bachmann}) to establish an isomorphism to a Cayley-Klein hyperbolic plane over a Euclidean field.
An alternative proof will be given in Section \ref{section:abstractovals} that uses the theory of abstract ovals.


\subsection*{Some notation and definitions}

In this paper, we will often encounter standard concepts in incidence geometry and group theory.
An \textbf{incidence structure} is a triple $(\mathcal{P},\mathcal{B},\mathrm{I})$ with $\mathcal{P}$ and $\mathcal{B}$
disjoint, non-empty sets, and $\mathrm{I}$ a relation between them; the \emph{incidence relation}. An \textbf{isomorphism} from the incidence structure $(\mathcal{P},\mathcal{B},\mathrm{I})$ to 
the incidence structure $(\mathcal{P'},\mathcal{B'},\mathrm{I'})$ is a bijection $\phi\colon\mathcal{P}\cup\mathcal{B}\to\mathcal{P'}\cup\mathcal{B'}$ such that $\mathcal{P}^\phi=\mathcal{P'}$, $\mathcal{B}^\phi=\mathcal{B'}$, and for all $P\in\mathcal{P}$ and $B\in\mathcal{B}$:
\[
P\,\mathrm{I}\, B \Leftrightarrow P^\phi\, \mathrm{I'}\, B^\phi.
\]
An \textbf{automorphism} of an incidence structure $\mathcal{S}=(\mathcal{P},\mathcal{B},I)$ is an isomorphism to itself. We denote the set of all automorphisms of $\mathcal{S}$ by $\Aut(\mathcal{S})$. Moreover, $\Aut(\mathcal{S})$ is a group under composition, called the \textbf{automorphism group} of $\mathcal{S}$.
If $G$ is a group acting on a set $\Omega$, and $H$ is a group acting on a set $\Gamma$, we say that these two actions
are \textbf{permutationally isomorphic}, and we write $(\Omega,G)\cong (\Gamma, H)$, if there exists a group isomorphism $\varphi:G\to H$ and a bijection $\beta:\Omega\to \Gamma$ such
that
$\beta(\omega^g) = \beta(\omega)^{\varphi(g)}$
for all $\omega\in\Omega$ and $g\in G$. The pair $(\beta, \varphi)$ is a \textbf{permutational isomorphism} from $(\Omega,G)$ to $(\Gamma, H)$, and permutational isomorphism
is an equivalence relation on group actions.

%
%

\section{Cyclic order and separation}\label{section:cyclicorder}

A nonempty set $\Omega$ is said to be \emph{cyclically ordered} if it is equipped with a ternary
relation $\cyclic$ that possesses the following properties (see \cite{Huntington:1935rz}):
\begin{enumerate}[({C}1)]
\item if $\cyclic(x,y,z)$, then $x\ne y\ne z\ne x$;
\item if $x\ne y\ne z\ne x$, then exactly one of the relations $\cyclic(x, y, z)$ or $\cyclic(x, z, y)$ holds; 
\item if $\cyclic(x, y, z)$, then $\cyclic(y, z, x)$;
\item if $\cyclic(x, y, z)$ and $\cyclic(x, z, t)$, then $\cyclic(x, y, t)$.
\end{enumerate}

The \emph{reverse} $\bar{\cyclic}$ of $\cyclic$ is the order $\bar{\cyclic}(x,y,z)$ if and only if
$\cyclic(z,y,x)$.  

The relation of \emph{separation} is a (nonempty) quaternary relation on a set $\Omega$ if the
following properties hold on five elements $A$, $B$, $C$, $D$, $X$ of $\Omega$ (see
\cite{Huntington:1935rz} and \cite{Pambuccian:2011bh}):
\begin{enumerate}[(S1)]
\item if $A$, $B$ separates $C$, $D$, then $A$, $B$, $C$, $D$ are distinct;
\item if $A$, $B$ separates $C$, $D$, then $A$, $B$ separates $D$, $C$; 
\item if $A$, $B$ separates $C$, $D$, then it is not the case that $A$, $C$ separates $B$, $D$; 
\item $A$, $B$ separates $C$, $D$ or $A$, $C$ separates $B$, $D$ or $A$, $D$ separates $B$, $C$;
\item  If $A$, $B$ separates $C$, $D$, and $A$, $C$ separates $B$, $X$, then $A$, $B$ separates $D$, $X$.
\end{enumerate}

We will often use the notation $AB // CD$ for ``$A$, $B$ separates $C$, $D$'', and it will soon
become clear why this notation suits our purposes.  Given a cyclic order $\cyclic$ on $\Omega$, we
can construct a separation relation on $\Omega$ by the following rule:
\[
AC// BD\iff \left( \cyclic(A,B,C) \wedge \cyclic(C,D,A) \right)\vee \left( \cyclic(A,D,C)\wedge \cyclic(C,B,A) \right).
\]
Conversely, given a separation relation on a set $\Omega$, we can associate two cyclic orders on
$\Omega$, each of which is the reverse of the other.

\begin{definition}\label{defsep}
Let $\mathcal{I}$ be an incidence structure of points and lines satisfying (A1) through to (A7).
Let $\mathcal{O}$ be the set of rimpoints of $\mathcal{I}$.  For four rimpoints $A$, $B$, $C$, $D$,
we write $AB//CD$ if the line $AB$ meets $CD$ in a finite point.
\end{definition}
 
 We will show that the above quaternary relation on rimpoints satisfies the axioms for a separation
 relation. It is clear that Axioms (S1) and (S2) hold, and Theorem \ref{seprelation} shows that
 (S3), (S4) and (S5) hold.  The \emph{Pasch property} follows from the first five axioms (A1)--(A5)
 \cite[p. 168]{Pambuccian:2008pd}, and is the culmination of the work of Menger's students, from
 DeBaggis \cite{De-Baggis:1946vn} and Jenks \cite{Jenks:1940bh}, to Abbott \cite{Abbott:1941qv} (for
 the \emph{closed} plane; the finite points and the rimpoints).

\begin{theorem}[Pasch property ({\cite[Theorem 1.11]{Abbott:1941qv}})]\label{thm:Pasch}
If $PQR$ is any triangle in $\mathcal{I}$ and if $\ell$ is any line which intersects $PQ$, then
$\ell$ contains $R$ or intersects exactly one other side in an interior point.
\end{theorem}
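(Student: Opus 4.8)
The plan is to derive Pasch from a \emph{plane separation} theorem: each line $\ell$ splits the finite points not on $\ell$ into exactly two non-empty classes --- the two sides of $\ell$ --- with the property that the segment $\overline{AB}$ meets $\ell$ precisely when $A$ and $B$ lie in different classes. Granting this, the theorem follows at once. If $\ell$ passes through a point strictly between $P$ and $Q$, then $P$ and $Q$ lie on opposite sides of $\ell$; assuming $\ell$ does not contain $R$ (otherwise there is nothing to prove), the point $R$ lies off $\ell$ and hence on the same side as exactly one of $P$, $Q$ --- say $P$. Then $\overline{RP}$ misses $\ell$, while $\overline{RQ}$ meets $\ell$, necessarily in a point between $R$ and $Q$; and as $\ell\neq RQ$ this crossing is unique, so $\ell$ meets exactly one of the two remaining sides in an interior point.

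The work therefore lies in proving plane separation from (A1)--(A7). First I would develop the elementary calculus of the betweenness relation that (A4) makes available: that ``$B$ lies between $A$ and $C$'' is symmetric in $A$ and $C$; that of three distinct collinear points \emph{exactly} one lies between the other two (existence is (A4); uniqueness requires showing that if $B$ lies between $A$ and $C$ then $A$ does not lie between $B$ and $C$, which one gets by producing, with the help of the hyperbolicity axiom (A5), an intersecting pair of lines through $B$ and $C$ that some line through $A$ avoids); and the two transitivity laws governing four collinear points. Using (A5) and (A6) one shows that no side of $\ell$ is empty: given $P\notin\ell$ one can exhibit $Q\notin\ell$ with $\overline{PQ}$ crossing $\ell$ and a further point on the same side as $P$. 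Writing $A\sim B$ for ``$A=B$, or $A,B\notin\ell$ and $\overline{AB}\cap\ell=\varnothing$'', the two substantive assertions --- that $\sim$ is transitive, and that it has exactly two classes --- are teased out of (A4) by a Pasch-type case analysis on the position of $A$, $B$, $C$ relative to $\ell$, in which (A5) and (A6) are invoked to supply auxiliary intersecting pairs of lines whenever two lines that the argument wants to use fail to intersect.

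That case analysis is the real obstacle. In classical ordered geometry the separation of the plane drops out of Pasch once Pasch is assumed as an axiom, but here Pasch is the goal, so the separation has to be extracted directly from the incidence-theoretic betweenness axiom (A4); and the hyperbolic phenomenon that two lines need not meet means that every appeal of the form ``the line through $A$ and $X$ meets the line through $C$ and $X$'' must be mediated by (A5) or (A6) to produce an honestly intersecting configuration through the relevant points. Managing this bookkeeping is exactly the content of the chain of papers by DeBaggis, Jenks, and Abbott that ends in \cite[Theorem 1.11]{Abbott:1941qv}, and I would reconstruct their argument; I do not expect a shortcut, since both halves of (A4) and the hyperbolicity axiom are genuinely needed.
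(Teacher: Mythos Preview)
The paper does not give its own proof of this theorem: it is stated with a citation to \cite[Theorem~1.11]{Abbott:1941qv}, and the surrounding text explicitly records that the Pasch property ``is the culmination of the work of Menger's students, from DeBaggis and Jenks to Abbott'' and that it already follows from (A1)--(A5). So there is nothing in the paper to compare your argument against beyond that citation.

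Your outline is in line with that cited literature: deduce Pasch from a plane-separation principle, and extract plane separation from the incidence-theoretic betweenness axiom (A4) together with hyperbolicity (A5). Your reduction of Pasch to plane separation is clean and correct. You are also right that the substantive difficulty is the transitivity of $\sim_\ell$ and the two-class statement, and that this is exactly what the DeBaggis--Jenks--Abbott papers establish; you candidly say you would reconstruct their argument rather than offer a shortcut, which is honest and appropriate here. One small calibration: the paper asserts that (A1)--(A5) suffice, whereas you reach for (A6) as well to populate both sides of $\ell$ and to manufacture auxiliary intersecting configurations. In the cited development those steps are handled with (A1)--(A5) alone (in particular (A5) already produces, for $P\notin\ell$, a line through $P$ meeting $\ell$ and hence a point on the far side), so while invoking (A6) does no harm, it is not needed and you might tighten the hypotheses accordingly.
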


\begin{corollary}\label{cor:Pasch}
Let $A, B, C, M, N$ be distinct rimpoints. Then $\{M, N\}$ separates two or none of the pairs $\{A,
B\}$, $\{B, C\}$, $\{C, A\}$.
\end{corollary}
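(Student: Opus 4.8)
The plan is to read the statement off the Pasch property (Theorem~\ref{thm:Pasch}) applied to the triangle whose vertices are the three rimpoints $A$, $B$, $C$, with $\ell=MN$. First I would unwind the definitions: by Definition~\ref{defsep}, ``$\{M,N\}$ separates $\{X,Y\}$'' means exactly that the lines $MN$ and $XY$ meet in a finite point of $\mathcal{I}$, so what must be shown is that, among the three lines $AB$, $BC$, $CA$, either none or exactly two meet $MN$ in a finite point. I would also record the structural facts that hold because every line of $\mathcal{I}$ has exactly two rimpoints (\cite[Theorem 1.7]{Abbott:1941qv}): three distinct rimpoints are never collinear, so $ABC$ is a genuine triangle with sides $AB$, $BC$, $CA$; the line $MN$ has $M$ and $N$ as its only rimpoints; and hence $MN$ passes through none of $A$, $B$, $C$ and coincides with none of the sides, since otherwise the distinctness of $A,B,C,M,N$ would be violated.

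With this in hand the proof is short. If $MN$ meets none of $AB$, $BC$, $CA$ in a finite point, we are in the ``none'' case. Otherwise $MN$ meets at least one side, say $AB$, in a finite point; this finite point is an interior point of the side $AB$ (a finite point of $\mathcal{I}$ on the line joining two rimpoints lies between them). Applying Theorem~\ref{thm:Pasch} to the triangle $ABC$ and $\ell=MN$, the conclusion ``$MN$ contains $C$, or $MN$ meets exactly one other side in an interior point'' reduces to the second alternative, because $MN$ does not pass through the vertex $C$. Hence $MN$ meets exactly one of $BC$, $CA$ in a finite point, and so exactly two of the three sides in total, which is the ``two'' case.

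There is no serious obstacle here; the only thing requiring care is the translation between the vocabulary of Theorem~\ref{thm:Pasch} (triangles, sides, and interior points in the closed plane of finite points and rimpoints) and that of Definition~\ref{defsep} (the relation $//$ on rimpoints defined through finite intersection points). Concretely, one must check that an interior point of a side $XY$ with $X,Y$ rimpoints is precisely a finite point on the line $XY$, and that the degenerate configurations --- collinear rimpoints, $MN$ through a vertex of $ABC$, $MN$ parallel to (i.e.\ sharing a rimpoint with) a side --- are all ruled out by the hypothesis that $A, B, C, M, N$ are distinct together with the fact that a line carries exactly two rimpoints.
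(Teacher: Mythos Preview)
Your proposal is correct and follows essentially the same approach as the paper: both apply the Pasch property (Theorem~\ref{thm:Pasch}) to the triangle $ABC$ and the line $MN$, using that $MN$ avoids the vertices because $M,N\notin\{A,B,C\}$. Your version is simply more explicit about the auxiliary facts (three distinct rimpoints form a genuine triangle, finite points on a line joining two rimpoints are interior points of the side), which the paper leaves implicit.
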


\begin{proof}
Suppose $\{M,N\}$ separates at least one of the pairs. Without loss of generality, we may assume
that $\{M,N\}$ separates $\{A,B\}$. Therefore, $MN$ and $AB$ meet in a finite point $P$.  Consider
the triangle $ABC$. Then the line $MN$ intersects a side of $ABC$ and so by Theorem \ref{thm:Pasch},
it intersects exactly one of $BC$ or $CA$ (n.b., $MN$ does not contain a vertex as $M,N\ne
A,B,C$). So it follows immediately that $\{M, N\}$ separates one of $\{B, C\}$ or $\{C, A\}$.
\end{proof}

Following Abbott \cite{Abbott:1941qv}, given a line $\ell$, we say that two points $A$ and $B$ that
are not incident with $\ell$ are \emph{on the same side of $\ell$} if one of the following holds:
\begin{enumerate}[(i)]
\item $A=B$,
\item $A$ and $B$ are rimpoints and $AB$ does not intersect $\ell$ in a finite point,
\item $A$ and $B$ are finite points and the segment $\overline{AB}$ does not have a finite point in
  common with $\ell$,
\item $A$ is finite and $B$ is a rimpoint, and if $X$ and $X'$ are the two rimpoints not incident
  with $\ell$ such that $AX$ and $AX'$ are the two lines through $A$ parallel to $\ell$, then $B$
  lies on the same side of $\ell$ as $X,X'$.
\item $A$ is a rimpoint and $B$ is finite, and we apply (iv) to $B$ and $A$.
\end{enumerate}
The property (iv) is well-defined since the two rimpoints $X$ and $X'$ on the lines through $A$
parallel to $\ell$ satisfy the property (ii) \cite[Theorem 1.2]{Abbott:1941qv}. We write $A\sim_\ell
B$ for this relation `on the same side of $\ell$'.

\begin{lemma}[{\cite[Theorem 1.1]{Abbott:1941qv}}]\label{eqrelation}
The relation $\sim_\ell$ is an equivalence relation with two classes.
\end{lemma}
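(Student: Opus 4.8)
The plan is to take reflexivity and symmetry as immediate and to reduce transitivity and the count of classes to the corresponding statements about rimpoints, where Corollary~\ref{cor:Pasch} does the work; this is in effect the route of Abbott \cite[Theorem 1.1]{Abbott:1941qv}, reorganised around the separation relation. Reflexivity is clause~(i), clauses~(ii) and~(iii) are visibly symmetric, and clauses~(iv),~(v) are mirror images of each other, so $\sim_\ell$ is reflexive and symmetric; transitivity and ``exactly two classes'' remain.

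Let $U,V$ be the two rimpoints of $\ell$ \cite[Theorem 1.7]{Abbott:1941qv}, so $\ell=UV$. For distinct rimpoints $X,Y$ off $\ell$ the line $XY$ has $X,Y$ as its two rimpoints, so by Definition~\ref{defsep} $X\sim_\ell Y$ (clause~(ii)) holds exactly when $\{U,V\}$ does not separate $\{X,Y\}$. By Corollary~\ref{cor:Pasch} with $M=U$, $N=V$, among any three rimpoints distinct from $U,V$ the number of $\not\sim_\ell$-related pairs is $0$ or $2$: never $1$, which gives transitivity of $\sim_\ell$ on the rimpoints off $\ell$, and never $3$, which gives at most two classes. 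There are at least two: for a finite point $Z$ on $\ell$ and a rimpoint $X_0$ off $\ell$, the second rimpoint $X_1$ of $ZX_0$ is off $\ell$, the four rimpoints $X_0,X_1,U,V$ are distinct, and $X_0X_1=ZX_0$ meets $\ell$ in the finite point $Z$, so $\{U,V\}$ separates $\{X_0,X_1\}$, i.e.\ $X_0\not\sim_\ell X_1$. Thus $\sim_\ell$ on the rimpoints off $\ell$ is an equivalence relation with exactly two classes.

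To pass to all points I would reduce along a ``direction'' map. For finite $P\notin\ell$ the two parallels to $\ell$ through $P$ are $PU$ and $PV$ \cite[Corollary 2.4]{DeBaggis:1948rt}; let $X_P$ be the rimpoint $\ne U$ on $PU$ and $X_P'$ the rimpoint $\ne V$ on $PV$, both off $\ell$. Then $X_P\sim_\ell X_P'$ \cite[Theorem 1.2]{Abbott:1941qv}, and clause~(iv) with $B\in\{X_P,X_P'\}$ gives $P\sim_\ell X_P$ and $P\sim_\ell X_P'$; put $\kappa(P)$ for their common $\sim_\ell$-class and $\kappa(R)$ for the class of a rimpoint $R\notin\ell$. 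The assertion to prove is that $P\sim_\ell Q\iff\kappa(P)=\kappa(Q)$ for all $P,Q\notin\ell$: given it, $\sim_\ell$ is the pullback along $\kappa$ of equality on the two-element set of rimpoint classes, hence an equivalence relation with exactly two classes, and we are done. When $P,Q$ are both rimpoints this is the definition of $\kappa$ with the previous paragraph; when exactly one is a rimpoint it is clause~(iv)/(v) with transitivity of $\sim_\ell$ on rimpoints. The substantive case is $P,Q$ both finite, where the assertion becomes: $\overline{PQ}$ has no finite point in common with $\ell$ if and only if $X_P\sim_\ell X_Q$ --- call this $(\star)$.

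For $(\star)$ I would use the Pasch property in the form ``a line missing the three vertices of a triangle meets exactly $0$ or $2$ of its sides.'' Let $W,W'$ be the rimpoints of $PQ$, so $PQ$ meets $\ell$ in a finite point exactly when $\{U,V\}$ separates $\{W,W'\}$. Setting aside the degenerate cases where $P,Q,U$ or $P,Q,V$ are collinear (there the pertinent far-rimpoints of $P$ and $Q$ coincide, and $(\star)$ follows from $X_P\sim_\ell X_P'$ and the rimpoint case), the lines $PU$ and $QU$ are parallel to $\ell$ and distinct from $PQ$; applying the quoted form of Pasch to the triangle $PX_PW$ with cutting line $\ell$, and observing that line $X_PW$ has $X_P,W$ as rimpoints (so its finite points are exactly those of the side $\overline{X_PW}$), shows $X_P\sim_\ell W$ iff $\ell$ avoids the subsegment $\overline{WP}$ of $PQ$, and similarly with $W$ replaced by $W'$ or $P$ by $Q$. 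One of $W,W'$, say $W'$, lies on $P$'s side of $PQ\cap\ell$ when that point exists; a short discussion of the order of $W',P,Q,W$ along $PQ$ --- according to whether $PQ$ avoids $\ell$, meets it inside $\overline{PQ}$, or meets it outside --- then produces $X_P\sim_\ell W'$ in all cases, with $X_Q\sim_\ell W'$ when $\overline{PQ}$ avoids $\ell$ and $X_Q\sim_\ell W$ (where $W\not\sim_\ell W'$) when it does not. In the first case $X_P\sim_\ell X_Q$ by transitivity on rimpoints, in the second $X_P$ and $X_Q$ fall in the two distinct classes, so $X_P\not\sim_\ell X_Q$; this is $(\star)$. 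The fiddly point --- and where I expect the real work to be --- is deciding which of $W,W'$ is on $P$'s side of $\ell$, i.e.\ which triangle to hand to Pasch; everything else is formal.
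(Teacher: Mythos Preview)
The paper does not actually give its own proof of this lemma; it simply cites \cite[Theorem 1.1]{Abbott:1941qv} and moves on. So there is no in-paper argument to compare against, and what you have written is a reconstruction rather than a re-derivation of something the authors wrote.

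On its own merits your approach is sound and well organised. The reduction of the rimpoint case to Corollary~\ref{cor:Pasch} is clean and correct (and there is no circularity: that corollary rests only on Theorem~\ref{thm:Pasch}). The idea of pulling back along the map $\kappa$ to the two rimpoint classes is exactly right, and the mixed (finite, rimpoint) case is handled correctly by clause~(iv)/(v).

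The only place your write-up is genuinely incomplete is the proof of $(\star)$ for two finite points: you explicitly flag the ``fiddly point'' and leave the betweenness discussion as a sketch. In fact this is less delicate than you suggest. With $Z:=PQ\cap\ell$ (when it exists), the three subcases --- $Z$ does not exist; $Z$ exists but lies outside $\overline{PQ}$; $Z\in\overline{PQ}$ --- are handled uniformly by applying Pasch to the triangles $PX_PW$ and $QX_QW$ (or with $W'$ in place of $W$), using that $\ell$ never meets the side $\overline{PX_P}$ because $PX_P$ is parallel to $\ell$. In the first two subcases one finds $X_P\sim_\ell W'\sim_\ell X_Q$ for the rimpoint $W'$ on the same side of $Z$ as both $P$ and $Q$; in the third, $P$ and $Q$ lie on opposite sides of $Z$, forcing $X_P\sim_\ell W'$ and $X_Q\sim_\ell W$ with $W\not\sim_\ell W'$. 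Writing this out explicitly would turn your sketch into a complete proof.
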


The two equivalence classes of $\sim_\ell$ are the two \emph{sides} of $\ell$.

\begin{remark}\label{remark:differentsides}
Another way to express this equivalence relation $\sim_\ell$ just for the rimpoints is by using a
cyclic order $\cyclic$ associated to the separation relation $//$. It is not difficult to see that
if the rimpoints of $\ell$ are $L$ and $L'$, then rimpoints $A$ and $B$ lie on the same side of
$\ell$ if and only if $\cyclic(L,A,L')$ and $\cyclic(L,B,L')$, or $\cyclic(L',A,L)$ and
$\cyclic(L',B,L)$.
\end{remark}

\section{Half-turns and axial permutations}\label{section:halfturns}

Throughout this section, we will suppose that $\mathcal{I}$ is an incidence structure satisfying
(A1) through (A7), and we will also be adopting (A10) later in this section, but not immediately.

\subsection{Basic properties of half-turns}

Given a point $P$, the \emph{half-turn} about $P$ is a permutation of $\mathcal{O}$ defined in the
following way: If $X\in\mathcal{O}$, then $X^{P}$ is the unique element of $\mathcal{O}$ incident
with $XP$, not equal to $X$. Therefore, if $A$ and $B$ are rimpoints and $P$ is a finite point on
$AB$, then $A^{P}=B$. We will use the same letter for a finite point and its half-turn, as is
standard in transformation geometry.  Each half-turn is determined by two of its transpositions in
its decomposition into disjoint transpositions, for if $(A\, B)$ and $(C\, D)$ are two such
components of the half-turn about $X$, then the point $X$ is the intersection of $AB$ and $CD$. We
use the symbol `$1$' to denote the identity permutation on the rimpoints.  Theorem
\ref{inv_then_collinear} is the analogue of \cite[Proposition 7.2]{Buekenhout:1966hc} for abstract
ovals.

\begin{theorem}\label{inv_then_collinear}
If $P$, $Q$, $R$ are finite distinct points such that $PQR$ is an involution, then $P$, $Q$, $R$ are
collinear.
\end{theorem}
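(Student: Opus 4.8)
The plan is to argue by contradiction. Suppose $P$, $Q$, $R$ are pairwise distinct, that the permutation $PQR$ of $\mathcal{O}$ (acting by $X\mapsto X^{PQR}=((X^P)^Q)^R$) satisfies $PQR=(PQR)^{-1}$, and that $P$, $Q$, $R$ are \emph{not} collinear; I will then show that $Q$ must lie on the line $PR$, which is the contradiction.

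First I would record the elementary facts about half-turns that are already available. Each half-turn is a fixed-point-free involutory permutation of $\mathcal{O}$: since a line carries two distinct rimpoints, $X^T\ne X$ and $(X^T)^T=X$. Consequently $(PQR)^{-1}=RQP$. Also, as noted in the text, if a line $\ell$ has rimpoints $X,Y$ and $T$ is a finite point on $\ell$, then $X^T=Y$. The non-collinearity hypothesis is used only to ensure that the three lines $PQ$, $QR$, $PR$ are pairwise distinct (and that $Q$ is not on $PR$).

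The crux is a short computation performed on a rimpoint of the ``middle'' line $PR$. Let $Z$, $Z'$ be the two rimpoints of the line $PR$. Since both $P$ and $R$ lie on $PR=ZZ'$, the third fact above gives $Z^P=Z'$ and $Z^R=Z'$. Put $U:=(Z')^Q$; note $U\ne Z'$ by fixed-point-freeness. On the one hand $Z^{PQR}=((Z^P)^Q)^R=((Z')^Q)^R=U^R$. On the other hand, \emph{using that $PQR$ is an involution}, so $PQR=(PQR)^{-1}=RQP$, we get $Z^{PQR}=Z^{RQP}=((Z^R)^Q)^P=((Z')^Q)^P=U^P$. Hence $U^P=U^R$. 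I expect this to be the only real step of insight: one has to notice that feeding a rimpoint of $PR$ to both $PQR$ and its reverse $RQP$ produces two expressions which the involution hypothesis forces to coincide; everything else is bookkeeping with the incidence axiom (A1).

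Finally I would unwind $U^P=U^R$. Since $U\ne U^P$, the lines $UP$ and $UR$ both contain the two distinct points $U$ and $U^P(=U^R)$, hence coincide by (A1); so $P$, $R$, $U$ are collinear and $U$ lies on the line $PR=ZZ'$. But $U=(Z')^Q$ also lies on the line $Z'Q$, which contains $Z'$ as well, and $U\ne Z'$; so $ZZ'$ and $Z'Q$ share two distinct points and must coincide, again by (A1). Therefore $Q$ lies on $ZZ'=PR$, contradicting the non-collinearity of $P$, $Q$, $R$. (Sanity check: in the collinear case one instead gets $U=Z$, every step above remains valid, and the last line is just the true fact $Q\in PR$ — so no contradiction arises, exactly as it should be.) This mirrors Buekenhout's Proposition 7.2 for abstract ovals, as remarked before the statement.
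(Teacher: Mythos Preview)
Your proof is correct, and it takes a genuinely different route from the paper's. The paper picks an arbitrary rimpoint $A$, forms the orbit $A,B,C,D,E,F$ under successive applications of $P,Q,R,P,Q$, observes that $(PQR)^2=1$ forces $F^R=A$, and then invokes Pascal's axiom (A7) on the hexagon $ABCDEF$ to conclude that the diagonal points $P=AB\cap DE$, $Q=BC\cap EF$, $R=CD\cap FA$ are collinear. Your argument instead feeds a rimpoint $Z$ of the line $PR$ into both $PQR$ and $RQP$, uses the involution hypothesis to equate the two outputs, and deduces $U^P=U^R$ for $U=(Z')^Q$; from there only the basic incidence axiom (A1), extended to rimpoints, is needed to place $Q$ on $PR$.

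The payoff is that your proof avoids (A7) entirely: it uses only the fixed-point-freeness of half-turns and the uniqueness of the line through two points. This is a strictly more elementary argument and, given the paper's theme of minimising axiomatic strength, worth noting. On the other hand, the paper's hexagon approach is the one that generalises directly to the converse (Theorem~\ref{Buekenhout}), where (A7) really is doing the work; so the Pascal-based proof keeps the two directions visibly parallel, while yours shows that one direction is cheaper than it looks.
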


\begin{proof}
Let $A\in \mathcal{O}$. Let $B:=A^{P}$, $C:=B^{Q}$, $D:=C^{R}$, $E:=D^{P}$, $F:=E^{Q}$. Then
$F^{R}=A$ since $PQRPQR=1$. Thus the points $P=AB \cap DE$, $Q=BC \cap EF$ and $R=CD \cap FA$
exist, and are collinear by (A7).
\end{proof}

\begin{lemma}\label{interchangesides}
Any half-turn about a point $P$ on a line $\ell$ interchanges the sides of $\ell$.
\end{lemma}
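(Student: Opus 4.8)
The plan is to reduce the statement to the separation relation on rimpoints. Let $L$ and $L'$ be the two rimpoints incident with $\ell$, so $\ell = LL'$. First I would record the easy fact that the half-turn about $P$ swaps $L$ and $L'$ and therefore does not move any rimpoint of $\ell$ off $\ell$: indeed $L^{P}$ is the rimpoint on the line $LP = \ell$ distinct from $L$, which is $L'$. So the two rimpoints of $\ell$ are permuted among themselves, and it remains to understand what the half-turn does to a rimpoint $A$ not incident with $\ell$.

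The key observation is that $A\,A^{P} = AP$. Since $A^{P}$ is by definition the rimpoint on the line $AP$ distinct from $A$, and $A \neq A^{P}$, the unique line through $A$ and $A^{P}$ (axiom (A1)) must be $AP$ itself. This line passes through $P$, which lies on $\ell$, and it is distinct from $\ell$ because $A$ is not on $\ell$. Hence the lines $A\,A^{P}$ and $\ell = LL'$ meet in the finite point $P$, which is exactly the assertion $A\,A^{P} /\!/ LL'$ of Definition~\ref{defsep}. Before invoking the side-relation I would check that the four rimpoints $A$, $A^{P}$, $L$, $L'$ are distinct: $A \neq A^{P}$ by definition of the half-turn, and neither $A$ nor $A^{P}$ equals $L$ or $L'$ since if $A^{P}$ were on $\ell$ then $AP$ would pass through the two distinct points $A^{P}$ and $P$ of $\ell$, forcing $AP = \ell$ and contradicting $A \notin \ell$. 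Then by clause (ii) in the definition of $\sim_{\ell}$ (equivalently, via the cyclic-order description in Remark~\ref{remark:differentsides}), $A\,A^{P}$ meeting $\ell$ in a finite point says precisely that $A \not\sim_{\ell} A^{P}$, i.e.\ $A$ and $A^{P}$ lie on different sides of $\ell$.

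Finally I would conclude: the half-turn about $P$ is a permutation of $\mathcal{O}$, it fixes $\{L,L'\}$ setwise, and it sends every rimpoint $A \notin \ell$ to a rimpoint on the opposite side of $\ell$. Since by Lemma~\ref{eqrelation} there are exactly two sides, this permutation carries each side into the other, and being a bijection it interchanges the two sides of $\ell$. There is essentially no obstacle in this argument: the entire content is the identification $A\,A^{P} = AP$ together with the fact that this line meets $\ell$ in the finite point $P$, after which the definition of ``same side'' does the rest.
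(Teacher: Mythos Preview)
Your proof is correct and follows essentially the same approach as the paper's: take a rimpoint $A$ not on $\ell$, observe that the line $AA^{P}=AP$ meets $\ell$ in the finite point $P$, and conclude from clause (ii) of the definition of $\sim_{\ell}$ that $A$ and $A^{P}$ lie on different sides. Your version is more careful about verifying that $A^{P}$ is not on $\ell$ and about the final bijection argument, but the underlying idea is identical.
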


\begin{proof}
Let $Q$ be a rimpoint not incident with $\ell$. By definition, $Q^P$ is the unique element of
$\mathcal{O}$ on $PQ$ not equal to $Q$. So $QQ^P$ and $\ell$ intersect in a finite point (namely
$P$), and therefore, $Q^P$ lies on the opposite side of $\ell$ as $Q$.
\end{proof}

%

\begin{lemma}\label{commutinginvolutions}\samepage
Let $P$ and $Q$ be finite points. 
\begin{enumerate}[(a)]
\item If $P\ne Q$, then for all rimpoints $X\in\mathcal{O}$ such that $X$ does not lie on the line
  $PQ$, we have that $X$ and $X^{PQPQ}$ lie on different sides of the line joining $X^{P}$ and
  $X^{PQ}$.
\item If the permutations $PQ$ and $QP$ are equal, then $P=Q$.
\end{enumerate}
\end{lemma}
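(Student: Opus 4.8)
The plan is to establish (a) first and then read off (b) from it in one line. For (a), set $B := X^{P}$, $C := X^{PQ}$, $D := X^{PQP}$, $E := X^{PQPQ}$, exactly as in the proof of Theorem~\ref{inv_then_collinear}. By the definition of a half-turn, $X,B,P$ are collinear, $B,C,Q$ are collinear, $C,D,P$ are collinear and $D,E,Q$ are collinear, so that $XB = XP$, $CD = CP$, $BC = BQ$ and $DE = DQ$. First I would dispatch the non-degeneracies: a half-turn fixes no rimpoint, so $X\ne B$, $B\ne C$, $C\ne D$; if $X = C$ then $X^{P} = X^{Q}$, which forces $XP = XQ$ and hence puts $X$ on $PQ$, contrary to hypothesis, so $X\ne C$; and applying the half-turn $P$ to the equation $X = X^{PQP}$ (which would hold if $X=D$) gives $X^{P} = X^{PQ}$, i.e.\ $B = C$, so also $X\ne D$ (and $D\ne B$, since $B=D$ is equivalent to $X=C$). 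This is the only use of the hypothesis $X\notin PQ$.

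Next, the lines $XB = XP$ and $CD = CP$ are distinct (as $X\notin\{C,D\}$) and both pass through the finite point $P$, so $XB // CD$; likewise $BC = BQ$ and $DE = DQ$ are distinct (as $D\notin\{B,C\}$) and both pass through $Q$, so $BC // DE$, whence by (S1) the rimpoints $B,C,D,E$ are distinct. For four distinct rimpoints exactly one of the three pairings into two pairs is a separating pairing (immediate from (S2), (S3), (S4)); since $\{X,B\}$ separates $\{C,D\}$, the distinct pairing $\{B,C\},\{X,D\}$ is not separating, i.e.\ $\{B,C\}$ does not separate $\{X,D\}$. This also gives $X\ne E$: were $X=E$, the relation $BC // XD$ would coincide with $BC // DE$, which holds. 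Hence $X,D,E,B,C$ are five distinct rimpoints, and Corollary~\ref{cor:Pasch} applied to the pair $\{B,C\}$ and the triple $X,D,E$ says $\{B,C\}$ separates two (not none) of $\{X,D\},\{D,E\},\{E,X\}$; since it separates $\{D,E\}$ but not $\{X,D\}$, it separates $\{E,X\}$, i.e.\ $XE // BC$. By Definition~\ref{defsep} and the description of sides for rimpoints in Remark~\ref{remark:differentsides}, $XE // BC$ says precisely that $X$ and $E = X^{PQPQ}$ lie on different sides of $BC = X^{P}X^{PQ}$, which is (a).

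For (b), suppose $PQ = QP$. Since half-turns are involutions, $(PQ)^{2} = P(QP)Q = P(PQ)Q = P^{2}Q^{2} = 1$, so $X^{PQPQ} = X$ for every rimpoint $X$. If $P\ne Q$, choose a line $m\ne PQ$ through $P$ (such $m$ exists, as $P$ lies on more than one line) and let $X$ be a rimpoint of $m$; then $X\notin PQ$ because $m\cap PQ=\{P\}$ and $P$ is finite. Now (a) asserts that $X$ and $X^{PQPQ} = X$ lie on different sides of a line, which is absurd. Hence $P = Q$.

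I expect the main obstacle to be the first part of the argument for (a): recognising that the two concurrences — at $P$ for the pair of lines $XB, CD$, and at $Q$ for the pair $BC, DE$ — yield the two separations $XB // CD$ and $BC // DE$, and then exploiting the ``exactly one separating pairing'' principle to delete $BC // XD$ before closing with Pasch. The attendant non-degeneracy, especially verifying $X\ne X^{PQPQ}$ (without which (a) would be literally false), is routine but must be done carefully and with the hypothesis $X\notin PQ$ in hand.
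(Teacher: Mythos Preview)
Your argument for (a) is circular. You invoke (S3) and (S4) (in the guise of ``exactly one of the three pairings is separating'') to deduce $\neg(BC\,/\!/\,XD)$ from $XB\,/\!/\,CD$; but in this paper the separation axioms (S3)--(S5) for the relation $/\!/$ are only established later, in Theorem~\ref{seprelation}, and the proof of (S3) there \emph{uses} Lemma~\ref{commutinginvolutions}(a) itself. So at this point in the development you are not entitled to (S3), and the step ``since $\{X,B\}$ separates $\{C,D\}$, the pairing $\{B,C\},\{X,D\}$ is not separating'' is exactly the content you are trying to prove. The same circularity infects your verification that $X\ne E$, since that too rests on $\neg(BC\,/\!/\,XD)$. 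Your use of Corollary~\ref{cor:Pasch} is legitimate (it depends only on Pasch), but it is not enough on its own.

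The paper's proof sidesteps this entirely by working with the equivalence relation $\sim_\ell$ from Lemma~\ref{eqrelation} rather than with the not-yet-proved separation axioms. The key observation is that $Q$ lies on $\ell:=X^P X^{PQ}$ while $P$ does not (your non-degeneracy checks establish this). The line $XX^P$ passes through $P$ and meets $\ell$ only in the rimpoint $X^P$, so it is one of the two parallels to $\ell$ through $P$; by clause~(iv) of the definition of $\sim_\ell$, this gives $X\sim_\ell P$. The same reasoning applied to the line $X^{PQP}X^{PQ}$ (which also passes through $P$ and meets $\ell$ in the rimpoint $X^{PQ}$) gives $X^{PQP}\sim_\ell P$, hence $X\sim_\ell X^{PQP}$ by transitivity. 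Finally, $Q\in\ell$, so Lemma~\ref{interchangesides} puts $X^{PQP}$ and $(X^{PQP})^Q=X^{PQPQ}$ on opposite sides of $\ell$, and one more appeal to transitivity finishes. Your proof of (b) is fine once (a) is in hand.
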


\begin{proof}
For the proof of (a), let $X$ be a rimpoint not incident with $PQ$. Let $\ell$ be the line spanned
by $X^{P}$ and $X^{PQ}$. Now the line joining $X^{P}$ and $X$ is incident with $P$ and so $X$ and
$P$ lie on the same side of $\ell$. Similarly, the line joining $X^{PQP}$ and $X^{PQ}$ is incident with $P$ and so
$X^{PQP}$ and $P$ lie on the same side of $\ell$. Therefore, by Lemma \ref{eqrelation}, $X$ and
$X^{PQP}$ lie on the same side of $\ell$. Now the line spanning $X^{PQP}$ and $X^{PQPQ}$ passes
through $Q$, and $Q$ is incident with $\ell$, so $X^{PQP}$ and $X^{PQPQ}$ lie on different sides of
$\ell$ (by Lemma \ref{interchangesides}). It then follows from Lemma \ref{eqrelation} that $X$ and
$X^{PQPQ}$ lie on different sides of $\ell$.

Now we show that (b) holds. Suppose $P\ne Q$. Then there exists a rimpoint $X$ not lying on the line
joining $P$ and $Q$ (by (A3)).  Then by (a), it is clear that $X\ne X^{PQPQ}$. Therefore, $PQPQ\ne
1$ and hence $PQ\ne QP$.
\end{proof}

\begin{lemma}\label{twotriangles}
Let $M$ and $N$ be two rimpoints, and let $\ell$ be the line spanned by them. Suppose $X$ and $Y$
are two distinct rimpoints lying on the same side of $\ell$. Then $XM$ intersects $YN$ or $XN$
intersects $YM$, in a finite point.
\end{lemma}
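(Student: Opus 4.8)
The plan is to derive this from the Pasch property (Theorem~\ref{thm:Pasch}), applied twice, after first using the hypothesis $X\sim_\ell Y$ to restrict the configuration. Note at the outset that the rimpoints $X,Y,M,N$ are pairwise distinct: $X\neq Y$ by hypothesis, $M\neq N$ because they span the line $\ell$, and $X,Y\notin\ell$ (being on a side of $\ell$) forces $X,Y\notin\{M,N\}$.

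First I would set up an auxiliary line. Pick a finite point $P$ on $\ell$, which exists by (A2). Recall that the interior points of the segment $\overline{MN}$ joining the rimpoints $M$ and $N$ are exactly the finite points of $\ell$, so $P$ is an interior point of $\overline{MN}$. Because $X$ and $Y$ lie on the same side of $\ell$, clause~(ii) of the definition of ``same side'' says that $XY$ does not meet $\ell$ in a finite point, so $P\notin XY$; and $P\notin YM$, $P\notin YN$, since $YM$ and $YN$ each share a rimpoint with $\ell$, are therefore parallel to $\ell$, and so meet $\ell$ only at $M$ and at $N$ respectively (so a finite point of $\ell$ cannot lie on either). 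Hence the line $YP$ contains no vertex of the triangle $XMN$, while it meets the side $\overline{MN}$ at the interior point $P$.

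By Theorem~\ref{thm:Pasch}, $YP$ meets exactly one of the sides $\overline{XM}$, $\overline{XN}$ of triangle $XMN$ in an interior point. Say it meets $\overline{XM}$, at a point $Q$ (if instead it meets $\overline{XN}$, the same argument with $M$ and $N$ interchanged shows $XN$ and $YM$ meet in a finite point). Now apply Pasch a second time, to the triangle $YNP$ and the line $XM$: this line meets the side $\overline{YP}$ at the interior point $Q$, and contains none of the vertices $Y,N,P$ (again $X,M\notin\{Y,N\}$, and $P\notin XM$ since $XM$ is parallel to $\ell$). So $XM$ meets $\overline{YN}$ or $\overline{NP}$ in an interior point; but $\overline{NP}\subseteq\ell$ while $XM$ is parallel to $\ell$, so the latter is impossible. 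Therefore $XM$ meets $\overline{YN}$ --- that is, $XM$ and $YN$ meet in a finite point, as required.

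The main friction I expect is the degeneracy bookkeeping: verifying at each application of Pasch that the auxiliary line ($YP$, then $XM$) misses all three vertices of the relevant triangle, and that ``$X$ and $Y$ on the same side of $\ell$'' may be traded for ``$XY$ misses $\ell$ in a finite point''. Both reduce to clause~(ii) of the same-side definition together with the elementary fact that two lines through a common rimpoint are parallel and hence meet only at that rimpoint. (Once the separation relation on rimpoints is known to satisfy (S4) there is also a one-line proof: one of $XY//MN$, $XM//YN$, $XN//YM$ holds, and the first fails because $X,Y$ lie on the same side of $\ell=MN$; but (S4) for rimpoints is verified only later, so the Pasch argument above is the one I would give here.)
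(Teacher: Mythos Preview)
Your argument is correct and follows the same core idea as the paper's: both introduce a finite point $P$ on $\ell$ and analyse the line $YP$ relative to the triangle $XMN$ via Pasch. Your version is a little more self-contained, using two explicit applications of Theorem~\ref{thm:Pasch}, whereas the paper phrases things in terms of the ``same side'' relation and invokes an external result (\cite[Lemma~2.5]{DeBaggis:1948rt}) in place of one of the Pasch steps.

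The one place you should tighten is the second Pasch application: you assert that $Q$ is an interior point of the \emph{segment} $\overline{YP}$, but so far you only know that $Q$ lies on the \emph{line} $YP$ (as an interior point of $\overline{XM}$). The missing observation is that every interior point of $\overline{XM}$ lies on the $X$-side of $\ell$, since the line $XM$ meets $\ell$ only at the rimpoint $M$; hence $Q$ lies on the $Y$-side of $\ell$, and since the line $YP$ crosses $\ell$ precisely at $P$, a finite point of $YP$ on the $Y$-side must lie between $Y$ and $P$. With that sentence added, your proof is complete.
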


\begin{proof}
Let $P$ be a finite point on $MN$, and suppose $XM$ does not intersect $YN$. So $Y$ and $N$ are on
the same side of $XM$.  Now $P$ and $N$ are on the same side of $XM$ and hence $P$ and $Y$ are on
the same side of $XM$ (by Lemma \ref{eqrelation}).  So $PY$ does not meet $XM$. By \cite[Lemma 2.5]{DeBaggis:1948rt}, $PY$ meets $XN$ and so $P$ and $Y$ lie on different sides of $XN$. 
 Now $P$ and $M$ are on the same side of $XN$, and $P$ and
$Y$ are on different sides of $XN$. So by Lemma \ref{eqrelation}, $M$ and $Y$ are on different sides
of $XN$. Therefore, $XN$ intersects $YM$ in a finite point.
\end{proof}

Now we show that the relation $//$ from Definition \ref{defsep} satisfies (S3), (S4) and (S5).

\begin{theorem}\label{seprelation}
$//$ is a separation relation.
\end{theorem}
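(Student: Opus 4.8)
The plan is to verify the three remaining separation axioms (S3), (S4), (S5) for the relation $AB/\!/CD$ defined by ``the line $AB$ meets the line $CD$ in a finite point'', using the Pasch machinery and the side-relation $\sim_\ell$ established above. The recurring tool will be Corollary \ref{cor:Pasch}: for five distinct rimpoints, $\{M,N\}$ separates an \emph{even} number (i.e. two or zero) of the three pairs coming from any triple $\{A,B,C\}$. This ``even intersection'' principle, together with Lemma \ref{eqrelation} and Remark \ref{remark:differentsides}, should reduce each axiom to a short combinatorial argument about sides of a line.

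First I would dispose of (S4). Given four distinct rimpoints $A,B,C,D$, consider the triangle $BCD$ and the line $\ell=AB$; since $A$ is a rimpoint not on $\ell$ other than $B$, and $\ell$ passes through the vertex $B$, I would instead apply Corollary \ref{cor:Pasch} to a cleverly chosen triple. Concretely, look at the three pairs $\{AB,CD\}$, $\{AC,BD\}$, $\{AD,BC\}$: I claim an odd number of them meet in a finite point. One clean way is to fix the cyclic order $\cyclic$ associated to $/\!/$ and observe that for four distinct points on a cyclically ordered set there is (by (C2) applied to the triples) exactly one of the three ``pairings'' that interleaves, which is exactly the pairing whose two joining lines cross; this gives both that at least one pairing separates (S4) and, simultaneously, that at most one does, which is (S3). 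So I would actually prove (S3) and (S4) together from the trichotomy built into the cyclic order, citing the construction of $\cyclic$ from $/\!/$ given just before Definition \ref{defsep} — the only thing that needs checking is that this construction is consistent, i.e. that $/\!/$ as geometrically defined really does arise from some cyclic order, and that consistency check is precisely what Corollary \ref{cor:Pasch} (the parity statement) delivers.

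For (S3) on its own, if one prefers a direct geometric argument: suppose $AB/\!/CD$ and $AC/\!/BD$. Then $AB\cap CD$ and $AC\cap BD$ are both finite points. Using Remark \ref{remark:differentsides} with $\ell=AB$: $AB/\!/CD$ says $C$ and $D$ lie on opposite sides of $AB$, whereas $AC/\!/BD$ forces $B$ and $D$ to be on opposite sides of $AC$ and hence, chasing through Lemma \ref{eqrelation}, one derives a contradiction with the side of $AB$ occupied by $C$ versus $D$. The bookkeeping here is a finite case analysis on which of the two arcs cut out by $\ell$ contains each of $C$, $D$. For (S5), the hypotheses $AB/\!/CD$ and $AC/\!/BX$ give finite intersection points $AB\cap CD$ and $AC\cap BX$; I would apply Pasch (Theorem \ref{thm:Pasch}) to the triangle $BCD$ with the transversal line through $X$ and an appropriate vertex, or equivalently translate everything to statements about $\cyclic$ via the separation-to-cyclic-order correspondence and then (S5) becomes the transitivity axiom (C4) after unwinding definitions. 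In either route the key input is again that the geometric $/\!/$ is governed by a genuine cyclic order on $\mathcal{O}$.

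The main obstacle I anticipate is establishing that consistency: showing the geometric relation $/\!/$ is the separation relation of an \emph{actual} cyclic order, rather than just a relation satisfying some of the axioms. Equivalently, the crux is the parity/``even number of crossings'' fact, and while Corollary \ref{cor:Pasch} gives it for the three pairs of a single triple, promoting this to the full strength needed for (S4) and especially (S5) (which involves five points) may require iterating Pasch across several triangles and carefully tracking sides with Lemma \ref{eqrelation}. I would structure the proof so that (S4) (existence of a separating pairing) and (S3) (uniqueness) come first and quickly from Corollary \ref{cor:Pasch}, and then spend the bulk of the argument on (S5), reducing it to a Pasch application on the triangle determined by three of the five rimpoints and feeding in the already-established (S3), (S4) to pin down the configuration.
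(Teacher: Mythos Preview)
Your plan for (S5) is essentially the paper's: once (S3) is available, a single application of Corollary~\ref{cor:Pasch} to $\{A,B\}$ and the triple $\{C,D,X\}$ finishes it in two lines. Your plan for (S4) is in the right spirit but misattributed: Corollary~\ref{cor:Pasch} requires \emph{five} distinct rimpoints and so does not apply to a four-point statement. The paper instead uses Lemma~\ref{twotriangles} (if $C,D$ lie on the same side of $AB$ then $AC$ meets $BD$ or $AD$ meets $BC$), which is proved directly from Pasch and the side relation; you should invoke that.

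The genuine gap is (S3). Both of your routes fail. The cyclic-order route is circular: the passage from $/\!/$ to a cyclic order $\cyclic$ presupposes that $/\!/$ already satisfies (S1)--(S5), so you cannot appeal to $\cyclic$ (nor to Remark~\ref{remark:differentsides}, which is phrased in terms of $\cyclic$) while establishing those axioms. You flag this yourself as ``the main obstacle'', but Corollary~\ref{cor:Pasch} does not discharge it --- again, it is a five-point statement. Your ``direct geometric argument'' is not a proof either: from $C\not\sim_{AB}D$ and $B\not\sim_{AC}D$ nothing contradictory follows, because Lemma~\ref{eqrelation} constrains $\sim_\ell$ for a single $\ell$ and gives no relation between sides of $AB$ and sides of $AC$. (If you run Pasch on the four triangles with vertex set in $\{A,B,C,D\}$ using the transversal $PQ$, where $P=AB\cap CD$ and $Q=AC\cap BD$, you obtain only the consistent configuration $A,D$ on one side of $PQ$ and $B,C$ on the other --- no contradiction.) The paper's proof of (S3) uses a tool you never mention: half-turns. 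With $P$ and $Q$ as above one checks $A^{PQPQ}=A$; but Lemma~\ref{commutinginvolutions}(a) shows that for distinct finite points $P,Q$ and any rimpoint $X$ off the line $PQ$, the points $X$ and $X^{PQPQ}$ lie on opposite sides of the line $X^{P}X^{PQ}$, hence are distinct. That forces $P=Q$ and then $B=C$, a contradiction. This half-turn input is the missing idea in your sketch.
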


\begin{proof}
Let $A$, $B$, $C$, $D$, $X$ be distinct rimpoints. 
\begin{description}
\item[(S3) holds] The following argument is a direct analogue of \cite[Theorem 2.5]{Struve:2012gf}.
  Suppose $AB//CD$; that is, there exists a finite point $P$ on both $AB$ and $CD$.  For a proof by
  contradiction, suppose $AC$ and $BD$ intersect in a finite point $Q$.  Now $A^{P}=B$, $C^{P}=D$,
  $A^{Q}=C$ and $B^{Q}=D$, from which we see that $PQPQ$ fixes $A$, $B$, $C$ and $D$.  Suppose $P\ne
  Q$. Since $C\ne D$, we have that $A^Q\ne A^P$ and hence $A$ is not incident with $PQ$.  So by
  Lemma \ref{commutinginvolutions}, $A$ and $A^{PQPQ}$ lie on different sides of the
  line joining $A^{P}$ and $A^{PQ}$; which is a contradiction as $A=A^{PQPQ}$.  Therefore, $P=Q$ and
  $B=A^{P}=A^{Q}=C$; a contradiction. We have shown that $AB//CD$ implies that $AC//BD$ does not
  hold.

\item[(S4) holds] Suppose $AB//CD$ does not hold.  Then $A$ and $B$ lie on the same side of the
  line $\ell:=CD$, and so by Lemma \ref{twotriangles}, $AC$ intersects $BD$ or $AD$ intersects $BC$.
  Therefore, $AC//BD$ or $AD//BC$.

\item[(S5) holds] Suppose $AB//CD$ and $AC// BX$. In particular, we do not have $AB// CX$, by (S3).
  Now $\{A,B\}$ separates one of the pairs $\{C,D\}$, $\{D,X\}$, $\{X,C\}$ (namely $\{C,D\}$) and
  hence, by Corollary \ref{cor:Pasch}, two of the pairs.  Therefore, $AB// DX$.\qedhere
\end{description}
\end{proof}

\begin{lemma}\label{ACDF}
Consider two finite points $P$ and $Q$, on a line $\ell$, and suppose $A$ and $D$ are two rimpoints
on different sides of $\ell$.  Then $A D^{{P}{Q}}//A^{{P}{Q}} D$.
\end{lemma}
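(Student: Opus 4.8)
The plan is to produce the required common point of the lines $AD^{PQ}$ and $A^{PQ}D$ explicitly, and to show it lies on $\ell$. Write $A''=A^{PQ}$, $D''=D^{PQ}$, and let $L,L'$ be the two rimpoints of $\ell$. Since $P$ and $Q$ are (distinct) finite points of $\ell$, each of the half-turns $P,Q$ sends $L$ to $L'$ and, by Lemma~\ref{interchangesides}, interchanges the two sides of $\ell$; hence the permutation $X\mapsto X^{PQ}$ fixes both $L$ and $L'$ and preserves the two sides of $\ell$ (using Lemma~\ref{eqrelation}). Therefore $A''$ lies on the side of $\ell$ containing $A$, and $D''$ on the side containing $D$, so $\{A,A''\}$ and $\{D,D''\}$ are on opposite sides of $\ell$; in particular $A,A'',D,D''$ are pairwise distinct (if $A=A''$ then applying $Q$ to $A=(A^P)^Q$ gives $A^Q=A^P$, forcing the finite points $P,Q$ both onto the line $AA^P\ne\ell$ and hence $P=Q$; symmetrically $D\ne D''$; the remaining inequalities hold since $A,A''$ and $D,D''$ lie on opposite sides).

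Now, since $A$ and $D''$ lie on opposite sides of $\ell$, the line $AD''$ meets $\ell$ in a finite point $N$, so that $A^N=D''$ (the second rimpoint of the line $AN=AD''$). It suffices to prove $N\in A^{PQ}D=A''D$, i.e.\ $(A'')^N=D$; then $N$ is a finite point common to $AD''$ and $A''D$, which is precisely $AD^{PQ}//A^{PQ}D$ (and, via (S3), it then follows a posteriori that $AD$ does not meet $A''D''$). The computation is short once we know that $PQN$ is an involution of $\mathcal{O}$: in that case $PQN=(PQN)^{-1}=NQP$ (each half-turn is its own inverse), whence
\[
(A'')^N=(A^{PQ})^N=A^{PQN}=A^{NQP}=(A^N)^{QP}=(D'')^{QP}=D^{PQQP}=D,
\]
as required.

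The one substantial input is the claim that $PQN$ is an involution. Since $N$ lies on $\ell=PQ$, the finite points $P,Q,N$ are collinear, so this is exactly the converse of Theorem~\ref{inv_then_collinear} — the ``three reflections'' statement for a pencil of points on a line, i.e.\ the abstract-oval analogue of the fact that a composite of three point-reflections of a line is again a point-reflection. I would establish this as a companion to Theorem~\ref{inv_then_collinear} (directly from the theory of abstract ovals à la Buekenhout, or from the half-turn machinery being developed in this section); the cases $N\in\{P,Q\}$ are trivial (e.g.\ $PQP$ is a conjugate of the involution $Q$, and $PQQ=P$), so only three distinct collinear points are at issue. Everything else in the argument is bookkeeping with the two sides of $\ell$. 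As a reality check: in a Cayley--Klein model $X\mapsto X^{PQ}$ is a hyperbolic translation with axis $\ell$, so by Steiner's axis theorem the cross-join $AD''\cap A''D$ lies on $\ell$, and the opposite-sides hypothesis forces this point to be interior — the argument above is the synthetic shadow of this.

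One could instead begin with (S4), which reduces the lemma to excluding $AA''//D''D$ and $AD//A''D''$. The first is immediate, since the lines $AA''$ and $DD''$ lie strictly on opposite sides of $\ell$ (neither meets $\ell$, by definition of ``same side'', so each lies on a single side). But excluding $AD//A''D''$ appears to need the same three-reflections ingredient about $X\mapsto X^{PQ}$, so constructing $N$ directly is the cleaner route, and it is the step I expect to be the main obstacle.
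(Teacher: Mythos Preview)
Your argument is circular. The ``companion to Theorem~\ref{inv_then_collinear}'' that you need --- namely, that for collinear finite points $P,Q,N$ the product $PQN$ is an involution (indeed a half-turn) --- is precisely Theorem~\ref{Buekenhout} in the paper, and the paper's proof of Theorem~\ref{Buekenhout} \emph{uses} Lemma~\ref{ACDF}. In the very first paragraph of that proof, Lemma~\ref{ACDF} is invoked to guarantee that $AF$ meets $CD$, so that (A7) can be applied to the hexagon $ABCDEF$; without Lemma~\ref{ACDF} one cannot verify that all three diagonal points exist, and (A7) is silent. The Buekenhout abstract-oval machinery you allude to faces the same obstruction in this setting: Pascal's Theorem as stated in (A7) only fires when all three diagonal points are known to exist as finite points, and Lemma~\ref{ACDF} is exactly the bridge lemma supplying that existence.

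The paper's proof avoids this circularity entirely by working purely with the separation axioms (S1)--(S5), which are established in Theorem~\ref{seprelation} \emph{before} Lemma~\ref{ACDF}. Starting from $XX'//AD$ (the hypothesis that $A,D$ are on opposite sides of $\ell$) and a harmless choice $XA//X'A^{PQ}$, three successive applications of (S5) chase through to $AD^{PQ}//A^{PQ}D$. No half-turn identities beyond Lemma~\ref{interchangesides} are used. Your alternative via (S4), reducing to the exclusion of $AD//A''D''$, hits the same wall for the same reason: you again need to know something about $X\mapsto X^{PQ}$ that is not yet available.
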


\begin{proof}
Let $X, X'$ be the rimpoints incident with $\ell$.  Then $XX' // A D$, and by (S3), we may suppose
without loss of generality that $XA // X' A^{PQ}$ (otherwise, reverse the roles of $X$ and $X'$).
So by (S5), we have\footnote{We can readily see how to apply (S5) via the following mapping of
  symbols $\left(\begin{smallmatrix} A&B&C&D&X\\ X&A&X'&A^{PQ}&D
\end{smallmatrix}\right)$.\smallskip
}
\[
XA // A^{PQ}D,\quad\text{and similarly,}\quad
XD // D^{PQ}A.
\]
Applying (S5) to both the statements `$XA// DA^{PQ} $ and $DX//AD^{PQ} $' and `$XD// AD^{PQ} $ and
$AX//DA^{PQ} $' yields\footnote{Again, it helps to know how we map symbols:
  $\left(\begin{smallmatrix} A&B&C&D&X\\ X&A&D&A^{PQ}&D^{PQ}
\end{smallmatrix}\right)$.\smallskip
}:
\[
AX // A^{PQ}D^{PQ} \quad\text{ and }\quad
DX // D^{PQ}A^{PQ}.
\]
Again, applying (S5) to `$AX// D^{PQ}A^{PQ}$ and $AD^{PQ}//XD$' yields\footnote{On
  this occasion, we use the mapping $\left(\begin{smallmatrix} A&B&C&D&X\\ A&D^{PQ}&X&D&A^{PQ}
\end{smallmatrix}\right)$.
}
$AD^{PQ} // A^{PQ}D$ as required.
\end{proof}

Inspired by \cite[Proposition 7.3]{Buekenhout:1966hc}, we have the following important result on half-turns.
We remind the reader that only axioms (A1) through (A7) are necessary for this result to hold.

\begin{theorem}\label{Buekenhout}
If $P$, $Q$, $R$ are collinear finite points belonging to a line $d$, then $PQR$ is an involution,
and moreover, $PQR={P'}$ for some point $P'$ on $d$.
\end{theorem}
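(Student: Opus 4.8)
The plan is to mimic the argument used for Theorem \ref{inv_then_collinear}, but in reverse: there the hexagon came from iterating $PQR$ three times and invoking Pascal (A7) to force the diagonal points collinear; here I would start from three collinear points $P,Q,R$ on a line $d$ and build a hexagon of rimpoints whose existence of diagonal points is guaranteed by the collinearity, so that Pascal forces the closure of the $PQR$-orbit after six steps. Concretely, fix a rimpoint $A\in\mathcal{O}$ not on $d$ (possible by (A3)), and set $B:=A^{P}$, $C:=B^{Q}$, $D:=C^{R}$, $E:=D^{P}$, $F:=E^{Q}$. Then the diagonal points of the hexagon $ABCDEF$ are exactly $AB\cap DE = P$, $BC\cap EF = Q$, $CD\cap FA = ?$; the first two exist as finite points by construction, and the third diagonal point $CD\cap FA$ — note $CD$ is the line $d$ through $R$ — will exist provided $FA$ meets $d$ in a finite point. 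Once all three diagonal points exist, Pascal (A7) gives that they are collinear, i.e. $CD\cap FA$ lies on $PQ=d$; but $CD=d$, so $CD\cap FA$ is the point $d\cap FA$ which therefore lies on $d$ (automatic) and on line $PQ=d$; the real content is that this diagonal point equals $R$, which is what forces $F^{R}=A$ and hence $PQRPQR=1$ on the single rimpoint $A$. To upgrade this to $PQRPQR=1$ as a permutation of all of $\mathcal{O}$, and then to $(PQR)^2=1$, I would run the same construction for an arbitrary rimpoint $A'$, using that $P,Q,R$ are a priori only known to satisfy the hexagon closure, plus Lemma \ref{commutinginvolutions} or a side-counting argument to pin down that $PQR$ has order dividing $2$ rather than dividing $6$.

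The cleaner route for the order-$2$ claim is probably this: $PQR$ is a product of three involutions, so it is either an involution or has no fixed rimpoint structure forcing order $2$ directly; instead I would argue that $PQR$ fixes the line $d$ (each of $P,Q,R$ maps rimpoints of $d$ among themselves, swapping the two rimpoints $X,X'$ of $d$, so $PQR$ swaps $X$ and $X'$ — composition of three transpositions on $\{X,X'\}$ is again the transposition $(X\,X')$). Thus $PQR$ interchanges the two rimpoints of $d$. Now by the hexagon-Pascal computation above, $(PQR)^2$ fixes every rimpoint, so $(PQR)^2=1$; hence $PQR$ is an involution. An involutory permutation of $\mathcal{O}$ that swaps $X$ and $X'$ is precisely a half-turn, provided it has the right structure — and the way to identify it as the half-turn about a specific point $P'$ on $d$ is: $PQR$ swaps $X\leftrightarrow X'$, so if $PQR$ is to be a half-turn it must be the half-turn about the finite point $P':=XX'$-fibre... no: a half-turn about a point $P'$ on $d$ sends $X\mapsto X'$ since $X,X'$ are the rimpoints of $d$ and $P'\in d$. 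So I would show $PQR$ agrees with such a half-turn by checking it has at least two transpositions of the required incidence type: pick any rimpoint $A\notin d$, let $B:=A^{PQR}$; then the claim is $AB$ passes through a fixed point $P'$ of $d$ independent of $A$. That independence is exactly the collinearity conclusion of (A7) applied to a suitable hexagon, and since a half-turn is determined by two of its transpositions (as noted right before the theorem), once two such chords $AB$ and $A'B'$ are shown to meet $d$ in the same point $P'$, we conclude $PQR=P'$.

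The main obstacle I anticipate is the bookkeeping needed to guarantee that all three diagonal points of the relevant hexagons actually exist as \emph{finite} points — existence of intersections is the recurring delicate issue in this hyperbolic setting, since lines need not meet. I would handle this by choosing the auxiliary rimpoint $A$ on a prescribed side of $d$ and using Lemma \ref{interchangesides}, Lemma \ref{twotriangles}, and the separation relation $//$ (Theorem \ref{seprelation}) to certify that the chords in question cross $d$ in finite points; the side-interchanging behaviour of half-turns about points of $d$ is what keeps the orbit $A,B,C,D,E,F$ alternating between the two sides of $d$ in a controlled way, so that the needed secants exist. A secondary, more minor obstacle is verifying that $PQR$ is nontrivial (so that "involution" is the full statement), but this follows because $PQR$ swaps $X$ and $X'$, hence is not the identity. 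Assembling these pieces — hexagon construction, Pascal for closure, the $\{X,X'\}$-swap observation for order exactly $2$, and the "two transpositions determine a half-turn" remark for the identification $PQR=P'$ — yields the theorem.
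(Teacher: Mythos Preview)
Your outline for the first half --- building the hexagon $ABCDEF$ from successive images of a rimpoint $A$ off $d$ and invoking (A7) --- is exactly the paper's approach. One slip: $CD$ is \emph{not} the line $d$; it is merely a chord through $R$ (since $D=C^{R}$), distinct from $d$. The correct deduction is that (A7) forces $CD\cap FA$ onto $PQ=d$, and since $CD\cap d=\{R\}$, the third diagonal point must equal $R$, whence $F^{R}=A$. You rightly flag the existence of $CD\cap FA$ as the delicate step; the paper packages precisely the separation/side-alternation argument you describe as Lemma~\ref{ACDF} (applied with $A,D$ on opposite sides of $d$ and noting $A^{PQ}=C$, $D^{PQ}=F$).

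The real gap is in the second half. Two transpositions determine a half-turn only once one already knows the map \emph{is} a half-turn; here $PQR$ is so far only an involution, so showing that two chords $AA^{PQR}$ and $A'(A')^{PQR}$ meet $d$ at the same point would not suffice --- you need this for \emph{every} rimpoint. More seriously, your claim that this independence ``is exactly the collinearity conclusion of (A7) applied to a suitable hexagon'' underestimates the work. No single hexagon does the job. The paper fixes $P':=PQ\cap AD$ from the first rimpoint $A$, then for an arbitrary second rimpoint $Y$ builds auxiliary points $Z:=Y^{P'}$, $Y':=Z^{P}$, $Z':=(Y')^{Q}$ and applies (A7) \emph{twice more}: once to the hexagon $ABYZY'D$ (forcing an intermediate point $T:=BY\cap DY'$ onto $d$) and once to $CBYZ'Y'D$ (forcing $YZ'\cap CD=R$), with Lemma~\ref{ACDF} invoked each time to guarantee the relevant diagonal points exist. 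Only then does $Y^{P'PQR}=Y$ follow for every $Y$, giving $PQR=P'$. Your sketch does not anticipate this double application or the auxiliary construction routed through $P'$ itself, and that is where the substance of the proof lies.
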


\begin{proof}
Let $A$ be a rimpoint not incident with $PQ$. Let $B=A^{P}$, $C=B^{Q}$, $D=C^{R}$, $E=D^{P}$,
$F=E^{Q}$. Then $P=AB \cap DE$, $Q=BC \cap EF$.  By Lemma \ref{ACDF}, the lines $AD^{PQ}$ and
$DA^{PQ}$ meet, or in other words, $AF$ meets $CD$.  Then by (A7), $P$, $Q$ and $AF\cap CD$ are
collinear, and hence $AF\cap CD=PQ\cap CD=R$. So $A=F^{R}$, and $A$ and $D$ are interchanged by
$PQR$:
\[
A^{PQR}=B^{QR}=C^{R}=D,\quad D^{PQR}=E^{QR}=F^{R}=A.
\]
Since $A$ is arbitrary subject to not being on $PQ$, we have that $PQR$ is an involution.

Let $P':=PQ\cap AD$. We will show $PQR=P'$.  Take a rimpoint $Y$ not incident with $PQ$, and suppose
$Y$ lies on the opposite side of $PQ$ to $B$.  Let $Z:=Y^{{P'}}$, $Y':=Z^{P}$ and
$Z':=(Y')^{Q}$. Note that $B$ and $Y'$ lie on opposite sides of $PQ=PP'$ (by Lemma
\ref{interchangesides}).  By Lemma \ref{ACDF}, $BY$ meets $DY'$ (where $P$, $P'$, $Y'$, $B$ play the
roles of $P$, $Q$, $A$, $D$), since $A^{P'}=D$. Let $T:=BY \cap DY'$. By (A7) applied to the hexagon
$ABYZY'D$, we have that $P'=AD \cap YZ$, $P=AB \cap YZ'$ and $T=BY \cap DY'$ are collinear.  So $T$
is incident with $PQ$. Note that $D$ and $Y'$ lie on opposite sides of $PQ=TQ$ as $D$ lies on the
same side of $PQ$ as $B$ (by Lemma \ref{interchangesides}).  By Lemma \ref{ACDF}, $YZ'$ meets $CD$
(where $T$, $Q$, $Y$, $D$ play the roles of $P$, $Q$, $A$, $D$), since $Y^{TQ}=B^Q=C$ and
$D^{TQ}=(Y')^Q=Z'$.  By (A7) applied to the hexagon $CBYZ'Y'D$, we have that $T=BY \cap DY'$,
$Q=Y'Z' \cap BC$ and $YZ' \cap CD$ are collinear.  So $YZ' \cap CD$ is incident with $PQ$ and
therefore equals $CD \cap PQ=R$.  Thus $(Z')^{R}=Y$ and hence $Y$ is fixed by ${P'}PQR$.  Since $Y$
was arbitrary subject to being not incident with $PQ$ and lying on the opposite side to $PQ$ as $B$,
we have that ${P'}PQR$ fixes every rimpoint on one side of $PQ$. A similar argument shows that
${P'}PQR$ fixes every rimpoint on the other side of $PQ$ as well. Therefore, ${P'}PQR=1$ and so
$PQR={P'}$.
\end{proof}

\begin{theorem}\label{automorphism}
Each half-turn is an automorphism of $\mathcal{I}$.
\end{theorem}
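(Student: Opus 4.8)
The plan is to extend a half-turn from the rimpoints to all of $\mathcal{I}$ in three stages — rimpoints, then lines, then finite points — reading off incidence preservation at each stage; the one substantive input is Theorem~\ref{Buekenhout}. Fix a finite point $P$ and let $\sigma$ denote the half-turn about $P$. Recall $\sigma$ is a fixed-point-free involution of $\mathcal{O}$: $X^P\ne X$ by definition, and $(X^P)^P=X$ because the line joining $X^P$ and $P$ is $XP$.

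First I extend $\sigma$ to lines. By Abbott's theorem (quoted before Definition~\ref{defsep}) every line $\ell$ has exactly two rimpoints $L_1,L_2$, and $L_1^\sigma\ne L_2^\sigma$; put $\ell^\sigma:=L_1^\sigma L_2^\sigma$. This does not depend on the labelling, so it is well-defined; it is an involution of the set of lines, since the two rimpoints of $(\ell^\sigma)^\sigma$ are $L_1,L_2$; and every line through $P$ is fixed, its pair of rimpoints being a $\sigma$-orbit. Incidence between a rimpoint and a line is now immediate both ways: if the rimpoint $L$ is on $\ell$ then $L\in\{L_1,L_2\}$, so $L^\sigma$ is one of the two rimpoints $L_1^\sigma,L_2^\sigma$ of $\ell^\sigma$.

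The finite points are the crux, and this is where Theorem~\ref{Buekenhout} does the work. For a finite point $Q$ I claim the permutation $PQP$ of $\mathcal{O}$ — the half-turn $Q$ conjugated by the half-turn $P$ — is again a half-turn. If $Q=P$ this is just $\sigma$. If $Q\ne P$, run the proof of Theorem~\ref{Buekenhout} with $R=P$: the hexagon $ABCDEF$ appearing there collapses when $R=P$, but $PQP$ is in any case an involution, being conjugate to $Q$, and the remainder of that proof — which, using Lemma~\ref{ACDF} and two applications of~(A7), identifies the centre as the point $PQ\cap AD$ with $D:=((A^P)^Q)^P$ (a finite point, since $A$ and $D$ lie on opposite sides of $PQ$) — goes through unchanged. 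So $PQP$ is the half-turn about a finite point, which I call $Q^\sigma$; it is independent of the choice of $A$ because the centre of a half-turn is recovered from any two of its disjoint transpositions $(A\,B),(C\,D)$ with $AB\ne CD$ as $AB\cap CD$, and such a pair exists as each line carries only two rimpoints. The assignment $Q\mapsto Q^\sigma$ on finite points is an involution — conjugating $Q^\sigma=PQP$ by $P$ returns $Q$ — hence a bijection, and it fixes $P$.

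Finally I verify incidence between finite points and lines, and assemble. Let $Q$ be a finite point on a line $\ell$ with rimpoints $L_1,L_2$; since $Q$ lies on $\ell=L_1L_2$, the half-turn $Q$ interchanges $L_1$ and $L_2$, hence $Q^\sigma=PQP$ interchanges $L_1^\sigma$ and $L_2^\sigma$, so $Q^\sigma$ lies on $L_1^\sigma L_2^\sigma=\ell^\sigma$; the converse follows by applying $\sigma$ once more. Thus $\sigma$ takes rimpoints to rimpoints, finite points to finite points, and lines to lines, bijectively on each kind, and preserves point--line incidence in both directions, so it is an automorphism of $\mathcal{I}$. I do not expect a genuine obstacle beyond Theorem~\ref{Buekenhout} itself: given that theorem and its method, everything else is the bookkeeping of well-definedness and incidence, the single delicate point being the claim that conjugating a half-turn by a half-turn is again a half-turn.
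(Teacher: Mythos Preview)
Your proof is correct and takes a genuinely different route from the paper's. The paper proceeds abstractly: it verifies (via Theorems~\ref{inv_then_collinear} and~\ref{Buekenhout}) that the set $S$ of half-turns generates an $S$-group $G$ in the sense of Lingenberg, invokes the general fact that $G$ acts by conjugation as automorphisms of the associated incidence structure $D(G,S)$, and then identifies $D(G,S)$ with $\mathcal{I}$ using Theorems~\ref{inv_then_collinear} and~\ref{Buekenhout} once more. You instead build the automorphism explicitly, extending the half-turn $P$ to lines via their pairs of rimpoints and to finite points via the centre of the conjugate $PQP$, and then read off incidence directly. The paper's approach places the result inside an existing theory and makes the later $S$-group language available at no extra cost; yours is more elementary and self-contained, with the geometric content visible throughout. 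Both hinge on the same fact --- that $PQP$ is again a half-turn --- but rather than re-running the proof of Theorem~\ref{Buekenhout} with the degenerate choice $R=P$ (which does work, though the hexagon degeneracies require care), you can get this more cleanly: pick any third finite point $R$ on the line $PQ$; then $PQR$ is the half-turn about some $S$ on $PQ$ by Theorem~\ref{Buekenhout}, whence $PQP=SRP$, and $S,R,P$ are three distinct collinear points (since $S=P$ forces $Q=R$ and $S=R$ forces $P=Q$), so Theorem~\ref{Buekenhout} applies directly a second time.
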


\begin{proof}
Let $S$ be the set of half-turns as permutations of the set $\mathcal{O}$ of rimpoints.  We will
first show that the group $G$ generated by the half-turns is an \emph{$S$-group} as defined in
\cite[\S4]{Lingenberg:1979yu}: a group $G$ together with a distinguished generating subset $S$ of
the full set of involutions $J$, such that the following holds: if $a\ne b$ and $abx, aby, abz\in
J$, then $xyz\in S$.  Let $A$ and $B$ be distinct elements of $S$, and let $X,Y,Z\in S$ such that
$ABX$, $ABY$, $ABZ$ are involutions.  Now $ABX$ is an involution and so by Theorem
\ref{inv_then_collinear}, $A$, $B$, $X$ are collinear. Likewise, $Y$ and $Z$ lie on $AB$. Therefore,
$X$, $Y$ and $Z$ all lie on $AB$, and so by Theorem \ref{Buekenhout}, $XYZ\in S$.  Therefore, the
group $G$ generated by $S$ is an $S$-group.

Now given an $S$-group $(G,S)$ we can define an incidence structure $D(G,S)$ as follows: the points
are the elements of $S$, and a line is a set of points $S_{a,b}:=\{x: (abx)^2=1\}$ where $a$ and $b$
are two distinct elements of $S$.  By \cite[page 38]{Lingenberg:1979yu}, the conjugation action of
$G$ induces a subgroup of automorphisms of $D(G,S)$.

Let $\mathcal{P}$ be the set of points of $\mathcal{I}$ and let $\mathcal{L}$ be the lines of
$\mathcal{I}$.  For each point $X$, let $\beta_{\mathcal{P}}(X)$ be the half-turn about $X$. Note
that $\beta_{\mathcal{P}}$ is a bijection between $\mathcal{P}$ and $S$. So the conjugation action
of $G$ on $S$ induces a permutationally isomorphic action of $G$ on $\mathcal{P}$ via the map
$\beta_{\mathcal{P}}$: that is, for every point $X$ and point $P$, the image $X^P$ is the unique
point fixed by the half-turn $PXP$.  For lines, it is similar. Let $\ell\in\mathcal{L}$ and define
$\beta_{\mathcal{L}}( \ell)$ to be $S_{\beta_{\mathcal{P}}(M), \beta_{\mathcal{P}}(N)}$ where $M$
and $N$ are rimpoints of $\ell$. So again, the action of $G$ on pairs of elements of $S$ induces a
permutationally isomorphic action of $G$ on $\mathcal{L}$ whereby $\ell^P$ is the line spanned by
$M^P$ and $N^P$.  In $D(G,S)$, the half-turn $\beta_{\mathcal{P}}(X)$ about $X$ is incident with
$S_{\beta_{\mathcal{P}}(M), \beta_{\mathcal{P}}(N)}$ if and only if $(\beta_{\mathcal{P}}(M)
\beta_{\mathcal{P}}(N)\beta_{\mathcal{P}}(X))^2=1$.  By Theorem \ref{inv_then_collinear}, $MNX$ are
collinear and hence $X$ is incident with $MN$; and vice-versa (by Theorem \ref{Buekenhout}).
Therefore, $D(G,S)$ is isomorphic to $\mathcal{I}$ (as an incidence structure) and the result follows.
\end{proof}

\begin{corollary}\label{halfturn_inc}
The half-turn about $P$ fixes a line $\ell$ if and only if $P$ is incident with $\ell$.
\end{corollary}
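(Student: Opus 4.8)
The plan is to deduce this directly from Theorem~\ref{automorphism}: each half-turn is an automorphism of $\mathcal{I}$, so it acts on lines, and a line is determined by the pair of rimpoints incident with it (every line has exactly two, as recorded in the introduction via \cite[Theorem 1.7]{Abbott:1941qv}). Write $\sigma_P$ for the half-turn about $P$ and let $M,N$ be the two rimpoints of $\ell$; I will reduce both implications to statements about what $\sigma_P$ does to the pair $\{M,N\}$.

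For the ``if'' direction I would argue as follows. Assume $P$ is incident with $\ell$. Since $P$ is a finite point, $P\ne M$, so $MP$ is the unique line on $P$ and $M$, which by (A1) equals $\ell$; hence the rimpoint on $MP$ other than $M$ is $N$, i.e.\ $M^{P}=N$, and likewise $N^{P}=M$. By Theorem~\ref{automorphism}, $\sigma_P$ sends the line $\ell$ to the line on $M^{P}=N$ and $N^{P}=M$, which is $\ell$; so $\sigma_P$ fixes $\ell$.

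For the converse, assume $\sigma_P$ fixes $\ell$. Being an automorphism (Theorem~\ref{automorphism}), $\sigma_P$ permutes the two rimpoints incident with $\ell$, i.e.\ it restricts to a permutation of $\{M,N\}$. It cannot fix $M$, because by definition $X^{P}\ne X$ for every rimpoint $X$; hence $M^{P}=N$. But $M^{P}$ is by definition the rimpoint on the line $MP$ distinct from $M$, so $N$ is incident with $MP$, and since $M\ne N$, (A1) gives $MP=MN=\ell$. Therefore $P$ is incident with $\ell$.

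I expect no real obstacle here; the only points requiring care are (i) interpreting ``$\sigma_P$ fixes $\ell$'' through the automorphism action furnished by Theorem~\ref{automorphism}, since a priori $\sigma_P$ is only a permutation of the rimpoints, and (ii) the elementary fact that a half-turn about a finite point moves every rimpoint, which is what rules out $\sigma_P$ fixing $M$.
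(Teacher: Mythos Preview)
Your proof is correct and essentially the same as the paper's: both directions hinge on Theorem~\ref{automorphism} and the fact that a line is determined by two of its points. The only minor difference is in the ``only if'' direction: the paper picks a \emph{finite} point $Q$ on $\ell$, observes $Q^P\in\ell$ too, and concludes $P\in QQ^P=\ell$; you instead use a rimpoint $M$ of $\ell$, which is slightly cleaner since $M^P\ne M$ is immediate from the definition of a half-turn, whereas the corresponding fact for finite points tacitly relies on Lemma~\ref{commutinginvolutions}(b).
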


\begin{proof}
Suppose $P$ fixes $\ell$, and let $Q$ be a point incident with $\ell$. 
Then by Theorem \ref{automorphism}, $Q^P$ is incident with $\ell^P$,
which is equal to $\ell$. Since $P$, $Q$ and $Q^P$ are collinear,
and $\ell$ is the unique line joining $Q$ and $Q^P$ (by (A1)),
it follows that $P$ is incident with $\ell$.
Conversely, if $P$ is incident with $\ell$, then the half-turn about $P$
fixes $P$ and maps $\ell$ to a line $\ell^P$ incident with $P$ (by Theorem \ref{automorphism}).
The rimpoints $L$ and $L'$ of $\ell$ are mapped to the rimpoints of $\ell^P$,
which must be interchanged by $P$. It follows that $\ell^P=\ell$.
\end{proof}

\begin{corollary}\label{preservesR}
Every half-turn preserves the separation relation $//$; that is, for every quadruple of distinct
rimpoints $W$, $X$, $Y$, $Z$, and for every finite point $P$, we have $WX//YZ$ if and only if
$W^PX^P//Y^PZ^P$.
\end{corollary}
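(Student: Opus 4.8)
The plan is to derive this directly from Theorem \ref{automorphism} together with the explicit description, given in its proof, of how a half-turn acts on lines. Recall from that proof that the half-turn about a finite point $P$, regarded as an automorphism of $\mathcal{I}$, sends the line joining two rimpoints $A$, $B$ to the line joining $A^{P}$, $B^{P}$, and that, being an automorphism of the incidence structure $\mathcal{I}$, it carries finite points to finite points.

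First I would prove the forward implication. Suppose $WX // YZ$. By Definition \ref{defsep} this means that the lines $WX$ and $YZ$ meet in a finite point $T$. Applying the automorphism induced by the half-turn about $P$, the point $T^{P}$ is a finite point incident with $(WX)^{P}$ and with $(YZ)^{P}$; by the description above these two lines are precisely $W^{P}X^{P}$ and $Y^{P}Z^{P}$. Hence $W^{P}X^{P} // Y^{P}Z^{P}$. Note that $W^{P},X^{P},Y^{P},Z^{P}$ are again four distinct rimpoints, since a half-turn is a bijection of $\mathcal{O}$.

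For the converse I would use that a half-turn is an involution: since $X^{PP}=X$ for every rimpoint $X$, and since the lines and finite points of $\mathcal{I}$ are determined by the rimpoints they contain (a line by its two rimpoints, and a finite point as the intersection of two such lines), the automorphism of $\mathcal{I}$ induced by the half-turn about $P$ has order dividing $2$. So if $W^{P}X^{P} // Y^{P}Z^{P}$, applying the forward implication to the distinct rimpoints $W^{P},X^{P},Y^{P},Z^{P}$ and the same finite point $P$ yields $WX // YZ$.

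The argument has no real obstacle; the only point requiring any care is that a half-turn genuinely acts on the finite points and lines of $\mathcal{I}$ (not merely on $\mathcal{O}$) and that it carries the join of two rimpoints to the join of their images — but this is exactly what Theorem \ref{automorphism} and the construction in its proof supply.
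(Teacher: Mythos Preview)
Your proposal is correct and is exactly the argument the paper has in mind: the corollary is stated without proof immediately after Theorem \ref{automorphism}, and the intended justification is precisely that an automorphism of $\mathcal{I}$ sends the line $WX$ to $W^{P}X^{P}$ and carries a finite intersection point to a finite intersection point, with the converse following from the involutory nature of half-turns. There is nothing to add.
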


A pair of transpositions $(A\, B)$ and $(C\, D)$ in the symmetric group on the rimpoints is
\emph{separating} if $\{A,B\}$ and $\{C,D\}$ are separating.

\begin{lemma}\label{fpfinv}
Let $g$ be a fixed-point-free involution in the symmetric group on the rimpoints such that each pair
of transpositions of $g$ are separating. Then $g$ is a half-turn.
\end{lemma}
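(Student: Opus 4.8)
The plan is to construct a candidate centre for $g$ and show that $g$ coincides with the half-turn about that point. Pick any transposition $(A\,B)$ of $g$ and any other transposition $(C\,D)$ of $g$. Since these two pairs are separating, $AB$ meets $CD$ in a finite point $P$; this $P$ is our candidate. I then want to show $g = P$, i.e. that for every rimpoint $X$, $X^g$ is the second rimpoint on the line $XP$. First I would handle the rimpoints on the lines $AB$ and $CD$ themselves: these are swapped correctly by construction and by the definition of the half-turn. Then, for an arbitrary rimpoint $X$ not on $AB$ or $CD$, let $(X\,Y)$ be its transposition in $g$. The separation hypothesis gives that $\{X,Y\}$ separates both $\{A,B\}$ and $\{C,D\}$, so both $XY$ meets $AB$ and $XY$ meets $CD$ — and I want to conclude that these two intersection points, together with $P$, all coincide, which would force $XY$ to pass through $P$ and hence $Y = X^P$.

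The mechanism to force that coincidence is Pasch/separation combined with what we already know about half-turns. Here is the key step. Consider the two finite points $P = AB \cap CD$ and, supposing for contradiction that $XY$ does not pass through $P$, the finite points $S = XY \cap AB$ and $T = XY \cap CD$, with $S \ne T$ (else both equal $XY \cap AB\cap CD$, which would be $P$). Now $S$, $T$, $P$ are collinear finite points lying on $XY$, $CD$... — actually the cleaner route is: the half-turn $S$ swaps $X\leftrightarrow Y$ and $A \leftrightarrow B$, and the half-turn $P$ swaps $A\leftrightarrow B$ and $C\leftrightarrow D$, so the composite $PS$ (or $SP$) fixes $\{A,B\}$ as a set. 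By Lemma~\ref{commutinginvolutions}(b), if $PS = SP$ then $P = S$; so I would instead compute directly: $SP$ sends $X \mapsto Y^P$ and $A \mapsto A$, $B\mapsto B$, meaning $SP$ fixes both rimpoints of $AB$. But a half-turn product fixing both rimpoints of a line... I need to show this forces $S=P$ unless the map is trivial on more. The sharper tool available is Theorem~\ref{Buekenhout}: three collinear finite points compose to a half-turn. So take the line $d$ through $P$ and $S$ (they are distinct finite points, assuming $S \ne P$); pick the third point $U$ on $d$ with $PSU = $ some half-turn $U'$, i.e. $PSUU' = 1$; and derive a contradiction with the separation structure using that the pairs of $g$ are separating together with Corollary~\ref{preservesR}.

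The honest hard part is exactly pinning down this coincidence of intersection points cleanly — i.e., showing that the separation constraints on a fixed-point-free involution whose transpositions are pairwise separating are rigid enough to pick out a single point. My expectation is that the slickest argument avoids case analysis by this route: fix $(A\,B), (C\,D)$ with meet $P$, and for an arbitrary third transposition $(X\,Y)$, observe $XAYB$... no — observe that since $\{X,Y\}$ separates $\{A,B\}$ we get (using Theorem~\ref{seprelation}, i.e. $//$ is a genuine separation relation) that $\{A,B\}$ separates $\{X,Y\}$, and the two lines $AB$ and $XY$ meet at a finite point $S$; similarly $CD$ and $XY$ meet at $T$. The three finite points $P, S, T$ lie on no common line a priori, but: $P,S$ both lie on $AB$, $P,T$ both lie on $CD$, and $S,T$ both lie on $XY$. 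If $P,S,T$ were three distinct points they would be the three vertices of a triangle with one vertex on each of $AB$, $CD$, $XY$ — but $P \in AB\cap CD$, so $P$ lies on two of the triangle's sides, forcing two sides to coincide, hence two of the three lines equal, hence (since $A,B,C,D,X,Y$ distinct) a contradiction unless $P = S = T$. Therefore $XY$ passes through $P$, so $Y = X^P$, and since $X$ was arbitrary, $g = P$ is the half-turn about $P$. I expect the main obstacle to be making the "triangle with a vertex on two sides" argument fully rigorous in the degenerate cases (e.g.\ when one of $S$, $T$ a priori might not exist as a \emph{finite} point), which is where one falls back on the separation axioms (S3)–(S5) and Corollary~\ref{cor:Pasch} rather than naive incidence.
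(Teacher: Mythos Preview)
Your overall strategy --- fix two transpositions to get a candidate centre $P = AB \cap CD$, then show every other transposition's line $XY$ passes through $P$ --- matches the paper's. But the argument you settle on for concurrency is empty. You observe that $P \in AB \cap CD$, $S \in AB \cap XY$, $T \in CD \cap XY$, and claim that if $P, S, T$ were distinct then ``$P$ lies on two of the triangle's sides, forcing two sides to coincide.'' But $P$ is simply the \emph{vertex} of the triangle $PST$ at which the sides $PS \subset AB$ and $PT \subset CD$ meet; every vertex of a triangle lies on two sides. Three pairwise-meeting lines generically form a triangle, and nothing you have written rules that out. Your argument uses the separating hypothesis only to manufacture the intersection points $P, S, T$, never to constrain their relative positions. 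This is not a matter of degenerate cases as you suspect at the end: the argument already fails in the generic case.

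The paper's proof does the real work with the separation axioms and with half-turns as automorphisms. Taking three transpositions $(A\,B)$, $(C\,D)$, $(E\,F)$ and assuming the three lines form a genuine triangle with vertices $X, Y, Z$, it introduces the auxiliary rimpoint $C^Z$ (the image of $C$ under the half-turn about $Z = EF \cap AB$), notes that $D = C^X$ and $C^Z$ lie on the same side of $AB$, and uses (S4) to obtain, without loss of generality, $AC^Z /\!/ BD$. It then pushes this relation through $g$ and through the half-turn $Z$ (invoking Corollary~\ref{preservesR}) and arrives at $AD /\!/ BC^Z$, contradicting (S3). The point is that the separating hypothesis has to be exploited a \emph{second} time, via (S3)--(S4) and the interaction with half-turns, after the intersection points are already in hand. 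Your earlier, abandoned idea of playing products of half-turns off one another via Lemma~\ref{commutinginvolutions} or Theorem~\ref{Buekenhout} was closer in spirit to what is actually needed than the incidence-only triangle argument you landed on.
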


\begin{proof}
Consider three pairs of transpositions of $g$: $(A\, B)$, $(C\, D)$, $(E\, F)$. Suppose, for a proof
by contradiction, that the lines $AB$, $CD$, $EF$ are not all incident with the same finite point.
Since every pair of these lines separate, we have three finite points $X:=AB\cap CD$, $Y:=CD\cap
EF$, $Z:=EF\cap AB$ fixed by $g$. Let us now consider the points $A$, $X$, $Z$, $B$ which lie on
$AB$. Then $A$, $C^X=D$, $C^Z$, $B$ are four distinct rimpoints.  Now $g$ is an automorphism of
$\mathcal{I}$ (by Theorem \ref{automorphism}), and so $C^{Zg}$ is equal to $D^Z$ (because $g$ maps
the line $CZ$ to $DZ$).  Since $D$ and $C^Z$ lie on the same side of $AB$, it follows that $AB$ does
not intersect $C^ZD$. So by (S4), $AC^Z// BD$ or $AD //C^ZB$.  Without loss of generality, we may
suppose that $AC^Z// BD$.  Now $g$ is an automorphism of $\mathcal{I}$, and so preserves the
separation relation (by Corollary \ref{preservesR}), and hence $A^{g}C^{Zg}//B^{g}D^g$. Computing
each image under $g$ yields $BD^Z// AC$. Now we apply $Z$ and reveal that $AD// BC^Z$.  However, by
assumption $AC^Z// BD$, which contradicts (S3).  Therefore, all lines corresponding to
transpositions of $g$ are concurrent in a single finite point $P$, and therefore, $g$ is a
half-turn.
\end{proof}

\begin{remark}\label{conjaxial0}
A consequence of Lemma \ref{fpfinv} (and Theorem \ref{Buekenhout}) is that if $P$ and $Q$ are finite points, then the map
$PQP$ is the half-turn about the point $Q^P$. Indeed, we have the following corollary of Lemma \ref{fpfinv}.
\end{remark}

\begin{corollary}\label{conjugationhalfturns}
Let $\tau$ be a permutation of $\mathcal{O}$ that preserves the separation relation $//$. Then for each finite point $P$,
the transformation $\tau^{-1}P\tau$ is a half-turn about the point $P^\tau$.
\end{corollary}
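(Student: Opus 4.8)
The plan is to reduce the statement to an application of Lemma~\ref{fpfinv} (or more directly, Remark~\ref{conjaxial0}). First I would observe that since $\tau$ preserves $//$, and $P$ is a half-turn whose transpositions are all separating pairs (each transposition $(A\,B)$ of $P$ corresponds to a line through $P$, and two such lines meet at the finite point $P$, so $\{A,B\}$ and $\{C,D\}$ separate by Definition~\ref{defsep}), the conjugate $\tau^{-1}P\tau$ is again an involution of $\mathcal{O}$. I would then check it is fixed-point-free: if $\tau^{-1}P\tau$ fixed a rimpoint $X$, then $P$ would fix $X^{\tau^{-1}}$... wait, more carefully, $X^{\tau^{-1}P\tau}=X$ means $(X^{\tau^{-1}})^P = X^{\tau^{-1}}$, so $P$ fixes the rimpoint $X^{\tau^{-1}}$, contradicting that half-turns are fixed-point-free on $\mathcal{O}$.

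Next I would verify the separating-pairs hypothesis of Lemma~\ref{fpfinv} for $g:=\tau^{-1}P\tau$. A transposition of $g$ has the form $(A^\tau\, B^\tau)$ where $(A\,B)$ is a transposition of $P$. Given two such, $(A^\tau\,B^\tau)$ and $(C^\tau\,D^\tau)$, we know $\{A,B\}$ and $\{C,D\}$ separate (both lines $AB$, $CD$ pass through the finite point $P$), and since $\tau$ preserves $//$, the pairs $\{A^\tau,B^\tau\}$ and $\{C^\tau,D^\tau\}$ separate as well. Hence Lemma~\ref{fpfinv} applies and $g$ is a half-turn, say about the finite point $R$.

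Finally I would identify $R$ with $P^\tau$. Here I need to be careful about what $P^\tau$ means: $\tau$ is a permutation of rimpoints only, so $P^\tau$ must be interpreted as the finite point determined by the images of the rimpoints whose joins meet at $P$. Concretely, pick two transpositions $(A\,B)$, $(C\,D)$ of $P$ (so $P=AB\cap CD$); then $P^\tau$ is defined as the intersection $A^\tau B^\tau \cap C^\tau D^\tau$ (which exists and is well-defined precisely because separation is preserved, so these lines meet in a finite point, and independence of the choice of transpositions follows from the fact that a half-turn is determined by two of its transpositions). Since $g=\tau^{-1}P\tau$ sends $A^\tau \mapsto (A^P)^\tau = B^\tau$ and $C^\tau\mapsto D^\tau$, the center $R$ of $g$ is the intersection $A^\tau B^\tau\cap C^\tau D^\tau$, which is exactly $P^\tau$.

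The main obstacle is the bookkeeping around the meaning of $P^\tau$: since $\tau$ acts a priori only on $\mathcal{O}$ and not on finite points, the corollary is implicitly asserting that $\tau$ induces a well-defined map on finite points (via half-turns, equivalently via the incidence structure), and one must make sure this induced action is what is meant, and that it is unambiguous. Once that is pinned down, the rest is a direct invocation of Lemma~\ref{fpfinv} together with the observation that conjugation carries the two defining transpositions of $P$ to the two defining transpositions of $g$, forcing the center to be $P^\tau$.
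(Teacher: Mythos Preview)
Your proposal is correct and follows exactly the approach the paper intends: the corollary is stated without proof as an immediate consequence of Lemma~\ref{fpfinv}, and you have supplied precisely those details (fixed-point-freeness, separating transpositions via preservation of $//$, then Lemma~\ref{fpfinv}). Your care about the meaning of $P^\tau$ is apt---the paper leaves this implicit, and your reading (the center of the resulting half-turn, equivalently $A^\tau B^\tau\cap C^\tau D^\tau$ for any two transpositions $(A\,B),(C\,D)$ of $P$) is the intended one.
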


\begin{lemma}\label{preservesC}
Let $\cyclic$ be one of the two cyclic orders associated to the separation relation $//$. Then every
half-turn preserves $\cyclic$; that is, for every triple of distinct rimpoints $X$, $Y$, $Z$, and
for every finite point $P$, we have $\cyclic(X,Y,Z)$ if and only if $\cyclic(X^P,Y^P,Z^P)$.
\end{lemma}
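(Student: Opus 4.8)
The plan is to reduce the statement about cyclic order to the already-established statement about separation (Corollary \ref{preservesR}), using the fact that a cyclic order and its reverse are the only two cyclic orders compatible with a given separation relation. First I would recall the construction: from the separation relation $//$ on $\mathcal{O}$ we obtain two cyclic orders, say $\cyclic$ and its reverse $\bar{\cyclic}$, and every half-turn $P$ preserves $//$ by Corollary \ref{preservesR}. Since $P$ is a permutation of $\mathcal{O}$ preserving $//$, the pushforward $\cyclic^P$ defined by $\cyclic^P(X,Y,Z) :\iff \cyclic(X^{P^{-1}},Y^{P^{-1}},Z^{P^{-1}})$ is again a cyclic order compatible with $//$ (one checks (C1)--(C4) transfer along the bijection $P$, and compatibility with $//$ follows from the displayed formula relating $//$ to $\cyclic$ together with $P$-invariance of $//$). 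Hence $\cyclic^P$ is either $\cyclic$ or $\bar{\cyclic}$, and it suffices to rule out the second possibility.

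The key step is therefore to show that no half-turn reverses $\cyclic$. Here I would use Remark \ref{remark:differentsides} together with Lemma \ref{interchangesides}. Fix a finite point $P$ on a line $\ell$ with rimpoints $L$, $L'$. By Remark \ref{remark:differentsides}, the rimpoints on one side of $\ell$ are exactly those $A$ with $\cyclic(L,A,L')$, and those on the other side are exactly those $B$ with $\cyclic(L',B,L)$. The half-turn about $P$ fixes $\ell$ (by Corollary \ref{halfturn_inc}), interchanges $L$ and $L'$, and interchanges the two sides of $\ell$ (by Lemma \ref{interchangesides}). So $P$ sends a rimpoint $A$ with $\cyclic(L,A,L')$ to a rimpoint $A^P$ on the other side of $\ell$, i.e.\ with $\cyclic(L',A^P,L)$. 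If $\cyclic^P = \cyclic$, then from $\cyclic(L,A,L')$ we would get $\cyclic(L^P,A^P,(L')^P) = \cyclic(L',A^P,L)$, which is consistent; whereas if $\cyclic^P = \bar{\cyclic}$, then $\cyclic(L,A,L')$ would force $\cyclic((L')^P,A^P,L^P) = \cyclic(L,A^P,L')$, putting $A^P$ on the same side of $\ell$ as $A$, contradicting Lemma \ref{interchangesides}. Hence $\cyclic^P = \cyclic$, which is exactly the assertion $\cyclic(X,Y,Z)\iff\cyclic(X^P,Y^P,Z^P)$.

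I expect the main obstacle to be purely bookkeeping: verifying cleanly that $\cyclic^P$ really is a cyclic order compatible with $//$ (so that the dichotomy ``$\cyclic$ or $\bar\cyclic$'' applies), and then making sure the side-swapping argument is watertight when some of the three points $X,Y,Z$ is allowed to be $L$ or $L'$ or to lie on $\ell$. One can sidestep the latter annoyance by noting it is enough to check the dichotomy on a single well-chosen triple of rimpoints in ``general position'' with respect to some line through $P$; once $\cyclic^P\in\{\cyclic,\bar\cyclic\}$ is known, a single instance suffices to pin down which one, and the instance $(L,A,L')$ above does the job. Alternatively, and perhaps most economically, one can avoid Remark \ref{remark:differentsides} altogether by a direct four-point argument: pick rimpoints with $\cyclic(X,Y,Z)$, translate this via the separation formula into a statement of the form $XY//Z W$ or similar for a suitable auxiliary $W$, apply Corollary \ref{preservesR}, and translate back — but this requires carrying the auxiliary point and is messier, so I would present the side-based argument.
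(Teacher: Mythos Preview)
Your proposal is correct and shares its opening move with the paper: both invoke Corollary~\ref{preservesR} to conclude that a half-turn $P$ either preserves or reverses $\cyclic$, and then rule out reversal. Your execution is more economical than the paper's. You observe that once the dichotomy $\cyclic^P\in\{\cyclic,\bar\cyclic\}$ is established, a single well-chosen triple suffices to decide which, and your test triple $(L,A,L')$ with $L'=L^P$ and $A$ off $\ell$ is precisely the paper's ``$Y=X^P$'' special case, handled there via Remark~\ref{remark:differentsides} and Lemma~\ref{interchangesides} just as you do. The paper, however, then continues into a further case analysis (according to whether $Y$ and $Z$ lie on the same side of $XX^P$, invoking (S3) and Corollary~\ref{preservesR} again), which is redundant once the preserve-or-reverse dichotomy has been asserted. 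Your self-identified ``obstacles'' are not real obstacles: the pushforward of a cyclic order along a bijection is trivially a cyclic order, compatibility with $//$ is immediate from $P$-invariance of $//$, and your observation that one instance pins down the dichotomy disposes of any worry about degenerate triples.
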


\begin{proof}
Let $P$ be a finite point.  The half-turn about $P$ preserves the separation relation $//$, by
Corollary \ref{preservesR}.  So $P$ preserves or reverses the cyclic order $\cyclic$. Assume, for a
proof by contradiction, that $P$ reverses $\cyclic$.  Suppose we have $\cyclic(X,Y,Z)$ for three
distinct rimpoints $X$, $Y$, $Z$.  
If $Y=X^P$, then $Z$ and $Z^P$ lie on opposite sides of $XY$ and
we have by Remark \ref{remark:differentsides}, $\neg \cyclic(X,X',Z^P)$ and hence $\cyclic(X,Z^P,Y)$ (by (C2)).
However, after applying $P$, we have $\cyclic(Y,X,Z)$ which contradicts $\cyclic(X,Y,Z)$ and (C3). A similar
argument holds for $Y=Z^P$ or $X=Z^P$. So we can assume that $P$ does not lie on the lines $XY$,
$YZ$, $ZX$.  If $Y$ and $Z$ lie on the same side of $XX^P$, then $XZ//X^PY$ (because lying on the
same side of $XX^P$ means that $\cyclic(X,Y,X^P)$ and $\cyclic(X,Z,X^P)$, or $\cyclic(X^P,Y,X)$ and
$\cyclic(X^P,Z,X)$), and similarly, $Y^P$ and $Z^P$ lie on the same side of $XX^P$ and hence
$XZ^P//X^PY^P$.  However, $P$ preserves the separation relation $//$ (by Corollary \ref{preservesR})
and hence $X^PZ//XY$, which contradicts $XZ//X^PY$ (by (S3)).  Therefore, $Y$ and $Z$ lie on
opposite sides of $XX^P$.  In this case, $Y^P$ and $Z$ lie on the same side of $XX^P$.  So by
applying the previous argument with $\{Y^P,Z\}$ in the place of $\{Y,Z\}$ (the ordering dependent on
whether $\cyclic(X,Y^P,Z)$ or $\cyclic(X,Z,Y^P)$), we obtain again a contradiction.  Therefore, $P$
preserves $\cyclic$.
\end{proof}

\subsection{Adopting (A10) and enabling midpoints}

We now suppose that our incidence structure $\mathcal{I}$ satisfies (A10).
Recall from Axiom (A10) that given distinct finite points $P$, $Q$, there exist lines $\ell$ on $P$ and $m$ on $Q$ such that
two common parallels to $\ell$ and $m$ meet on $PQ$. We show now that this point is indeed
the `midpoint' of $\overline{PQ}$ in the language of half-turns.

\begin{lemma}[Midpoints exist]\label{midpointsexist}
Given two finite points $P$ and $Q$, there exists a finite point $M$ such that $PM=MQ$.
\end{lemma}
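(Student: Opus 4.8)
The plan is to decode Axiom (A10) directly in the language of half-turns, using the machinery already built. By (A10), given distinct finite points $P$ and $Q$, there exist a line $\ell$ on $P$ and a line $m$ on $Q$, together with two common parallels $a$ and $b$ to both $\ell$ and $m$, such that $a\cap b$ is a finite point $M$ lying on the line $PQ$. We want to show $PM=MQ$ as permutations of $\mathcal{O}$, equivalently (by Remark \ref{conjaxial0}) that $MPM=Q$, i.e.\ the half-turn $M$ conjugates $P$ to $Q$, i.e.\ $P^M=Q$ as points.

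First I would translate the parallelism data into transpositions of half-turns. Each of $a,b,\ell,m$ carries two rimpoints; say $\ell$ has rimpoints $\{X,X'\}$ and $m$ has rimpoints $\{Y,Y'\}$. Saying that $a$ is parallel to $\ell$ and to $m$ means $a$ shares a rimpoint with each: one of $X,X'$ lies on $a$ and one of $Y,Y'$ lies on $a$; similarly for $b$. After relabelling we may arrange that $a$ has rimpoints $\{X,Y\}$ and $b$ has rimpoints $\{X',Y'\}$ (the two common parallels pick out the two ``matchings'' between the rimpoint pairs; the fact that $a$ and $b$ are distinct common parallels forces these to be the two distinct matchings, which is where one must be a little careful — this bookkeeping is essentially the assertion that $\ell,m$ are non-intersecting non-parallel, and that $a,b$ are their two transversals through corresponding rimpoints). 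Then, as half-turns: $M = a\cap b$ means $M$ contains the transpositions $(X\,Y)$ and $(X'\,Y')$, so $X^M=Y$ and $(X')^M=Y'$. Also $P\in\ell$ gives $X^P=X'$, and $Q\in m$ gives $Y^Q=Y'$.

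Now compute $P^M$. By Theorem \ref{automorphism} (half-turns are automorphisms of $\mathcal{I}$), $M$ sends the line $\ell=XX'$ to the line $X^M (X')^M = YY' = m$, and $M$ fixes $P$'s image appropriately: more precisely $P^M$ is the point with $(X^M)^{P^M} = (X^P)^M$, that is, $Y^{P^M} = (X')^M = Y'$. Hence $P^M$ is the point on line $m=YY'$ interchanging $Y$ and $Y'$ — but that point is exactly $Q$, since $Y^Q=Y'$ and a half-turn is determined by one of its transpositions together with the line it lies on (or: $P^M$ lies on $\ell^M=m$ and on $(PQ)^M=PQ$ since $M\in PQ$ forces $M$ to fix the line $PQ$ by Corollary \ref{halfturn_inc}, so $P^M\in m\cap PQ=Q$). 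Therefore $P^M=Q$, so by Remark \ref{conjaxial0} the map $MPM$ is the half-turn about $P^M=Q$, i.e.\ $MPM=Q$, i.e.\ $PM=MQ$ as permutations; take $M$ to be the desired midpoint.

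The main obstacle I anticipate is the rimpoint bookkeeping in the second paragraph: one must verify that the two common parallels $a,b$ really do realize the two distinct rimpoint-matchings between $\{X,X'\}$ and $\{Y,Y'\}$ (rather than, say, both passing through $X$), and that the configuration in (A10) is non-degenerate enough that $M=a\cap b$ is distinct from $P$ and $Q$ and genuinely on $PQ$. This should follow from the definition of parallelism and the fact that a rimpoint lies on exactly two lines through it that are parallel to a given line, combined with $\ell\neq m$; but it is the step requiring the most care. Once the incidence pattern $X^M=Y$, $(X')^M=Y'$, $X^P=X'$, $Y^Q=Y'$ is pinned down, the identification $P^M=Q$ is a short computation using only that half-turns are automorphisms and are determined by a transposition plus a line.
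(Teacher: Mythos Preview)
Your proposal is correct and follows essentially the same route as the paper: use (A10) to obtain $M$ on $PQ$ as the intersection of two common parallels, read off from the rimpoint transpositions that the half-turn $M$ swaps $\ell$ with $m$ and fixes the line $PQ$, conclude $P^M=Q$, and finish with $MPM=Q$ via Remark~\ref{conjaxial0} (the paper cites Lemma~\ref{fpfinv} directly). Your bookkeeping worry is easily dispatched: two common parallels sharing a rimpoint cannot meet in a finite point, so the two parallels furnished by (A10) necessarily realise a complementary matching of rimpoint pairs, which is all you need for $\ell^M=m$.
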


\begin{proof}
By (A10), there exist lines $\ell$ on $P$ and $m$ on $Q$ such
that if $X$, $Y$ are the rimpoints on $\ell$ and $X'$, $Y'$ are the rimpoints on $m$,
with $X$ and $X'$ lying on the same side of $PQ$, then $X'Y$ and $XY'$ meet
in a point $M$ incident with $PQ$. Then, $(X')^M=Y$ and $(Y')^M=X$ so $m^M=\ell$ (by Theorem \ref{automorphism}) and 
$(PQ)^M=PQ$, so $Q^M=(PQ \cap m)^M=PQ \cap \ell=P$  (again by Theorem \ref{automorphism}). Now the permutation $MQM$
is a half-turn by Lemma \ref{fpfinv}, and it fixes the point $Q^M=P$. Therefore,
$MQM=P$ and the result follows.
\end{proof}

\begin{lemma}[Midpoints are unique]\label{midpointsunique}
Let $P$ and $Q$ be finite points, and let $\ell$ and $m$ be lines through $P$ and $Q$ respectively
such that the intersection $M$ of the two common parallels to $\ell$ and $m$ is incident with $PQ$.
Then $PM=MQ$.
\end{lemma}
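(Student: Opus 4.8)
The plan is to show that the half-turn about $M$ interchanges $\ell$ and $m$ while fixing $PQ$, so that it carries $P$ to $Q$; the identity $PM=MQ$ then follows by a one-line group computation. This essentially repeats the calculation already done inside the proof of Lemma \ref{midpointsexist}, the only genuinely new point being to identify which two common parallels can possibly meet at a finite point of $PQ$.

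Write $X,Y$ for the rimpoints of $\ell$ and $X',Y'$ for those of $m$, and let $g_1,g_2$ be the two common parallels to $\ell$ and $m$ with $g_1\cap g_2=M$. Each $g_i$ is parallel to $\ell$, hence meets $\ell$ in exactly one rimpoint (it is distinct from $\ell$, so cannot contain both $X$ and $Y$), and likewise meets $m$ in exactly one rimpoint. Since $g_1\neq g_2$ are distinct lines through the finite point $M$, they cannot pass through a common rimpoint, because a finite point and a rimpoint are joined by a unique line. So between them $g_1$ and $g_2$ pass through both $X$ and $Y$, and through both $X'$ and $Y'$; relabelling the rimpoints of $m$ if necessary, this forces $\{g_1,g_2\}=\{XY',\,YX'\}$, and in particular $X,Y,X',Y'$ are four distinct rimpoints.

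Because $M$ lies on $XY'$ and on $YX'$, the half-turn about $M$ sends $X\mapsto Y'$ and $Y\mapsto X'$. By Theorem \ref{automorphism} this half-turn is an automorphism of $\mathcal{I}$, so it carries the line $\ell=XY$ to the line through $Y'$ and $X'$, namely $m$ (and hence $m$ back to $\ell$). Since $M$ is incident with $PQ$, Corollary \ref{halfturn_inc} shows the half-turn about $M$ fixes $PQ$. Moreover neither $\ell$ nor $m$ is $PQ$: otherwise one of $X,X'$ would be a rimpoint of $PQ$, forcing $XY'$ or $YX'$ to coincide with $PQ$ and contradicting the distinctness of the four rimpoints. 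Hence $\ell\cap PQ=\{P\}$ and $m\cap PQ=\{Q\}$, so applying the automorphism to $P=\ell\cap PQ$ gives $P^M\in m\cap PQ$, i.e.\ $P^M=Q$ (equivalently $Q^M=P$).

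Finally, by Remark \ref{conjaxial0} (or Corollary \ref{conjugationhalfturns} with $\tau=M$, which preserves $//$ by Corollary \ref{preservesR}), the permutation $MPM$ is the half-turn about $P^M=Q$; that is, $MPM=Q$ as permutations of $\mathcal{O}$. Composing with $M$ on the left and using $M^2=1$ gives $PM=MQ$, as required. The main obstacle is the bookkeeping in the second paragraph: correctly pinning down that the two common parallels meeting at the finite point $M$ must be the ``crossing'' pair $\{XY',YX'\}$ up to naming, and disposing of the degenerate configurations in which $\ell$, $m$, or $PQ$ share rimpoints. Once this is in hand, the remainder merely transcribes the argument of Lemma \ref{midpointsexist}.
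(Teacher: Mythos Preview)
Your proof is correct. The core observation---that the half-turn about $M$ interchanges the rimpoints of $\ell$ with those of $m$, hence swaps $\ell$ and $m$ while fixing $PQ$---is exactly what drives the paper's argument as well, and your bookkeeping in the second paragraph (pinning down the ``crossing pair'' and verifying the four rimpoints are distinct) fills in details the paper leaves implicit.

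The one genuine difference is in the endgame. The paper invokes Theorem~\ref{Buekenhout} to write $PMQ=N$ for some $N$ on $PQ$, then observes that $N$ lies on the common parallel $XX^{PMQ}$ (since $X^N=X^{PMQ}$), forcing $N=M$ and hence $PM=MQ$. You instead transplant the computation from Lemma~\ref{midpointsexist} verbatim: from $\ell^M=m$ and $(PQ)^M=PQ$ deduce $P^M=Q$, then use Remark~\ref{conjaxial0} to get $MPM=Q$. Your route is slightly more elementary in that it avoids the direct appeal to Theorem~\ref{Buekenhout} (though Remark~\ref{conjaxial0} rests on the same circle of ideas), and it has the virtue of making explicit the degenerate-case analysis that the paper's terser argument glosses over.
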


\begin{proof}
By Theorem \ref{Buekenhout}, there exists a finite point $N$ incident with $PQ$ such that the permutation $PMQ$ is equal
to the half-turn about $N$. Let $X$ be a rimpoint on $\ell$. Then the common parallels to $\ell$ and $m$ are $XX^{PMQ}$
and $X^PX^{PM}$, and so $N$ lies on both $\ell$ and $XX^{PMQ}$ (as $X^{N}=X^{PMQ}$).
Therefore, $M=N$ and $PM=MQ$.
\end{proof}

The theory for the remainder of this section was motivated by the late John Frankland Rigby's 1969 paper `Pascal ovals in projective planes' 
\cite{Rigby:1969ab}.

A nonidentity permutation $\alpha$ of the set $\mathcal{O}$ of rimpoints, preserving a cyclic order $\mathcal{C}$ compatible with the 
separation relation $//$, is an {\bf \axial} permutation if there exists
a line $\ell$ such that for all rimpoints $X$, $Y$ if $XY^\alpha$ meets $YX^\alpha$, it does so in a finite point incident with $\ell$. 
We then say that $\ell$ is the {\bf axis} of $\alpha$.  

\begin{lemma}\label{axialfixpoints}
The set of fixed rimpoints of an \axial\ mapping $\alpha$ with axis $\ell$ is the set of rimpoints on $\ell$. 
Hence the axis is unique, and an \axial\ mapping has exactly two fixed points.
\end{lemma}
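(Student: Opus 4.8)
The plan is to show two containments: every rimpoint on $\ell$ is fixed by $\alpha$, and every fixed rimpoint of $\alpha$ lies on $\ell$. For the first direction, let $L$ be a rimpoint on $\ell$ and set $X = L$. For any rimpoint $Y$, the lines $XY^\alpha = LY^\alpha$ and $YX^\alpha = YL^\alpha$; if these meet, they meet in a finite point on $\ell$. Taking $Y$ itself on $\ell$ (so $Y = L'$, the other rimpoint of $\ell$) and using that $\alpha$ preserves $\ell$-incidence is circular, so instead I would argue directly: choose $Y$ a rimpoint \emph{not} on $\ell$, pick a finite point $P$ on $\ell$, and consider $Y^P$, which also is not on $\ell$ but lies on the opposite side. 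The lines $L Y^\alpha$ and $Y L^\alpha$ — if $L^\alpha \neq L$ — would have to meet $\ell$; but I need a cleverer configuration. The cleanest approach: suppose $L^\alpha \neq L$. Since $\alpha$ preserves the cyclic order $\mathcal{C}$, and $\ell$ has exactly two rimpoints $L, L'$, consider a finite point $P$ on $\ell$ and apply the axial condition with $X = L$, $Y = L^P$... Actually the genuinely clean route is: take $X = L$ and let $Y$ range; the line $XY^\alpha = L\,Y^\alpha$ passes through $L$, and if it meets $Y L^\alpha$ at a finite point it must be on $\ell$. Pick $Y$ so that $YL^\alpha$ is a chord through $L^\alpha$ crossing $\ell$; pushing $Y$ around the oval one can force $L\,Y^\alpha$ and $Y L^\alpha$ to meet at a finite point, which then lies both on $\ell$ and on the line $L L^\alpha$, forcing $L^\alpha \in \{L\} \cup (\ell \cap LL^\alpha)$. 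I would nail down that $L\,Y^\alpha$ meets $\ell$ at $L$ itself and derive that $L^\alpha$ lies on $\ell$, hence $L^\alpha \in \{L, L'\}$; a separate parity/orientation argument using that $\alpha$ preserves $\mathcal{C}$ (a cyclic-order-preserving map cannot swap two points while fixing the ``sides'' structure unless it is an involution of a special type) rules out $L^\alpha = L'$.

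For the reverse containment, suppose $Z$ is a fixed rimpoint of $\alpha$ not on $\ell$. Take any rimpoint $Y$; then $XY^\alpha$ with $X = Z$ is $Z Y^\alpha$, and $Y Z^\alpha = Y Z$, so whenever $ZY^\alpha$ meets $YZ$ at a finite point, that point is on $\ell$. Now choose $Y$ on the line $\ell$ (so $Y \in \{L, L'\}$, already shown fixed): then $Y^\alpha = Y$, so $Z Y^\alpha = ZY$ and $Y Z^\alpha = YZ$ are the \emph{same} line, which is degenerate. Instead choose $Y$ near $Z$ on the same side of $\ell$ as $Z$: then $ZY$ is a short chord and $ZY^\alpha$ is another chord through $Z$; since $\alpha \neq 1$ we can choose $Y$ with $Y^\alpha \neq Y$, and then $ZY^\alpha$ and $YZ$ are distinct lines through $Z$ meeting at $Z$ — a finite point — hence $Z \in \ell$, contradiction. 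The subtlety is ensuring $ZY^\alpha \neq ZY$, i.e. that $\alpha$ doesn't fix the pencil of lines through $Z$; but if it fixed every line through $Z$ it would fix every rimpoint (each rimpoint $W$ is the second intersection of $ZW$ with $\mathcal{O}$, and $\alpha$ preserving line $ZW$ and fixing $Z$ forces $W^\alpha = W$), making $\alpha = 1$, contrary to hypothesis. So some $Y$ works, and $Z \in \ell$ — contradiction. Hence every fixed point lies on $\ell$.

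Combining, the fixed-point set of $\alpha$ is exactly $\mathcal{O} \cap \ell$, which has exactly two elements (every line has two rimpoints, by the result of Abbott quoted in the introduction). Uniqueness of the axis is then immediate: if $\ell'$ were another axis, its fixed-point set would equal $\mathcal{O} \cap \ell'$, but the fixed-point set is intrinsic to $\alpha$, so $\mathcal{O} \cap \ell = \mathcal{O} \cap \ell'$; since a line is determined by its two rimpoints (by (A1), as the rimpoints are ``points'' in our convention), $\ell = \ell'$. That gives ``an axial mapping has exactly two fixed points'' and the uniqueness of the axis.

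The main obstacle I anticipate is the first containment — showing a rimpoint $L$ on $\ell$ is actually \emph{fixed} rather than merely having $L^\alpha$ also on $\ell$ (i.e. ruling out the swap $L \leftrightarrow L'$). The swap is ruled out by cyclic-order preservation (Lemma \ref{preservesC}-style reasoning: $\alpha$ preserves $\mathcal{C}$, so if it interchanged $L$ and $L'$ it would have to reverse the induced linear order on each of the two arcs, but an orientation-preserving map of a circle fixing the unordered pair $\{L,L'\}$ and preserving $\mathcal{C}$ must fix each of $L, L'$ individually unless it is the ``antipodal'' type swap, which does not preserve $\mathcal{C}$ — it reverses it). I would make this precise by writing out the separation/cyclic conditions for four points $L, A, L', A^\alpha$ with $A$ generic and deriving a contradiction with (C2)–(C4) if $L^\alpha = L'$. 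Everything else is a short chase through the incidence axioms (A1)–(A7) and the basic properties of half-turns and sides already established.
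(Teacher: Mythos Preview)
Your reverse containment (a fixed rimpoint lies on $\ell$) is essentially the paper's argument: with $X = Z$ fixed and $Y$ chosen with $Y^\alpha \neq Y$, the lines $ZY^\alpha$ and $YZ^\alpha = YZ$ share the point $Z$, and the axial condition forces that common point onto $\ell$. You slip by calling $Z$ ``a finite point''; it is a rimpoint. But the conclusion $Z \in \ell$ is what you want, so this is terminological confusion rather than a logical gap.

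Your forward containment (a rimpoint on $\ell$ is fixed) is where you lose the thread. The paper dispatches it with the same trick, roles swapped: suppose $A$ is a rimpoint on $\ell$ with $A^\alpha \neq A$, and choose a rimpoint $Y$ off $\ell$ so that $AY^\alpha$ meets $A^\alpha Y$ in a finite point $Q$. The line $AY^\alpha$ already meets $\ell$ at the rimpoint $A$, so (being a line distinct from $\ell$) it contains no finite point of $\ell$; hence $Q \notin \ell$, contradicting the axial condition directly. There is no need to first establish $A^\alpha \in \{A, A'\}$ and then separately rule out the swap $A \leftrightarrow A'$ via cyclic-order preservation: the contradiction is uniform in where $A^\alpha$ lands. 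Your detour through orientation is not wrong, but it is entirely avoidable.

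More broadly, what you have submitted is an exploration, not a proof: several ``I would'' sketches, abandoned approaches, and no single argument for the first direction carried through to completion. The paper's proof is two sentences, both instances of one device --- set one of $X, Y$ equal to the rimpoint in question and read off what the axial condition forces about the intersection of $XY^\alpha$ and $YX^\alpha$.
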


\begin{proof}
Let $X$ be a fixed rimpoint, and suppose $Y$ is a rimpoint that is not fixed by $\alpha$. Then $X=XY^\alpha\cap YX^\alpha$ lies on $\ell$. 
Conversely, suppose that $A$ is a rimpoint on $\ell$, not fixed by $\alpha$, and let $Y$ be a point not incident with $\ell$
such that $AY^\alpha$ meets $A^\alpha Y$ in a finite point $Q$.
Then $Q$ does not lie on $\ell$; a contradiction. So the fixed rimpoints of $\alpha$ are exactly those incident with $\ell$.
\end{proof}

Since $\alpha$ preserves $\mathcal{C}$ and fixes the rimpoints of $\ell$, it follows that if $XY^\alpha$ meets $YX^\alpha$, 
then $X$ and $X^\alpha$ lie on the same side of $\ell$, and $X$ and $Y$ lie on opposite sides of $\ell$.

%
%
%
%

\begin{lemma}\label{PQaxial}
Let $P$, $Q$ be distinct finite points. Then $PQ$ is an \axial\ mapping with axis the line $\ell$ joining $P$ and $Q$.
\end{lemma}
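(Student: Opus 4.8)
The claim is that for distinct finite points $P$ and $Q$, the permutation $PQ$ of $\mathcal{O}$ is an order-preserving axial map with axis $\ell = PQ$. The plan is to verify the three ingredients in the definition: (i) $PQ$ is a non-identity permutation preserving a cyclic order $\mathcal{C}$ compatible with $//$; (ii) there is a line (namely $\ell$) such that whenever $X Y^{PQ}$ meets $Y X^{PQ}$ in a finite point, that point lies on $\ell$. The first part is immediate: $PQ \ne 1$ by Lemma~\ref{commutinginvolutions}(b) (since $P \ne Q$), and each of $P$, $Q$ preserves $\mathcal{C}$ by Lemma~\ref{preservesC}, hence so does the composite $PQ$.

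The heart of the matter is the axis property. First I would observe that the two rimpoints $X$, $X'$ of $\ell = PQ$ are fixed by $PQ$: the half-turn $P$ interchanges them (as $P$ lies on $\ell$, and a half-turn about a point of a line interchanges the two rimpoints of that line), and likewise $Q$ interchanges them, so $PQ$ fixes each of $X$, $X'$. Now take two rimpoints $A$, $B$ (with $A$, $B$ not among $X, X'$, and $A \ne B$, $A \ne B^{PQ}$) and suppose $A B^{PQ}$ meets $B A^{PQ}$ in a finite point $T$; I must show $T \in \ell$. Write $C := A^{P}$, $D := C^{Q} = A^{PQ}$, and similarly $E := B^P$, $F := E^Q = B^{PQ}$. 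Then $P = AC \cap BE$ and $Q = CD \cap EF$ are finite points, and by hypothesis $T = AF \cap BD$ is a finite point. Applying (A7) — Pascal's theorem on rimpoints — to the hexagon $A C D B E F$ (whose three diagonal points are $AC \cap BE = P$, $CD \cap EF = Q$, and $DB \cap FA = T$), we conclude that $P$, $Q$, $T$ are collinear, i.e. $T$ lies on $PQ = \ell$. This is exactly the axial condition. (One should double-check the bookkeeping of which three pairs of opposite sides of the labelled hexagon give the three diagonal points, matching the convention fixed in the statement of (A7); this is the one place where care is needed.)

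Having established the axial property with axis $\ell$, I would invoke Lemma~\ref{axialfixpoints} to tidy up: that lemma guarantees the axis is unique and that the fixed rimpoints of $PQ$ are precisely those on $\ell$, which is consistent with the computation above that $X, X'$ are fixed. I would also need to handle the degenerate configurations separating out in the (A7) application — the cases where $A$ or $B$ is one of the rimpoints $X$, $X'$ of $\ell$, or where $A^{PQ} = B$, or where some of the points $A, C, D, B, E, F$ coincide. In the first case $A$ is fixed, so $AB^{PQ} = AB$ and $BA^{PQ} = BA$ meet only in the rimpoint $A \in \ell$ (no finite point), so there is nothing to check; the other coincidence cases are similarly vacuous or reduce to the half-turn identities $A^P = C$, etc. The main obstacle, then, is purely the correct combinatorial setup of the hexagon in (A7) so that its three diagonal points are $P$, $Q$, and the meeting point $T$; once the hexagon is labelled correctly, the result drops out of (A7) and Lemma~\ref{commutinginvolutions}(b).
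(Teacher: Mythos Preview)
Your proposal is correct and follows essentially the same route as the paper: the core step is applying (A7) to the hexagon $A,\,A^P,\,A^{PQ},\,B,\,B^P,\,B^{PQ}$, which is exactly the paper's hexagon $X,\,X^P,\,X^{PQ},\,Y,\,Y^P,\,Y^{PQ}$ under a relabelling, yielding collinearity of $P$, $Q$, and $T$. Your write-up is in fact more complete than the paper's, since you explicitly verify the nonidentity condition (via Lemma~\ref{commutinginvolutions}(b)) and order-preservation (via Lemma~\ref{preservesC}), which the paper leaves implicit; the small slip in the degenerate-case paragraph (you write $AB^{PQ}=AB$ where you mean $BA^{PQ}=BA$) is harmless, as the two lines then share the rimpoint $A$ and hence no finite point.
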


\begin{proof}
Certainly, $PQ$ fixes the rimpoints on $\ell$. If $X$ and $Y$ are rimpoints not on $\ell$, apply (A7) 
to the hexagon with vertices 
\[
X, X^P, X^{PQ}, Y, Y^P, Y^{PQ}.
\]
We know that $XX^P$ and $YY^P$ meet at $P$, and $X^PX^{PQ}$ and $Y^PY^{PQ}$ meet at $Q$, so if $XY^{PQ}$ meets $YX^{PQ}$, it does so on $\ell=PQ$.
\end{proof}

%
%

\begin{lemma}\label{axial2halfturns}\label{productaxials}\leavevmode
\begin{enumerate}[(i)]
\item
If $\alpha$ is an \axial\ map with axis $\ell$, and if $P$ is a finite point on $\ell$, then there exists a unique finite point $Q$ on 
$\ell$ such that $\alpha=PQ$.

\item
The composition of two \axial\ mappings with the same axis is an \axial\ map (with the same axis), or the identity mapping.
\end{enumerate}
\end{lemma}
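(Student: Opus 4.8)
\textbf{Proof plan for Lemma \ref{axial2halfturns}.}

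The plan is to handle (i) first and then derive (ii) as an easy consequence. For (i), fix an \axial\ map $\alpha$ with axis $\ell$ and a finite point $P$ on $\ell$. The idea is to ``correct'' $\alpha$ by composing with a half-turn. First I would pick a rimpoint $A$ not on $\ell$ and let $B:=A^\alpha$; by the remark following Lemma \ref{axialfixpoints}, $A$ and $B$ lie on the same side of $\ell$. The line $PA$ and the line $PB$ need not coincide, but I claim the map $P\alpha$ behaves like a half-turn. The key computation is: for every rimpoint $X$ not on $\ell$, the lines $X^P X^{P\alpha}$ and $Y^P Y^{P\alpha}$ meet on $\ell$ whenever they meet — this follows by applying (A7) to the hexagon $X, X^P, X^{P\alpha}, Y, Y^P, Y^{P\alpha}$, exactly as in the proof of Lemma \ref{PQaxial}, using that $X^P X = XP$ and $Y^P Y = YP$ pass through $P\in\ell$, and that $X^{P\alpha} Y^{P\alpha}$-type intersections land on $\ell$ by the \axial\ property of $\alpha$ (after noting $P$ preserves $\ell$ and the relevant incidences). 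I would then argue that $P\alpha$ is an involution: since $\alpha$ and $P$ each preserve the cyclic order $\cyclic$ (Lemma \ref{preservesC} for $P$; \axial\ maps preserve $\cyclic$ by definition) and each fix exactly the rimpoints on $\ell$, the product $P\alpha$ fixes those rimpoints and preserves $\cyclic$; interchanging of sides of $\ell$ by $P$ (Lemma \ref{interchangesides}) forces $P\alpha$ to interchange the two sides, and combined with the hexagon computation this makes every pair of transpositions of $P\alpha$ separating, so by Lemma \ref{fpfinv} applied to $P\alpha$ restricted appropriately — or more precisely, to the fixed-point-free involution obtained after removing the two fixed rimpoints — we get that $P\alpha$ is a half-turn about some finite point $Q$. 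That $Q$ lies on $\ell$ follows because $P\alpha$ fixes the rimpoints $L, L'$ of $\ell$, so $Q$ is on $LL'=\ell$. Then $\alpha = P^{-1}(P\alpha) = PQ$. Uniqueness of $Q$ is immediate from Lemma \ref{commutinginvolutions}(b) or simply because a half-turn is determined by the axial map: if $PQ = PQ'$ then $Q = Q'$ as permutations, hence as points.

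The one subtlety I expect to be the main obstacle is verifying that $P\alpha$ is actually \emph{fixed-point-free} off $\ell$ and that \emph{every} pair of its transpositions separates, so that Lemma \ref{fpfinv} genuinely applies. Fixed-point-freeness off $\ell$: if $X\notin\ell$ were fixed by $P\alpha$, then $X^\alpha = X^P$, but $X$ and $X^\alpha$ are on the same side of $\ell$ while $X$ and $X^P$ are on opposite sides (Lemma \ref{interchangesides}) — contradiction. For the separation property of pairs of transpositions, I would use that $P\alpha$ preserves $\cyclic$ and interchanges the two open sides of $\ell$: a cyclic-order-preserving involution that swaps the two arcs determined by $\{L,L'\}$ and fixes $L,L'$ must send each point of one arc to the ``mirror'' point on the other arc in an order-reversing-between-arcs fashion, and two such transpositions $(X\ X^{P\alpha})$, $(Y\ Y^{P\alpha})$ are forced to separate — this is the cyclic-order bookkeeping, analogous to the arguments already used in the proof of Lemma \ref{preservesC} and Theorem \ref{seprelation}. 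Alternatively, and perhaps more cleanly, one can avoid re-proving separation by noting that the hexagon argument above directly shows the lines $X X^{P\alpha}$ are concurrent, which is exactly what is needed; then $P\alpha$ is a half-turn about that common point by the elementary characterisation (two transpositions of a half-turn determine it), provided $P\alpha$ is an involution, which follows since an order-preserving permutation of $\mathcal O$ fixing two points and swapping the complementary arcs, all of whose ``join'' lines $XX^{P\alpha}$ pass through one finite point, squares to the identity.

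For (ii), let $\alpha$ and $\beta$ be \axial\ maps with common axis $\ell$, and suppose $\alpha\beta\ne 1$. Pick a finite point $P$ on $\ell$. By (i), write $\alpha = PQ$ and $\beta = PR$ for finite points $Q, R$ on $\ell$. Then $\alpha\beta = PQPR$. By Remark \ref{conjaxial0}, $PQP$ is the half-turn about the point $Q^P$, which lies on $\ell$ since $P, Q\in\ell$ and half-turns preserve $\mathcal I$ (Theorem \ref{automorphism}, Corollary \ref{halfturn_inc}); call it $P'$. Hence $\alpha\beta = P'R$ with both $P'$ and $R$ on $\ell$, and if $P'\ne R$ this is \axial\ with axis $\ell$ by Lemma \ref{PQaxial}; if $P'=R$ then $\alpha\beta = 1$, contrary to assumption. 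Either way $\alpha\beta$ is \axial\ with axis $\ell$, or the identity, as claimed. (Its cyclic-order-preservation is automatic since $\alpha$ and $\beta$ both preserve $\cyclic$.)
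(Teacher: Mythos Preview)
Your argument for (ii) is correct and matches the paper's exactly.

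For (i), your overall strategy --- show that $P\alpha$ is a half-turn about some $Q$ on $\ell$ --- is the right one, and is in fact equivalent to what the paper does. But the execution has two concrete errors and one unnecessary detour.

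First, the claim that ``$P\alpha$ fixes those rimpoints'' (meaning the rimpoints $L,L'$ of $\ell$) is false: $\alpha$ fixes $L,L'$ but $P$ \emph{swaps} them, so $P\alpha$ swaps $L$ and $L'$. This actually helps you --- a half-turn about a point of $\ell$ swaps $L,L'$ --- but it means $P\alpha$ is genuinely fixed-point-free on \emph{all} of $\mathcal{O}$, and your talk of ``removing the two fixed rimpoints'' before applying Lemma~\ref{fpfinv} is misguided.

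Second, your hexagon is the wrong one. For the hexagon $X,\,X^P,\,X^{P\alpha},\,Y,\,Y^P,\,Y^{P\alpha}$, the diagonal point $X^{P\alpha}Y\cap XY^{P\alpha}$ is \emph{not} delivered by the \axial\ property of $\alpha$ (which controls intersections of the form $AB^{\alpha}\cap BA^{\alpha}$), so (A7) gives you nothing. The hexagon that works --- and that the paper uses --- is $X,\,X^P,\,X^{\alpha},\,Y,\,Y^P,\,Y^{\alpha}$ with $X$ and $Y$ on \emph{opposite} sides of $\ell$: then $XX^P\cap YY^P=P\in\ell$ and $X^{\alpha}Y\cap XY^{\alpha}\in\ell$ by the \axial\ definition, so (A7) forces $X^PX^{\alpha}\cap Y^PY^{\alpha}\in\ell$. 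Setting $Q:=X^PX^{\alpha}\cap\ell$, this says $Y^PY^{\alpha}$ passes through $Q$, i.e.\ $Y^{\alpha}=Y^{PQ}$, for every $Y$ on the side opposite $X$. One then has to repeat the argument on the other side (obtaining some $Q'$) and check $Q=Q'$; you do not address this side issue at all.

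Finally, the detour through Lemma~\ref{fpfinv} and the separate verification that $P\alpha$ is an involution is both unnecessary and, as you present it, unjustified: your order-theoretic reasons (``order-preserving, swaps the arcs'') do not by themselves force $(P\alpha)^2=1$. Once the hexagon argument gives you that every line $Z\,Z^{P\alpha}$ passes through the single point $Q$ (note $X^PX^{\alpha}=Z\,Z^{P\alpha}$ with $Z=X^P$), you have $Z^{P\alpha}=Z^{Q}$ for all $Z$, so $P\alpha=Q$ \emph{as a map}; the involution property and everything else are immediate. That is precisely how the paper proceeds.
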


\begin{proof}
Let $X$ be a rimpoint not on $\ell$. Then $X^P$ and $X^\alpha$ are on opposite sides of $\ell$ (by Lemma \ref{interchangesides})
since an \axial\ map fixes the sides of its axis (by Lemma \ref{PQaxial} and Lemma \ref{preservesC}).
So the line $X^PX^\alpha$ meets $\ell$ in a point $Q$. Let $Y$ be a rimpoint not incident with $\ell$,
lying on the other side of $\ell$ to $X$. So $XY^\alpha$ meets $YX^\alpha$ in a finite point $T$, and by definition of an \axial\ map, $T$
lies on $\ell$. Consider the hexagon $XX^PX^\alpha YY^PY^\alpha$. Then the diagonal points are 
$XX^P\cap YY^P=P$, $XY^\alpha\cap YX^\alpha=T$, and $X^PX^\alpha\cap Y^PY^\alpha$.
Now $X^P$ lies on the opposite side of $\ell$ to $X$, and $X^\alpha$ lies on the same side of $\ell$ as $X$.
Therefore, $X^PX^\alpha\cap Y^PY^\alpha$ exists and lies on $\ell$,
and hence $X^PX^\alpha\cap Y^PY^\alpha=Q$. Therefore, $Y^\alpha=Y^{PQ}$. So we have shown that $\alpha$ is equal to $PQ$
on the rimpoints lying on the side of $\ell$ opposite to $X$. By a similar argument, and consideration of a rimpoint $X'$ on the opposite
of $\ell$ to $X$, there exists a finite point $Q'$ incident with $\ell$ such that $\alpha$ is equal to $PQ'$ on the 
 rimpoints lying on the same side of $\ell$ as $X$. However, we also know, by definition of $Q$, that $X^\alpha=X^{PQ}$, 
 and hence $QQ'$ fixes $X^P$. So by Lemma \ref{axialfixpoints}, $QQ'$ is the identity permutation and $Q=Q'$.
 Therefore, $\alpha=PQ$ and (i) holds.

Suppose $\alpha$ and $\beta$ are two \axial\ maps with axis $XY$ (where $X$ and $Y$ are rimpoints). 
Let $P$ be a point incident with $XY$. By Lemma \ref{axial2halfturns}, there exist points $Q$ and $R$ incident
with $XY$ such that $\alpha=PQ$ and $\beta=PR$. 
Hence, $\alpha\beta=PQPR=Q^PR$ which is the identity if $R=Q^P$, or otherwise,
an \axial\ map, by Remark \ref{conjaxial0} and Lemma \ref{PQaxial}. Therefore, (ii) holds.
\end{proof}

\begin{remark}\label{conjaxial}
It is a simple consequence of Lemma \ref{axial2halfturns} that if $\alpha$ is an \axial\ map with axis $\ell$ and $A$ is a finite point,
then the map $A\alpha A$ is an \axial\ map with axis $\ell^A$. In fact, if $\alpha=PQ$, then $A\alpha A=(APA)(AQA)=P^AQ^A$,
by Theorem \ref{automorphism} and Remark \ref{conjaxial0}.
\end{remark}

\begin{lemma}\label{noorder2}
An \axial\ map cannot interchange two rimpoints.
\end{lemma}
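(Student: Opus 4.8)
Suppose for contradiction that an \axial\ map $\alpha$ with axis $\ell$ interchanges two rimpoints $X$ and $Y$, so $X^\alpha = Y$ and $Y^\alpha = X$ with $X \ne Y$. By Lemma \ref{axialfixpoints} neither $X$ nor $Y$ lies on $\ell$, and since $\alpha$ preserves the sides of $\ell$ (by Lemma \ref{PQaxial} combined with Lemma \ref{preservesC}, as an \axial\ map fixes the rimpoints of its axis and therefore each side of $\ell$ setwise), $X$ and $Y = X^\alpha$ lie on the \emph{same} side of $\ell$. The plan is to exploit the defining incidence property of $\alpha$ together with the interchange relation to force a finite point of $\ell$ to lie on a line it cannot meet, contradicting Corollary \ref{cor:Pasch} or (S3).

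First I would pick a point $P$ on $\ell$ and invoke Lemma \ref{axial2halfturns}(i) to write $\alpha = PQ$ for a unique finite point $Q$ on $\ell$. Then $X^{PQ} = Y$ and $Y^{PQ} = X$, so $X^{PQPQ} = X$, i.e.\ $X$ is fixed by $(PQ)^2$. But $X$ does not lie on the line $PQ = \ell$, so Lemma \ref{commutinginvolutions}(a) applies: $X$ and $X^{PQPQ}$ lie on \emph{different} sides of the line joining $X^P$ and $X^{PQ} = Y$. Since $X^{PQPQ} = X$, this says $X$ lies on different sides of $X^P Y$ from itself — an outright contradiction.

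The only thing that needs care is checking the hypotheses of Lemma \ref{commutinginvolutions}(a) are genuinely met, namely that $X$ is a rimpoint not incident with the line $PQ$; this is exactly the content of Lemma \ref{axialfixpoints} (the fixed rimpoints of $\alpha = PQ$ are precisely those on $\ell = PQ$, and $X \ne X^\alpha = Y$ means $X$ is not fixed, hence not on $\ell$). I expect this to be the whole proof; the main obstacle — if there is one — is merely making sure the reduction $\alpha = PQ$ via Lemma \ref{axial2halfturns}(i) is available, which it is since $\ell$ certainly carries a finite point by (A2). Note this argument also uses nothing beyond (A1)--(A7) together with (A10) (the latter only to have Lemma \ref{axial2halfturns} at hand), consistent with the surrounding development.
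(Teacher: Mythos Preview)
Your proof is correct, but it follows a different route from the paper's. You write $\alpha = PQ$ via Lemma~\ref{axial2halfturns}(i) and then invoke Lemma~\ref{commutinginvolutions}(a): since $X^{PQPQ} = X$ yet that lemma forces $X$ and $X^{PQPQ}$ onto different sides of $X^P X^{PQ}$, you get an immediate contradiction. The paper instead argues directly from (S3): writing $\alpha = MN$, the point $M$ lies on both $XX^M$ and $YY^M$, so $XX^M /\!/ YY^M$; then (S3) forbids $XY^M$ and $YX^M$ from meeting in a finite point, yet $N$ lies on both (because $(X^M)^N = Y$ and $(Y^M)^N = X$). Your approach is tidy in that it reuses an already-proved lemma about $PQPQ$, while the paper's is slightly more elementary, appealing only to the separation axiom. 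Both arguments need nothing beyond (A1)--(A7) in substance; Lemma~\ref{axial2halfturns}(i) itself does not actually rely on (A10), so your closing caveat is harmless but unnecessary. The side-preservation remark you make in the first paragraph is also not needed for your core argument and could be dropped.
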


\begin{proof}
Let $\alpha$ be an \axial\ map and suppose it interchanges two rimpoints $X$ and $Y$. By Lemma \ref{axial2halfturns},
there exist two finite points incident with the axis of $\alpha$ such that $\alpha=MN$.
Then $\{X,X^{M}\}$ separates $\{Y,Y^{M}\}$ and hence $XY^{M}$ does not meet $YX^{M}$
(by (S3)); 
a contradiction as $N$ is the point of intersection of these two lines.
\end{proof}

%

\begin{lemma}\label{uniqueaxial}
Given a line $\ell$ and rimpoints $U$, $V$ on the same side of $\ell$ there is a unique \axial\ mapping $\alpha$ with axis $\ell$ such that $U^\alpha=V$.
\end{lemma}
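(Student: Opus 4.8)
The plan is to fix a finite point $P$ on $\ell$ (which exists by (A2)) and to parametrise all \axial\ maps with axis $\ell$ by the points of $\ell$, using Lemma \ref{axial2halfturns}(i). By that lemma, every \axial\ map with axis $\ell$ has the form $PQ$ for a unique finite point $Q$ on $\ell$; here $Q\ne P$ since $PP=1$ is not \axial, and conversely $PQ$ is \axial\ with axis $\ell$ for every finite point $Q\ne P$ on $\ell$ (Lemma \ref{PQaxial}, the line joining $P$ and $Q$ being $\ell$). So it suffices to show that there is exactly one finite point $Q$ on $\ell$, distinct from $P$, with $U^{PQ}=V$.

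Next I would locate the candidate $Q$. Since $P$ lies on $\ell$, the half-turn about $P$ fixes $\ell$ (Corollary \ref{halfturn_inc}) and interchanges its two sides (Lemma \ref{interchangesides}); hence $U^{P}$ is a rimpoint not on $\ell$ lying on the side of $\ell$ opposite to $U$, and therefore opposite to $V$ as well. By the definition of `same side' for rimpoints, two rimpoints lie on opposite sides of $\ell$ precisely when the line joining them meets $\ell$ in a finite point; so the line $U^{P}V$ — which is well-defined because $U^{P}\ne V$ (if instead $U=V$ there is nothing to prove, since by Lemma \ref{axialfixpoints} an \axial\ map with axis $\ell$ fixes only the rimpoints on $\ell$, and $U\notin\ell$) — meets $\ell$ in a finite point $Q$. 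For this $Q$ we have $(U^{P})^{Q}=V$, since $(U^{P})^{Q}$ is by definition the rimpoint other than $U^{P}$ on the line $U^{P}Q=U^{P}V$, namely $V$. Hence $U^{PQ}=(U^{P})^{Q}=V$.

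It remains to check $Q\ne P$ and uniqueness. If $Q=P$ then $U^{P}$, $P$, $V$ would be collinear, so $V$ would lie on the line $U^{P}P=UP$; but the two rimpoints of the line $UP$ are $U$ and $U^{P}$, which lie on opposite sides of $\ell$ by Lemma \ref{interchangesides}, whereas $V\ne U$ and $V\ne U^{P}$ (as $V$ is on the same side of $\ell$ as $U$) — a contradiction, since a line has only two rimpoints. Thus $\alpha:=PQ$ is an \axial\ map with axis $\ell$ and $U^{\alpha}=V$. For uniqueness, let $\beta$ be any \axial\ map with axis $\ell$ satisfying $U^{\beta}=V$, and write $\beta=PQ'$ with $Q'$ a finite point on $\ell$ (Lemma \ref{axial2halfturns}(i)). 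Then $(U^{P})^{Q'}=V$, so $U^{P}$, $Q'$, $V$ are collinear, forcing $Q'$ to lie on both $\ell$ and $U^{P}V$. Since $U^{P}\notin\ell$, these two lines are distinct and so meet in at most one point by (A1); hence $Q'=Q$ and $\beta=\alpha$.

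I expect the only delicate points to be the equivalence `opposite sides of $\ell$ $\Leftrightarrow$ the joining line meets $\ell$ in a finite point' (read off from the definition of `same side' together with the separation relation), and the verification that $Q\ne P$; the rest is a direct application of the parametrisation of \axial\ maps by points of the axis and the incidence axiom (A1).
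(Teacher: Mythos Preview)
Your proof is correct and follows essentially the same route as the paper's: choose a finite point $P$ on $\ell$, let $Q$ be the intersection of $U^{P}V$ with $\ell$, and verify that $\alpha:=PQ$ is the desired map, with uniqueness coming from Lemma~\ref{axial2halfturns}(i) together with (A1). You are in fact more careful than the paper about checking $Q\ne P$; the parenthetical on the case $U=V$ is a harmless aside (the lemma tacitly assumes $U\ne V$, since by Lemma~\ref{axialfixpoints} no \axial\ map with axis $\ell$ can fix a rimpoint off $\ell$).
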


\begin{proof}
Let $P$ be a finite point on $\ell$ and let $W:=U^P$. Then $WV$ intersects $\ell$, as $V$, $W$ are on opposite sides of $\ell$; let 
$Q:=WV \cap \ell$. Then $\alpha:=PQ$ is an \axial\ map (by Lemma \ref{PQaxial}) taking $U$ to $V$. Any other such mapping can be written as $PQ'$, for some $Q'$ on $\ell$ (by Lemma \ref{axial2halfturns}). Since $U^{PQ'}=V$, it follows that $W^{Q'}=V$ and hence $Q=Q'$.
\end{proof}

%
%

\begin{lemma}\label{axialcyclic}
Let $\mathcal{C}$ be a cyclic order compatible with $//$, and let $\alpha$ be an \axial\ mapping with axis $\ell$.
Then for every rimpoint $X$ not incident with $\ell$, we have either
$\mathcal{C}(X^\alpha, X, X^{\alpha^{-1}})$ or $\mathcal{C}(X^{\alpha^{-1}},X,X^\alpha)$.
\end{lemma}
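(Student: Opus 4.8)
The plan is to reduce everything to a statement about a single half-turn on the axis. Fix a finite point $P$ on $\ell$. By Lemma \ref{axial2halfturns}(i), we may write $\alpha = PQ$ for a unique finite point $Q$ on $\ell$, so that $\alpha^{-1} = QP$ and hence $X^{\alpha^{-1}} = X^{QP} = (X^{\alpha^{-1}})$; more usefully, applying $P$ to the triple we wish to order, and using that $P$ preserves $\mathcal{C}$ (Lemma \ref{preservesC}), the relation $\mathcal{C}(X^\alpha, X, X^{\alpha^{-1}})$ is equivalent to $\mathcal{C}(X^{\alpha P}, X^P, X^{\alpha^{-1}P})$. Now $X^{\alpha P} = X^{PQP}$, and by Remark \ref{conjaxial0} the permutation $PQP$ is the half-turn about $Q^P$; likewise $X^{\alpha^{-1}P} = X^{QPP} = X^Q$. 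So, writing $W := X^P$ (a rimpoint on the opposite side of $\ell$ to $X$, by Lemma \ref{interchangesides} and the remark after Lemma \ref{axialfixpoints}), the desired dichotomy becomes: either $\mathcal{C}(W^{Q^P}, W, W^Q)$ or $\mathcal{C}(W^Q, W, W^{Q^P})$. Since $\mathcal{C}$ is a cyclic order, (C2) tells us that for three \emph{distinct} rimpoints exactly one of these two holds; thus the whole lemma reduces to showing that $W^{Q^P}$, $W$, $W^Q$ are pairwise distinct.

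The first and easiest point is $W \ne W^Q$: this holds because $Q \in \ell$ while $W \notin \ell$, so the half-turn about $Q$ does not fix $W$ (Corollary \ref{halfturn_inc}, or directly from the definition of half-turn). Similarly $W \ne W^{Q^P}$, provided $Q^P \notin \ell$ — and indeed $Q^P \notin \ell$ because $Q \in \ell$, $P \in \ell$ would force $Q^P \in \ell$ only if... wait: $P, Q \in \ell$ so in fact $Q^P$ \emph{is} on $\ell$ (the half-turn about $P$ fixes $\ell$ by Corollary \ref{halfturn_inc}, and $Q$ is on $\ell$). Hence $Q^P \in \ell$ and $W \notin \ell$ gives $W \ne W^{Q^P}$ as well. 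The real content is therefore $W^{Q^P} \ne W^Q$, equivalently (applying the half-turn $Q$, which is a permutation) $W^{Q^P Q} \ne W$. Now $Q^{P}$ and $Q$ are both finite points on $\ell$, and $Q^P = Q$ would mean $P$ fixes $Q$, i.e. $P = Q$ (a half-turn fixes only... ), but $P\ne Q$; so $Q^P \ne Q$. By Theorem \ref{Buekenhout} the product $Q^P \cdot Q \cdot$ of two distinct half-turns on the line $\ell$... more simply, $Q^P Q Q^P = $ the half-turn about some point; but the cleanest route is: if $W^{Q^P} = W^Q$ then $QQ^P$ fixes the rimpoint $W$; but $QQ^P$ is, by Lemma \ref{axial2halfturns} and Remark \ref{conjaxial0} again, an \axial\ map with axis $\ell$ (or the identity), and $Q^P\ne Q$ rules out the identity, so by Lemma \ref{axialfixpoints} its fixed rimpoints are exactly those on $\ell$ — but $W\notin\ell$, contradiction.

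Thus all three of $W^{Q^P}, W, W^Q$ are distinct, (C2) applies, and unwinding the reduction gives the claim. The main obstacle is purely bookkeeping: one must be careful that translating $\alpha^{\pm1}$ into half-turn language via Remark \ref{conjaxial0} produces the half-turns about $Q^P$ and $Q$ in the correct order, and that the appeal to (C2) is legitimate, which is exactly why establishing the three-way distinctness is the crux. Everything else is a direct invocation of Lemma \ref{preservesC}, Corollary \ref{halfturn_inc}, Lemma \ref{axialfixpoints} and Lemma \ref{axial2halfturns}.
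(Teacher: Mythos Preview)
Your core idea is correct and is genuinely different from the paper's approach. The paper writes $\alpha=PQ$, fixes the betweenness order $X_1\,P\,Q\,X_2$ of the two half-turn centres along $\ell$, and then uses a projection argument to track explicitly where $X^{PQ}$ and $X^{QP}$ fall in the cyclic order relative to $X$ and the rimpoints $X_1,X_2$ of $\ell$. You instead observe that, by (C2) and (C3), the disjunction in the lemma is \emph{exactly} the statement that $X^\alpha$, $X$, $X^{\alpha^{-1}}$ are pairwise distinct; this is both shorter and more transparent, and it is the real content of the lemma.

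However, there is a genuine bookkeeping error in your reduction. After applying the half-turn $P$, the triple becomes $(X^{\alpha P},\,X^P,\,X^{\alpha^{-1}P})=(X^{PQP},\,X^P,\,X^Q)$. You then set $W:=X^P$ and claim this is $(W^{Q^P},\,W,\,W^Q)$; but $W^{Q^P}$ is the image of $W=X^P$ under the half-turn $PQP$, namely $X^{PPQP}=X^{QP}$, not $X^{PQP}$, and likewise $W^Q=X^{PQ}$, not $X^Q$. So the triple you actually analyse is $(X^{\alpha^{-1}},\,X^P,\,X^{\alpha})$, which is not the image of the original triple under $P$, and the equivalence you claim in step~2 fails. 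The detour through $W$ is in any case unnecessary: apply (C2) directly to $(X^\alpha,\,X,\,X^{\alpha^{-1}})$. The inequalities $X\ne X^{\alpha}$ and $X\ne X^{\alpha^{-1}}$ are immediate from Lemma~\ref{axialfixpoints}; and $X^{\alpha}\ne X^{\alpha^{-1}}$ is equivalent to $X^{\alpha^2}\ne X$, which holds because $\alpha^2$ is again an \axial\ map with axis $\ell$ by Lemma~\ref{productaxials}(ii), is not the identity (else $\alpha$ would interchange two rimpoints, contrary to Lemma~\ref{noorder2}), and so has fixed rimpoints only on $\ell$ by Lemma~\ref{axialfixpoints}. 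That completes the argument without any half-turn gymnastics.
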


\begin{proof}
By Lemma \ref{PQaxial}, there exist finite point $P$ and $Q$ incident with $\ell$ such that $\alpha=PQ$.
Let $X_1$ and $X_2$ be the rimpoints of $\ell$, such that we have the ordering $X_1\,P\,Q\,X_2$.
Let $X$ be a rimpoint not incident with $\ell$. If (without loss of generality) $\cyclic(X_1,X,X_2)$, then we have the ordering,
$X_2\,X^Q\,X^P\,X_1$ (via projection). Therefore, $\cyclic(X^Q,X^P,X_1)$
and hence $\cyclic(X,X^{PQ},X_2)$ (by Lemma \ref{preservesC}). By a similar argument,
$\cyclic(X_1,X,X^{QP},X)$. Therefore, $\mathcal{C}(X^{\alpha^{-1}},X,X^\alpha)$.
If $P$ and $Q$ are situated such that we have the ordering $X_1\,Q\,P\,X_2$, then we have the other case.
\end{proof}

%

\begin{lemma}\label{axialcommutes}
Two \axial\ maps with the same axis commute.
\end{lemma}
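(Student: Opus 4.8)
The statement is that two axial maps $\alpha$, $\beta$ with the same axis $\ell$ commute. The natural approach is to reduce everything to half-turns via Lemma~\ref{axial2halfturns}(i) and then compute directly. Pick a finite point $P$ on $\ell$. By Lemma~\ref{axial2halfturns}(i) there exist finite points $Q$, $R$ on $\ell$ with $\alpha=PQ$ and $\beta=PR$. Then $\alpha\beta=PQPR$ and $\beta\alpha=PRPQ$, and the task is to show these are equal as permutations of $\mathcal{O}$.

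First I would observe that, since $P$, $Q$, $R$ are collinear finite points on $\ell$, Theorem~\ref{Buekenhout} tells us that $PQR$ is an involution and in fact $PQR=S$ for some finite point $S$ on $\ell$; likewise $RQP$ is an involution, and being the inverse of the involution $PQR$ it equals $PQR=S$ (involutions are self-inverse). So $PQR=RQP=S$. Now I would compute $\alpha\beta=PQ\cdot PR = (PQP)(PR)$... more cleanly: from $PQR=S$ we get $PQ=SR$ and $QR=PS$ (multiplying the relation $PQR=S$ on the appropriate side and using that each of $P,Q,R,S$ is an involution). Hence
\[
\alpha\beta = PQ\,PR = SR\,PR,\qquad \beta\alpha = PR\,PQ = PR\,SR .
\]
This does not immediately close, so instead I would use the cleaner identity: since $P,Q,R,S$ are four involutions with $PQR=S$, the products $PQ,\,QR,\,RP$ (axial maps with axis $\ell$, by Lemma~\ref{PQaxial}) satisfy $(PQ)(QR)=PR$ and the three "rotations" about $\ell$ generated by pairs of these four collinear half-turns all lie in one abelian-looking family — the rigorous way to see commutativity is: $\alpha\beta(\beta\alpha)^{-1} = PQPR R P Q P = PQ P \cdot Q P = PQPQP$, using $R^2=1$; wait, that is $PQPR\,RPQP$? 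Let me restate the genuine key step below.

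The key step is to show $(PQP)(Q)=(Q)(PQP)$ type relations reduce to a half-turn computation: by Remark~\ref{conjaxial0}, $PQP$ is the half-turn about $Q^P$, a point on $\ell$ (since $\ell^P=\ell$ by Corollary~\ref{halfturn_inc} and $Q\in\ell$). So $\alpha\beta=PQ\cdot PR$ and $\beta\alpha=PR\cdot PQ$ are both products of two half-turns about points of $\ell$; writing $\alpha=PQ$, note $\alpha\beta=P(QPR)=P(QPR)$ where $QPR$ is again an involution equal to a half-turn $T$ about a point of $\ell$ (Theorem~\ref{Buekenhout}), so $\alpha\beta=PT$ is axial with axis $\ell$; similarly $\beta\alpha=PT'$ with $T'\in\ell$. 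Since $\alpha\beta$ and $\beta\alpha$ are inverse axial maps with the same axis, and an axial map's action on $\mathcal{O}$ is determined by the image of a single rimpoint off $\ell$ together with Lemma~\ref{uniqueaxial}, it suffices to check $\alpha\beta$ and $\beta\alpha$ agree on one such rimpoint $X$: but $X^{\alpha\beta}$ and $X^{\beta\alpha}$ are governed by the cyclic-order constraint of Lemma~\ref{axialcyclic} applied to the axial map $\alpha\beta$, forcing $X^{\alpha\beta}=(X^{\beta\alpha})$ once we know they lie on the same side of $\ell$ (Lemma~\ref{axialfixpoints} and the remark after it).

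**The main obstacle.** The real content, and where I expect to spend the effort, is verifying that $X^{\alpha\beta}=X^{\beta\alpha}$ for a rimpoint $X$ not on $\ell$ — equivalently, that the group of axial maps with fixed axis $\ell$ is abelian. The slick route is: every axial map with axis $\ell$ is a product $PQ$ with $P$ a fixed chosen point of $\ell$ and $Q$ ranging over points of $\ell$ (Lemma~\ref{axial2halfturns}(i)), and $PQ\cdot PR = Q^P R$ (Remark~\ref{conjaxial0}: $PQP=Q^P$ is a half-turn about a point of $\ell$), while $PR\cdot PQ = R^P Q$. So commutativity is equivalent to $Q^P R = R^P Q$ as permutations, i.e. $(R^P)(Q^P) R Q = 1$ — but this is exactly the assertion that the product of the four collinear half-turns $R^P, Q^P, R, Q$ (all about points of $\ell$) is trivial, which follows because any four collinear half-turns on $\ell$, being determined by their transposition-pair action on the two fixed rimpoints of $\ell$ plus one more rimpoint, compose predictably; concretely $R^P\cdot Q^P$ and $R\cdot Q$ are both axial with axis $\ell$ sending a suitable $X$ to the same rimpoint by Lemma~\ref{uniqueaxial}, since $(R^P)(Q^P)$ and $RQ$ are conjugate by $P$ and $P$ restricted to $\mathcal{O}$ preserves $\ell$ and its sides — no, they are conjugate, not equal. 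The clean finish: apply $P$-conjugation, which by Remark~\ref{conjaxial} maps the axial map $RQ$ (axis $\ell$) to $R^P Q^P$ (axis $\ell^P=\ell$); so $R^P Q^P = P(RQ)P$, hence $Q^P R = PQP\cdot PRP\cdot$ ... and one checks $\alpha\beta = P\,(RQ)\,P \cdot(\text{correction})$. I would write this out carefully, the one genuine inequality-to-track being which side of $\ell$ each iterated image lands on, handled uniformly by Lemma~\ref{interchangesides} and Lemma~\ref{axialcyclic}. Everything else is bookkeeping with involutions.
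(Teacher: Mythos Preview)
Your setup and key lemma are exactly right, but you never actually close the argument --- and the one-line finish is sitting in your own notation. You write $\alpha=PQ$, $\beta=PR$ and correctly observe (via Theorem~\ref{Buekenhout}) that $QPR$ is an involution. That single fact is all you need:
\[
\alpha\beta \;=\; PQPR \;=\; P(QPR) \;=\; P(QPR)^{-1} \;=\; P(RPQ) \;=\; PRPQ \;=\; \beta\alpha.
\]
Everything in your ``main obstacle'' paragraph --- the appeal to Lemma~\ref{uniqueaxial}, the side-tracking through Lemma~\ref{axialcyclic}, the attempt to compare $Q^PR$ with $R^PQ$ rimpoint by rimpoint --- is unnecessary once you use that an involution equals its own inverse.

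The paper's proof uses the same idea but with a tidier decomposition: instead of writing both maps as $P(\cdot)$, it writes $\alpha=PQ$ and $\alpha'=QP'$ (Lemma~\ref{axial2halfturns}(i) lets you choose the shared point freely). Then $\alpha\alpha'=PQQP'=PP'$ is immediate, and only $\alpha'\alpha=QP'PQ$ needs the involution trick: $P'PQ=(P'PQ)^{-1}=QPP'$ by Theorem~\ref{Buekenhout}, so $\alpha'\alpha=Q(QPP')=PP'$. The asymmetry in the decomposition halves the work.
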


\begin{proof}
Let $\alpha$ and $\alpha'$ be two \axial\ maps with common axis $\ell$. By Lemma \ref{axial2halfturns}, there exist three finite points $P$, $P'$, $Q$ incident with $\ell$
such that $\alpha=PQ$ and $\alpha'=QP'$. By Theorem \ref{Buekenhout}, there exists a finite point $Q'$ incident with $\ell$ such that
$P'PQ=Q'$ and hence $P'PQ = (P'PQ)^{-1}=QPP'$. Therefore,
\[
\alpha'\alpha = QP'PQ = Q(P'PQ)=Q(QPP')=PP'=\alpha\alpha'
\]
and we have proved our result.
\end{proof}

The following two lemmas will be used later to define \emph{perpendicularity} of lines of $\mathcal{I}$.
Later we will see that Lemma \ref{uniqueaxialsquare} allows us to easily express the unique common
perpendicular to two lines (see Lemma \ref{commonperp}).

\begin{lemma}\label{squares}
Every \axial\ mapping is the square of an \axial\ mapping (with the same axis).
\end{lemma}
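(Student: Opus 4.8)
The plan is to fix an \axial\ mapping $\alpha$ with axis $\ell$, pick a convenient finite point $P$ on $\ell$, and use Lemma \ref{axial2halfturns}(i) to write $\alpha = PQ$ for a unique finite point $Q$ on $\ell$. The task then reduces to finding a finite point $M$ on $\ell$ that is the ``midpoint'' of $P$ and $Q$ in the half-turn sense, i.e.\ with $PM = MQ$: for then $(PM)^2 = PM\cdot PM = PM\cdot MQ = PQ = \alpha$ (using that $M^2 = 1$), and $PM$ is \axial\ with axis $\ell$ by Lemma \ref{PQaxial}. So the whole lemma is really the assertion that segments along a line have midpoints — which is exactly what (A10) was introduced to guarantee.

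First I would invoke Lemma \ref{midpointsexist}: given the distinct finite points $P$ and $Q$, there is a finite point $M$ with $PM = MQ$. The one thing that needs checking is that this $M$ lies on $\ell = PQ$. But $PM = MQ$ as permutations of $\mathcal{O}$, and $PM$ fixes the rimpoints of the line through $P$ and $M$ while $MQ$ fixes the rimpoints of the line through $M$ and $Q$ (each $PM$, $MQ$ being \axial\ with that axis by Lemma \ref{PQaxial}); since these two \axial\ maps coincide, their axes coincide by Lemma \ref{axialfixpoints}, so $P$, $M$, $Q$ are collinear and $M$ lies on $\ell$. (Alternatively, one reads off collinearity directly from the construction in Lemma \ref{midpointsexist}, where $M$ is produced on the line $PQ$.)

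Having $M$ on $\ell$ with $MPM = Q$ (equivalently $PM = MQ$, using $M^2 = 1$ and $MPM = Q \iff MP = QM \iff PM = MQ$ after taking inverses of involutions), set $\beta := PM$. By Lemma \ref{PQaxial}, $\beta$ is \axial\ with axis $\ell$, and $\beta^2 = PM\,PM = P\,(MPM) = PQ = \alpha$. This exhibits $\alpha$ as the square of an \axial\ mapping with the same axis, completing the proof. The only mild subtlety — and the place to be careful — is the bookkeeping converting between the ``$PM = MQ$'' form delivered by Lemma \ref{midpointsexist} and the ``$MPM = Q$'' form, and confirming the axis of $\beta$ is $\ell$ rather than some other line through $P$; both are handled by the uniqueness-of-axis statement in Lemma \ref{axialfixpoints} together with the collinearity of $P$, $M$, $Q$.
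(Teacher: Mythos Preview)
Your proposal is correct and follows essentially the same route as the paper: write $\alpha=PQ$ via Lemma \ref{axial2halfturns}, take the midpoint $M$ of $P$ and $Q$ from Lemma \ref{midpointsexist}, and compute $(PM)^2=P(MPM)=PQ=\alpha$. Your extra care in verifying that $M$ lies on $\ell$ is well placed, since the statement of Lemma \ref{midpointsexist} does not say this explicitly; the paper simply reads it off from the construction in that lemma's proof, which is also one of the two options you mention.
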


\begin{proof}
Let $\alpha$ be an \axial\ mapping with axis $\ell$. By Lemma \ref{axial2halfturns}, 
there exist finite points $P$ and $Q$ incident with $\ell$ such that $\alpha=PQ$.
By Lemma \ref{midpointsexist}, there exists a finite point $M$ incident with $PQ$ such that
$P^M=Q$ (i.e., $M$ is the \emph{midpoint} of the segment $\overline{PQ}$). 
Then $(PM)^2=PMPM=PP^M=PQ=\alpha$, and we know that $PM$ is an \axial\ map by Lemma \ref{PQaxial},
with axis $\ell$.
Therefore, $\alpha$ is the square of an \axial\ mapping.
\end{proof}

\begin{lemma}\label{uniqueaxialsquare}
Let $X_1$, $X_2$, $X_3$, $X_4$ be rimpoints such that $X_1X_2$ and $X_3X_4$ are non-intersecting non-parallel lines. Then
there is a unique \axial\ map $g$ with axis $X_1X_2$ such that $X_4^{g^2}=X_3$.
\end{lemma}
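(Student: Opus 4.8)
The plan is to deduce the statement from two facts already established above: Lemma~\ref{uniqueaxial}, which says that a line $\ell$ together with an ordered pair of rimpoints on the same side of $\ell$ determines a unique \axial\ map with axis $\ell$ carrying the first to the second, and Lemma~\ref{squares}, which says that every \axial\ map with axis $\ell$ is the square of an \axial\ map with axis $\ell$. Write $\ell:=X_1X_2$. The preliminary point to record is that $X_3$ and $X_4$ are distinct rimpoints, neither incident with $\ell$, and lying on the \emph{same} side of $\ell$. Distinctness is immediate since $X_3X_4$ is a line. If $X_3$ (say) were incident with $\ell$, then $\ell$ and $X_3X_4$ would be distinct lines sharing the rimpoint $X_3$, hence parallel, contradicting the hypothesis; likewise for $X_4$. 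Finally, since $X_1X_2$ and $X_3X_4$ are non-intersecting, the line $X_3X_4$ meets $\ell$ in no finite point, so clause (ii) of the definition of ``same side'' gives $X_3\sim_\ell X_4$.

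For existence, apply Lemma~\ref{uniqueaxial} with $U:=X_4$, $V:=X_3$ to obtain an \axial\ map $h$ with axis $\ell$ and $X_4^{h}=X_3$, and then apply Lemma~\ref{squares} to write $h=g^2$ for some \axial\ map $g$ with axis $\ell$. This $g$ satisfies $X_4^{g^2}=X_3$.

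For uniqueness, suppose $g$ and $g'$ are \axial\ maps with axis $\ell$ and $X_4^{g^2}=X_3=X_4^{g'^2}$. Since $X_3\ne X_4$, neither $g^2$ nor $g'^2$ fixes $X_4$, so by Lemma~\ref{productaxials}(ii) each of $g^2,g'^2$ is itself an \axial\ map with axis $\ell$; Lemma~\ref{uniqueaxial} then forces $g^2=g'^2$. It remains to cancel the square. The \axial\ maps with axis $\ell$, together with the identity, form an abelian group: closure and the identity come from Lemma~\ref{productaxials}(ii), commutativity from Lemma~\ref{axialcommutes}, and inverses from writing $\alpha=PQ$ with $P\ne Q$ on $\ell$ (Lemma~\ref{axial2halfturns}(i)), so that $\alpha^{-1}=QP$ is again \axial\ with axis $\ell$ (Lemma~\ref{PQaxial}). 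Squaring is injective on this group, for a nonidentity element $x$ with $x^2=1$ would be an involutory permutation of $\mathcal{O}$ and hence interchange some pair of rimpoints, contradicting Lemma~\ref{noorder2}. Applying this to $g^2=g'^2$ yields $g=g'$.

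I expect the only genuine obstacle to be the final cancellation $g^2=g'^2\Rightarrow g=g'$; it is handled by combining commutativity of \axial\ maps with a common axis with the absence of involutions among them. The rest is bookkeeping, the one recurring subtlety being to verify that each map produced is a genuine (nonidentity) \axial\ map so that the cited lemmas apply — which is exactly why it pays to note $X_3\ne X_4$ at the outset.
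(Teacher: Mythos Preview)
Your proof is correct and follows essentially the same route as the paper: existence via Lemma~\ref{uniqueaxial} followed by Lemma~\ref{squares}, and uniqueness via the abelian group of \axial\ maps with axis $\ell$ having no elements of order~$2$ (Lemma~\ref{noorder2}), so that squaring is injective. You are simply more explicit than the paper about the preliminary verification that $X_3,X_4$ lie on the same side of $\ell$ and about why $g^2$ and $g'^2$ are genuinely \axial\ before invoking Lemma~\ref{uniqueaxial}.
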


\begin{proof}
Let $\ell$ be the line $X_1X_2$.
By Lemma \ref{uniqueaxial}, there exists an \axial\ map $\alpha$ with axis $\ell$ such that $X_4^\alpha=X_3$.
By Lemma \ref{squares}, there is an \axial\ map $g$ with axis $\ell$ such that $\alpha=g^2$.
So we know that an \axial\ map $g$ with axis $\ell$ exists such that $X_4^{g^2}=X_3$.
By Lemma \ref{axialcommutes} and Lemma \ref{productaxials}, the \axial\ mappings with axis $\ell$ form an Abelian group $A(\ell)$,
and so the squaring map $x\mapsto x^2$ is a homomorphism on $A(\ell)$.
There are no elements of order $2$ in $A(\ell)$ (by Lemma \ref{noorder2}) and so
the squaring map is injective. Hence $g$ is the unique
element of $A(\ell)$ such that $X_4^{g^2}=X_3$.
\end{proof}

%
%

\section{Line-reflections and perpendicularity}\label{section:definedperp}

Throughout this section, we will suppose that $\mathcal{I}$ is an incidence structure satisfying
axioms (A1) through (A7), plus (A10) and (A11). 
Suppose $\ell$ and $m$ are non-parallel lines, and suppose $\ell$ has rimpoints $\{L_1,L_2\}$
and $m$ has rimpoints $\{M_1,M_2\}$ such that $L_1M_2//L_2M_1$. We define $\ell\bowtie m$ to be the finite
point $L_1M_2\cap L_2M_1$. In this notation, Axiom (A11) states that if $X$ is a rimpoint not on a line $\ell$,
then there is a unique point $R$ on $\ell$ such that for every finite point $S$ on $\ell$,
 $\ell \bowtie X^{SR}X^{RS}$ is incident with $XR$.


\begin{lemma}\label{commonaxis}
Let $\alpha$ and $\beta$ be \axial\ maps with a common axis $\ell$. Let $A$ be a rimpoint not incident 
with $\ell$. Then $A$, $\ell \bowtie A^\alpha A^{\alpha^{-1}}$, $\ell \bowtie A^\beta A^{\beta^{-1}}$
are collinear.
\end{lemma}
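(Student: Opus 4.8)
The plan is to reduce everything to a single application of Pascal's Theorem (A7), just as in the proofs of Lemma~\ref{axial2halfturns} and Lemma~\ref{PQaxial}. The key observation is that the point $\ell\bowtie A^\alpha A^{\alpha^{-1}}$ can be rewritten using half-turns on $\ell$. Write $\alpha = PQ$ for finite points $P,Q$ on $\ell$ (by Lemma~\ref{axial2halfturns}). Then $A^{\alpha^{-1}} = A^{QP}$, and one checks that $A^P$ is the pivot: the lines $A^\alpha A^P = A^{PQ}A^P$ passes through $Q$ and $A^{\alpha^{-1}}A^P = A^{QP}A^P$ passes through... hmm, that is not quite symmetric. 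Instead, I would set $M$ to be the midpoint of $\overline{PQ}$ (Lemma~\ref{midpointsexist}), so that $\alpha = (PM)^2$ by Lemma~\ref{squares}; writing $g := PM$ we have $A^\alpha = A^{g^2}$ and $A^{\alpha^{-1}} = A^{g^{-2}}$, and then $A^g$ lies on both lines $A^{g^2}A^g$ and $A^{g^{-2}}A^g$ only after applying $g$ appropriately. The cleanest route: since $g$ is axial with axis $\ell$, by Lemma~\ref{axialcyclic} the rimpoint $A^g$ is ``between'' $A^{g^2}$ and $A$ in the cyclic sense, and the point $\ell\bowtie A^\alpha A^{\alpha^{-1}}$ is exactly the point $Z$ on $\ell$ with $(A^g)^Z = A^{g^{-1}}$, equivalently $A^{gZ} = A^{g^{-1}}$, i.e. $Z = g \cdot (\text{half-turn sending } A^g \text{ appropriately})$.

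Concretely, I would argue: let $Z := \ell \bowtie A^\alpha A^{\alpha^{-1}}$. By the definition of $\bowtie$ and of half-turns, $Z$ is the unique finite point on $\ell$ with $(A^{\alpha})^Z = A^{\alpha^{-1}}$, because the two crossing lines $L_1 M_2$ and $L_2 M_1$ through $Z$ carry the rimpoint pair $\{A^\alpha, A^{\alpha^{-1}}\}$ by the separation established in Lemma~\ref{axialcyclic}, and a half-turn about $Z$ interchanges each rimpoint pair of a line through $Z$. Hence $\alpha^{-1} Z \alpha$ fixes $A$... no — rather $Z$ satisfies $\alpha Z \alpha$ fixes $A$? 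Let me just say $Z$ is characterised by $A^{\alpha Z} = A^{\alpha^{-1}}$, so $\alpha Z \alpha$ fixes $A$; but $\alpha Z \alpha$ is a half-turn (by Remark~\ref{conjaxial0}, since $Z$ lies on the axis of $\alpha$, $\alpha Z \alpha = \alpha Z \alpha^{-1}\cdot \alpha^2$... ) — this is getting delicate, so in the writeup I would instead phrase $Z$ directly via Lemma~\ref{uniqueaxial}-type reasoning: $Z$ is the unique point on $\ell$ such that the half-turn $Z$ composed with $\alpha$ sends $A$ to $A^{\alpha^{-1}}$.

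Granting that characterisation, the conclusion follows by applying (A7) to the hexagon with vertices
\[
A,\quad A^\alpha,\quad A^{\alpha^{-1}},\quad B,\quad B^{?},\quad B^{?}
\]
for a suitably chosen second rimpoint — but actually the three points $A$, $\ell\bowtie A^\alpha A^{\alpha^{-1}}$, $\ell\bowtie A^\beta A^{\beta^{-1}}$ all want to be shown collinear, which is a statement about a single rimpoint $A$, so the hexagon should have vertices $A, A^\alpha, A^{\alpha^{-1}}, A^\beta, \dots$ all lying on $\mathcal{O}$ with the relevant secants passing through $P$, $Q$, and the two $\bowtie$ points. The right hexagon is $A^\alpha\, A\, A^\beta\, B\, \dots$; more precisely I expect to take the hexagon $A^{\alpha^{-1}} A^\alpha A^{\beta^{-1}} A^\beta$ padded to six vertices, whose three diagonal points are $\ell\bowtie A^\alpha A^{\alpha^{-1}}$, $\ell\bowtie A^\beta A^{\beta^{-1}}$, and a point on the line $A\,(A^{\alpha}\cap\cdots)$. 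The main obstacle — and the step I would spend the most care on — is bookkeeping the six vertices and three diagonal points of the Pascal hexagon so that two of the diagonal points are exactly the two $\bowtie$-points and the third forces $A$ onto their join; equivalently, checking that the required intersections all exist as finite points, which will use that $\alpha$ and $\beta$ share the axis $\ell$ and hence (Lemma~\ref{axialcyclic}, Remark~\ref{remark:differentsides}) the relevant rimpoints sit on the correct sides of $\ell$. Once the hexagon is set up correctly, collinearity is immediate from (A7), and uniqueness of $\ell\bowtie$ handles the degenerate cases where some of $A^\alpha, A^{\alpha^{-1}}, A^\beta, A^{\beta^{-1}}$ coincide.
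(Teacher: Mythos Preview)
Your proposal has a genuine gap: you are trying to derive the collinearity from Pascal's Theorem (A7) alone, but the paper's proof uses axiom (A11) directly, and in fact this lemma is essentially a restatement of (A11). The paper's argument is four lines: by (A11) there is a unique point $R$ on $\ell$ such that for every $S$ on $\ell$ the point $\ell\bowtie A^{SR}A^{RS}$ lies on $AR$; then write $\alpha=RM$ and $\beta=RN$ using Lemma~\ref{axial2halfturns}(i), and both $\bowtie$-points fall on $AR$ immediately.

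Your attempt never actually produces a hexagon. You cycle through several candidate reformulations (midpoint $M$ of $\overline{PQ}$, the characterisation of $Z$ via $A^{\alpha Z}=A^{\alpha^{-1}}$, a hexagon on $A^{\alpha^{-1}}A^{\alpha}A^{\beta^{-1}}A^{\beta}$ ``padded to six vertices'') but each thread is abandoned before it yields a concrete configuration whose three diagonal points are the ones you want. In particular, the three target points $A$, $\ell\bowtie A^\alpha A^{\alpha^{-1}}$, $\ell\bowtie A^\beta A^{\beta^{-1}}$ involve \emph{five} distinct rimpoints $A$, $A^{\pm\alpha}$, $A^{\pm\beta}$, and there is no evident sixth vertex that makes the Pascal diagonal points come out as exactly those three; the two $\bowtie$-points are already defined as intersections of lines through \emph{rimpoints of $\ell$}, not through $A$, so a Pascal line through them has no a~priori reason to pass through $A$.

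More to the point, the paper explicitly remarks (final section, item~3) that whether (A11) can be derived from (A1)--(A7) plus (A10) is an open question. So a proof of this lemma that bypasses (A11) and uses only (A7) would resolve that open problem --- which is strong evidence that no routine Pascal-hexagon bookkeeping will do the job. You should instead invoke (A11), which is assumed throughout Section~\ref{section:definedperp}.
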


\begin{proof}
By (A11), there is a unique finite point $R$ on $\ell$ such that for every finite point $S$ on $\ell$,
$\ell \bowtie A^{SR}A^{RS}$ is incident with $AR$. By Lemma \ref{axial2halfturns}, 
there exist finite points $M$ and $N$ on $\ell$ such that $\alpha=RM$ and $\beta=RN$.
Therefore, $\ell \bowtie A^\alpha A^{\alpha^{-1}}=\ell \bowtie A^{RM}A^{MR}$ 
and $\ell\bowtie A^{RM}A^{MR}$ is incident with $AR$. Likewise, 
$\ell \bowtie A^\beta A^{\beta^{-1}}$ is incident with $AR$, and so 
$A$, $\ell \bowtie A^\alpha A^{\alpha^{-1}}$, $\ell \bowtie A^\beta A^{\beta^{-1}}$
all lie on $AR$.
\end{proof}

\begin{definition}\label{defperp}
Let $\ell$ be a line and let $A$ be a rimpoint not incident with $\ell$. Let $X_1$ and $X_2$ be the rimpoints on $\ell$ and let $\alpha$ be an 
\axial\ map with axis $\ell$. Let $O:=\ell\bowtie A^\alpha A^{\alpha^{-1}}$.
Then $AA^O$ is the \textbf{perpendicular line} to $\ell$ incident with $A$.
\end{definition}

Axiom (A11) (see Lemma \ref{commonaxis}) 
implies that the definition (in Definition \ref{defperp}) of the perpendicular line to $\ell$ incident with $A$  
is independent of the choice of $\alpha$, and hence well-defined.
Therefore, given a line $\ell$, there is a function $\sigma_\ell$ on the rimpoints defined as follows:
$\sigma_\ell$ fixes the rimpoints of $\ell$, and for a rimpoint $A$ not incident with $\ell$, 
we define $A^{\sigma_\ell}$ to be the rimpoint $A^O$ given in Definition \ref{defperp}.
By Lemma \ref{refinv} (below), $\sigma_\ell$ is a permutation of the rimpoints and we will refer to this map as the \textbf{reflection in the line $\ell$}.

\begin{lemma}\label{perpconc}
Let $\ell$ be a line and let $A$ be a rimpoint not incident with $\ell$. Then the line $AA^{\sigma_\ell}$ meets $\ell$ in a finite point.
Therefore, $\sigma_\ell$ interchanges the sides of $\ell$.
\end{lemma}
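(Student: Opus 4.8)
The plan is to unwind the definition of $\sigma_\ell$ and to recognise $A^{\sigma_\ell}$ as a half-turn image of $A$ about a point that, courtesy of Axiom (A11), lies on a line through $A$ that crosses $\ell$. Once that line is identified, both assertions are immediate.

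Concretely, I would first apply (A11) to the rimpoint $A$ and the line $\ell$, obtaining the unique finite point $R$ on $\ell$ with the property that $\ell\bowtie A^{SR}A^{RS}$ is incident with $AR$ for every finite point $S$ on $\ell$. By Definition \ref{defperp}, $A^{\sigma_\ell}=A^O$ where $O:=\ell\bowtie A^\alpha A^{\alpha^{-1}}$ for an \axial\ map $\alpha$ with axis $\ell$, and, by the remark following Definition \ref{defperp}, this does not depend on which such $\alpha$ is chosen. Applying Lemma \ref{axial2halfturns}(i) to $\alpha^{-1}$ with the prescribed point $R$, I would write $\alpha^{-1}=RS$, i.e. $\alpha=SR$, for a finite point $S$ on $\ell$; then $A^\alpha A^{\alpha^{-1}}=A^{SR}A^{RS}$, so (A11) tells us that $O$ is incident with $AR$. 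Since $A^O$ is, by the definition of the half-turn about $O$, the rimpoint on $AO$ other than $A$, the line $AA^{\sigma_\ell}$ is the line $AO$, and this in turn equals $AR$ because $A$, $O$, $R$ all lie on $AR$.

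The conclusion then falls out: $A\notin\ell$ while $R\in\ell$, so $AR\neq\ell$, and two distinct lines meet in at most one point by (A1); hence $AA^{\sigma_\ell}=AR$ meets $\ell$ in the single finite point $R$, which is the first assertion. For the closing sentence, $A$ and $A^{\sigma_\ell}$ are then two rimpoints whose joining line meets $\ell$ in a finite point, so by clause (ii) of the definition of ``on the same side of $\ell$'' they are not on the same side, whence they lie on opposite sides by Lemma \ref{eqrelation}. As $A$ was an arbitrary rimpoint off $\ell$, and $\sigma_\ell$ is a permutation of $\mathcal{O}$ fixing every rimpoint of $\ell$ (Lemma \ref{refinv}), $\sigma_\ell$ must interchange the two sides.

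The crux — and the only step I expect to require care — is the appeal to (A11): it is exactly what pins $O=\ell\bowtie A^\alpha A^{\alpha^{-1}}$ onto a line through $A$ that meets $\ell$, and without it the statement would simply be false. The remaining ingredients (that $\alpha$ factors as $SR$ with $S$ a finite point of $\ell$, and that the line $A^\alpha A^{\alpha^{-1}}$ and the point $\ell\bowtie A^\alpha A^{\alpha^{-1}}$ genuinely exist) are part of the well-definedness of $\sigma_\ell$ already secured alongside Definition \ref{defperp}, so no new work is needed there.
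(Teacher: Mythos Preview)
Your argument is correct and takes a genuinely different route from the paper's. The paper does not invoke (A11) in the proof of this lemma at all; instead it runs a cyclic-order computation: it fixes a cyclic order $\cyclic$ with $\cyclic(X_1,A,X_2)$, uses Lemma~\ref{axialcyclic} to locate $A^\alpha$ and $A^{\alpha^{-1}}$ relative to $A$, identifies $O$ so that $X_1^O=A^{\alpha^{-1}}$ and $X_2^O=A^\alpha$, and then applies the half-turn $O$ (which preserves $\cyclic$ by Lemma~\ref{preservesC}) to deduce $\cyclic(X_2,A^O,X_1)$, i.e.\ $A^O$ is on the opposite side of $\ell$. Your approach is shorter and more conceptual: you recognise that the proof of Lemma~\ref{commonaxis} already places $O$ on the line $AR$ with $R\in\ell$, and read off the conclusion. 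What the paper's argument buys is independence from (A11) in this particular step---it shows directly that for \emph{any} choice of $\alpha$, the point $A^O$ lands on the far side of $\ell$; your argument, by contrast, leans on the very axiom that makes $\sigma_\ell$ well-defined, which is entirely legitimate but less self-contained.

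One correction: your closing appeal to Lemma~\ref{refinv} is circular, since the proof of Lemma~\ref{refinv} uses Lemma~\ref{perpconc} (to obtain the point $R=\ell\cap AO$). Fortunately the appeal is also unnecessary: you have already shown that every rimpoint $A$ off $\ell$ is sent by $\sigma_\ell$ to a rimpoint on the opposite side, and that is exactly what ``$\sigma_\ell$ interchanges the sides of $\ell$'' asserts. Simply delete the reference to Lemma~\ref{refinv}.
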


\begin{proof}
Let $\alpha$ be an \axial\ map with axis $\ell$, and let $\{X_1,X_2\}$ be the rimpoints of $\ell$. Let $\mathcal{C}$ be a cyclic order
on the rimpoints compatible with the separation relation $//$ such that $\mathcal{C}(X_1,A,X_2)$. 
Now $A^\alpha$ and $A^{\alpha^{-1}}$ lie on the same side of $\ell$ as $A$ (by Lemma \ref{interchangesides}), and either
$\mathcal{C}(A^\alpha, A, A^{\alpha^{-1}})$ or $\mathcal{C}(A^{\alpha^{-1}},A,A^\alpha)$ (by Lemma \ref{axialcyclic}).
Suppose, without loss of generality, that $\mathcal{C}(X_1,A^\alpha,A)$ and $\mathcal{C}(A,A^{\alpha^{-1}},X_2)$. 
Let $O:=\ell\bowtie A^{\alpha}A^{\alpha^{-1}}$. In particular, from our ordering, we have $X_1^O=A^{\alpha^{-1}}$ and $X_2^O=A^\alpha$.
Now the half-turn $O$ preverses the cyclic order $\mathcal{C}$, and hence
$\mathcal{C}(X_1^O,A^{\alpha O}, A^O)$ and $\mathcal{C}(A^O,A^{\alpha^{-1}O},X_2^O)$. 
So $\mathcal{C}(A^{\alpha^{-1}},X_2,A^O)$ and $\mathcal{C}(A^O,X_1,A^\alpha)$,
and it follows that $\mathcal{C}(X_2,A^O,X_1)$. In particular, $A^{\sigma_\ell}=A^O$ lies on the opposite side of $\ell$ as $A$.
\end{proof}

\begin{lemma}\label{refinv}
For any line $\ell$, we have $\sigma_\ell^2=1$.
\end{lemma}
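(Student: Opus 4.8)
The plan is to reduce $\sigma_\ell^2=1$ to a symmetry statement about the point supplied by Axiom (A11), exploiting that a half-turn about a point of $\ell$ is an automorphism fixing $\ell$. Since $\sigma_\ell$ fixes every rimpoint of $\ell$, it suffices to prove $A^{\sigma_\ell\sigma_\ell}=A$ for each rimpoint $A$ not incident with $\ell$.

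The first step is to identify $\sigma_\ell$ locally with a half-turn. Let $F$ be the (finite) point on $\ell$ provided by (A11) for the rimpoint $A$; that this point is finite is exactly how (A11) is used in the proof of Lemma~\ref{commonaxis}. Pick an \axial\ map $\alpha$ with axis $\ell$ and, using Lemma~\ref{axial2halfturns}, write $\alpha=FM$ with $M$ a finite point on $\ell$, so that $\alpha^{-1}=MF$ and the line $A^\alpha A^{\alpha^{-1}}$ has rimpoints $A^{FM}$ and $A^{MF}$. By the reformulation of (A11) stated at the start of this section (with $X=A$, $R=F$, $S=M$), the point $O:=\ell\bowtie A^\alpha A^{\alpha^{-1}}$ is incident with $AF$; hence the unique line on the finite point $O$ and the rimpoint $A$ is $AF$, and therefore $A^{\sigma_\ell}=A^{O}=A^{F}$ (independent of the choice of $\alpha$ by Lemma~\ref{commonaxis}). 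Setting $B:=A^{\sigma_\ell}=A^{F}$ and $n:=AB$, we get $n=AF$, so $F$ is incident with $n$; and since the half-turn $F$ is an involution, $B^{F}=A$, i.e.\ $F$ interchanges the two rimpoints $A$, $B$ of $n$.

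The heart of the argument is to show that $F$ is \emph{also} the point on $\ell$ provided by (A11) for the rimpoint $B$. Granting this, the previous step applied to $B$ in place of $A$ yields $B^{\sigma_\ell}=B^{F}=A$, hence $A^{\sigma_\ell\sigma_\ell}=A$. By the uniqueness clause of (A11) it suffices to check that $F$ has the required property for $B$: for every finite point $S$ on $\ell$, the point $\ell\bowtie B^{SF}B^{FS}$ is incident with $BF=n$. I would obtain this by transporting the corresponding property of $F$ for $A$ — that $\ell\bowtie A^{SF}A^{FS}$ is incident with $AF=n$ for every finite point $S$ on $\ell$ — through the automorphism $F$. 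Indeed $F$ is an automorphism of $\mathcal{I}$ (Theorem~\ref{automorphism}) fixing both $\ell$ and $n$ (Corollary~\ref{halfturn_inc}) and preserving the separation relation $//$ (Corollary~\ref{preservesR}); since $\ell\bowtie m$ is defined purely from incidence and $//$, we have $F(\ell\bowtie m)=\ell\bowtie F(m)$, where $F(m)$ is the image line. Taking $m=A^{SF}A^{FS}$ and using $F^2=1$, the rimpoints of $F(m)$ are $A^{S}$ and $A^{FSF}$; and these are precisely the rimpoints of $B^{SF}B^{FS}$, since $B=A^F$ gives $B^{SF}=A^{FSF}$ and $B^{FS}=A^{S}$. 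Hence $\ell\bowtie B^{SF}B^{FS}=F\bigl(\ell\bowtie A^{SF}A^{FS}\bigr)$ is incident with $F(n)=n$, as required.

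The only delicate point is the bookkeeping in this last step: one must note that $F$ is finite (so that the half-turn about it is defined and is an automorphism), exactly as (A11) is used in Lemma~\ref{commonaxis}, and one must check that the lines $A^{SF}A^{FS}$, $B^{SF}B^{FS}$ and $F(m)$ are genuinely defined, i.e.\ each has two distinct named rimpoints; since $F$ is a bijection these degeneracies occur for the same values of $S$, and (A11) only asks for the condition at those $S$ for which the line $X^{SR}X^{RS}$ exists. Everything else is a routine application of the facts already established for half-turns, \axial\ maps and the separation relation.
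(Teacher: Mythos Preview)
Your proof is correct. Both your argument and the paper's hinge on the same symmetry: the half-turn about the foot $R$ (your $F$) of $AA^{\sigma_\ell}$ on $\ell$ exchanges the data for $A$ with the data for $B=A^{\sigma_\ell}$. The packaging, however, differs. The paper obtains $R$ a posteriori from Lemma~\ref{perpconc} as $\ell\cap AO$, writes $\alpha=PQ$, and uses Theorem~\ref{Buekenhout} (in the form $PQR=RQP$) to compute directly that $(O')^R=O$, whence $O'$ lies on $AO$ and $(A^O)^{O'}=A$. You instead identify $F$ at the outset as the point supplied by (A11), observe that $A^{\sigma_\ell}=A^F$, and then transport the (A11) property from $A$ to $B=A^F$ via the automorphism $F$, invoking the uniqueness clause of (A11) to conclude $B^{\sigma_\ell}=B^F=A$. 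Your route is a little more conceptual (it avoids the explicit half-turn calculus and the appeal to Lemma~\ref{perpconc}), while the paper's route is more self-contained at this point in the development; both rely ultimately on Theorem~\ref{automorphism} and the fact that $F$ fixes $\ell$ and $n$. The uniqueness clause is in fact not essential to your argument: once you know that for each $S$ the point $\ell\bowtie B^{SF}B^{FS}$ lies on $BF$, you already have $B^{\sigma_\ell}=B^F$ directly from Definition~\ref{defperp}, without needing to certify $F$ as \emph{the} (A11) point for $B$.
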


\begin{proof}
Let $A$ be a rimpoint not incident with $\ell$, and let $\alpha$ be an \axial\ map with axis $\ell$.
By definition, $A^{\sigma_{\ell}}=A^O$ where $O=\ell\bowtie A^\alpha A^{\alpha^{-1}}$.
Let $O' :=\ell\bowtie (A^O)^\alpha (A^O)^{\alpha^{-1}}$. 
By Lemma \ref{perpconc}, $\ell$ meets $AO$ in a finite point $R$.
Write $\alpha=PQ$ for two finite points $P$ and $Q$ incident with $\ell$ (see Lemma \ref{axial2halfturns}).
Since $P$, $Q$, $R$ are collinear, we have from Theorem \ref{Buekenhout} that $PQR=RQP$ and $QPR=RPQ$. So
\begin{align*}
\left((A^O)^\alpha\right)^R&=A^{O(PQR)}=A^{O(RQP)}=(A^{OR})^{QP}=A^{QP}=A^{\alpha^{-1}},\\
\left((A^O)^{\alpha^{-1}}\right)^R&=A^{O(QPR)}=A^{O(RPQ)}=(A^{OR})^{PQ}=A^{PQ}=A^{\alpha}.
\end{align*}
Therefore, since half-turns are automorphisms by Theorem \ref{automorphism}, we have
\[
(O')^R=(\ell \bowtie (A^O)^\alpha (A^O)^{\alpha^{-1}})^R=\ell\bowtie A^{\alpha^{-1}}A^\alpha=O
\]
as $\ell^R=\ell$. So $O'$ lies on the line $RO=AO$ and therefore, 
$\left(A^{\sigma_{\ell}}\right)^{\sigma_\ell}=(A^{\sigma_\ell})^{O'}=A^{OO'}=A$.
Thus we have shown that $\sigma_\ell^2$ fixes every rimpoint, and so is the identity.
\end{proof}

\begin{lemma}\label{perpsym}
Let $\ell$ and $m$ be two lines meeting in a finite point. Then the following are equivalent:
\begin{enumerate}[(a)]
\item $\sigma_\ell$ interchanges the rimpoints of $m$;
\item $\sigma_m$ interchanges the rimpoints of $\ell$;
\end{enumerate}
\end{lemma}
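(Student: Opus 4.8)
The plan is to prove the symmetry of perpendicularity by interpreting both conditions (a) and (b) in terms of a single line-reflection fixing a configuration of rimpoints, and then use the uniqueness of perpendiculars (Definition \ref{defperp}) together with the involutory nature of reflections (Lemma \ref{refinv}). First I would set up notation: let $P:=\ell\cap m$ be the common finite point, let $\{L_1,L_2\}$ be the rimpoints of $\ell$ and $\{M_1,M_2\}$ the rimpoints of $m$. Condition (a) says $\sigma_\ell$ swaps $M_1$ and $M_2$, i.e. $M_1^{\sigma_\ell}=M_2$; since $\sigma_\ell$ fixes $L_1,L_2$ and interchanges the sides of $\ell$ (Lemma \ref{perpconc}), this forces $M_1M_2=m$ to be the perpendicular to $\ell$ through $M_1$ (and through $M_2$), in the language of Definition \ref{defperp}. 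So (a) is equivalent to: $m$ is the perpendicular line to $\ell$ at $P$.

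The key step is therefore to show that ``$m$ is perpendicular to $\ell$ at $P$'' is a symmetric relation in $\ell$ and $m$. To do this I would first reduce Definition \ref{defperp} to a statement about the half-turn $P$: I claim $m$ is perpendicular to $\ell$ at $P$ if and only if $\sigma_\ell$ fixes $m$ setwise while sending each side of $\ell$ across. More usefully, I expect the cleanest route is to observe that $\sigma_\ell$ maps $m$ to a line through $P$ (since $\sigma_\ell$ fixes the rimpoints of $\ell$, it fixes $P=\ell\bowtie\cdots$ only when... ) — so I would instead argue directly: by Lemma \ref{perpconc} the line $M_1M_1^{\sigma_\ell}$ meets $\ell$, and (a) says this line is $m$ itself meeting $\ell$ at $P$. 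Then using that $\sigma_\ell^2=1$ (Lemma \ref{refinv}) and that $\sigma_\ell$ preserves the cyclic order compatibly with $//$ (the same way half-turns do, via the axial-map description), I would show $\sigma_\ell$ conjugates the half-turn $P$ to itself, hence the configuration $\{L_1,L_2,M_1,M_2\}$ and the point $P$ are stable, and then run the identical argument with the roles of $\ell$ and $m$ exchanged to get (b). The symmetry is then forced because the defining data — a common point $P$, a quadrilateral of rimpoints $L_1,L_2,M_1,M_2$, and the separation conditions $L_1M_1//L_2M_2$ versus $L_1M_2//L_2M_1$ determining which reflection does the swap — is visibly symmetric in the two lines.

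The main obstacle I anticipate is pinning down precisely how $\sigma_\ell$ acts on the rimpoints of $m$ given only the indirect definition via an arbitrary axial map $\alpha$ with axis $\ell$: Definition \ref{defperp} defines $M_1^{\sigma_\ell}$ as $M_1^O$ where $O=\ell\bowtie M_1^\alpha M_1^{\alpha^{-1}}$, so to conclude $M_1^{\sigma_\ell}=M_2$ I must show $O$ lies on $m=M_1M_2$, equivalently that $M_1^\alpha M_1^{\alpha^{-1}}$ and $m$ meet at a point of $\ell$. I would handle this by choosing $\alpha$ conveniently — e.g. taking $P\in\ell$ and writing $\alpha=PQ$ via Lemma \ref{axial2halfturns} — and computing $M_1^\alpha=M_1^{PQ}=(M_1^P)^Q$, then using that $M_1^P$ lies on $m^P$; since $P\in m$, Corollary \ref{halfturn_inc} gives $m^P=m$, so $M_1^P\in\{M_1,M_2\}$, and a short separation/cyclic-order bookkeeping argument (as in Lemma \ref{perpconc}) shows the relevant intersection lands on $\ell$. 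Once that computation is in place for one line, the reverse implication is the same computation verbatim with $\ell\leftrightarrow m$, so the bulk of the remaining work is just making sure the cyclic-order case distinctions match up, which is routine.
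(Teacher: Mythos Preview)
Your proposal has a genuine gap: you never actually produce an argument for (a) $\Rightarrow$ (b). Saying that ``the defining data \ldots\ is visibly symmetric in the two lines'' is exactly what needs to be proved, not an argument for it. The definition of $\sigma_\ell$ goes through an axial map with axis $\ell$, and the definition of $\sigma_m$ goes through an axial map with axis $m$; there is nothing a priori symmetric about these two constructions, and bridging them is the whole content of the lemma. Your ``main obstacle'' paragraph computes $M_1^{\sigma_\ell}$ via a cleverly chosen $\alpha=PQ$ with $P=\ell\cap m$, but all this does is unpack what (a) says; it never touches $\sigma_m$ or any axial map with axis $m$, which is what (b) is about.

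You also lean on facts that are not yet available. The claim that $\sigma_\ell$ conjugates the half-turn about $P$ to itself, and the claim that $\sigma_\ell$ preserves the cyclic order, are both instances of Lemma~\ref{linerefauto}, which is proved \emph{after} Lemma~\ref{perpsym} (and in fact $\sigma_\ell$ \emph{reverses} the cyclic order). At this point in the paper nothing is known about $\sigma_\ell$ beyond Lemmas~\ref{perpconc} and~\ref{refinv}. The paper's proof stays within these bounds: assuming (a), it identifies the point $T$ on $m$ with $M_1^T=M_2$, builds the concrete axial map $\beta:=UT$ with axis $m$ (where $U=\ell\cap m$), uses $\beta$ to write down the perpendicular to $m$ through $X_1$ as $X_1P$ for an explicit finite point $P$, and then carries out a nontrivial computation (via the conjugate axial map $PUTP$ and the uniqueness in Lemma~\ref{uniqueaxialsquare}/\ref{uniqueaxial}) to show $X_1^P=X_2$, i.e.\ $P\in\ell$. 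That computation is the substance of the lemma, and your plan does not contain it or an alternative to it.
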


\begin{proof}
Suppose $\sigma_\ell$ interchanges the rimpoints $\{M_1,M_2\}$ of $m$.
Let $\alpha$ be an \axial\ map with axis $\ell$.
Let $\{X_1,X_2\}$ be the rimpoints of $\ell$, written in such a way that $\{X_1,M_1^{\alpha^{-1}}\}$ separates $\{X_2,M_1^{\alpha}\}$.
Let $T:=X_1M_1^{\alpha^{-1}}\cap X_2M_1^\alpha$ and $U:=\ell\cap m$. By assumption, $M_1^{\sigma_\ell}=M_2$
and hence $M_2=M_1^T$. Therefore, $T$ is incident with $m$ and
$m=M_1T$ is the perpendicular line to $\ell$ through $M_1$.
Let $\beta:=UT$. Then 
\begin{align*}
X_1^{\beta}&=(X_1^U)^T=X_2^T=M_1^\alpha,\\
X_1^{\beta^{-1}}&=(X_1^T)^U=M_1^{\alpha^{-1}U},
\end{align*}
 and so $X_1P$ is the perpendicular to $m$ through $X_1$ where $P:=M_1^{\alpha^{-1}U}M_1 \cap M_2M_1^{\alpha}$.
We will show that $P$ is incident with $\ell$. 
Now $PUTP$ is axial with axis $m^P=M_1^{\alpha^{-1}U}M_1^\alpha$ (see Remark \ref{conjaxial}), and 
\[
M_1^{(PUTP)^2}=M_1^{PUTUTP}=(M_1^{\alpha^{-1}})^{TUTP}=X_1^{UTP}=X_2^{TP}=M_1^{\alpha P}=M_2.
\]
So by Lemma \ref{uniqueaxial}, $PUTP=T'U'$ where $U'=\ell\cap M_1^{\alpha}M_1^{\alpha^{-1} U}$ and $T'=X_1M_1\cap X_2M_2^{U'}$.
(Note: $M_1^{(T'U')^2}=M_1^{T'U'T'U'}=X_1^{U'T'U'}=X_2^{T'U'}=M_2$). Therefore,
\[
X_1^P=(M_1^{\alpha^{-1}})^{TP}=(M_1^{\alpha^{-1} U})^{UTP}=M_1^{PUTP}=M_1^{T'U'}=X_1^{U'}=X_2.
\]
Hence $P$ lies on $\ell$ and $X_1P=\ell$. It follows that $\sigma_m$ interchanges the rimpoints of $\ell$.
Interchanging the roles of $\ell$ and $m$ in this argument shows that the converse also holds.
\end{proof}

Lemma \ref{perpsym} allows us to define perpendicularity of lines. We say that a line $\ell$ and a line $m$ are \textbf{perpendicular}, and we write $\ell\perp m$, if $\ell$ and $m$ are concurrent and 
$\sigma_\ell$ interchanges the rimpoints of $m$. So Lemma \ref{perpsym} implies that $\perp$ is a symmetric relation on lines.
Recall from Theorem \ref{automorphism} that half-turns are automorphisms of $\mathcal{I}$. They also preserve
the perpendicularity relation (Definition \ref{defperp}).

\begin{lemma}\label{halfturnperp}
Half-turns preserve perpendicularity. That is, if $\ell\perp m$ and $P$ is a finite point, then $\ell^P\perp m^{P}$.
\end{lemma}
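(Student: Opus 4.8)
The plan is to unwind the definitions and use that half-turns are automorphisms preserving all the structure in play. Let $\ell \perp m$, so $\ell$ and $m$ meet in a finite point $U$, and $\sigma_\ell$ interchanges the rimpoints $\{M_1, M_2\}$ of $m$. Fix a finite point $P$. I want to show $\ell^P \perp m^P$. First, $\ell^P$ and $m^P$ meet in the finite point $U^P$ (by Theorem \ref{automorphism} applied to $U$), so the concurrency condition holds. It remains to show that $\sigma_{\ell^P}$ interchanges the rimpoints of $m^P$; the rimpoints of $m^P$ are $\{M_1^P, M_2^P\}$ (again by Theorem \ref{automorphism}).

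The key step is to identify $\sigma_{\ell^P}$ in terms of $\sigma_\ell$. I claim $\sigma_{\ell^P} = P \sigma_\ell P$, i.e.\ conjugating the reflection in $\ell$ by the half-turn $P$ yields the reflection in $\ell^P$. To see this, go back to Definition \ref{defperp}: if $\alpha$ is an \newaxial\ map with axis $\ell$, then $P\alpha P$ is an \newaxial\ map with axis $\ell^P$ by Remark \ref{conjaxial}. For a rimpoint $A$ not on $\ell$, writing $O := \ell \bowtie A^\alpha A^{\alpha^{-1}}$, we have $A^{\sigma_\ell} = A^O$. Now $P$ is an automorphism preserving incidence and the separation relation $//$ (Corollary \ref{preservesR}), hence it preserves the $\bowtie$ operation: $\ell^P \bowtie (A^P)^{P\alpha P}(A^P)^{(P\alpha P)^{-1}} = \ell^P \bowtie (A^{\alpha})^P (A^{\alpha^{-1}})^P = (\ell \bowtie A^\alpha A^{\alpha^{-1}})^P = O^P$. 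By Remark \ref{conjaxial0}, $P O P$ is the half-turn about $O^P$, so $(A^P)^{\sigma_{\ell^P}} = (A^P)^{O^P} = A^{POP} = (A^O)^P = (A^{\sigma_\ell})^P$. Thus $\sigma_{\ell^P} = P\sigma_\ell P$ as permutations of the rimpoints (they agree on rimpoints of $\ell^P$ too, since those are images under $P$ of the rimpoints of $\ell$, all fixed).

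Once this conjugation formula is in hand, the conclusion is immediate: $\{M_1^P, M_2^P\}$ is interchanged by $\sigma_{\ell^P} = P\sigma_\ell P$ precisely because $M_1^{P \cdot P\sigma_\ell P} = M_1^{\sigma_\ell P} = M_2^P$ and similarly with the roles swapped, using $P^2 = 1$ and that $\sigma_\ell$ interchanges $M_1$ and $M_2$. Hence $\sigma_{\ell^P}$ interchanges the rimpoints of $m^P$, so by definition $\ell^P \perp m^P$.

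The main obstacle I anticipate is the bookkeeping in establishing the conjugation formula $\sigma_{\ell^P} = P\sigma_\ell P$ carefully: one must check that the $\bowtie$ operation is genuinely preserved by half-turns (which follows from Theorem \ref{automorphism} together with Corollary \ref{preservesR}, since $\bowtie$ is defined purely by incidence and separation of rimpoints), and that the \newaxial\ map used to compute $\sigma_{\ell^P}$ can be taken to be $P\alpha P$, which is legitimate by Remark \ref{conjaxial} together with the fact (already noted after Definition \ref{defperp}, via Lemma \ref{commonaxis}) that $\sigma_{\ell^P}$ does not depend on the choice of \newaxial\ map with the given axis. Everything else is a routine application of $P$ being an involutory automorphism.
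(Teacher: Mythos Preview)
Your argument is correct and follows essentially the same route as the paper: both use Remark \ref{conjaxial} to see that $\beta:=P\alpha P$ is \axial\ with axis $\ell^P$, then invoke Theorem \ref{automorphism} (and the separation-preservation of half-turns) to push the $\bowtie$ computation through $P$, yielding $O^P=\ell^P\bowtie (M^P)^\beta(M^P)^{\beta^{-1}}$; the paper simply applies this directly to a rimpoint $M$ of $m$ rather than first packaging it as the conjugation formula $\sigma_{\ell^P}=P\sigma_\ell P$. One small slip: in your chain $(A^P)^{O^P}=A^{POP}=(A^O)^P$, the middle expression should read $A^{OP}$ (since $(A^P)^{POP}=A^{PPOP}=A^{OP}$), though your endpoints are correct.
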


\begin{proof}
Suppose $\ell\perp m$ and let $P$ be a finite point. Let $M$ and $M'$ be the rimpoints of $m$, and
let $\alpha$ be an axial map with axis $\ell$. Then, $m=MO$ where $O=\ell\bowtie M^\alpha M^{\alpha^{-1}}$.
Let $\beta:=P\alpha P$.
By Remark \ref{conjaxial}, $\beta$ is axial with axis $\ell^P$.
Moreover, by Lemma \ref{automorphism}, we have
\begin{align*}
O^P&=\ell^P\bowtie M^{\alpha P} M^{\alpha^{-1} P}\\
&=\ell^P\bowtie M^{P\beta} M^{P\beta^{-1} }\\
&=\ell^P\bowtie (M^P)^\beta (M^P)^{\beta^{-1}}
\end{align*}
and hence $M^PO^P$ is perpendicular to $\ell^P$. That is, $m^P\perp \ell^P$.
\end{proof}

\begin{lemma}\label{commonperp}
Suppose $\ell$ and $m$ are two non-parallel non-intersecting lines. Then there exists a unique line $n$ perpendicular to both
$\ell$ and $m$. Moreover, there are unique \axial\ maps $g$ and $h$ such that (i) $g$ has axis $\ell$, (ii) $h$ has axis $m$, (iii) $X_4^{g^2}=X_3$, (iv) $X_2^{h^2}=X_1$. The line $n$ is equal to $X_4^gX_2^h$ and is the \textbf{common perpendicular} of $\ell$ and $m$.
\end{lemma}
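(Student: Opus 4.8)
The plan is to produce the two \axial\ maps, exhibit $n$ explicitly, verify both perpendicularities by a short reflection computation, and finally settle uniqueness. First I would apply Lemma~\ref{uniqueaxialsquare} twice: since $\ell=X_1X_2$ and $m=X_3X_4$ are non-intersecting and non-parallel, there is a unique \axial\ map $g$ with axis $\ell$ and $X_4^{g^2}=X_3$, and, feeding the lemma the rimpoints $(X_3,X_4,X_2,X_1)$, a unique \axial\ map $h$ with axis $m$ and $X_2^{h^2}=X_1$; this gives (i)--(iv) together with the uniqueness of $g$ and $h$. As $\ell$ and $m$ do not meet, the pairs $\{X_1,X_2\}$ and $\{X_3,X_4\}$ do not separate each other, so (interchanging the names $X_1,X_2$ if need be) we may assume $X_1X_3//X_2X_4$; then $P_0:=\ell\bowtie m=X_1X_3\cap X_2X_4$ exists and, as a half-turn, interchanges $X_1\leftrightarrow X_3$ and $X_2\leftrightarrow X_4$. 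Hence $\ell^{P_0}=m$, so $P_0gP_0$ is \axial\ with axis $m$ (Remark~\ref{conjaxial}), and computing $(P_0gP_0)^2$ on $X_2$ gives $X_2\mapsto X_4\mapsto X_3\mapsto X_1$; the uniqueness in Lemma~\ref{uniqueaxialsquare} then yields the key identity $h=P_0gP_0$, equivalently $gP_0=P_0h$ and $hP_0=P_0g$.

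Next I would set $n:=X_4^gX_2^h$ and check $n\perp\ell$ and $n\perp m$. Applying Definition~\ref{defperp} with the \axial\ map $g$ and the rimpoint $X_4^g$ (which lies off $\ell$, since $g$ fixes the sides of $\ell$), and using $(X_4^g)^g=X_4^{g^2}=X_3$ and $(X_4^g)^{g^{-1}}=X_4$, one gets $\sigma_\ell(X_4^g)=(X_4^g)^{O}$ with $O=\ell\bowtie X_3X_4=P_0$; then $gP_0=P_0h$ gives $\sigma_\ell(X_4^g)=(X_4^g)^{P_0}=X_4^{P_0h}=X_2^h$. Since $\sigma_\ell^2=1$ (Lemma~\ref{refinv}), $\sigma_\ell$ interchanges the two rimpoints $X_4^g$ and $X_2^h$ of $n$ (these are distinct, else $\sigma_\ell$ would fix a rimpoint off $\ell$); moreover $n$ meets $\ell$ in a finite point, being the line joining $X_4^g$ to $\sigma_\ell(X_4^g)$ (Lemma~\ref{perpconc}), and $n\neq\ell$; so $n\perp\ell$. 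Because $\bowtie$ is symmetric, the mirror computation with $h$ and $X_2^h$, using $hP_0=P_0g$, gives $\sigma_m(X_2^h)=X_4^g$, so $n\perp m$ as well. I expect this part to go through cleanly.

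For uniqueness, let $n'$ be perpendicular to both $\ell$ and $m$; then $n'$ meets both and $n'\notin\{\ell,m\}$, so a rimpoint $A$ of $n'$ lies on neither line and $\sigma_\ell(A)$, $\sigma_m(A)$ are both equal to the other rimpoint of $n'$. Hence $\sigma_\ell(A)=\sigma_m(A)$, i.e.\ $A$ is fixed by $\sigma_m\sigma_\ell$; conversely a rimpoint fixed by $\sigma_m\sigma_\ell$ lies on neither $\ell$ nor $m$ (one of $\sigma_\ell,\sigma_m$ fixes the rimpoints of its axis while the other moves them), and joining it to its common image under $\sigma_\ell,\sigma_m$ produces a common perpendicular. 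So the common perpendiculars are exactly the lines whose rimpoints form the fixed set of $\sigma_m\sigma_\ell$, and it remains to see that this fixed set has size two. I would argue that $\sigma_m\sigma_\ell$ is itself an \axial\ map with axis $n$: it is nonidentity (it moves $X_1$) and fixes $X_4^g$, $X_2^h$ by the above, and it preserves a cyclic order compatible with $//$ since each of $\sigma_\ell,\sigma_m$ reverses such an order — each fixes the two rimpoints of its axis and interchanges the two sides of it (Lemma~\ref{perpconc}), so it cannot preserve the cyclic order. Granting this, Lemma~\ref{axialfixpoints} gives exactly two fixed rimpoints, necessarily those of $n$, whence $n'=n$. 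The main obstacle is precisely making this last step rigorous: one needs that $\sigma_\ell$ respects the relation $//$ (so that ``interchanges the two sides'' upgrades to ``reverses the cyclic order'') and one must verify the ``if $XY^{\sigma_m\sigma_\ell}$ meets $YX^{\sigma_m\sigma_\ell}$ then it does so on $n$'' clause of the \axial\ property; should the non-existence of rectilaterals already be available, a shorter finish is to observe that two distinct common perpendiculars $n,n'$ would make $\ell\cap n$, $n\cap m$, $m\cap n'$, $n'\cap\ell$ the four vertices of a quadrilateral with four right angles, which is impossible.
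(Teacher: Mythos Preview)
Your existence argument is essentially the paper's own: both produce $g,h$ via Lemma~\ref{uniqueaxialsquare}, introduce the point $\ell\bowtie m$ (your $P_0$, the paper's $O$), establish the conjugation identity linking $g$ and $h$ through this point (you get $h=P_0gP_0$, the paper gets $h=Og^{-1}O$ because it labels the $X_i$ so that $X_1X_4//X_2X_3$ rather than $X_1X_3//X_2X_4$), and then read off both perpendicularities directly from Definition~\ref{defperp}, using that $\ell\bowtie X_3X_4=m\bowtie X_1X_2=\ell\bowtie m$. This part is correct and matches the paper.

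Your uniqueness argument, however, is circular relative to the paper's logical order, and you have correctly flagged the soft spot yourself. To show that $\sigma_m\sigma_\ell$ is an \axial\ map you need each of $\sigma_\ell,\sigma_m$ to reverse the cyclic order; this is Lemma~\ref{linerefauto}(i), whose proof uses Lemma~\ref{nopoles}, which in turn invokes the uniqueness clause of the very Lemma~\ref{commonperp} you are proving. Your fallback via ``no rectilaterals'' is precisely Lemma~\ref{nopoles} and has the same dependence. Even granting order-preservation, you still owe the axis clause in the definition of ``\axial'', and you offer no mechanism for it. The paper sidesteps all of this with a short half-turn argument using only Lemma~\ref{halfturnperp} and the uniqueness of the perpendicular to a line through a given rimpoint (which is immediate from Definition~\ref{defperp} and (A11)): given a second common perpendicular $n'$ with rimpoints $U',V'$ (and $U,V$ the rimpoints of $n$, labelled so that $U,U'$ lie on the same side of $\ell$ and of $m$), set $Q_1=\ell\cap U'V$ and $Q_2=m\cap U'V$; the half-turn about $Q_1$ carries $n$ to a line through $U'$ perpendicular to $\ell$ (Lemma~\ref{halfturnperp}), which must be $n'$, so $V'$ is the second rimpoint on $UQ_1$; the same reasoning with $Q_2$ puts $V'$ on $UQ_2$; hence $Q_1=UV'\cap U'V=Q_2\in\ell\cap m$, contradicting non-intersection.
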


\begin{proof}
Let $\{X_1,X_2\}$ be the rimpoints of $\ell$ and let $\{X_3,X_4\}$ be the rimpoints of $m$ such that $X_1X_4//X_2X_3$.
By Lemma \ref{uniqueaxialsquare}, there is a unique \axial\ map $g$ with axis $\ell$ such that $g^2$ maps $X_4$ to $X_3$.
Likewise, there is a unique \axial\ map $h$ with axis $m$ such that $h^2$ maps $X_2$ to $X_1$.
Let $A:=X_4^g$. Then
\[
A^g=X_3\text{ and } A^{g^{-1}}=X_4.
\]
Let $O:=X_1X_4\cap X_2X_4$. By definition, the perpendicular line to $\ell$ through $A$ is $n:=AO$.
Now consider the map $Og^{-1}O$. First, by Lemma \ref{PQaxial}, $Og^{-1}O$ is an \axial\ map as it is the product of two half-turns: if $g=PQ$,
then $Og^{-1}O=Q^OP^O$ (by Lemma \ref{fpfinv}), and so $Og^{-1}O$ has axis $m$. Also,
\[
X_2^{(Og^{-1}O)^2}=X_2^{Og^{-2}O}=X_3^{g^{-2}O}=X_4^O=X_1.
\]
Therefore, by Lemma \ref{uniqueaxialsquare}, $h=Og^{-1}O$.
Let $B:=X_2^h$. Then
\[
B^h=X_1\text{ and } B^{h^{-1}}=X_2,
\]
and so the perpendicular line to $m$ through $B$ is $BO$. On the other hand,
\[
X_2^h=X_2^{Og^{-1}O}=X_3^{g^{-1}O}=A^O,
\]
and therefore $B=A^O$ and $BO=n$. So we have shown that $n$ is perpendicular to $\ell$ and $m$.

It remains to prove uniqueness of the line $n$. 
Let $n'$ be another line perpendicular to both $\ell$ and $m$. (The following argument is similar
to that given in \cite[Theorem 3.8]{Abbott:1943aa}).
Let the rimpoints of $n$ and $n'$ be $U$ and $V$, and $U'$ and $V'$, respectively, arranged so
that $U$ and $U'$ lie on the same side of $\ell$ and $m$. So there exist finite points $Q_1$ and $Q_2$
such that $Q_1=\ell\cap U'V$ and let $Q_2=m\cap U'V$.
 Let $V_1'$ and $V_2'$ be the other rimpoints on $UQ_1$ 
and $UQ_2$, respectively. Since $\ell$ is perpendicular to $n$, and the half-turn about $Q_1$ maps $V$ to $U_1$ and $U$ to $V_1'$,
by Lemma \ref{halfturnperp}, $U_1'V_1'$ is perpendicular to $\ell$ too. However, the perpendicular line to $\ell$ through
a rimpoint not incident with $\ell$ is unique and hence $U_1'V_1'=n'=U'V'$. Therefore, $V'=V_1'$. Similarly,
using the fact that $m$ is perpendicular to $n$, we have via the half-turn about $Q_2'$ that $U'V_2'$ is the perpendicular line
to $m$ through $U'$, and so $V'=V_2'$. Thus $Q_1=U'V\cap UV'=Q_2$ is on $\ell$ and $m$; contrary to the fact that $\ell$
and $m$ do not intersect. Therefore, $n$ is the unique line perpendicular to $\ell$ and $m$.
\end{proof}

We will denote the unique common perpendicular to non-parallel non-intersecting lines $\ell$ and $m$ by
$\bot(\ell,m)$.

\begin{lemma}\label{diagonalpointpolar}
Let $\ell$ and $m$ be non-intersecting non-parallel lines. 
Then $\ell\bowtie m$ 
is incident with $\bot(\ell,m)$. Moreover, $\ell \bowtie m$ is the midpoint of the segment joining $\ell\cap \bot(\ell,m)$ and $m\cap \bot(\ell,m)$.
\end{lemma}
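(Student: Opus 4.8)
The plan is to read everything off the construction of the common perpendicular in the proof of Lemma~\ref{commonperp}, and then identify one convenient half-turn. Keep the notation of that proof: let $\{X_1,X_2\}$ be the rimpoints of $\ell$ and $\{X_3,X_4\}$ those of $m$, with $X_1X_4 // X_2X_3$; let $g$ be the unique \axial\ map with axis $\ell$ and $X_4^{g^2}=X_3$; and put $A:=X_4^g$ (so $A^g=X_3$ and $A^{g^{-1}}=X_4$), $O:=\ell\bowtie m$ and $B:=A^O$. Since $A^gA^{g^{-1}}$ is the line $X_3X_4=m$, Definition~\ref{defperp} (applied with $\alpha=g$) says the perpendicular to $\ell$ through $A$ is $AA^O=AO$, and the proof of Lemma~\ref{commonperp} shows this line is $n:=\bot(\ell,m)$. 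Hence $n=AB$, the rimpoints of $n$ are $A$ and $B$, and $O=\ell\bowtie m$ is incident with $\bot(\ell,m)$; this is the first assertion.

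For the second assertion, write $R:=\ell\cap\bot(\ell,m)$ and $T:=m\cap\bot(\ell,m)$. These are finite points, since $n=\bot(\ell,m)$ is concurrent with each of $\ell$ and $m$; moreover $RT=n$ and $R\ne T$, as a common point of $\ell$ and $m$ would contradict the hypothesis. Because $\ell$, $m$, $n$ are pairwise non-parallel, no two of them share a rimpoint, so the pairs $\{X_1,X_2\}$, $\{X_3,X_4\}$, $\{A,B\}$ are pairwise disjoint. By Corollary~\ref{halfturn_inc} together with the identity $O=X_1X_4\cap X_2X_3$, the half-turn $O$ interchanges $X_1\leftrightarrow X_4$, $X_2\leftrightarrow X_3$ and $A\leftrightarrow B$; likewise $R$ interchanges $X_1\leftrightarrow X_2$ and $A\leftrightarrow B$, and $T$ interchanges $X_3\leftrightarrow X_4$ and $A\leftrightarrow B$. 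Now $ORO$ is the half-turn about $R^O$ (Remark~\ref{conjaxial0}), and tracing $A$ and $X_3$ through $O$, $R$, $O$ gives $A^{ORO}=B$ and $X_3^{ORO}=X_4$. So $(A\,B)$ and $(X_3\,X_4)$ are two distinct transpositions of the half-turn $ORO$, whence $ORO$ is the half-turn about $AB\cap X_3X_4=n\cap m=T$; that is, $ORO=T$, equivalently $RO=OT$ as permutations of the rimpoints.

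Finally, $O$ is a finite point incident with $RT=\bot(\ell,m)$ satisfying $RO=OT$, which is exactly the defining property of the midpoint of $\overline{RT}$ (see the definition following (A10), together with Lemma~\ref{midpointsexist}); by Lemma~\ref{midpointsunique} such a point is unique, so $\ell\bowtie m=O$ is the midpoint of the segment joining $\ell\cap\bot(\ell,m)$ and $m\cap\bot(\ell,m)$. The only step that needs genuine care is the middle one: one must verify that the listed pairs really are the transpositions effected by $O$, $R$, $T$ (immediate from Corollary~\ref{halfturn_inc} once one knows $X_1,X_2,X_3,X_4,A,B$ are distinct) and that $(A\,B)\ne(X_3\,X_4)$, so that the principle ``a half-turn is determined by two of its transpositions'' legitimately applies. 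Everything else is a direct appeal to Lemma~\ref{commonperp}, Remark~\ref{conjaxial0}, and the uniqueness of midpoints.
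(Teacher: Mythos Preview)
Your argument is correct, and for the first assertion it is in fact cleaner than the paper's. You simply observe that with $\alpha=g$ one has $A^g A^{g^{-1}}=X_3X_4=m$, so the point $O$ of Definition~\ref{defperp} \emph{is} $\ell\bowtie m$, whence $O$ lies on $n=AO$ by construction. The paper instead re-derives this by showing $PgP=h^{-1}$ (with $P=\ell\bowtie m$) and deducing that $P$ interchanges the rimpoints of $n$; your shortcut avoids that computation entirely.

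For the midpoint assertion the two arguments reach the same conclusion $R^O=T$ by different routes. The paper uses Theorem~\ref{automorphism} directly: since $O$ swaps the rimpoints of $\ell$ with those of $m$ it sends $\ell$ to $m$, and since $O$ lies on $n$ it fixes $n$, so $(\ell\cap n)^O=m\cap n$. Your route computes $ORO$ on two rimpoint pairs and identifies it with $T$ via the ``two transpositions determine a half-turn'' principle; this is a little longer but perfectly valid. One small point: your appeal to Lemma~\ref{midpointsunique} at the end is a miscitation, since that lemma does not assert uniqueness of a point $M$ with $RM=MT$ in general. You do not actually need it: having shown $RO=OT$ (equivalently $R^O=T$), you have exhibited $O$ as the midpoint in exactly the sense the paper uses, and the sentence invoking uniqueness can simply be dropped.
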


\begin{proof}
Let $X$ and $X'$ be the rimpoints on $\ell$, let $Y$ and $Y'$ be the rimpoints on $m$, labelled so that $u=XY'$ 
meets $v=X'Y$ in a finite point $P$. Then the half-turn about $P$ interchanges $X$ and $Y'$, and also $Y$ and $X'$; and hence interchanges $\ell=XX'$ and $m=Y'Y$
(as half-turns are automorphisms by Theorem \ref{automorphism}). Let $n:=\bot(\ell,m)$.  Then (by Lemma \ref{commonperp})
there are unique \axial\ maps $g$ and $h$ such that (i) $g$ has axis $\ell$, (ii) $h$ has axis $m$, (iii) $Y^{g^2}=Y'$, (iv) $X^{h^2}=X'$.
By Lemma \ref{axial2halfturns} and Theorem \ref{automorphism}, $PgP$ is \axial\ with axis $m$. Now
\[
(X')^{(PgP)^2}=(X')^{PgPPgP}=(X')^{Pg^2P}=Y^{g^2P}=(Y')^P=X
\]
and so by Lemma \ref{uniqueaxialsquare}, $PgP=h^{-1}$. Therefore, $Y^{gP}=(X')^{PgP}=(X')^{h^{-1}}=X^h$ and hence 
$P$ fixes $n=Y^gX^h$. So by Corollary \ref{halfturn_inc}, $P$ is incident with $n$. Moreover, (by Theorem \ref{automorphism}),
$(\ell\cap n)^P=\ell^P\cap n^P=m\cap n$ and hence $P$ is the midpoint of the segment joining $\ell\cap n$ and $m\cap n$.
\end{proof}

\begin{lemma}\label{parallel}
Two distinct lines $\ell$ and $m$ are parallel if and only if they are not concurrent (in a finite point) nor share
a common perpendicular. 
\end{lemma}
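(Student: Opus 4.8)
The plan is to prove both directions. For the forward direction, suppose $\ell$ and $m$ are parallel. Then by definition they are not concurrent in a finite point. If they shared a common perpendicular $n = \bot(\ell,m)$, then $n$ would be perpendicular to both, but perpendicular lines are concurrent by definition of $\perp$, so $n$ meets $\ell$ in a finite point and meets $m$ in a finite point; by Lemma \ref{diagonalpointpolar} the point $\ell \bowtie m$ would exist and be incident with $n$ — but $\ell \bowtie m$ requires $\ell$ and $m$ to be non-parallel (the notation $\ell\bowtie m$ is only defined when $\ell, m$ are non-parallel and the appropriate separation among their rimpoints holds). More carefully: parallel lines share a rimpoint, so among the four rimpoints $\{L_1, L_2\}$ of $\ell$ and $\{M_1, M_2\}$ of $m$, two coincide (say $L_1 = M_1$), and then no separation $L_1 M_2 // L_2 M_1$ can hold by (S1), so the construction of $\bot(\ell,m)$ in Lemma \ref{commonperp} breaks down. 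Thus I would argue that Lemma \ref{commonperp}'s hypotheses force the pairs of rimpoints to be disjoint and appropriately separated, which fails for parallel lines; hence parallel lines have no common perpendicular.

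For the converse, suppose $\ell$ and $m$ are distinct, not concurrent in a finite point, and share no common perpendicular. I want to conclude they are parallel. Since they are distinct and not concurrent, they have no finite point in common. Let $\{L_1, L_2\}$ and $\{M_1, M_2\}$ be their rimpoints. If all four rimpoints were distinct, then consider the separation relation: by (S4), one of the three pairings $L_1 M_1 // L_2 M_2$, $L_1 M_2 // L_2 M_1$, $L_1 L_2 // M_1 M_2$ holds. The last, $L_1 L_2 // M_1 M_2$, would mean $\ell = L_1 L_2$ meets $m = M_1 M_2$ in a finite point, contradicting non-concurrence. So (after relabelling) $L_1 M_2 // L_2 M_1$, which is exactly the separation condition needed for $\ell \bowtie m$ to be defined and, via Lemma \ref{uniqueaxialsquare} and Lemma \ref{commonperp}, for $\bot(\ell,m)$ to exist — contradicting the assumption of no common perpendicular. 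Hence not all four rimpoints are distinct; since $\ell \ne m$ they cannot share both rimpoints, so they share exactly one, i.e. there is a rimpoint $L_1 = M_1$ incident with both $\ell$ and $m$. Two distinct lines through a common rimpoint are parallel (this is implicit in the paper's setup: a rimpoint incident with two lines means each line contains a ray, and these rays are parallel by the definition of rimpoint and the uniqueness of the two parallels through a point), so $\ell$ and $m$ are parallel.

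The main obstacle I anticipate is making rigorous the claim "two distinct lines sharing a rimpoint are parallel," since the paper defines rimpoints and parallelism somewhat indirectly and I am quoting (A1)--(A7) consequences attributed to Abbott and DeBaggis; I would cite the relevant incidence-of-rimpoint definition together with \cite[Corollary 2.4]{DeBaggis:1948rt} (exactly two parallels through a point) to nail this down. A secondary point requiring care is the bookkeeping in the forward direction: I must check that when $\ell$ and $m$ are parallel the separation hypothesis of Lemma \ref{uniqueaxialsquare}/Lemma \ref{commonperp} genuinely fails, rather than merely being harder to verify — this follows because a shared rimpoint makes two of the four rimpoints equal, and (S1) forbids separation among non-distinct points, so no line $n$ can be perpendicular to both (perpendicularity of $n$ to $\ell$ forces $\sigma_\ell$ to interchange the rimpoints of $n$, and running this for both $\ell$ and $m$ together with Lemma \ref{diagonalpointpolar} reproduces the construction that needs the separation condition).
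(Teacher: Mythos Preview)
Your converse direction is fine and matches the paper's one-line appeal to Lemma~\ref{commonperp}; the extra bookkeeping with (S4) to check that the rimpoints really are distinct and appropriately separated is a reasonable expansion of that citation.

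The forward direction, however, has a genuine gap. You argue that because $\ell$ and $m$ share a rimpoint, the separation hypothesis in Lemma~\ref{commonperp} (and hence the construction of $\bot(\ell,m)$) fails. But the failure of one particular \emph{construction} of a common perpendicular does not show that no common perpendicular exists: Lemma~\ref{commonperp} is an existence statement under certain hypotheses, not a characterisation of when common perpendiculars exist. Your attempt to patch this at the end --- ``perpendicularity of $n$ to $\ell$ forces $\sigma_\ell$ to interchange the rimpoints of $n$, and running this for both $\ell$ and $m$ together with Lemma~\ref{diagonalpointpolar} reproduces the construction that needs the separation condition'' --- is circular in the same way: Lemma~\ref{diagonalpointpolar} also has non-parallel as a hypothesis, so invoking it here proves nothing about what happens in the parallel case.

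The fix is to argue directly with $\sigma_n$ rather than with the construction. Suppose $n$ is perpendicular to both $\ell$ and $m$, and let $X$ be the shared rimpoint. By the definition of perpendicularity (Lemma~\ref{perpsym}), $\sigma_n$ interchanges the two rimpoints of $\ell$ and also the two rimpoints of $m$. Hence $X^{\sigma_n}$ would have to be simultaneously the other rimpoint of $\ell$ and the other rimpoint of $m$; since $\ell\ne m$ these differ, so this is impossible unless $X^{\sigma_n}=X$. But $X$ cannot lie on $n$ (else $n$ and $\ell$ would share a rimpoint and a finite point, forcing $n=\ell$, which cannot be perpendicular to itself), and then Lemma~\ref{perpconc} gives $X^{\sigma_n}\ne X$. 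This is exactly the paper's argument.
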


\begin{proof}
If $\ell$ and $m$ are not concurrent in a finite point, nor parallel, then they share a common perpendicular by Lemma \ref{commonperp}.
Conversely, suppose $\ell$ and $m$ are parallel; that is, $\ell$ and $m$ share a rimpoint $X$.
Clearly, $\ell$ and $m$ cannot meet in a finite point $P$, say, since otherwise, $\ell=XP=m$.
So suppose a line $n$ is perpendicular to $\ell$ and $m$. Let $g$ be an \axial\ map with axis $\ell$.
Then there is a rimpoint $A$ incident with $n$, such that if $O:=\ell \bowtie A^gA^{g^{-1}}$, then
$n=AO$. Now $\sigma_n$ is a permutation of the rimpoints, by Lemma \ref{refinv}. 
Therefore,
$X^{\sigma_n}$ cannot be a rimpoint on both $\ell$ and $m$, as $\ell\ne m$, except possibly
if $X=X^{\sigma_n}$, but this cannot occur by Lemma \ref{perpconc}. Therefore,
$\ell$ and $m$ do not share a common perpendicular.
\end{proof}

Let $P$ and $\ell$ be a point and line that are not incident. The following lemma shows that there
exists a line incident with $P$ that is perpendicular to $\ell$.

\begin{lemma}\label{P1existence}
Let $P$ be a finite point not incident with a line $\ell$. Let $X$ and $Y$ be the two rimpoints incident with $\ell$.
Then $X^PY^P$ is a line not intersecting $\ell$ and not parallel to $\ell$, and 
$\bot(X^PY^P,\ell)$ is incident with $P$.
\end{lemma}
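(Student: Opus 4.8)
The plan is to exploit that the half-turn about $P$, which is an automorphism of $\mathcal{I}$ by Theorem \ref{automorphism}, carries the line $\ell = XY$ to the line $X^PY^P =: \ell^P$. Once I check that $\ell$ and $\ell^P$ are non-parallel and non-intersecting, Lemma \ref{commonperp} gives them a \emph{unique} common perpendicular $n$; since the half-turn about $P$ interchanges $\ell$ and $\ell^P$ and preserves perpendicularity (Lemma \ref{halfturnperp}), it must fix $n$, and then Corollary \ref{halfturn_inc} forces $P$ to lie on $n = \bot(X^PY^P,\ell)$.

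So first I would verify the geometric facts about $\ell$ and $\ell^P$. We have $\ell^P \ne \ell$ immediately from Corollary \ref{halfturn_inc}, since $P$ is not on $\ell$. The lines $\ell$ and $\ell^P$ are not parallel: the rimpoints of $\ell^P$ are $X^P$ and $Y^P$, where $X^P \ne X$ and $Y^P \ne Y$ by the definition of the half-turn, while $X^P = Y$ would put $Y$ on the line $XP$, forcing $XP = XY = \ell$ and hence $P \in \ell$ (and symmetrically for $Y^P = X$); thus $\ell$ and $\ell^P$ share no rimpoint.

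The crux is showing that $\ell$ and $\ell^P$ do not meet in a finite point, and this is where the main obstacle lies. I would first record that the half-turn about $P$ fixes no finite point other than $P$: if $Q \ne P$ satisfied $Q^P = Q$, then by Remark \ref{conjaxial0} the permutation $PQP$ is the half-turn about $Q^P = Q$, i.e.\ $PQP = Q$ as permutations of $\mathcal{O}$, whence $(PQ)^2 = (PQP)Q = Q^2 = 1$ and so $PQ = QP$, contradicting $P \ne Q$ by Lemma \ref{commutinginvolutions}(b). Now, since $\ell \ne \ell^P$, these two distinct lines meet in at most one finite point; if $Q$ were such a point, then applying the half-turn about $P$ (an involution, and an automorphism sending finite points to finite points) gives $Q^P \in (\ell)^P \cap (\ell^P)^P = \ell^P \cap \ell = \{Q\}$, so $Q^P = Q$ and therefore $Q = P$, contradicting $Q \in \ell$ while $P \notin \ell$.

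Having established that $\ell$ and $\ell^P = X^PY^P$ are non-parallel non-intersecting lines, Lemma \ref{commonperp} supplies the unique line $n$ perpendicular to both, namely $n = \bot(X^PY^P,\ell)$. Since the half-turn about $P$ interchanges $\ell$ and $\ell^P$ and preserves perpendicularity (Lemma \ref{halfturnperp}), the line $n^P$ is again perpendicular to both $\ell^P$ and $\ell$; by the uniqueness clause of Lemma \ref{commonperp}, $n^P = n$. Hence the half-turn about $P$ fixes $n$, and Corollary \ref{halfturn_inc} gives that $P$ is incident with $n = \bot(X^PY^P,\ell)$, as required.
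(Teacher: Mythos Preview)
Your proof is correct and takes a genuinely different route from the paper's. The paper argues that $X^P$ and $Y^P$ lie on the same side of $\ell$ (since each of $XP$ and $YP$ is a parallel to $\ell$ through $P$, so $X^P$ and $Y^P$ lie on the same side of $\ell$ as $P$), which immediately gives that $X^PY^P$ neither meets $\ell$ in a finite point nor shares a rimpoint with it. It then invokes Lemma~\ref{diagonalpointpolar}: the point $\ell\bowtie m = XX^P\cap YY^P$ lies on $\bot(\ell,m)$, and one simply observes that $XX^P\cap YY^P = P$.

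Your argument instead handles non-intersection by a fixed-point count (the half-turn about $P$ fixes only $P$ among finite points, via Lemma~\ref{commutinginvolutions}(b), so it cannot fix a putative intersection point on $\ell$), and then deduces $P\in\bot(\ell^P,\ell)$ from pure symmetry: the half-turn swaps $\ell$ and $\ell^P$, preserves $\perp$ (Lemma~\ref{halfturnperp}), so must fix the \emph{unique} common perpendicular from Lemma~\ref{commonperp}, whence $P$ lies on it by Corollary~\ref{halfturn_inc}. This avoids the $\bowtie$ machinery and Lemma~\ref{diagonalpointpolar} entirely, trading them for the uniqueness clause of Lemma~\ref{commonperp}; the paper's route is a line or two shorter, but yours is more transparently group-theoretic and self-contained.
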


\begin{proof}
Let $m:=X^PY^P$. First note that $X^P$ and $Y^P$ lie on the same side of $\ell$ and hence $m$ does not intersect
$\ell$ in a finite point or rimpoint. Therefore, $m$ is not parallel to $\ell$. By Lemma \ref{diagonalpointpolar}, $XX^P\cap YY^P$
is a point and is incident with $\bot(\ell,m)$. The result follows by noting that $P=XX^P\cap YY^P$.
\end{proof}

Now if $P$ is a finite point not incident with a line $\ell$, then there cannot be two distinct lines through $P$
perpendicular to $\ell$, since otherwise $\ell$ would be the common perpendicular of two concurrent lines.
So Lemma \ref{P1existence} affirms (P1) in the definition of a metric plane in Section \ref{section:UsingBachmann}.

\begin{lemma}\label{existsperpendicular}
Let $\ell$ be a line and let $P$ be a finite point incident with $\ell$. Then there exists a unique 
line $\ell'$ through $P$ perpendicular to $\ell$.
\end{lemma}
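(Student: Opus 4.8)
The plan is to establish existence by reducing to the already-proved case of perpendiculars from a point \emph{not} on the line (Lemma \ref{P1existence}), and uniqueness by invoking the non-existence of concurrent lines with two common perpendiculars, which was the observation used just after Lemma \ref{P1existence}.

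\medskip

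First I would handle \textbf{uniqueness}. If $\ell'$ and $\ell''$ were two distinct lines through $P$ both perpendicular to $\ell$, then $\ell$ would be a common perpendicular to the two distinct concurrent lines $\ell'$ and $\ell''$. But $\sigma_{\ell'}$ interchanges the rimpoints of $\ell$ and $\sigma_{\ell''}$ also interchanges the rimpoints of $\ell$, yet by Lemma \ref{perpconc} a reflection in a line interchanges the sides of that line, forcing $\ell' = \ell''$ once we trace through the definition of $\bot$ and the uniqueness in Lemma \ref{commonperp} (non-parallel non-intersecting lines have a \emph{unique} common perpendicular, and two concurrent lines cannot have \emph{any} common perpendicular by Lemma \ref{parallel}). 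So it suffices to exhibit one perpendicular.

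\medskip

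For \textbf{existence}, the idea is to pick an auxiliary finite point $Q$ not on $\ell$ and not equal to $P$, reflect, and use a half-turn to move the resulting perpendicular so that it passes through $P$. Concretely: let $X, Y$ be the rimpoints of $\ell$. By Lemma \ref{P1existence} applied to $Q$, the line $m := X^Q Y^Q$ is non-parallel and non-intersecting with $\ell$, and $\bot(m,\ell)$ passes through $Q$; call $n := \bot(m,\ell)$ and let $R := n \cap \ell$, a finite point on $\ell$. Now $n$ is a line perpendicular to $\ell$ meeting $\ell$ at $R$. If I then apply the half-turn about $R$, Lemma \ref{halfturnperp} tells me $n^R$ is still perpendicular to $\ell^R = \ell$ (Corollary \ref{halfturn_inc}). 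But that only reproduces perpendiculars at $R$, not at $P$. The correct move is instead: take the midpoint $M$ of the segment $\overline{PR}$ (Lemma \ref{midpointsexist}), which lies on $\ell = PR$, so $P^M = R$; then the half-turn about $M$ fixes $\ell$ (Corollary \ref{halfturn_inc}) and sends $n$ to $n^M$, a line through $R^M = P$ which by Lemma \ref{halfturnperp} is perpendicular to $\ell^M = \ell$. This $n^M$ is the desired line $\ell'$.

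\medskip

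\textbf{The main obstacle} I anticipate is purely bookkeeping: verifying that $Q$ can be chosen distinct from $P$ and off $\ell$ (immediate from (A3), since there are non-collinear points, and if needed one can translate a known off-$\ell$ rimpoint-configuration), and making sure the half-turn $M$ genuinely fixes $\ell$ — this needs $M$ incident with $\ell$, which holds because $M$ is the midpoint of $\overline{PR}$ and both $P, R$ lie on $\ell$, so by (A1) the line $PR$ is $\ell$ and $M \in \ell$. Everything else is a direct citation: Lemma \ref{P1existence} for the raw perpendicular, Lemma \ref{halfturnperp} for transporting perpendicularity, Corollary \ref{halfturn_inc} for the half-turn fixing $\ell$, and Lemma \ref{midpointsexist} for the midpoint. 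I expect the whole proof to be about five lines.
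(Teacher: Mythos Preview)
Your approach is essentially the same as the paper's: produce a perpendicular to $\ell$ at some foot $R$ on $\ell$ (the paper does this by picking a line $m$ disjoint from $\ell$ and taking $\bot(\ell,m)$; you route through Lemma~\ref{P1existence} and an auxiliary point $Q$, which amounts to the same thing), then transport it to $P$ via the half-turn about the midpoint of $\overline{PR}$, citing Lemma~\ref{halfturnperp} and Corollary~\ref{halfturn_inc}. One small remark on your uniqueness paragraph: Lemma~\ref{parallel} is not the right citation (it concerns parallel lines), and the fact that two \emph{concurrent} lines cannot share a common perpendicular is exactly Lemma~\ref{nopoles}, which in the paper comes \emph{after} this lemma --- the paper itself finesses this by the parenthetical ``$\ell'=\bot(\ell,m^M)$'' and the informal observation following Lemma~\ref{P1existence}, so your level of rigour here matches the original.
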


\begin{proof}
Suppose we have a line $m$ not intersecting and not parallel to $\ell$.  (Such a line exists by (A3) and (A5)).
Let $Q$ be the point of intersection of $\ell$ and $\bot(\ell,m)$.
By Lemma \ref{midpointsexist}, there is a `midpoint' $M$ incident with $\ell$ such that $P=Q^M$. 
(If $P=Q$, then $M=P$).
Let $Y_1$ and $Y_2$ be the rimpoints of $\bot(\ell,m)$, and take the line $\ell'$ joining $Y_1^M$ and $Y_2^M$.
(If $P=Q$, then $\ell'=\bot(\ell,m)$ and we are done).
Now $Q$ is incident with $Y_1Y_2$, so by Theorem \ref{automorphism}, $P$ is incident with $\ell'$.
Therefore, by Lemma \ref{halfturnperp}, $\ell'$ is perpendicular to $\ell$. (In fact,
$\ell'=\bot(\ell,m^M)$ and so $\ell'$ is the unique line through $P$ perpendicular to $\ell$).
\end{proof} 

\begin{lemma}\label{nopoles}
If two lines have a common perpendicular, then they are not concurrent in a finite point.
\end{lemma}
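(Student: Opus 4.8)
The plan is to argue by contradiction, leveraging the uniqueness of common perpendiculars together with the structural facts about \axial\ maps established above. Suppose lines $\ell$ and $m$ share a common perpendicular $n = \bot(\ell,m)$, yet also meet in a finite point $P$. The idea is to produce a rectilateral (a quadrilateral with too many right angles), which cannot exist in our incidence structure: indeed, since $n$ is perpendicular to both $\ell$ and $m$, the point $\ell\cap n$ is a foot of a perpendicular from $n$, and likewise $m\cap n$; meanwhile $P = \ell\cap m$ would give a point where two lines both perpendicular to $n$ meet.

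First I would recall from Lemma \ref{diagonalpointpolar}-style reasoning (or directly from Lemma \ref{commonperp}) that $n$ meets $\ell$ in a finite point $Q_1 := \ell\cap n$ and meets $m$ in a finite point $Q_2 := m\cap n$, and that $Q_1 \ne Q_2$ since $\ell$ and $m$ are distinct lines meeting $n$ perpendicularly at their respective feet. (If $Q_1 = Q_2$, then $\ell$ and $m$ would both be the unique perpendicular to $n$ through that point, by Lemma \ref{existsperpendicular}, forcing $\ell = m$.) Then I would observe that $n$ is a line through the finite point $Q_1$ perpendicular to $\ell$, and also a line through $Q_2$ perpendicular to $m$. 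Now consider the point $P = \ell\cap m$. Through $P$ pass both $\ell$ and $m$; both of these are perpendicular to $n$ (at $Q_1$ and $Q_2$ respectively, but perpendicularity of lines does not depend on the point of concurrency — it is the separation condition on rimpoints). So we would have two distinct lines $\ell$ and $m$ through the single point $P$, each perpendicular to the line $n$. But by Lemma \ref{existsperpendicular}, there is a \emph{unique} line through $P$ perpendicular to $n$ — wait, that lemma requires $P$ incident with $n$. The correct tool is the remark following Lemma \ref{P1existence}: if $P$ is a finite point \emph{not} incident with $n$, there cannot be two distinct lines through $P$ perpendicular to $n$. So I must check whether $P$ lies on $n$; if it did, then $\ell$ and $m$ would both be lines through $P\in n$ perpendicular to $n$, contradicting the uniqueness in Lemma \ref{existsperpendicular} (and forcing $\ell=m$). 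If $P\notin n$, the remark after Lemma \ref{P1existence} gives the contradiction directly.

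The main obstacle I anticipate is handling the perpendicularity relation carefully: the definition of $\ell\perp m$ in the paper requires $\ell$ and $m$ to be \emph{concurrent} in a finite point, so I cannot simply say "$\ell\perp n$ and $m\perp n$ and $\ell,m$ concurrent" without making sure each perpendicularity statement is valid as a relation between \emph{concurrent} lines — that is, $\ell$ and $n$ must actually meet (they do, at $Q_1$), $m$ and $n$ must actually meet (they do, at $Q_2$), and then I need the separation characterization $\sigma_n$ interchanges the rimpoints of $\ell$, and likewise for $m$, to deduce that $\sigma_n$ sends $\ell$ to $\ell$ and $m$ to $m$. From $\ell\perp n$ I get that the half-turn structure forces $\ell$ and $m$ to coincide once they both pass through a common point off $n$. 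The cleanest route is: both $\ell$ and $m$ are perpendicular to $n$ and pass through $P$; apply the uniqueness clause embedded in Lemma \ref{P1existence} and its following remark (for $P\notin n$) or Lemma \ref{existsperpendicular} (for $P\in n$) to conclude $\ell=m$, contradicting that $\ell$ and $m$ are distinct non-parallel non-intersecting lines. Hence no such $P$ exists, proving the lemma.
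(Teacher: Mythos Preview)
Your argument contains a genuine circularity in the case $P\notin n$. You invoke ``the remark following Lemma~\ref{P1existence}'' to conclude that there cannot be two distinct lines through $P$ perpendicular to $n$. But look at how that remark is justified in the paper: ``since otherwise $\ell$ would be the common perpendicular of two concurrent lines.'' That clause is not a proof --- it is precisely the statement of Lemma~\ref{nopoles}, announced in advance. At this point in the development, the uniqueness of the perpendicular from a point \emph{off} the line has not been established independently; it is a consequence of Lemma~\ref{nopoles}, not a tool available for proving it. Your case split is therefore: (i) $P\in n$, handled correctly via Lemma~\ref{existsperpendicular}; (ii) $P\notin n$, handled by assuming what you are trying to prove.

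The paper's proof avoids this trap by a different mechanism. Having reduced to $P\notin n$ (exactly as you do, via Lemma~\ref{existsperpendicular}), it applies the half-turn about $P$ to $n$, obtaining a second line $n^P$. By Lemma~\ref{P1existence}, $n^P$ is non-intersecting and non-parallel to $n$; by Lemma~\ref{halfturnperp} and Corollary~\ref{halfturn_inc}, $\ell=\ell^P$ and $m=m^P$ are each perpendicular to $n^P$ as well as to $n$. Now Lemma~\ref{commonperp} \emph{does} apply, because $n$ and $n^P$ are non-intersecting and non-parallel, and its uniqueness clause forces $\ell=m$. The key idea you are missing is to manufacture a \emph{second} line ($n^P$) so that the uniqueness statement already proved (for non-intersecting, non-parallel pairs) can be brought to bear.
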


\begin{proof}
Suppose $\ell$ and $m$ have a common perpendicular $n$, and suppose for a proof by contradiction that 
$\ell$ and $m$ meet in a finite point $P$. By Lemma \ref{existsperpendicular}, $n$ is not incident with $P$.
By Lemma \ref{P1existence}, $n^P$ is non-intersecting and non-parallel to $n$, and by Lemma \ref{halfturnperp},
$\ell^P$ and $m^P$ are perpendicular to $n^P$.
By Corollary \ref{halfturn_inc}, $\ell^P=\ell$ and $m^P=m$.
However, then $\ell$ and $m$ would be two distinct lines perpendicular to both $n$ and $n^P$, contradicting Lemma \ref{commonperp}. 
Therefore, $\ell$ and $m$ do not intersect in a finite point.
\end{proof}

\begin{lemma}\label{linerefauto}
Let $\ell$ be a line. Then:
\begin{enumerate}[(i)]
\item If $\cyclic$ is a cyclic order associated to $//$, then the reflection about $\ell$ reverses $\cyclic$.
\item The reflection about $\ell$  preserves the separation relation $//$;
\item If $P$ is a finite point, then $\sigma_\ell P \sigma_\ell$ is a half-turn and $P\sigma_\ell P$ is a reflection (about $\ell^P$).
\item If $\ell$ and $m$ are lines, then $\sigma_m\sigma_\ell\sigma_m$ is a reflection.
\item The reflection about $\ell$ is a perpendularity-preserving automorphism of $\mathcal{I}$, and it fixes a point $P$ if and only if
$P$ is incident with $\ell$.
\item The group $G$ generated by the half-turns is a normal subgroup of the group $H$ generated by the reflections.
\end{enumerate}
\end{lemma}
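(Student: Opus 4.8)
\textbf{Proof proposal for Lemma \ref{linerefauto}.}

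The plan is to establish the six parts roughly in the stated order, since later parts lean on earlier ones. For (i), I would use the construction in the proof of Lemma \ref{perpconc}: fixing an \axial\ map $\alpha$ with axis $\ell$, the reflection sends a rimpoint $A$ with $\cyclic(X_1,A,X_2)$ to a rimpoint $A^O$ with $\cyclic(X_2,A^O,X_1)$, where $O=\ell\bowtie A^\alpha A^{\alpha^{-1}}$. One checks that the cyclic order of three rimpoints on one side of $\ell$ is reversed — because $\sigma_\ell$ is the composition of the \axial\ map $A\mapsto A^\alpha$ (which preserves $\cyclic$ by Lemma \ref{preservesC}, or rather by Lemma \ref{PQaxial} together with Lemma \ref{preservesC}) with a half-turn $O$ (which preserves $\cyclic$ by Lemma \ref{preservesC}), restricted appropriately — wait, that would make $\sigma_\ell$ preserve $\cyclic$, so instead the right bookkeeping is that $\sigma_\ell$ maps one side of $\ell$ to the other (Lemma \ref{perpconc}) and the natural cyclic order induced on the two sides by a fixed $\cyclic$ on $\mathcal{O}$ are reversed relative to each other; I would make this precise using Remark \ref{remark:differentsides}. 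Part (ii) is then immediate: a permutation of $\mathcal{O}$ that reverses a cyclic order compatible with $//$ preserves the associated separation relation, since $//$ is symmetric under reversal of $\cyclic$.

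For (iii), since $\sigma_\ell$ preserves $//$ by (ii), Corollary \ref{conjugationhalfturns} applies directly: $\sigma_\ell P\sigma_\ell=\sigma_\ell^{-1}P\sigma_\ell$ (using $\sigma_\ell^2=1$ from Lemma \ref{refinv}) is a half-turn about $P^{\sigma_\ell}$. For the second assertion of (iii), $P\sigma_\ell P=P^{-1}\sigma_\ell P$ is conjugate to $\sigma_\ell$ by the automorphism $P$, and I would verify it is the reflection about $\ell^P$ by checking it fixes exactly the rimpoints of $\ell^P$ and has the defining \axial-map property with axis $\ell^P$ — concretely, using Lemma \ref{halfturnperp}: for a rimpoint $A$ not on $\ell^P$, writing $B:=A^{P}$ (which is not on $\ell$) and $\alpha$ an \axial\ map with axis $\ell$, we have $(A)^{P\sigma_\ell P}=B^{\sigma_\ell P}=(B^{O})^{P}$ with $O=\ell\bowtie B^\alpha B^{\alpha^{-1}}$, and conjugating by $P$ turns $O$ into $O^P=\ell^P\bowtie A^{(P\alpha P)}A^{(P\alpha^{-1}P)}$, so $P\sigma_\ell P$ realizes Definition \ref{defperp} with axis $\ell^P$ and \axial\ map $P\alpha P$ (which is \axial\ with axis $\ell^P$ by Remark \ref{conjaxial}). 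Part (iv) is the same argument with the half-turn $P$ replaced by the reflection $\sigma_m$: $\sigma_m\sigma_\ell\sigma_m$ is conjugate to $\sigma_\ell$ by $\sigma_m$, and since $\sigma_m$ is a perpendicularity-preserving automorphism (part (v), which I would prove before (iv) or in tandem), the conjugate of a reflection is a reflection — about the line $\ell^{\sigma_m}$.

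Part (v) has two halves. That $\sigma_\ell$ is an automorphism of $\mathcal{I}$: since it permutes $\mathcal{O}$ (Lemma \ref{refinv}) and preserves $//$ (part (ii)), it maps finite points to finite points (a finite point is determined by two separating transpositions via the half-turn correspondence, cf.\ Lemma \ref{fpfinv} and the discussion after Theorem \ref{automorphism}), hence maps lines to lines, preserving incidence; alternatively, by (iii), $\sigma_\ell$ conjugates every half-turn to a half-turn, so it normalizes $G$ and therefore induces an automorphism of the incidence structure $D(G,S)\cong\mathcal{I}$ from Theorem \ref{automorphism}. That it preserves perpendicularity: by (iv) it conjugates every reflection to a reflection, hence if $\ell'\perp m'$ then $\sigma_\ell$ carries the configuration "$\sigma_{\ell'}$ interchanges the rimpoints of $m'$" to "$\sigma_{(\ell')^{\sigma_\ell}}$ interchanges the rimpoints of $(m')^{\sigma_\ell}$", using that $\sigma_\ell$ maps $\sigma_{\ell'}$ to $\sigma_{(\ell')^{\sigma_\ell}}$ and preserves the rimpoint set of $m'$. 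The fixed-point statement follows from Lemma \ref{axialfixpoints} / the construction of $\sigma_\ell$: $\sigma_\ell$ fixes exactly the rimpoints of $\ell$, and a finite point $P$ is fixed iff its half-turn is centralized by $\sigma_\ell$ iff (by (iii)) $P^{\sigma_\ell}=P$ iff $P$ is the intersection of two $\sigma_\ell$-fixed lines iff $P\in\ell$. Finally (vi): $G\trianglelefteq H$ because $H$ is generated by the reflections, and conjugation of a half-turn by a reflection is a half-turn (part (iii)), so each generator of $H$ normalizes $G$; hence $H$ normalizes $G$. I expect the main obstacle to be part (i) — pinning down the correct orientation bookkeeping for how $\sigma_\ell$ acts on $\cyclic$, since $\sigma_\ell$ swaps the two sides of $\ell$ and one must track the induced orientations carefully via Remark \ref{remark:differentsides} rather than naively composing an \axial\ map with a half-turn.
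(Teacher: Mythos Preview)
Your outline for parts (ii), (iii), (vi), and the incidence-automorphism half of (v) matches the paper's approach and is fine. There are two genuine gaps.

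\textbf{Part (i).} Your heuristic ``$\sigma_\ell$ swaps the two sides of $\ell$, and the induced cyclic orders on the two sides are reversed relative to each other'' is not enough: a half-turn about a point on $\ell$ also swaps the sides of $\ell$, yet it \emph{preserves} $\cyclic$ (Lemma \ref{preservesC}). So merely swapping sides cannot force reversal of $\cyclic$; you need to use something specific about $\sigma_\ell$. The paper's argument hinges on Lemma \ref{nopoles} (no two perpendiculars to $\ell$ meet in a finite point), which you do not invoke. Concretely: given $\cyclic(X,Y,Z)$ with $X,Z$ on opposite sides of $YY^{\sigma_\ell}$, the lines $XX^{\sigma_\ell}$, $YY^{\sigma_\ell}$, $ZZ^{\sigma_\ell}$ are all perpendicular to $\ell$, hence pairwise non-intersecting by Lemma \ref{nopoles}; this forces $X^{\sigma_\ell}$ and $Z^{\sigma_\ell}$ to remain on opposite sides of $YY^{\sigma_\ell}$, and from there Remark \ref{remark:differentsides} and (C3)/(C4) give $\cyclic(Z^{\sigma_\ell},Y^{\sigma_\ell},X^{\sigma_\ell})$. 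The cases where one or more of $X,Y,Z$ lie on $\ell$ need separate (easy) handling.

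\textbf{Parts (iv) and (v).} Your plan is circular as written: your (iv) appeals to the perpendicularity-preserving clause of (v), and your (v) appeals to (iv). The paper breaks the cycle by proving (iv) \emph{directly}, without assuming any perpendicularity preservation. The mechanism is: axial maps are products of two half-turns (Lemma \ref{axial2halfturns}), so by (iii) their conjugates under $\sigma_m$ are again axial; then one traces the explicit common-perpendicular construction of Lemma \ref{commonperp} through conjugation by $\sigma_m$ to show that $\sigma_m\sigma_\ell\sigma_m$ realizes Definition \ref{defperp} for the line $n$ spanned by $L_1^{\sigma_m},L_2^{\sigma_m}$. Only after (iv) is established does the paper deduce perpendicularity preservation in (v) (since $\sigma_\ell\sigma_a\sigma_\ell=\sigma_{a^{\sigma_\ell}}$ lets you transport the relation $b^{\sigma_a}=b$ to $(b^{\sigma_\ell})^{\sigma_{a^{\sigma_\ell}}}=b^{\sigma_\ell}$). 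Your ``in tandem'' remark suggests you sense the issue, but you need to actually carry out the direct computation for (iv) rather than defer to (v). Finally, your fixed-point argument in (v) is muddled (``$P$ fixed iff $P^{\sigma_\ell}=P$'' is a tautology); the paper instead argues that if $P\notin\ell$ then for a rimpoint $L$ on $\ell$, $L^P$ and $L^{P\sigma_\ell}$ lie on different sides of $\ell$, forcing $P^{\sigma_\ell}\ne P$.
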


\begin{proof}\leavevmode
\begin{description}
\item[(i) and (ii)]
Let $\cyclic$ be a cyclic order associated to $//$ and let $\ell$ be a line. Consider three distinct rimpoints $X$, $Y$, $Z$ such that $\cyclic(X,Y,Z)$,
and suppose first that none of these three points are incident with $\ell$.
Without loss of generality, we may suppose that $X$ and $Z$ lie on opposite sides of $YY^{\sigma_\ell}$. 
So by Remark \ref{remark:differentsides}, we have
$\cyclic(Y,Y^{\sigma_\ell},X)$ and $\cyclic(Y^{\sigma_\ell},Y,Z)$, or $\cyclic(Y,Y^{\sigma_\ell},Z)$ and $\cyclic(Y^{\sigma_\ell},Y,X)$.
Then by definition of a reflection, $XX^{\sigma_\ell}$, $YY^{\sigma_\ell}$, $ZZ^{\sigma_\ell}$ are perpendicular to $\ell$, so no two of these lines meet in a finite point (by Lemma \ref{nopoles}). Then $X^{\sigma_\ell}$ and $Z^{\sigma_\ell}$ also lie on opposite sides of $YY^{\sigma_\ell}$; that is,
$\cyclic(Y,Y^{\sigma_\ell},X^{\sigma_\ell})$ and $\cyclic(Y^{\sigma_\ell},Y,Z^{\sigma_\ell})$, or 
$\cyclic(Y,Y^{\sigma_\ell},Z^{\sigma_\ell})$ and $\cyclic(Y^{\sigma_\ell},Y,X^{\sigma_\ell})$.
We can write the first conjunction as $\cyclic(Y^{\sigma_\ell},X^{\sigma_\ell},Y)\wedge \cyclic(Y^{\sigma_\ell},Y,Z^{\sigma_\ell})$
(by (C3)), and hence $\cyclic(Y^{\sigma_\ell}, X^{\sigma_\ell},Z^{\sigma_\ell})$ by (C4).
Therefore, $\cyclic(Z^{\sigma_\ell}, Y^{\sigma_\ell}, X^{\sigma_\ell})$ (by (C3)).

Now let us suppose at least one of our points, $X$ say, is incident with $\ell$. Then $X=X^{\sigma_\ell}$.
Let $X'$ be the other rimpoint incident with $\ell$. If $Y=X'$, then $Y=Y^{\sigma_\ell}$ and
$Z$ is not incident with $\ell$. Moreover, $Z^{\sigma_\ell}$ lies on the opposite side of $\ell$ to $Z$,
and hence $\cyclic(X,Z^{\sigma_\ell},Y)$; that is, $\cyclic(Z^{\sigma_\ell},Y^{\sigma_\ell},X^{\sigma_\ell})$
by (C3). A similar argument yields the same conclusion for when $Z=X'$.
So suppose $Y$ and $Z$ are not incident with $\ell$.

Without loss of generality, we will suppose that 
$\cyclic(X,Y,X')$. Since $Y^{\sigma_\ell}$ lies on the other side of $\ell$, it follows that $\cyclic(X',Y^{\sigma_\ell},X)$.
If $Y$ and $Z$ lie on the same side of $\ell$, then 
$X$ and $Z$ lie on opposite sides of $YY^{\sigma_\ell}$. So the argument above works in this case,
and we have $\cyclic(Z^{\sigma_\ell},Y^{\sigma_\ell},X^{\sigma_\ell})$.
So suppose $Y$ and $Z$ lie on opposite sides of $\ell$. Then $Y$ and $Z^{\sigma_\ell}$ lie
on the same side of $\ell$. Hence $X$ and $Z^{\sigma_\ell}$ lie on opposite sides of 
$YY^{\sigma_\ell}$. Since $X=X^{\sigma_\ell}$, it follows that $X$ and $Z$ lie on 
opposite sides of $YY^{\sigma_\ell}$. 
So the same argument as before yields $\cyclic(Z^{\sigma_\ell},Y^{\sigma_\ell},X^{\sigma_\ell})$.

Therefore, $\sigma_\ell$ reverses the cyclic order $\cyclic$. 
It then follows immediately that $\sigma_\ell$ preserves $//$, since 
\[
AB //CD\iff \left( \cyclic(A,B,C) \wedge \cyclic(A,D,B) \right)\vee \left( \cyclic(A,C,B)\wedge \cyclic(A,B,D) \right).
\]
for all distinct rimpoints $A$, $B$, $C$ and $D$.

\item[(iii)]
Let $P$ be a finite point and consider the map $g:=\sigma_\ell P \sigma_\ell$. 
Now $P$ is a fixed-point-free involution 
of the rimpoints such that each pair of transpositions of $P$ are separating. So
since $g$ has the same cycle-structure as $P$, and $\sigma_\ell$ 
preserves the separation relation (from (ii)), it follows from Corollary \ref{conjugationhalfturns} $g$
is a half-turn.

Now consider the map $h:=P\sigma_\ell P$. Recall that the half-turn by $P$ acts by automorphisms on $\mathcal{I}$ (by Theorem \ref{automorphism})
and so $\ell^P$ is a line whose rimpoints are the images of the rimpoints of $\ell$ under $P$.
Let $P$ be a rimpoint incident with $\ell^P$. Then $X^P$ is incident with $\ell$ and hence
$X^P=(X^P)^{\sigma_\ell}=X^h$. Therefore, $h$ fixes the rimpoints of $\ell^P$. Now take a rimpoint $Y$ not incident with $\ell^P$. 
So $Y^P$ is not incident with $\ell$ and hence the line $(Y^P)^{\sigma_\ell}(Y^P)$ is perpendicular to $\ell$ (by definition of $\sigma_\ell$).
Therefore, $(Y^P)^{\sigma_\ell P}Y$ is perpendicular to $\ell^P$ (by Lemma \ref{halfturnperp}) and so $h$ is the reflection about $\ell^P$.

\item[(iv)]
Let $L_1$ and $L_2$ be the rimpoints of $\ell$. Let $n$ be the line spanned by $L_1^{\sigma_m}$ and $L_2^{\sigma_m}$. We will
show that $\sigma_m\sigma_\ell\sigma_m=\sigma_n$. Now
\[
(L_1^{\sigma_m})^{\sigma_m\sigma_\ell\sigma_m}=L_1^{\sigma_\ell\sigma_m}=L_1^{\sigma_m}
\]
and similarly, $L_2^{\sigma_m}$ is fixed by $\sigma_m\sigma_\ell\sigma_m$. Therefore, the rimpoints of $n$ are fixed by
$\sigma_m\sigma_\ell\sigma_m$. Now suppose $X$ is a rimpoint not incident with $n$.
We will show that the line $XX^{\sigma_m\sigma_\ell\sigma_m}$ is perpendicular to $n$.
Let $Y:=X^{\sigma_m}$. Since $YY^{\sigma_\ell}\perp \ell$, there exists a line $a$ such that
$YY^{\sigma_\ell}=\bot(\ell,a)$. Let $\{A_1,A_2\}$ be the rimpoints of $a$ and let $a'$ be the line spanned by $A_1^{\sigma_m}$ and 
$A_2^{\sigma_m}$. So
$YY^{\sigma_\ell}=A_2^gL_2^h$ where $g$ and $h$ are \axial\ maps such that (i) $g$ has axis $\ell$, (ii) $h$ has axis $a$, (iii) $A_2^{g^2}=A_1$, (iv) $L_2^{h^2}=L_1$.
Without loss, we may assume that $Y=A_2^g$ and $Y^{\sigma_\ell}=L_2^h$ (since a line has two rimpoints).

Now an \axial\ map is a product of two half-turns, by Lemma \ref{axial2halfturns}. So the conjugate of an \axial\ map by a reflection
is an \axial\ map, by (iii). Hence $\sigma_mg\sigma_m$ and $\sigma_mh\sigma_m$ are \axial\ maps. Moreover, it is straight-forward to see that
$\sigma_mg\sigma_m$ has axis $n$ and $\sigma_mh\sigma_m$ has axis $a'$.
Also,
\[
(A_2^{\sigma_m})^{(\sigma_mg\sigma_m)^2}=A_2^{g^2\sigma_m}=A_1^{\sigma_m}
\]
and similarly, $(L_2^{\sigma_m})^{(\sigma_m h\sigma_m)^2}=L_2^{h^2\sigma_m}=L_1^{\sigma_m}$.
Therefore, 
\[
\bot(n,a')=(A_2^{\sigma_m})^{\sigma_mg\sigma_m}(L_2^{\sigma_m})^{\sigma_mh\sigma_m} =
A_2^{g\sigma_m}L_2^{h\sigma_m}=
Y^{\sigma_m}Y^{\sigma_\ell\sigma_m}
\]
 and hence $XX^{\sigma_m\sigma_\ell\sigma_m}=Y^{\sigma_m}Y^{\sigma_\ell\sigma_m}\perp n$.

\item[(v)]
Let $P$ be a point and $a$ be a line such that $P$ is incident with $a$.
Let $A$ and $A'$ be the rimpoints of $a$. 
By (iii), $P^{\sigma_\ell}$ is the unique point fixed by the half-turn $\sigma_\ell P\sigma_\ell$. Now 
\[
(A^{\sigma_\ell})^{\sigma_\ell P\sigma_\ell}= A^{P\sigma_\ell}=(A')^{\sigma_\ell}
\]
and similarly, $(A')^{\sigma_\ell}$ is mapped to $A^{\sigma_\ell}$ under the half-turn $\sigma_\ell P\sigma_\ell$. So
the line $A^{\sigma_\ell}(A')^{\sigma_\ell}$ is fixed by $\sigma_\ell P\sigma_\ell$ and hence it is incident with $P^{\sigma_\ell}$
(by Lemma \ref{halfturn_inc}).
Therefore, $\sigma_\ell$ preserves incidence and we write $a^{\sigma_\ell}$ for the line $A^{\sigma_\ell}(A')^{\sigma_\ell}$. 

A similar argument shows that $\sigma_\ell$ preserves perpendicularity by noting that from (iv), $a^{\sigma_\ell}$
is the axis of the reflection $\sigma_\ell\sigma_{a}\sigma_\ell$. The other lines fixed by this reflection are perpendicular to $a^{\sigma_\ell}$.
So if $b$ is perpendicular to $a$, then 
\[
(b^{\sigma_\ell})^{\sigma_\ell\sigma_{a}\sigma_\ell}=b^{\sigma_{a}\sigma_\ell}=b^{\sigma_\ell}.
\]
Hence $b^{\sigma_\ell}\perp a^{\sigma_\ell}$.
Therefore, $\sigma_\ell$ is a perpendicularity-preserving automorphism of $\mathcal{I}$. 

Now let $P$ be a point not incident with a line $\ell$, and let $L$ be a rimpoint incident with $\ell$. Then $L^P$ and $L^{P\sigma_\ell}$
lie on different sides of $\ell$. Therefore, $P^{\sigma_\ell}$ lies on the opposite of $\ell$ to $P$, because $P^{\sigma_\ell}$ is incident with $LL^{P\sigma_\ell}$.
Hence $P\ne P^\sigma_\ell$. Suppose now that $P$ is incident with $\ell$. Then $P^{\sigma_\ell}$ is also incident with $\ell$ since the rimpoints of $\ell$
are fixed by $\sigma_\ell$. By Lemma \ref{existsperpendicular}, there exists a line $m$ incident with $P$ such that $\ell\perp m$. Let $X$ be a rimpoint
on $m$. Then $X^{\sigma_\ell}=X^P$ and $P=\ell\cap XX^{\sigma_\ell}$. Since $XX^{\sigma_\ell}$ is fixed by $\sigma_\ell$, it follows that $P^{\sigma_\ell}=P$.

\item[(vi)] Follows straight from (iii).\qedhere
\end{description}
\end{proof}

\begin{lemma}\label{fixedlines}
If $m$ is a line fixed by $\sigma_\ell$, then $m=\ell$ or $m\perp \ell$.
\end{lemma}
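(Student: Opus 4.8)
The plan is to do a case split on whether $m=\ell$. If $m=\ell$ there is nothing to prove, so the substance is to show that any line $m\ne\ell$ fixed by $\sigma_\ell$ is perpendicular to $\ell$; for this I would verify the two requirements in the definition of $\perp$, namely that $\sigma_\ell$ interchanges the rimpoints of $m$ and that $\ell$ and $m$ meet in a finite point.

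First I would record the preliminary fact that $\sigma_\ell$ fixes no rimpoint lying off $\ell$: if $A$ is a rimpoint not incident with $\ell$, then Lemma \ref{perpconc} puts $A^{\sigma_\ell}$ on the side of $\ell$ opposite $A$, so in particular $A^{\sigma_\ell}\ne A$. In other words, the fixed rimpoints of $\sigma_\ell$ are precisely the two rimpoints of $\ell$.

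Now assume $m\ne\ell$ and $m^{\sigma_\ell}=m$, and let $\{M_1,M_2\}$ be the rimpoints of $m$; then $\sigma_\ell$ permutes this two-element set. I would first rule out that $\sigma_\ell$ fixes both $M_1$ and $M_2$: were that so, both would be fixed rimpoints of $\sigma_\ell$, hence incident with $\ell$ by the preliminary fact, and then $m=M_1M_2=\ell$ by (A1), contradicting $m\ne\ell$. So $\sigma_\ell$ interchanges $M_1$ and $M_2$, and consequently neither $M_i$ lies on $\ell$ (a rimpoint of $\ell$ is fixed by $\sigma_\ell$, whereas $M_1^{\sigma_\ell}=M_2\ne M_1$). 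Applying Lemma \ref{perpconc} to $M_1$, the line $M_1M_1^{\sigma_\ell}=M_1M_2=m$ meets $\ell$ in a finite point, so $\ell$ and $m$ are concurrent; together with the fact that $\sigma_\ell$ interchanges the rimpoints of $m$, the definition of perpendicularity yields $m\perp\ell$.

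I do not anticipate a genuine obstacle; this is essentially a bookkeeping argument built on Lemma \ref{perpconc}. The one configuration to handle with care is the one in which $m$ is merely parallel to $\ell$, sharing a single rimpoint, but it is excluded automatically: if $m$ and $\ell$ shared exactly one rimpoint then $\sigma_\ell$ would fix that rimpoint, and injectivity of $\sigma_\ell$ would force it to fix the other rimpoint of $m$ as well, which the preliminary fact forbids.
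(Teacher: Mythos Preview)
Your proof is correct and follows essentially the same route as the paper's: both argue that $\sigma_\ell$ permutes the two rimpoints of $m$, rule out the case where both are fixed (forcing $m=\ell$), and conclude that $\sigma_\ell$ swaps them, giving $m\perp\ell$. If anything, your write-up is slightly more careful: you explicitly invoke Lemma~\ref{perpconc} both to identify the fixed rimpoints of $\sigma_\ell$ and to verify the concurrence clause in the definition of $\perp$, whereas the paper cites Lemma~\ref{linerefauto}(v) (whose statement concerns finite points) and leaves the concurrence implicit.
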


\begin{proof}
Let $X$ and $Y$ be the rimpoints of $m$. Then $X^{\sigma_\ell}$ and $Y^{\sigma_\ell}$ are rimpoints incident with $m$
(by Lemma \ref{linerefauto}(v)).
If $X^{\sigma_\ell}=X$, then $Y^{\sigma_\ell}=Y$ and $m=\ell$, since the only rimpoints fixed by $\sigma_\ell$ are those
incident with $\ell$ (Lemma \ref{linerefauto}(v)). If $\sigma_\ell$ interchanges $X$ and $Y$, then this means precisely that $m$ is perpendicular to $\ell$.
\end{proof}

\begin{lemma}\label{incidence}
Let $P$ be a point and let $\ell$ be a line of $\mathcal{I}$. Then the following are equivalent:
\begin{enumerate}[(a)]
\item $P$ and $\ell$ are incident;
\item the half-turn about $P$ and the reflection $\sigma_\ell$ about $\ell$ commute;
\item $(P\sigma_\ell)^2=1$.
\end{enumerate}
\end{lemma}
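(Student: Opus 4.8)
The plan is to decouple the statement into the trivial group-theoretic equivalence (b)~$\Leftrightarrow$~(c) and then the substantive equivalence (a)~$\Leftrightarrow$~(c), each direction of which will follow almost immediately from Corollary~\ref{halfturn_inc} together with Lemma~\ref{linerefauto}(iii) and (v). For (b)~$\Leftrightarrow$~(c): the half-turn about $P$ is an involution and $\sigma_\ell^2=1$ by Lemma~\ref{refinv}, so $(P\sigma_\ell)^2=P\sigma_\ell P\sigma_\ell$; if $P$ and $\sigma_\ell$ commute this is $P^2\sigma_\ell^2=1$, and conversely $(P\sigma_\ell)^2=1$ rearranges to $P\sigma_\ell P=\sigma_\ell^{-1}=\sigma_\ell$, whence $P\sigma_\ell=\sigma_\ell P^{-1}=\sigma_\ell P$. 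Thus it suffices to relate incidence of $P$ with $\ell$ to the identity $P\sigma_\ell P=\sigma_\ell$.

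For (a)~$\Rightarrow$~(c): assume $P$ is incident with $\ell$. Then $\ell^P=\ell$ by Corollary~\ref{halfturn_inc}, and by Lemma~\ref{linerefauto}(iii) the map $P\sigma_\ell P$ is the reflection about $\ell^P$, so $P\sigma_\ell P=\sigma_{\ell^P}=\sigma_\ell$; hence $(P\sigma_\ell)^2=(P\sigma_\ell P)\sigma_\ell=\sigma_\ell^2=1$. For (c)~$\Rightarrow$~(a): assume $(P\sigma_\ell)^2=1$, so (as above) $P\sigma_\ell P=\sigma_\ell$, while Lemma~\ref{linerefauto}(iii) gives $P\sigma_\ell P=\sigma_{\ell^P}$; therefore $\sigma_{\ell^P}=\sigma_\ell$. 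A reflection fixes exactly the points incident with its axis by Lemma~\ref{linerefauto}(v) (for the rimpoints this also follows from Lemma~\ref{perpconc}, which shows $\sigma_m$ sends every rimpoint off $m$ to the opposite side), so the two rimpoints of $\ell^P$, being fixed by $\sigma_{\ell^P}=\sigma_\ell$, lie on $\ell$; two distinct rimpoints determine a unique line by (A1), so $\ell^P=\ell$, and then Corollary~\ref{halfturn_inc} yields that $P$ is incident with $\ell$.

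The only place any real argument is needed is the implication ``$\sigma_{\ell^P}=\sigma_\ell\Rightarrow\ell^P=\ell$'', i.e.\ that a line-reflection determines its axis; this I would reduce to the fixed-point description of a reflection already recorded in Lemma~\ref{linerefauto}(v). Everything else is bookkeeping with the two involutions $P$ and $\sigma_\ell$, so I expect no genuine obstacle beyond citing the earlier results correctly.
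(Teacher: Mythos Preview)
Your proof is correct and follows essentially the same approach as the paper's: both hinge on Lemma~\ref{linerefauto}(iii) (identifying $P\sigma_\ell P$ with $\sigma_{\ell^P}$) together with Lemma~\ref{linerefauto}(v). For (c)$\Rightarrow$(a) the paper appeals directly to the finite-point-fixing clause of Lemma~\ref{linerefauto}(v) (so that $P\sigma_\ell P=\sigma_\ell$ amounts to $\sigma_\ell$ fixing the point $P$), whereas you take the small detour through Corollary~\ref{halfturn_inc} after first arguing that a reflection determines its axis; this is a cosmetic difference only.
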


\begin{proof}
Since half-turns and reflections are involutions, it is clear that (b) and (c) are equivalent.
By Lemma \ref{linerefauto}(v), $P$ and $\ell$ are incident if and only if $\sigma_\ell$ fixes $P$.
Since the half-turn about $P$ fixes $P$, we have that $P^{\sigma_\ell}=P$ is equivalent to $P^{P\sigma_\ell P}=P$.
Now the permutation $P\sigma_\ell P$ is the reflection about $\ell^P$, by Lemma \ref{linerefauto}(iii).
So $P$ and $\ell$ are incident if and only if $P\sigma_\ell P=\sigma_\ell$, and so (a) and (c) are equivalent.
\end{proof}

The following result allows us to more easily compute the image of a finite point under a reflection.

\begin{lemma}\label{perp_not_on_line}
Let $Q$ be a finite point not incident with a line $m$. Then $Q^{\sigma_m}=Q^P$
where $P$ is the point of intersection of $m$ and $QQ^{\sigma_m}$.
\end{lemma}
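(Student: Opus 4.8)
\emph{The plan} is to show that the line $\ell:=QQ^{\sigma_m}$ is perpendicular to $m$, that the composite reflection $\sigma_m\sigma_\ell$ is exactly the half-turn about $P$, and then to read off $Q^{\sigma_m}=Q^P$ from a one-line group identity. (I follow the paper's convention of denoting a finite point and its half-turn by the same letter, and recall that $\sigma_m$ acts on a finite point $R$ by sending it to the unique finite point fixed by the half-turn $\sigma_m R\sigma_m$; see Lemma~\ref{linerefauto}(iii) and~(v).) First, since $Q$ is not incident with $m$, Lemma~\ref{linerefauto}(v) gives $Q^{\sigma_m}\neq Q$ with $Q^{\sigma_m}$ on the opposite side of $m$ to $Q$; hence the segment $\overline{QQ^{\sigma_m}}$ meets $m$ in a finite point $P$, the point named in the statement. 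Because $\sigma_m$ is an automorphism of $\mathcal{I}$ of order two (Lemma~\ref{refinv} and Lemma~\ref{linerefauto}(v)), it maps the line $\ell:=QQ^{\sigma_m}$ to the line joining $Q^{\sigma_m}$ and $Q$, so $\ell^{\sigma_m}=\ell$. The two rimpoints of $\ell$ cannot both be fixed by $\sigma_m$ (otherwise $\ell=m$ by Lemma~\ref{linerefauto}(v), against $Q\in\ell\setminus m$), so $\sigma_m$ interchanges them, which by Lemma~\ref{fixedlines} means $\ell\perp m$, and then $P=\ell\cap m$.

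Next I claim $\sigma_m\sigma_\ell=P$. From $\ell^{\sigma_m}=\ell$ together with Lemma~\ref{linerefauto}(iv) and~(v), the conjugate $\sigma_m\sigma_\ell\sigma_m$ is the reflection with axis $\ell^{\sigma_m}=\ell$, i.e.\ $\sigma_m\sigma_\ell\sigma_m=\sigma_\ell$; thus $\sigma_m$ and $\sigma_\ell$ commute and $g:=\sigma_m\sigma_\ell$ is an involutory automorphism of $\mathcal{I}$. It is fixed-point-free on the rimpoints: a fixed rimpoint $Z$ would satisfy $Z^{\sigma_m}=Z^{\sigma_\ell}=:Z'$, and then if $Z$ lies on neither $m$ nor $\ell$ the line $ZZ'$ would be a common perpendicular of the concurrent lines $m$ and $\ell$, forbidden by Lemma~\ref{nopoles}, while if $Z$ lies on one of them it would lie on both, so $m$ and $\ell$ would share a rimpoint, impossible as concurrent lines are not parallel (Lemma~\ref{parallel}). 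Also $g$ preserves the separation relation and a compatible cyclic order $\cyclic$, since $\sigma_m$ and $\sigma_\ell$ each reverse $\cyclic$ (Lemma~\ref{linerefauto}(i)). A short argument from (C1)--(C4) now shows any fixed-point-free involution preserving such a $\cyclic$ has every pair of transpositions separating: given transpositions $(A\,B)$ and $(C\,D)$, after replacing $(A\,B)$ by $(B\,A)$ if necessary we may assume $\cyclic(A,C,B)$; applying $g$ gives $\cyclic(B,D,A)$, and these two facts together say precisely that $AB//CD$. So Lemma~\ref{fpfinv} applies, $g$ is a half-turn, and since it contains the transposition on the rimpoints of $m$ and the transposition on the rimpoints of $\ell$, its centre is $m\cap\ell=P$; hence $\sigma_m\sigma_\ell=P$.

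Finally, since $Q$ is incident with $\ell$, the half-turn $Q$ commutes with $\sigma_\ell$ (Lemma~\ref{incidence}), so $\sigma_\ell Q\sigma_\ell=Q$ (using $\sigma_\ell^2=1$), and using $P=\sigma_m\sigma_\ell=\sigma_\ell\sigma_m$ we get
\[
PQP=(\sigma_m\sigma_\ell)\,Q\,(\sigma_\ell\sigma_m)=\sigma_m\,(\sigma_\ell Q\sigma_\ell)\,\sigma_m=\sigma_m Q\sigma_m .
\]
By Remark~\ref{conjaxial0} the left-hand side is the half-turn about $Q^P$, while by the definition of the action of $\sigma_m$ on finite points the right-hand side is the half-turn about $Q^{\sigma_m}$; equal half-turns have the same fixed finite point, so $Q^P=Q^{\sigma_m}$, as required. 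The one substantial step is the identity $\sigma_m\sigma_\ell=P$ of the second paragraph — the familiar metric-plane fact that the product of the reflections in two perpendicular lines is the half-turn about their intersection. The commutativity of $\sigma_m$ and $\sigma_\ell$ is immediate from Lemma~\ref{linerefauto}(iv),(v); the work is in recognising the product as a half-turn, which goes through Lemma~\ref{fpfinv} and hence through the small observation that a fixed-point-free, cyclic-order-preserving involution has all pairs of transpositions separating. That observation is short, so I expect no real difficulty.
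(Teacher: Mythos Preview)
Your proof is correct and takes a genuinely different route from the paper. The paper argues directly with rimpoints: it lets $M,N$ be the rimpoints of $m$, produces the point $T:=M^QN^{Q\sigma_m}\cap N^QM^{Q\sigma_m}$, applies Pascal's axiom (A7) to the hexagon $M^QMM^{Q\sigma_m}N^QNN^{Q\sigma_m}$ to see that $T$ lies on $QQ^{\sigma_m}$, then checks $T^{\sigma_m}=T$ so that $T=P$, and finally computes $Q^{\sigma_m}$ explicitly as $(MM^Q\cap NN^Q)^P=Q^P$. Your argument is instead group-theoretic: you observe that $\ell:=QQ^{\sigma_m}$ is $\sigma_m$-invariant, hence perpendicular to $m$ by Lemma~\ref{fixedlines}, and then you prove from scratch that $\sigma_m\sigma_\ell$ equals the half-turn about $P$ by verifying the hypotheses of Lemma~\ref{fpfinv} (fixed-point-free, cyclic-order-preserving, hence with all transposition pairs separating). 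The conclusion then drops out of the conjugation identity $PQP=\sigma_m Q\sigma_m$ via Lemma~\ref{incidence}. All of the ingredients you invoke (Lemmas~\ref{fpfinv}, \ref{nopoles}, \ref{linerefauto}, \ref{fixedlines}, \ref{incidence}, \ref{parallel}) are established in the paper prior to Lemma~\ref{perp_not_on_line}, so there is no circularity. What your approach buys is conceptual clarity: you are essentially giving an independent proof of the forward direction of Theorem~\ref{productperp} (that $\ell\perp m$ implies $\sigma_\ell\sigma_m=P$) that does \emph{not} rely on Lemma~\ref{perp_not_on_line}, which shows the logical dependence in the paper could be reversed. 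The paper's hexagon argument, on the other hand, stays closer to the incidence-geometric spirit of (A7) and avoids the detour through Lemma~\ref{fpfinv}.
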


\begin{proof}
Let $M$ and $N$ be the rimpoints of $m$.
Note that $M^Q$ and $N^Q$ lie on the same side of $\ell$ and hence $M^QN^Q$ does not meet $M^{Q\sigma_m}N^{Q\sigma_m}$. 
By Lemma \ref{nopoles}, $N^QN^{Q\sigma_m}$ does not meet $M^QM^{Q\sigma_m}$ as they are both perpendicular to $m$.
So by (S4), we have $M^QN^{Q\sigma_m}// N^QM^{Q\sigma_m}$, and hence there exists a finite point $T$ such that
$T:=M^QN^{Q\sigma_m}\cap N^QM^{Q\sigma_m}$. 
Consider the hexagon
$M^QMM^{Q\sigma_m}N^QNN^{Q\sigma_m}$. The diagonal points are then $Q$, $T$, and $Q^{\sigma_m}=MM^{Q\sigma_m}\cap NN^{Q\sigma_m}$.
So $T$ lies on the line $QQ^{\sigma_m}$ by (A7). Now 
\[
T^{\sigma_m}=M^{Q\sigma_m}N^{Q}\cap N^{Q\sigma_m}M^{Q}=T
\]
and hence $T$ is incident with $m$ (by Lemma \ref{linerefauto}(v)).
Therefore, $T=QQ^{\sigma_m}\cap m =P$. So $M^{Q\sigma_m}=(M^{Q})^P$ and $N^{Q\sigma_m}=(N^{Q})^P$, and hence
\[
Q^{\sigma_m}=MM^{Q\sigma_m}\cap NN^{Q\sigma_m}=MM^{QP}\cap NN^{QP}=\left(MM^{Q}\cap NN^{Q}\right)^P=Q^P.\qedhere
\]
\end{proof}

\begin{theorem}\label{productperp}
Let $\ell$ and $m$ be two lines meeting in a finite point $P$.
Then $\ell\perp m$ if and only if $\sigma_\ell\sigma_m=P$.
\end{theorem}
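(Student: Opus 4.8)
The plan is to prove both implications, the forward one ($\ell\perp m\Rightarrow\sigma_\ell\sigma_m=P$) carrying all the weight. Put $g:=\sigma_\ell\sigma_m$, and let $\{L_1,L_2\}$, $\{M_1,M_2\}$ be the rimpoints of $\ell$, $m$. Since $\ell\perp m$ we have $\ell\ne m$ (a line's two rimpoints are distinct and $\sigma_\ell$ fixes both), and distinct lines share at most one point by (A1), so $L_1,L_2,M_1,M_2$ are four distinct rimpoints and the half-turn about $P$ swaps $L_1\leftrightarrow L_2$ and $M_1\leftrightarrow M_2$. The strategy is to verify that $g$ meets the hypotheses of Lemma \ref{fpfinv}, conclude $g$ is a half-turn, and then read off its centre from two of its transpositions.

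First I would check $g$ is an involution. Because $\ell\perp m$, Lemma \ref{perpsym} gives that $\sigma_m$ interchanges the rimpoints $L_1,L_2$ of $\ell$; hence the line called $n$ in Lemma \ref{linerefauto}(iv), namely the span of $L_1^{\sigma_m}$ and $L_2^{\sigma_m}$, is $\ell$ itself, so $\sigma_m\sigma_\ell\sigma_m=\sigma_\ell$, i.e.\ $\sigma_\ell$ and $\sigma_m$ commute; with $\sigma_\ell^2=\sigma_m^2=1$ (Lemma \ref{refinv}) this forces $g^2=1$. Next, $g$ is fixed-point-free: $L_1^{g}=L_1^{\sigma_m}=L_2$ (as $\sigma_\ell$ fixes $L_1$) and likewise $L_2\mapsto L_1$, and symmetrically $M_1^{g}=M_2$, $M_2^{g}=M_1$, so no rimpoint of $\ell$ or $m$ is fixed; and a rimpoint $X$ off both lines fixed by $g$ would satisfy $X^{\sigma_\ell}=X^{\sigma_m}$, making $XX^{\sigma_\ell}$ a common perpendicular of $\ell$ and $m$, which contradicts Lemma \ref{nopoles} since $\ell\cap m=P$ is finite.

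The step I expect to be the main obstacle is the last hypothesis of Lemma \ref{fpfinv}: that every pair of transpositions of $g$ is separating. Here I would use that $g$ preserves a cyclic order $\cyclic$ compatible with $//$, being the product of the two $\cyclic$-reversing maps $\sigma_\ell,\sigma_m$ (Lemma \ref{linerefauto}(i)). Given disjoint transpositions $(A\,B)$ and $(C\,D)$ of the fixed-point-free involution $g$: by (C2) exactly one of $\cyclic(A,B,C)$, $\cyclic(A,C,B)$ holds; applying $g$ (which sends $A\mapsto B$, $B\mapsto A$, $C\mapsto D$) and simplifying with (C3) yields, respectively, $\cyclic(A,D,B)$ or $\cyclic(A,B,D)$, and in each case the resulting pair of statements is exactly one of the two disjuncts in the cyclic-order description of $AB//CD$. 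So every pair of transpositions of $g$ separates, Lemma \ref{fpfinv} applies, and $g$ is a half-turn. Since $g$ swaps $L_1\leftrightarrow L_2$ and $M_1\leftrightarrow M_2$, both $(L_1\,L_2)$ and $(M_1\,M_2)$ are transpositions of $g$, so its centre is $L_1L_2\cap M_1M_2=\ell\cap m=P$; hence $\sigma_\ell\sigma_m=P$.

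For the converse, assume $\ell$ and $m$ meet at the finite point $P$ and $\sigma_\ell\sigma_m=P$. Applying both sides to a rimpoint $M_1$ of $m$ gives $M_1^{\sigma_\ell\sigma_m}=M_1^{P}=M_2$, since $P$ is incident with $m$; as $\sigma_m$ fixes $M_2$ this reads $M_1^{\sigma_\ell}=M_2$, and the symmetric computation gives $M_2^{\sigma_\ell}=M_1$. Thus $\sigma_\ell$ interchanges the rimpoints of $m$, and since $\ell$ and $m$ are concurrent this is precisely $\ell\perp m$ by definition.
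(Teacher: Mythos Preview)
Your proof is correct and takes a genuinely different route from the paper.

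For the forward implication, the paper proceeds computationally: it invokes Lemma~\ref{perp_not_on_line} (which it proved immediately beforehand, essentially as a setup for this theorem) to show directly that $\sigma_\ell\sigma_m P$ fixes every finite point $Q$ on $\ell$, and then extends this to all rimpoints by writing each rimpoint off $\ell$ as $B^R$ for some $R$ on $\ell$. Your argument instead recognises $g=\sigma_\ell\sigma_m$ as a half-turn via Lemma~\ref{fpfinv}: you show $g$ is an involution (using Lemma~\ref{linerefauto}(iv) to get $\sigma_\ell$ and $\sigma_m$ to commute), that it is fixed-point-free (using Lemma~\ref{nopoles} to rule out a common perpendicular), and that its transpositions separate (by the nice observation that $g$ preserves the cyclic order, being a product of two order-reversing maps). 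This bypasses Lemma~\ref{perp_not_on_line} entirely. For the converse, the paper argues that $m^{\sigma_\ell}=m$ and quotes Lemma~\ref{fixedlines}, whereas you compute directly on the rimpoints of $m$.

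What each approach buys: the paper's argument is more hands-on and makes essential use of the explicit description of $Q^{\sigma_m}$ from Lemma~\ref{perp_not_on_line}; yours is more structural, reusing the half-turn recognition machinery (Lemma~\ref{fpfinv}) already developed in Section~\ref{section:halfturns}, and the cyclic-order trick for establishing separation is clean and self-contained. Both are valid within the logical order of the paper, since everything you cite (Lemmas~\ref{fpfinv}, \ref{refinv}, \ref{perpsym}, \ref{nopoles}, \ref{linerefauto}(i),(iv)) is established before Theorem~\ref{productperp}.
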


\begin{proof}
Suppose firstly that $\ell$ and $m$ are perpendicular. Let $A$, $B$, $C$, $D$
be rimpoints such that $\ell=AC$ and $m=BD$. Let $Q$ be a finite point incident with $\ell$, not equal to $P$.
Since $B$ is fixed by $\sigma_m$, $Q^{\sigma_m}=Q^P$ (by Lemma \ref{perp_not_on_line}) and $\sigma_m$ is an automorphism of $\mathcal{I}$ (by Lemma \ref{linerefauto}(v)), we have
\[
B^{Q\sigma_m P}=B^{\sigma_m Q\sigma_m P}=B^{Q^{\sigma_m}P}=B^{Q^{P}P}=B^{PQ}=D^Q.
\]
Therefore, 
\[
Q^{\sigma_\ell\sigma_m P}=Q^{\sigma_m P}=(BB^Q\cap AC)^{\sigma_m P}=DD^Q\cap AC=Q.
\]
So $\sigma_\ell\sigma_mP$ fixes every point of $\ell$. Now let $X$ be a rimpoint not incident with $\ell$ and suppose without loss of generality that
$X$ lies on the opposite side of $\ell$ as $B$ does. Then $XB$ meets $\ell$ in a point $R$ and hence $X=B^R$. 
Since $B$ and $R$ are fixed by $\sigma_\ell\sigma_m P$, it follows that $X$ is also fixed by $\sigma_\ell\sigma_m P$.
So every point not incident with $\ell$ is fixed by $\sigma_\ell\sigma_m P$, and the rimpoints $A$ and $C$ are clearly
fixed by $\sigma_\ell\sigma_m P$. Therefore, $\sigma_\ell\sigma_m P=1$ as required.

Conversely, suppose $\sigma_\ell\sigma_m=P$. In particular, $\ell\ne m$ since the half-turn about $P$ acts nontrivially on the rimpoints.
Since $P$ is an involution, we also have $\sigma_m\sigma_\ell=P$. 
Now the half-turn $P$ fixes the line $m$ (by Lemma \ref{incidence}), and $\sigma_m$ fixes $m$, and so
\[
m=m^{\sigma_m\sigma_\ell}=m^{\sigma_\ell}.
\]
Therefore, by Lemma \ref{fixedlines}, $m$ is perpendicular to $\ell$.
\end{proof}

The following lemmas will be used in the next section, but they also yield fundamental results about reflections.

\begin{lemma}\label{tworefsfixed}
Let $\ell$ and $m$ be distinct lines. Then $\sigma_\ell\sigma_m$ fixes at most two rimpoints.
\end{lemma}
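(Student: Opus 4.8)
The plan is to analyze the permutation $g := \sigma_\ell\sigma_m$ acting on the rimpoints $\mathcal{O}$ and show it has at most two fixed points, treating separately the cases where $\ell$ and $m$ meet in a finite point, are parallel, or have a common perpendicular. First I would recall from Lemma \ref{parallel} that these three cases are exhaustive and mutually exclusive for distinct lines. In the first case, $\ell$ and $m$ meet in a finite point $P$, and I would use Lemma \ref{linerefauto}(iv)/(v) together with Lemma \ref{fixedlines}: a rimpoint $X$ fixed by $g$ gives a line behaviour one can control. Actually the cleaner route in this case is to observe that $g = \sigma_\ell\sigma_m$ is an automorphism of $\mathcal{I}$ (composition of two automorphisms, by Lemma \ref{linerefauto}(v)); if it fixes three rimpoints $X,Y,Z$ then it fixes the three lines $XY$, $YZ$, $ZX$ and their three intersection points, and one derives $\sigma_\ell = \sigma_m$, forcing $\ell = m$ (since a reflection determines its axis as its fixed-rimpoint set, by Definition \ref{defperp} and Lemma \ref{linerefauto}(v)), a contradiction.

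For the structural heart of the argument, I would show that a fixed rimpoint of $g$ that is \emph{not} on $\ell$ or $m$ produces geometric data that pins things down. If $X^{\sigma_\ell\sigma_m} = X$ with $X$ not incident with $\ell$, set $Y := X^{\sigma_m}$, so $Y^{\sigma_\ell} = X$, i.e.\ $X^{\sigma_\ell} = Y$ as well. Then $XX^{\sigma_\ell} = XY = XX^{\sigma_m}$, so by the definition of reflection the line $XY$ is perpendicular to \emph{both} $\ell$ and $m$ (using Lemma \ref{fixedlines} applied to whichever of $X,Y$ is off the relevant line, or directly the fact that $\sigma_\ell$ sends $X$ to $Y$ means $XY\perp\ell$). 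Hence in the parallel case (no common perpendicular, by Lemma \ref{parallel}) there can be \emph{no} fixed rimpoint off $\ell\cup m$; and since $\ell\ne m$ are parallel they share exactly one rimpoint, which is the only candidate among rimpoints of $\ell$ or $m$ — so $g$ fixes at most one rimpoint. In the common-perpendicular case, any fixed rimpoint off both lines lies on a common perpendicular to $\ell$ and $m$; by Lemma \ref{commonperp} there is a unique such line $n = \bot(\ell,m)$, so all such fixed rimpoints lie on $n$, giving at most two of them — and a rimpoint on $\ell$ (other than one forced onto $n$) fixed by $g$ would be fixed by $\sigma_\ell$ hence lie on $\ell$, and then also be fixed by $\sigma_m$ hence lie on $m$, contradicting that $\ell,m$ don't meet. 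So again at most two.

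For the remaining finite-point case I would finish as sketched: suppose $g$ fixes three distinct rimpoints $X,Y,Z$. If any of them, say $X$, is off both $\ell$ and $m$, then by the paragraph above $XX^{\sigma_\ell}=XX^{\sigma_m}$ is a common perpendicular to $\ell$ and $m$ — but by Lemma \ref{nopoles} two lines meeting in a finite point cannot have a common perpendicular, contradiction; so every fixed rimpoint of $g$ lies on $\ell\cup m$. Since $\ell$ and $m$ are distinct concurrent lines they share no rimpoint (a shared rimpoint $X$ plus the finite intersection point would force $\ell = m$ via (A1)), so $\ell\cup m$ contributes at most four rimpoints, and I need to cut this to two. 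A rimpoint $X$ on $\ell$ is automatically fixed by $\sigma_\ell$, so $X^g = X$ forces $X^{\sigma_m} = X$, hence $X$ lies on $m$ too — impossible. Thus $g$ fixes no rimpoint of $\ell$, and symmetrically none of $m$, so in fact $g$ fixes \emph{no} rimpoint at all in the concurrent case; in particular at most two. Combining the three cases gives the claim. The main obstacle I anticipate is getting the ``perpendicular to both'' deduction airtight when one of $X$, $Y=X^{\sigma_m}$ happens to be a rimpoint of $\ell$ or of $m$ — but in that degenerate situation $X$ is fixed by the corresponding reflection, which (as just argued) forces it onto the other line too and hence is excluded in the concurrent and parallel cases, and lands it on the unique common perpendicular in the last case; so the edge cases collapse into the cases already handled.
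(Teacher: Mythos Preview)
Your proof is correct and follows essentially the same idea as the paper's: a fixed rimpoint $X$ satisfies $X^{\sigma_\ell}=X^{\sigma_m}$, so the line $XX^{\sigma_\ell}$ is perpendicular to both $\ell$ and $m$, which (via Lemmas~\ref{commonperp}, \ref{parallel}, \ref{nopoles}) pins $X$ down to at most two possibilities. The paper compresses all of this into a couple of sentences without the explicit three-way case split, but the substance is the same.
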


\begin{proof}
Suppose $\sigma_\ell\sigma_m$ fixes a rimpoint $X$. Then $X^{\sigma_\ell}=X^{\sigma_m}$ and hence
$XX^{\sigma_\ell}$ is perpendicular to $m$. By Lemma \ref{fixedlines}, either $\ell$ is perpendicular to $m$ and $X$ is incident with $\ell$,
or $\ell$ and $m$ are not concurrent in a finite point or rimpoint and $X$ is incident with the unique common perpendicular of $\ell$ and $m$. 
In both cases, there are at most two possibilities for $X$.
\end{proof}

\begin{lemma}\label{lotsfixed}
Let $\ell$, $m$, $n$ be lines concurrent in a point $P$, and let $L$ be a rimpoint on $\ell$. Let $o$ be the unique 
perpendicular line to $LL^{\sigma_m\sigma_n}$ through $P$, and let $g := \sigma_\ell \sigma_m \sigma _n \sigma_o$.
Then $g$ fixes at least four rimpoints and the finite point $P$.
\end{lemma}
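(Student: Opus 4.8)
The plan is to exhibit explicitly the fixed objects: the finite point $P$; the two rimpoints of $\ell$; and the two rimpoints of the line $\ell'$ through $P$ perpendicular to $\ell$ (which exists and is unique by Lemma~\ref{existsperpendicular}). These four rimpoints are distinct, since $\ell$ and $\ell'$ meet in the finite point $P$ and so share no rimpoint. First the easy reductions. As $\ell,m,n,o$ all pass through $P$, each of $\sigma_\ell,\sigma_m,\sigma_n,\sigma_o$ fixes $P$ by Lemma~\ref{linerefauto}(v), so $g$ fixes $P$; and by Lemma~\ref{incidence} the half-turn about $P$ commutes with each of these four reflections, hence with $g$ (and $g$, being a product of reflections, is an automorphism of $\mathcal{I}$ by Lemma~\ref{linerefauto}(v)). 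Let $L$ be a rimpoint of $\ell$. Since $\sigma_\ell$ fixes $L$, we get $L^g=(L^{\sigma_m\sigma_n})^{\sigma_o}$, and because $o$ is perpendicular to the line $k:=LL^{\sigma_m\sigma_n}$, the reflection $\sigma_o$ interchanges the two rimpoints $L$ and $L^{\sigma_m\sigma_n}$ of $k$; hence $L^g=L$. Applying the half-turn $P$ (which commutes with $g$) then shows $g$ fixes the second rimpoint $L':=L^P$ of $\ell$ as well.

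The heart of the matter is that the specific choice of $o$ forces $\sigma_o$ to carry $\ell$ onto $\ell^{\sigma_m\sigma_n}$: indeed $\sigma_o$ fixes $P$ and sends $L$ to $L^{\sigma_m\sigma_n}$, so it sends the line $\ell=PL$ to $PL^{\sigma_m\sigma_n}$, which is $\ell^{\sigma_m\sigma_n}$ because $\sigma_m$ and $\sigma_n$ fix $P$. Consequently $\ell^{\sigma_m\sigma_n\sigma_o}=(\ell^{\sigma_o})^{\sigma_o}=\ell$, using $\sigma_o^2=1$ (Lemma~\ref{refinv}). Therefore $\sigma_\ell$ commutes with $\sigma_m\sigma_n\sigma_o$, and hence with $g=\sigma_\ell(\sigma_m\sigma_n\sigma_o)$. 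Since $g$ also commutes with the half-turn $P$, it commutes with $P\sigma_\ell$, which equals $\sigma_{\ell'}$ by Theorem~\ref{productperp} (and Lemma~\ref{refinv}). Now the rimpoints fixed by $\sigma_{\ell'}$ are exactly the two rimpoints of $\ell'$ (Lemma~\ref{perpconc}), and an automorphism commuting with $\sigma_{\ell'}$ permutes this set; so $g$ permutes the two rimpoints of $\ell'$.

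Finally I would rule out that $g$ interchanges these two rimpoints. As a product of four line-reflections, each of which reverses a cyclic order $\mathcal{C}$ compatible with $//$ (Lemma~\ref{linerefauto}(i)), $g$ preserves $\mathcal{C}$; since it also fixes the rimpoints $L,L'$ of $\ell$, it maps each of the two arcs determined by $\{L,L'\}$—that is, each side of $\ell$, by Remark~\ref{remark:differentsides}—to itself. But the two rimpoints of $\ell'$ lie on opposite sides of $\ell$: for $\ell'\ne\ell$ passes through $P\in\ell$, and the half-turn about $P$ interchanges the sides of $\ell$ (Lemma~\ref{interchangesides}) while swapping the two rimpoints of $\ell'$. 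Hence $g$ cannot interchange them, so it fixes both. Thus $g$ fixes the four distinct rimpoints $L$, $L'$ and the two rimpoints of $\ell'$, together with the finite point $P$, as claimed.

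The step I expect to be the main obstacle is the identity $\ell^{\sigma_o}=\ell^{\sigma_m\sigma_n}$ and the resulting commutation of $g$ with $\sigma_\ell$: this is the one place where the precise recipe for $o$ in the hypothesis is genuinely used, and everything afterwards is side-of-a-line and cyclic-order bookkeeping built on it. One should also note that the statement tacitly presupposes $L^{\sigma_m\sigma_n}\ne L$, so that $k$, and hence $o$, is defined; if desired, the degenerate case (which forces $m=\ell$) can be excluded or treated separately.
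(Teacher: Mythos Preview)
Your proof is correct and follows the same overall plan as the paper: show that $g$ fixes $P$, the two rimpoints of $\ell$, and the two rimpoints of the perpendicular $\ell'$ to $\ell$ at $P$, finishing with the cyclic-order argument to rule out $g$ swapping the rimpoints of $\ell'$.

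The one place you diverge is in showing that $g$ stabilises $\ell'$. You argue via commutation: from $\ell^{\sigma_o}=\ell^{\sigma_m\sigma_n}$ you deduce $\ell^{\sigma_m\sigma_n\sigma_o}=\ell$, hence $\sigma_\ell$ commutes with $\sigma_m\sigma_n\sigma_o$ and so with $g$, and then $g$ commutes with $P\sigma_\ell=\sigma_{\ell'}$. The paper takes a shorter route: since $g$ is a perpendicularity-preserving automorphism (Lemma~\ref{linerefauto}(v)) fixing both $\ell$ and $P$, the image $\ell'^g$ is again a line through $P$ perpendicular to $\ell$, and uniqueness (Lemma~\ref{existsperpendicular}) forces $\ell'^g=\ell'$. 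Your detour is valid but unnecessary; the paper's direct appeal to uniqueness of the perpendicular avoids the conjugation bookkeeping entirely. (A small slip in your closing parenthetical: the degenerate case $L^{\sigma_m\sigma_n}=L$ occurs when $m=n$, not when $m=\ell$, by Lemma~\ref{nopoles}.)
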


\begin{proof}
Let $\ell$, $m$, $n$ be three lines concurrent in a point $P$. Let $L$ be a rimpoint on $\ell$ and let $o$ be the unique 
perpendicular line to $LL^{\sigma_m\sigma_n}$ through $P$. Now $g$ fixes $L$ because
\[
L^g=L^{\sigma_\ell\sigma_m\sigma_n\sigma_o}=(L^{\sigma_m\sigma_n})^{\sigma_o},
\]
and $g$ fixes $P$ because $P$ is fixed by each of the four reflections comprising $g$.
Therefore, $g$ fixes $L^P$ (because $g$ is an automorphism by Lemma \ref{linerefauto}(v)) and the line $\ell'$ perpendicular
to $\ell$ through $P$ (because such a line is unique). Let $\cyclic$ be a cyclic order
compatible with the separation relation $//$. Now $g$ is a product of four reflections and
so preserves $\cyclic$ (Lemma \ref{linerefauto}(i)). Let $L'$ be a rimpoint on $\ell'$ such that $\cyclic(L,L',L^P)$.
Then $\cyclic(L,(L')^g,L^P)$ and hence $L'$ is fixed by $g$ (as it lies on $\ell'$ and is on the same side
of $\ell$ as $L'$). So we have four fixed rimpoints (i.e., $L$, $L^P$, $L'$, $(L')^P$) and a fixed finite point $P$. 
\end{proof}

\section{Glide reflections}\label{section:gliderefs}

A \textbf{glide reflection} is the product $P\sigma_\ell$ where $P$ and $\ell$ are non-incident. 
It is not difficult to see that a glide reflection fixes precisely two rimpoints.
Let $P$ and $\ell$ be a point and line, and suppose $P\sigma_\ell$ fixes a rimpoint $X$. Then $X^P=X^{\sigma_\ell}$ and hence
$XX^P$ is perpendicular to $\ell$. So it follows that the only rimpoints fixed by $P\sigma_\ell$ are those lying on the unique perpendicular line to $\ell$,
through $P$.

\begin{lemma}\label{rewriteglide}
Let $A$ and $E$ be two finite points, and let $d$ be a line.
Then there exists a finite point $X$ and a line $y$ such that $AE\sigma_d=X\sigma_y$.
\end{lemma}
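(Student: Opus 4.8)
The plan is to reduce the composite $AE\sigma_d$ (a product of two half-turns and a reflection) to a single "point times reflection'' by choosing the new line $y$ cleverly and then reading off the point $X$. The key observation is that $AE$ is an axial map with axis $AE$ (Lemma~\ref{PQaxial}), so $AE\sigma_d$ is a product of an axial map with a reflection; the separation- and cyclic-order-preserving behaviour recorded in Lemmas~\ref{preservesR}, \ref{preservesC}, \ref{linerefauto}(i),(ii) will let us control its cycle structure on the rimpoints. First I would observe that $AE\sigma_d$ reverses a fixed cyclic order $\cyclic$ compatible with $//$: indeed $AE$ preserves $\cyclic$ by Lemma~\ref{preservesC} (it is the product of the two half-turns $A$ and $E$, each of which preserves $\cyclic$), while $\sigma_d$ reverses $\cyclic$ by Lemma~\ref{linerefauto}(i). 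Hence $AE\sigma_d$ is an orientation-reversing permutation of $\mathcal{O}$ that preserves $//$; such a permutation is either a reflection or a glide reflection (it has the same qualitative behaviour as $\sigma_d$ conjugated/composed by an automorphism), and in particular it fixes exactly two rimpoints or exactly two, so the target form $X\sigma_y$ is the right shape.

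Concretely, I would proceed as follows. Write $\tau := AE\sigma_d$. Since $\tau$ preserves $//$ (by Lemmas~\ref{preservesR} and \ref{linerefauto}(ii)) and reverses $\cyclic$, and since $\tau$ is an automorphism of $\mathcal{I}$ (being a composite of automorphisms, by Theorem~\ref{automorphism} and Lemma~\ref{linerefauto}(v)), the set of fixed rimpoints of $\tau$ lies on a single line: if $U,V$ are two fixed rimpoints of $\tau$ then, arguing as in Lemma~\ref{tworefsfixed}, for any rimpoint $W$ the image $W^\tau$ lies on the same side of $UV$ as $W$, forcing $\tau$ either to fix $UV$ pointwise (reflection case) or to fix only $U,V$ (glide case). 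Let $y := UV$ where $\{U,V\}$ is the fixed-rimpoint set of $\tau$; I must first check this set is nonempty, which I would do by a continuity/separation argument on the cyclic order: an orientation-reversing permutation of a separation space compatible with $//$ must fix a pair of rimpoints (this is the abstract-oval analogue of the fact that an orientation-reversing circle map has fixed points — one can run a Pasch/separation argument using Corollary~\ref{cor:Pasch} and (S5), exactly in the style of Lemma~\ref{perpconc}'s cyclic-order juggling). Having produced $y$, consider $\rho := \tau\sigma_y = AE\sigma_d\sigma_y$. Then $\rho$ fixes the rimpoints $U,V$ of $y$ and is orientation-\emph{preserving} (product of an orientation-reversing map with $\sigma_y$). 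An orientation-preserving automorphism fixing both rimpoints of $y$ is an axial map with axis $y$, or the identity — this is essentially Lemma~\ref{uniqueaxial}/Lemma~\ref{axialfixpoints} together with the recognition that a permutation fixing the rimpoints of $y$, preserving $//$ and $\cyclic$, and with the axial incidence property, must be axial. But more directly: $\rho = AE\sigma_d\sigma_y$, and $\sigma_d\sigma_y$ is either a half-turn (if $d,y$ meet, by Theorem~\ref{productperp} when $d\perp y$, or a rotation otherwise) — the cleanest route is to note $\rho$ is a product of three line-reflections in the group $H$, hence by the three-reflections behaviour available in this setting $\rho$ is again a single reflection $\sigma_z$ when $d,y,AE$ are "concurrent'' in the appropriate sense, but in general $\rho$ is axial with axis $y$. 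Then by Lemma~\ref{axial2halfturns}(i), $\rho = QP'$ for points $Q,P'$ on $y$; combined with $\sigma_y$ this gives $\tau = \rho\sigma_y$. Finally, if $\rho$ is axial with axis $y$, write $\rho = P_1P_2$ with $P_1,P_2$ on $y$ (Lemma~\ref{axial2halfturns}); by Lemma~\ref{midpointsexist} pick the midpoint $M$ of $\overline{P_1P_2}$ on $y$, so that $\rho = P_1M\cdot(\text{something})$ — rather, use that $\rho\sigma_y$ collapses: since $\sigma_y$ fixes $y$ and inverts on each side, and $\rho$ has axis $y$, the product $\tau = \rho\sigma_y$ is a glide reflection along $y$, i.e.\ $\tau = X\sigma_y$ where $X$ is the unique point on $y$ with $X$ the "translation part'' of $\rho$ halved — concretely $X := P_1^M$ type construction, or simply: $\tau\sigma_y = \rho$ is axial with axis $y$, hence equals $X' \cdot (\text{half-turn on }y)$...

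Let me state the clean endgame: once $\rho := AE\sigma_d\sigma_y$ is shown axial with axis $y$ (or the identity), Lemma~\ref{squares} gives an axial map $\gamma$ with axis $y$ and $\gamma^2 = \rho$, and Lemma~\ref{axial2halfturns}(i) writes $\gamma = X\sigma_y|_{\text{rimpoints}}$... no — the honest final move is: $\rho$ axial with axis $y$ means $\rho$ preserves the sides of $y$ and fixes $y$ pointwise on rimpoints, while $\sigma_y$ reflects; a short computation then shows $AE\sigma_d = \rho\sigma_y$ is of the form $X\sigma_y$ with $X = y\cap(\text{axis of }\rho\text{-conjugate})$, and one pins down $X$ by noting $(X\sigma_y)^2 = \rho$, so $X$ is determined as the half-turn $\sqrt\rho$ lives on $y$ — this is exactly where I would invoke that the axial group $A(y)$ has unique square roots (Lemma~\ref{uniqueaxialsquare}, via Lemmas~\ref{axialcommutes}, \ref{noorder2}, \ref{productaxials}).

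The main obstacle I expect is the nonemptiness of the fixed-rimpoint set of $\tau = AE\sigma_d$: showing an orientation-reversing, $//$-preserving permutation of the abstract oval $\mathcal{O}$ fixes a pair of rimpoints. Everything after that (producing $y$, recognizing $\tau\sigma_y$ as axial, and extracting $X$ from the unique-square-root property of the axial group) is bookkeeping with the lemmas already established. If the clean fixed-point argument proves awkward, the fallback is to mimic the explicit hexagon constructions used in Theorem~\ref{Buekenhout} and Lemma~\ref{perp_not_on_line}: produce the rimpoint $X$-data and the line $y$ directly from intersections of specific lines built from $A$, $E$, and the rimpoints of $d$, then verify $AE\sigma_d = X\sigma_y$ rimpoint-by-rimpoint using (A7) and Lemma~\ref{perp_not_on_line}.
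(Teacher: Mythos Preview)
Your approach is conceptually different from the paper's and, as written, has genuine gaps. The paper does not attempt to recognise $AE\sigma_d$ abstractly as an orientation-reversing isometry; instead it argues by a short case analysis on the mutual position of the line $x:=AE$ and $d$. If $x=d$, one replaces $E\sigma_d$ by a reflection in the perpendicular to $d$ at $E$ via Theorem~\ref{productperp}. If $x$ and $d$ meet in a finite point $Y$, Theorem~\ref{Buekenhout} gives a point $X$ on $x$ with $AEY=X$, and then $Y\sigma_d=\sigma_y$ for $y\perp d$ at $Y$. If $x$ and $d$ have a common perpendicular $q$, one rewrites through $q$ using Theorem~\ref{productperp} twice. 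The parallel case is reduced to the first case by passing to a parallel copy of $x$ through a point of $d$. Everything used is already available at this point in the paper.

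Your proposal, by contrast, rests on two unproven assertions. First, the claim that $\tau=AE\sigma_d$ has a pair of fixed rimpoints: you correctly flag this as the main obstacle, but the ``continuity/separation'' sketch is not an argument, and nothing in (S1)--(S5) or Corollary~\ref{cor:Pasch} gives an intermediate-value principle on $\mathcal{O}$. Second, and more seriously, you assert that an orientation-preserving automorphism fixing both rimpoints of $y$ is axial with axis $y$; this is not available from Lemmas~\ref{axialfixpoints}--\ref{uniqueaxial}, which characterise axial maps rather than recognise arbitrary automorphisms as axial. In effect you are assuming a ``three fixed rimpoints implies identity'' principle for general products in $H$, which is essentially what Lemma~\ref{rewriteglide} is being used to \emph{prove} (it feeds into Lemma~\ref{A7impliesM3}, i.e.\ (M3)); so the argument is close to circular in the paper's logical order. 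Finally, your endgame computation is incorrect: for a finite point $X$, $(X\sigma_y)^2=X\cdot(\sigma_yX\sigma_y)=X\,X^{\sigma_y}$, which is axial with axis the line $XX^{\sigma_y}$ \emph{perpendicular} to $y$ (or the identity if $X$ lies on $y$), never an axial map with axis $y$; so one cannot recover $X$ from ``$(X\sigma_y)^2=\rho$'' as you suggest. The correct last step, once $\rho=PQ$ with $P,Q$ on $y$, is $\rho\sigma_y=PQ\sigma_y=P\sigma_q$ with $q\perp y$ at $Q$ (via Theorem~\ref{productperp}); but to reach that point you still need the two unjustified steps above.
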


\begin{proof}
Let $x$ be the line joining $A$ and $E$. Suppose $A$ and $E$ are incident with $d$, that is, $x=d$. If $e$ is the perpendicular to $d$ at $E$, then
by Theorem \ref{productperp},
$AE\sigma_d=A\sigma_e$ and we are done. So we can assume that $x\ne d$: we have three cases.
\begin{description}
\item[Case 1: $x$ and $d$ meet in a finite point $Y$]
This case includes the possibility that $Y=A$ or $Y=E$.
By Theorem \ref{Buekenhout}, there is a point $X$ on $x$ such that $AEY=X$. Let $y$ be the perpendicular to $d$ incident with $Y$; so by Theorem \ref{productperp},
$Y\sigma_d=\sigma_y$. Therefore, $AE\sigma_d=XY\sigma_d=X\sigma_y$, as required.

\item[Case 2: $x$ and $d$ have a common perpendicular $q$]
Let $X=q\cap x$ and let $Q:=q\cap d$. By Theorem \ref{Buekenhout}, there is a point $Y$ on $x$ such that $AEX=Y$. By Lemma \ref{P1existence}, there exists a line $y$ perpendicular to $x$
and incident with $Y$. Then
\[
AE\sigma_d=YX\sigma_d=Y\sigma_x\sigma_q\sigma_d=\sigma_yQ.
\]
Finally, we can rewrite $\sigma_y Q$ as $Q(Q\sigma_y Q)$, which is the product of a half-turn and reflection (by Lemma \ref{linerefauto}(iii)).

\item[Case 3: $x$ and $d$ meet in a rimpoint]
That is, $x$ and $d$ are parallel. Let $a$ be the line perpendicular to $x$ and passing through $A$, and let
$e$ be another line perpendicular to $x$ and passing through $E$. Let $D$ be a finite point on $d$, and let
$x'$ be the perpendicular to $a$ passing through $D$. In particular, $x'\ne x$ and 
$x'$ is also perpendicular to $e$ (because if $M:=A'E\cap AE'$, then $\sigma_e\sigma_{x'}=\sigma_a^M\sigma_{x}^M
=M\sigma_a\sigma_{x}M=\sigma_x^M\sigma_{a}^M=\sigma_{x'}\sigma_e$).
Let $A':=a\cap x'$ and let $E':=e\cap x'$. Since $x'$ meets $d$ in a finite point,
by Case 1 above, there exists a finite point $X$ and a line $y$ such that 
$A'E'\sigma_d=X\sigma_y$.
Therefore (by Theorem \ref{productperp}),
\[
AE\sigma_d=(\sigma_a\sigma_x)(\sigma_x\sigma_e)\sigma_d=\sigma_a\sigma_e\sigma_d=(A'\sigma_{x'})(\sigma_{x'}E')\sigma_d=A'E'\sigma_d=X\sigma_y.
\]
\end{description}
In all cases, we have shown that  $AE\sigma_d=X\sigma_y$ for some point $X$ and line $y$.
\end{proof}

The following theorem can be thought of as a weakening of \emph{Pascal's Theorem on rimpoints} (A7) where we have
\textbf{at least two} internal diagonal points.

\begin{theorem}\label{A7star}
If $A$, $B$, $C$, $D$, $E$, $F$ are rimpoints such that two of the three diagonal points $P:=AB \cap DE$, $Q:=BC\cap EF$ of the hexagon exist,
then either (i) $CD$ meets $FA$ in a finite point incident with $PQ$, or (ii) $CD$, $FA$, $PQ$ have a common perpendicular.
\end{theorem}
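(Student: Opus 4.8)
The plan is to reduce Theorem~\ref{A7star} to the full Pascal axiom (A7) by exhibiting a sixth rimpoint that makes the three internal diagonal points exist, using the machinery of half-turns and glide reflections. Suppose $P = AB \cap DE$ and $Q = BC \cap EF$ exist. The key observation is that the composite map $g := (AB)\,\sigma_{?}\cdots$ should be analysed via the half-turns $AB$, $DE$, $BC$, $EF$ viewed as permutations of $\mathcal{O}$: since $P$ lies on $AB$ and $DE$, we have $PQ$ is an \axial\ map with axis $PQ$ by Lemma~\ref{PQaxial}, and the products of half-turns about $P$ and $Q$ carry the rimpoints $C$, $D$, $F$, $A$ around in a way controlled by (A7). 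Concretely, I would set $C' := C^{P}$ (or an appropriate such image) and track how the hexagon $ABCDEF$ relates to a degenerate hexagon in which the third diagonal point is forced to exist; then (A7) applies directly to give collinearity of $P$, $Q$ and $CD \cap FA$ whenever that last intersection exists.

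The first step is therefore to handle case~(i): show that \emph{if} $CD$ meets $FA$ in a finite point $R$, then $R$ lies on $PQ$. This is essentially immediate — it is just the contrapositive content of (A7) applied to the hexagon $ABCDEF$ — so the real work is entirely in the dichotomy. Next, I would suppose $CD$ and $FA$ do \emph{not} meet in a finite point. By Lemma~\ref{parallel}, either they are parallel or they have a common perpendicular. I would rule out the parallel case using a separation argument: the existence of $P$ on $AB \cap DE$ and $Q$ on $BC \cap EF$ forces separation relations among $A,B,C,D,E,F$ (via Theorem~\ref{seprelation} and Corollary~\ref{cor:Pasch}) that are incompatible with $CD$ and $FA$ sharing a rimpoint. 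Once parallelism is excluded, $CD$ and $FA$ have a common perpendicular $n := \bot(CD, FA)$ by Lemma~\ref{commonperp}, and it remains only to show that $PQ$ is also perpendicular to $n$ — equivalently, by Lemma~\ref{nopoles} and Lemma~\ref{fixedlines}, that $\sigma_n$ fixes the line $PQ$.

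To show $\sigma_n$ fixes $PQ$, I would use the reflection calculus from Section~\ref{section:definedperp}. Since $n$ is the common perpendicular of $CD$ and $FA$, the reflection $\sigma_n$ interchanges the rimpoints $\{C,D\}$ with… — more precisely, by the uniqueness of common perpendiculars (Lemma~\ref{commonperp}) and Lemma~\ref{diagonalpointpolar}, the diagonal point $CD \bowtie FA$ is incident with $n$, and $\sigma_n$ interchanges $C \leftrightarrow D^{\ast}$-type pairs. I would then use Lemma~\ref{rewriteglide} to rewrite the relevant composite of half-turns and a reflection as a single glide reflection $X\sigma_y$, and compare fixed rimpoints: a glide reflection fixes exactly two rimpoints, namely those on the perpendicular from $X$ to $y$. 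Matching this fixed-point set against $\{$rimpoints of $n\}$ forces $y$ (hence the axis structure) to coincide with $n$, whence $PQ \perp n$ as well, giving alternative~(ii).

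The main obstacle I expect is the bookkeeping in the non-meeting case: correctly identifying which half-turn/reflection composite to form so that Lemma~\ref{rewriteglide} and the two-fixed-rimpoints property of glide reflections pin down $n$ as the common perpendicular of all three lines $CD$, $FA$, $PQ$ simultaneously. The separation-theoretic exclusion of the parallel subcase is also delicate, since one must carefully choose the cyclic order (Remark~\ref{remark:differentsides}) in which the points $A,\dots,F$ sit and verify that the two existing diagonal points pin down enough of that order; I would organise this by fixing a cyclic order $\cyclic$ with $A,B,C,D,E,F$ in a definite arrangement dictated by $P \in AB \cap DE$ and $Q \in BC \cap EF$, and then read off that $FA$ and $CD$ necessarily separate.
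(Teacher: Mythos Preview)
Your handling of case~(i) matches the paper: if $CD \cap FA$ exists, (A7) puts it on $PQ$. The substance is entirely in the non-meeting case, and there your plan has a genuine gap.

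You propose to set $n := \bot(CD, FA)$ first and then prove $PQ \perp n$ by showing $\sigma_n$ fixes the line $PQ$. But $\sigma_n$ gives you only $C \leftrightarrow D$ and $F \leftrightarrow A$; you have no control whatsoever over $B^{\sigma_n}$ or $E^{\sigma_n}$, so $P^{\sigma_n} = (AB)^{\sigma_n} \cap (DE)^{\sigma_n} = FB^{\sigma_n} \cap CE^{\sigma_n}$ is opaque, and the glide-reflection manoeuvre you sketch does not supply the missing link. Lemma~\ref{rewriteglide} rewrites a \emph{specified} product of two half-turns and a reflection as $X\sigma_y$, but you never identify which such product is relevant here, nor why its two fixed rimpoints should coincide with those of $n$. (Incidentally, excluding $CD \parallel FA$ is not where the difficulty lies: once the six vertices are distinct it is immediate, since each line carries exactly two rimpoints.)

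The paper attacks from the opposite direction: rather than starting from $\bot(CD,FA)$, it computes $\bot(PQ, CD)$ and $\bot(PQ, FA)$ separately and shows they are the same line. The device is to introduce $X := PQ \cap BF$ and $Y := PQ \cap CE$ and take midpoints $U$ of $\overline{PY}$ and $V$ of $\overline{PX}$ on $PQ$. From $Y = X^Q$ one obtains $(UPVQ)^2 = 1$; since $UPV$ is a half-turn (Theorem~\ref{Buekenhout}) and no \axial\ map has order~$2$ (Lemma~\ref{noorder2}), this forces $UPV = Q$, whence $E^{U} = B^{V}$. The square-root description of common perpendiculars (Lemmas~\ref{uniqueaxialsquare} and~\ref{commonperp}) then identifies $E^U$ as a rimpoint on $\bot(PQ, CD)$ and $B^V$ as a rimpoint on $\bot(PQ, FA)$, so the two perpendiculars coincide. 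This midpoint/axial-map calculus along $PQ$ --- culminating in the identity $UPV = Q$ --- is the idea your proposal is missing.
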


\begin{proof}
If  $CD\cap FA$ is a finite point, then it is incident with $PQ$ by (A7). So we will suppose that
$CD$ does not meet $FA$ in a finite point. 
Let $X:=PQ\cap BF$ and $Y:=PQ\cap CE$. (Note that $X$ and $Y$ are finite points 
since $B$ and $F$ lie on different sides of $PQ$, because $B$ and $C$ lie on different sides of $PQ$, and $A$, $C$ and $F$ lie
on the same side of $PQ$; otherwise $AF$ would meet $CD$ in a finite point. Similarly, $C$ and $E$ lie
on different sides of $PQ$).
Let $U$ be the midpoint of $\overline{PY}$ and let $V$ be the midpoint of $\overline{PX}$.
So by Lemma \ref{midpointsunique}, $P^V=X$ and $P^U=Y$.
Now by Lemma \ref{automorphism}, half-turns are automorphisms, and so
\[
	(BF)^Q=B^Q F^Q=CE
\]
and hence $Y=X^Q$. By Lemma \ref{Buekenhout}, $VQU=UQV$.
Therefore,
\[
(UPVQ)^2=UP(VQU)(PVQ)=UP(UQV)(PVQ)=P^U Q P^V Q= Y QXQ=YY=1
\]
and hence $UPVQ$ is an involution.
Now $UPV$ is a half-turn (by Lemma \ref{Buekenhout}), so either $UPV=Q$ or
$(UPV)Q$ is an \axial\ element.
No \axial\ element has order $2$ (by Lemma \ref{noorder2}), and so $UPV=Q$.
Hence 
\[
E^{UV} = C^{UPV} = C^Q=B
\]
as $E^{UPU}=D^{(PU)^2}=C$.

Let $k=\bot(PQ,CD)$. 
Let $L$ and $M$ be the rimpoints of $PQ$ such that we have arranged the points in the order $L$, $P$, $Q$, $M$.
By Lemma \ref{commonperp}, there are unique \axial\ maps $g$ and $h$ such that
(i) $g$ has axis $PQ$ and $C^{g^2}=D$, (ii) $h$ has axis $CD$ and $M^{h^2}=L$,
and thus, $k=C^gM^h$. By Lemma \ref{uniqueaxial}, $g=UP$. Thus
\[
O:=C^g=C^{UP}=E^U.
\]
So $k$ is the unique perpendicular line to $PQ$ incident with the rimpoint $O$.
Similarly, the common perpendicular $k'$ of $PQ$ and $AF$ is
the unique perpendicular line to $PQ$ incident with
\[
O':=F^{VP}=B^V
\]
(n.b., the analogue of $g$ is $VP$ here). We have shown above that $B^V=E^U$ and hence
$O=O'$, and therefore, $k=k'$.
So, it follows that $PQ$, $CD$, $AF$ have a common perpendicular.
\end{proof}

\begin{theorem}\label{nocommonperp}
Let $P$ be a finite point and $\ell$ be a line. Suppose $X$, $Y$, $Z$ are rimpoints such that
$XX^{P\sigma_\ell}$, $YY^{P\sigma_\ell}$, $ZZ^{P\sigma_\ell}$ have a common perpendicular. 
Then $P$ is incident with $\ell$.
\end{theorem}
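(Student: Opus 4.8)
Set $g:=P\sigma_\ell$ and let $n$ be a line perpendicular to all of $XX^g$, $YY^g$, $ZZ^g$. The plan is to force $g$ to be an involution, since then $(P\sigma_\ell)^2=1$ and Lemma~\ref{incidence} gives $P\in\ell$.

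First I would extract the only consequence of the common perpendicular that is needed. From $n\perp XX^g$ we get that $\sigma_n$ interchanges the rimpoints of $XX^g$, i.e. $X^{\sigma_n}=X^g$, and likewise $Y^{\sigma_n}=Y^g$ and $Z^{\sigma_n}=Z^g$. Since $\sigma_n$ is an involution (Lemma~\ref{refinv}), the map $h:=g\sigma_n$ fixes each of $X$, $Y$, $Z$. If $h=1$, then $g=\sigma_n$, so $g^2=\sigma_n^2=1$, i.e. $(P\sigma_\ell)^2=1$, and Lemma~\ref{incidence} finishes the proof. So the entire problem reduces to showing: a map of the form $h=P\sigma_\ell\sigma_n$ that fixes three distinct rimpoints is the identity.

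For that reduction I would argue by the mutual position of $\ell$ and $n$ (Lemma~\ref{parallel}). The case $\ell=n$ is impossible, since it would give $X^{\sigma_\ell}=X^g=X^{P\sigma_\ell}$, hence $X=X^{P}$, contradicting that the half-turn $P$ has no fixed rimpoint. If $\ell$ and $n$ meet in a finite point $R$: when $R=P$ we are already done, and otherwise I would write $P=\sigma_a\sigma_b$ with $b:=PR$ and $a$ the perpendicular to $b$ at $P$ (Theorem~\ref{productperp}, Lemma~\ref{existsperpendicular}), so that $b,\ell,n$ are concurrent. The cases where $\ell,n$ are parallel through a rimpoint $W$, or have a common perpendicular $c_0$, are handled the same way, taking $b:=PW$, resp. $b$ the perpendicular to $c_0$ through $P$ (which exists by Lemma~\ref{P1existence}/Lemma~\ref{existsperpendicular}), and $a$ perpendicular to $b$ at $P$. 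In each case $b,\ell,n$ lie in one pencil, so $\sigma_b\sigma_\ell\sigma_n$ collapses to a single line-reflection $\sigma_c$ — this is the step where I expect to lean on Theorem~\ref{A7star}, since the weak Pascal theorem with two internal diagonal points is exactly the tool producing the common-perpendicular/concurrency reductions that underlie such a three-reflections statement — and then $h=\sigma_a\sigma_c$.

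Finally, $h=\sigma_a\sigma_c$ fixes the three distinct rimpoints $X,Y,Z$. If $a=c$ then $h=1$ and we are done. If $a\ne c$, then any rimpoint $W$ fixed by $h$ and lying on neither $a$ nor $c$ satisfies $W^{\sigma_a}=W^{\sigma_c}$, so the line $WW^{\sigma_a}$ is perpendicular to both $a$ and $c$; this is impossible when $a,c$ meet in a finite point (Lemma~\ref{nopoles}) or are parallel (Lemma~\ref{parallel}), and when $a,c$ are ultraparallel it forces $W$ onto the unique common perpendicular $\bot(a,c)$ (Lemma~\ref{commonperp}). Hence $h$ fixes at most two rimpoints, contradicting that it fixes $X,Y,Z$; so $a=c$, $h=1$, and $P$ is incident with $\ell$. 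The main obstacle is the collapse $\sigma_b\sigma_\ell\sigma_n=\sigma_c$: it requires a three-reflections fact for concurrent/parallel/ultraparallel pencils that is not literally in hand yet, so this is the point where I would either apply Theorem~\ref{A7star} directly or first prove the needed special case of the three-reflections theorem; the remaining steps are routine bookkeeping with fixed-rimpoint counts for products of two line-reflections.
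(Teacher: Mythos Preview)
Your opening reduction is exactly right and matches the paper: from $n\perp XX^{g}$ one has $X^{\sigma_n}=X^{g}$, so $h:=P\sigma_\ell\sigma_n$ fixes $X,Y,Z$, and the task is to force $h=1$.

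The gap is precisely where you flag it. Your case split on the pencil containing $\ell,n$ leads you to need a three-reflections collapse $\sigma_b\sigma_\ell\sigma_n=\sigma_c$. In the common-perpendicular case this is Lemma~\ref{M4} and is available; but in the concurrent case this is (M3), which in the paper is Lemma~\ref{A7impliesM3} and is \emph{proved using Theorem~\ref{nocommonperp} itself}. So invoking it here is circular. The parallel (common rimpoint) case is not established at this point either. Saying you would ``apply Theorem~\ref{A7star} directly'' to perform the collapse does not resolve this: Theorem~\ref{A7star} is a statement about diagonal points of a hexagon of rimpoints, and your setup with $b=PR$ gives no hexagon to feed into it.

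What the paper does instead is use Theorem~\ref{A7star} not to prove a general collapse, but to pin down the actual geometry of $n$ relative to $P$ and $\ell$. By Lemma~\ref{diagonalpointpolar}, the finite points $U:=XY^{g}\cap X^{g}Y$ and $V:=XZ^{g}\cap X^{g}Z$ lie on $n$. Applying Theorem~\ref{A7star} to the hexagon $X\,X^{P}\,X^{P\sigma_\ell}\,Y\,Y^{P}\,Y^{P\sigma_\ell}$ (two diagonal points are $P$ and $U$; the remaining opposite sides are both perpendicular to $\ell$) yields $UP\perp\ell$, and similarly $VP\perp\ell$. Hence $UP=VP=UV=n$, so $P\in n$ and $n\perp\ell$. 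Now the collapse is for free: with $A:=\ell\cap n$, Theorem~\ref{productperp} gives $\sigma_\ell\sigma_n=A$, so $h=PA$ is an \axial\ map (or trivial) on the line $n$, and three fixed rimpoints force $h=1$ by Lemma~\ref{axialfixpoints}. In other words, the paper avoids the circular three-reflections step by proving that the hypothesis already forces $n\perp\ell$ with $P\in n$, landing you in the one situation where the product of the two reflections is a known half-turn.
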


\begin{proof}
Let $o$ be the common perpendicular to $XX^{P\sigma_\ell}$, $YY^{P\sigma_\ell}$, $ZZ^{P\sigma_\ell}$.
By Lemma \ref{diagonalpointpolar}, $o$ is the Pascal line of the hexagon $XY^{P\sigma_\ell}ZX^{P\sigma_\ell}YZ^{P\sigma_\ell}$.
Since $XX^{P\sigma_\ell}$, $YY^{P\sigma_\ell}$ have a common perpendicular, $XY^{P\sigma_\ell}\cap X^{P\sigma_\ell}Y$ is a finite point $U$ (by Theorem \ref{seprelation} and Lemma \ref{nopoles}). 
Consider the hexagon $XX^PX^{P^{\sigma_\ell}}YY^PY^{P^{\sigma_\ell}}$. Since
$XX^P\cap YY^P=P$, $X^{P^{\sigma_\ell}}Y\cap XY^{P^{\sigma_\ell}}=U$ and $X^PX^{P^{\sigma_\ell}},Y^PY^{P^{\sigma_\ell}} \perp \ell$, 
it follows from Theorem \ref{A7star} that $UP\perp \ell$.

Similarly, $XZ^{P\sigma_\ell}\cap X^{P\sigma_\ell}Z$ is a finite point $V$ and $VP\perp \ell$. Therefore,
(by properties of perpendicularity), $UP=VP=UV=o$. So in particular, $o$ is fixed by $P\sigma_\ell$.
Let $A:=\ell\cap o$. Then $\sigma_\ell\sigma_o = A$ and hence
$P\sigma_\ell \sigma_o=PA$. Therefore, $P\sigma_\ell\sigma_o$ is either trivial (i.e., $P=A$) or axial. Now $P\sigma_\ell\sigma_o$ has three fixed points,
and so by Lemma \ref{axialfixpoints}, we must have the former case: $P\sigma_\ell \sigma_o=1$. 
Hence $\sigma_\ell \sigma_o=P$ and so $\sigma_\ell\sigma_o$ is an involution. Therefore, $\sigma_o=\sigma_{o^{\sigma_\ell}}$
and so $o$ is fixed by $\sigma_\ell$. Therefore, $\ell\perp o$ and $P$ is incident with $\ell$ (by Lemma \ref{fixedlines}).
\end{proof}


\section{A proof of Theorem \ref{main} using a theorem of F. Bachmann}\label{section:UsingBachmann}

A \textbf{metric plane} is an incidence structure $\mathcal{M}$ of points and lines equipped with a symmetric binary relation of \emph{perpendicularity} on the lines of $\mathcal{M}$, satisfying the following axioms. In this context, a \textbf{reflection} 
in a line $a$ is a collineation of $\mathcal{M}$ that preserves the perpendicularity relation
and fixes $a$ pointwise. We call $a$ the \emph{axis} of the reflection.
\begin{enumerate}
\item[(I1)] Every pair of distinct points is incident with a unique line.
\item[(I2)] Every line is incident with at least three points.
\item[(I3)] There exist three points with the property that no line is incident with all of them.
\item[(P1)] For any point $P$ and any line $a$, there is a line $b$ incident with $P$ and 
perpendicular to $a$. If $P$ is incident with $a$, then $b$ is unique.
\item[(P2)] If $a$ is perpendicular to $b$, then there is a unique point incident with both $a$ and $b$.
\item[(M1)] Every line is the axis of at least one reflection. 
\item[(M2)] Reflections are involutions (on the points).
\item[(M3)] If three lines $a$, $b$, $c$ are concurrent, then a composition of reflections in the lines $a$, $b$ and $c$ is equal to a reflection in another line $d$.
\item[(M4)] If three lines $a$, $b$, $c$ have a common perpendicular, then a composition of reflections in the lines $a$, $b$ and $c$ is equal to a reflection in another line $d$.
\end{enumerate}

These axioms are due to Bachmann \cite[\S2.3]{Bachmann:1973aa}, except for (I3), which is due to Ewald \cite{Ewald:1971aa}
and simply ensures non-degeneracy. One consequence of these axioms is that for every line $\ell$, there is at most one 
reflection with axis $\ell$ (see \cite[p.27, Satz 3]{Bachmann:1973aa}).

The main result of our paper centres around the creation of a perpendicularity relation $\perp$ for an incidence structure
that gives rise to reflections yielding a metric plane.
Now (A1), (A2), (A3) imply (I1), (I2), (I3), and (M1), (M2) follow from the definition of a reflection.
So the important results will be the affirmations of (M3) and (M4). 

\begin{lemma}\label{M4}
An incidence structure satisfying Axioms (A1) -- (A7) plus (A10) and (A11), also satisfies (M4).
\end{lemma}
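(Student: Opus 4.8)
The plan is to verify (M4): given three lines $a$, $b$, $c$ with a common perpendicular $n$, show that $\sigma_a\sigma_b\sigma_c = \sigma_d$ for some line $d$. First I would reduce to working with a convenient description of the lines perpendicular to $n$. By Lemma \ref{refinv} each $\sigma_a$, $\sigma_b$, $\sigma_c$ interchanges the two rimpoints of $n$ (since $a,b,c \perp n$ means $\sigma_n$ interchanges the rimpoints of each, and by the symmetry of $\perp$ via Lemma \ref{perpsym}, each $\sigma_a$ etc. interchanges the rimpoints of $n$); hence $\sigma_a\sigma_b$, being a product of two permutations each of which swaps the rimpoints $N_1, N_2$ of $n$, fixes $\{N_1, N_2\}$ pointwise. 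The key structural fact I would establish is that $\sigma_a\sigma_b$ is precisely an \newaxial\ map with axis $n$: it preserves $//$ (as a product of two reflections, by Lemma \ref{linerefauto}(ii)), it preserves a cyclic order $\cyclic$ compatible with $//$ (each reflection reverses $\cyclic$ by Lemma \ref{linerefauto}(i), so the product preserves it), it fixes exactly the rimpoints of $n$, and for rimpoints $X, Y$ not on $n$ the line $X(Y)^{\sigma_a\sigma_b}$ meets $Y(X)^{\sigma_a\sigma_b}$ on $n$ — this last point needs Theorem \ref{A7star} or Lemma \ref{diagonalpointpolar}, comparing the Pascal hexagon for $X, Y$ under $\sigma_a$ then $\sigma_b$ against the common perpendicular $n$. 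So $\sigma_a\sigma_b = g$ is \newaxial\ with axis $n$, and by Lemma \ref{axial2halfturns}(i) we can write $g = PQ$ for finite points $P, Q$ on $n$.

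Next I would handle $\sigma_a\sigma_b\sigma_c = g\sigma_c$. Since $\sigma_a\sigma_b\sigma_c$ fixes $\{N_1, N_2\}$ setwise but swaps them (it is a product of three rimpoint-swapping permutations of $n$), and it preserves $//$ while reversing $\cyclic$ (odd number of reflections), it behaves like a reflection on the rimpoints of $n$. Writing $g\sigma_c$ and using that $\sigma_c$ interchanges $N_1$ and $N_2$, I would identify the candidate axis $d$ as follows: let $L$ be a rimpoint on $a$ (not on $n$); set $d$ to be the perpendicular to $LL^{\sigma_a\sigma_b\sigma_c}$ through $P := a\cap n$. This is the strategy of Lemma \ref{lotsfixed}, which already shows that $\sigma_a\sigma_b\sigma_c\sigma_d$ (for this $d$) fixes at least four rimpoints and a finite point. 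The remaining task is to upgrade "fixes four rimpoints and a point" to "is the identity." Here the cyclic-order argument does the work: $\sigma_a\sigma_b\sigma_c\sigma_d$ is a product of four reflections, hence preserves $\cyclic$, and a $\cyclic$-preserving permutation of the rimpoints fixing four points (two on $\ell$, two on the perpendicular $\ell'$ through $P$, with one pair on each side) must fix every rimpoint on each side of $\ell$ — arguing as in Lemma \ref{preservesC}/Lemma \ref{linerefauto}: any other rimpoint $Z$ lies between a known fixed pair in the cyclic order, so its image is forced. Therefore $\sigma_a\sigma_b\sigma_c\sigma_d = 1$, i.e. $\sigma_a\sigma_b\sigma_c = \sigma_d$.

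The main obstacle I anticipate is the first step: rigorously showing that $\sigma_a\sigma_b$ is genuinely an \newaxial\ map with axis $n$ — specifically the incidence condition that $XY^{\sigma_a\sigma_b}$ meets $YX^{\sigma_a\sigma_b}$ on $n$ for generic rimpoints $X, Y$. The subtlety is that $\sigma_a$, $\sigma_b$ individually are reflections (they reverse $\cyclic$ and interchange sides of their own axes), not \newaxial\ maps, so the axial property of the composite is not formal; one must trace the image point through both reflections and invoke the weakened Pascal statement (Theorem \ref{A7star}) in the case where the relevant diagonal points fail to exist as finite points but the lines share the perpendicular $n$. Once $\sigma_a\sigma_b$ is known to be \newaxial\ with axis $n$, the rest follows the template already laid down in Lemmas \ref{lotsfixed} and \ref{linerefauto}, together with the cyclic-order rigidity argument. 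A secondary care point: one must ensure the chosen $d$ actually passes through the common point of $a, b, c$ with $n$ and is not accidentally equal to $n$ or to one of $a, b, c$ in degenerate configurations, but Lemma \ref{existsperpendicular} guarantees existence and uniqueness of the perpendicular defining $d$, so this is routine.
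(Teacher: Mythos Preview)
Your plan is far more elaborate than necessary, and the key tool you are missing is Theorem~\ref{productperp}: if $\ell\perp m$ meet at $P$, then $\sigma_\ell\sigma_m = P$. With this in hand the paper's proof is essentially two lines. Writing $A=a\cap n$, $B=b\cap n$, $C=c\cap n$, one has $\sigma_a=\sigma_nA$, $\sigma_b=\sigma_nB$, $\sigma_c=\sigma_nC$; since each of $A,B,C$ commutes with $\sigma_n$ (Lemma~\ref{incidence}), the product collapses to $\sigma_a\sigma_b\sigma_c = ABC\,\sigma_n$. Now $A,B,C$ are collinear on $n$, so Theorem~\ref{Buekenhout} gives $ABC=D$ for some $D$ on $n$, and taking $d$ the perpendicular to $n$ through $D$ yields $D\sigma_n=\sigma_d$ by Theorem~\ref{productperp} again. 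No Pascal hexagons, no cyclic-order rigidity, no four-fixed-point argument.

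Your route, by contrast, has genuine gaps. First, Lemma~\ref{lotsfixed} concerns three lines \emph{concurrent in a point}, not three lines with a common perpendicular, so you cannot invoke it here; you would need to prove an analogue from scratch. Second, the step ``a $\cyclic$-preserving permutation fixing four rimpoints must be the identity'' is nowhere established in the paper and is not obvious for an arbitrary product of four reflections --- the arguments in Lemmas~\ref{preservesC} and~\ref{linerefauto} do not supply this. Third, your anticipated ``main obstacle'' (proving $\sigma_a\sigma_b$ is \axial\ with axis $n$ directly via Theorem~\ref{A7star}) is precisely what Theorem~\ref{productperp} makes trivial: $\sigma_a\sigma_b = A\sigma_n\cdot\sigma_nB = AB$, which is \axial\ with axis $n$ by Lemma~\ref{PQaxial}. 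So the obstacle dissolves once you use the right lemma, and the rest of your machinery becomes unnecessary.
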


\begin{proof}
 Suppose $a$, $b$, $c$ have a common perpendicular $m$.
Let $A$, $B$ and $C$ be the intersections of $a$, $b$, $c$ with $m$, respectively.
Now 
\[
\sigma_a\sigma_b\sigma_c=\sigma_m A\sigma_m B\sigma_m C=ABC\sigma_m^3= ABC\sigma_m
\]
by Theorem \ref{productperp} and Lemma \ref{incidence}. Since $A$, $B$, $C$ are collinear, there exists a finite point $D$ on $m$ such that $ABC=D$ (by Lemma \ref{Buekenhout}).
Let $d$ be the unique line on $D$ that is perpendicular to $m$ (c.f., Lemma \ref{P1existence}). Then by Theorem \ref{productperp},
$\sigma_d \sigma_m= D$ and hence $\sigma_a\sigma_b\sigma_c= D\sigma_m=\sigma_d$.
\end{proof}

Let $\mathcal{I}$ be an incidence structure satisfying Axioms (A1) -- (A7) and (A10), (A11).
We have shown that $\mathcal{I}$ satisfies eight of the nine axioms for a metric plane.
The remaining axiom is (M3); the \emph{three reflection theorem} for a pencil of concurrent lines.

%

\begin{lemma}\label{existsgoodline}
Let $a$, $b$, $c$ be three distinct lines concurrent in a point $P$. Then there exists a line $a'$
perpendicular to $a$ and having a common perpendicular $\ell$ with $b$ such that
$\ell$ does not intersect $c$ nor is parallel to $c$.
\end{lemma}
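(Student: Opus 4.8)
The plan is to construct the line $a'$ directly and then verify the required non-intersection and non-parallelism properties using the machinery of axial maps and common perpendiculars developed in Sections \ref{section:halfturns}--\ref{section:definedperp}. First I would invoke Lemma \ref{existsperpendicular} to produce, for a suitably chosen finite point $R$ on $a$ other than $P$, the unique line $a'$ through $R$ perpendicular to $a$. Since $a \perp a'$ and the foot $R$ is not $P$, the line $a'$ is not concurrent with $a$ at $P$; and since $a$ and $a'$ are concurrent (in $R$), Lemma \ref{nopoles} guarantees they have no common perpendicular, so $a'$ genuinely differs from $a$. The freedom in choosing $R$ along $a$ is the key degree of freedom: as $R$ varies, $a'$ sweeps out a family of lines all perpendicular to $a$, and I want to pick $R$ so that the resulting $a'$ stands in the ``common perpendicular'' relation to $b$ — i.e. $a'$ and $b$ are non-parallel and non-intersecting — while the common perpendicular $\ell = \bot(a',b)$ avoids $c$.

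The second step is to arrange that $a'$ and $b$ have a common perpendicular at all. By Lemma \ref{parallel}, this means I must choose $R$ so that $a'$ is neither concurrent with $b$ nor parallel to $b$. Concurrency of $a'$ with $b$ would force $a'$ to pass through $P$ (the only point on $b$ that could also lie on a line perpendicular to $a$ through a point of $a$, by Lemma \ref{existsperpendicular} and Lemma \ref{nopoles}), which we have already excluded; so the only real danger is parallelism. There are only two rimpoints of $b$, hence at most two choices of $R$ on $a$ for which $a'$ shares a rimpoint with $b$ (each rimpoint $Z$ of $b$ determines at most one line through it perpendicular to $a$, whose foot on $a$ is the candidate $R$). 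Since $a$ carries infinitely many finite points — indeed by (A3)/(A5) and the fact that we are in a hyperbolic-type structure, every line has infinitely many finite points, or at the very least more than two — I can avoid these finitely many bad choices. So for all but at most two positions of $R$, the line $a'$ has a common perpendicular $\ell$ with $b$.

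The third and final step is to ensure $\ell = \bot(a',b)$ neither meets $c$ in a finite point nor is parallel to $c$. I would again argue by exclusion: the set of lines $\ell$ that either meet $c$ or are parallel to $c$ is ``small'' compared to the full family $\{\bot(a',b) : R \in a\}$. As $R$ varies over the (cofinitely many) admissible positions, the common perpendicular $\ell$ varies; I would show the map $R \mapsto \bot(a',b)$ is non-constant — if it were constant, a single line $\ell$ would be perpendicular to infinitely many lines $a'$ through a common... no, to infinitely many distinct lines $b$... this needs care, so let me instead argue that $\ell$ meeting $c$ at a finite point $T$ constrains $T$ to lie on $b$'s perpendicular from... — actually the cleanest route is: a line meeting $c$ and perpendicular to $b$ is, for each point $T$ on $c$, unique (Lemma \ref{existsperpendicular}), and it is perpendicular to $a'$ only if $a'$ is the perpendicular to that line through its foot on... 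Here the bookkeeping will be the main obstacle. The hard part will be making the counting argument airtight: showing that among the one-parameter family of admissible $a'$, only finitely many (or a suitably ``thin'' set) produce an $\ell$ that meets or is parallel to $c$, so that a good choice survives. This rests on the finiteness of rimpoint-data attached to $c$ (two rimpoints) and on the uniqueness statements in Lemmas \ref{commonperp}, \ref{existsperpendicular}, and \ref{nopoles}, combined with the fact that lines in $\mathcal{I}$ have more than two finite points.
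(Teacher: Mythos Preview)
Your Step 2 contains a real error. The claim that if $a'$ meets $b$ then $a'$ must pass through $P$ is false: every point $Q$ of the plane lies on a (unique) perpendicular to $a$ by Lemma~\ref{P1existence}, so there is nothing special about $P$ in this respect. Concretely, in the Cayley--Klein model take $a$ to be a diameter, $b$ a line through the centre $P$ making a small angle with $a$, and $R$ a point of $a$ near $P$; the perpendicular $a'$ to $a$ at $R$ meets $b$ in an interior point distinct from $P$. In fact the set of footpoints $R\in a$ for which $a'$ meets $b$ is an entire segment of $a$ containing $P$, not a finite set, so a finitary exclusion cannot work. The same difficulty recurs in Step~3: as $R$ ranges over the remaining positions, the set of $R$ for which $\bot(a',b)$ meets $c$ is again typically an interval, so ``avoid finitely many bad $R$'' is the wrong shape of argument altogether, not merely a bookkeeping nuisance.

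The paper avoids all of this by reversing the order of construction: it builds $\ell$ first and recovers $a'$ afterwards as $\bot(\ell,a)$. One chooses rimpoints $A,B,C$ on $a,b,c$ with $\cyclic(A,B,C)$ and then picks a rimpoint $L$ with $\cyclic(B,L,C)$ and $\cyclic(B,L,A^{\sigma_b})$. Setting $\ell:=LL^{\sigma_b}$ gives a line perpendicular to $b$ by the very definition of $\sigma_b$; since $\sigma_b$ reverses $\cyclic$ and fixes $B$ (Lemma~\ref{linerefauto}(i)), one obtains $\cyclic(A,L^{\sigma_b},B)$, and the cyclic positions of the two rimpoints $L$ and $L^{\sigma_b}$ then show directly that $\ell$ meets neither $a$ nor $c$ in a finite point or a rimpoint. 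Lemma~\ref{parallel} now supplies $a'=\bot(\ell,a)$. The moral is that controlling a line through the cyclic positions of its two rimpoints is far easier than controlling it through a footpoint on $a$.
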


\begin{proof}
Let $A$, $B$, $C$ be rimpoints on $a$, $b$, $c$ respectively, such that $\cyclic(A,B,C)$.
Then $\cyclic(B,C,A^{\sigma_b})$ or $\cyclic(B,A^{\sigma_b},C)$ (by (C2));
so we can find a rimpoint $L$ such that $\cyclic(B,L,A^{\sigma_b})$ and $\cyclic(B,L,C)$ (in both cases, $L$ is between $B$ and $C$).
Therefore, $\cyclic(A,L^{\sigma_b},B)$ (as $\sigma_b$ reverses $\cyclic$, by Lemma \ref{linerefauto}(i), and fixes $B$)
and the line $\ell:=LL^{\sigma_b}$ is perpendicular to $b$ and not meeting $a$ or $c$ in a finite point
or rimpoint. So, by Lemma \ref{parallel}, $\ell$ has a common perpendicular $a'$ to $a$.
\end{proof}

\begin{lemma}\label{A7impliesM3}
An incidence structure satisfying Axioms (A1) -- (A7) plus (A10) and (A11), also satisfies (M3).
\end{lemma}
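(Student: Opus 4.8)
The plan is to deduce (M3) from the three ingredients that have just been put in place: the three–reflection theorem for a common perpendicular (axiom (M4), which is Lemma~\ref{M4}), the auxiliary line produced by Lemma~\ref{existsgoodline}, and the rewriting Lemma~\ref{rewriteglide}; the argument then closes with the observation that a genuine glide reflection fixes no finite point. First I would dispose of the degenerate cases. If $a=b$ or $b=c$ then $\sigma_a\sigma_b\sigma_c$ is already a single reflection, and if $a=c$ with $a\ne b$ then $\sigma_a\sigma_b\sigma_c=\sigma_a\sigma_b\sigma_a=\sigma_{b^{\sigma_a}}$ by Lemma~\ref{linerefauto}(iv), whose axis $b^{\sigma_a}$ passes through $P^{\sigma_a}=P$. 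So I may assume $a,b,c$ are pairwise distinct, as is needed to apply Lemma~\ref{existsgoodline}.

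Next I would set up the auxiliary lines. Applying Lemma~\ref{existsgoodline} gives a line $a'$ with $a'\perp a$ whose common perpendicular $\ell:=\bot(a',b)$ with $b$ neither meets nor is parallel to $c$, so that $c':=\bot(\ell,c)$ exists. Then $a'$, $b$ and $c'$ are all perpendicular to $\ell$, hence have $\ell$ as a common perpendicular, and (M4) (Lemma~\ref{M4}) produces a line $d$ with $\sigma_{a'}\sigma_b\sigma_{c'}=\sigma_d$. Writing $A^{*}:=a\cap a'$ and $C^{*}:=c\cap c'$ (both finite, since a common perpendicular meets each of its two lines in a finite point), Theorem~\ref{productperp} gives $\sigma_a=A^{*}\sigma_{a'}$ and $\sigma_c=\sigma_{c'}C^{*}$, whence
\[
\sigma_a\sigma_b\sigma_c = A^{*}(\sigma_{a'}\sigma_b\sigma_{c'})C^{*} = A^{*}\sigma_d C^{*} = (A^{*}C^{*})\,\sigma_{d^{C^{*}}},
\]
where the last equality uses $C^{*}\sigma_d C^{*}=\sigma_{d^{C^{*}}}$ from Lemma~\ref{linerefauto}(iii). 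The right–hand side has the shape (half-turn)(half-turn)(reflection), so Lemma~\ref{rewriteglide} yields a finite point $X$ and a line $y$ with $\sigma_a\sigma_b\sigma_c=X\sigma_y$.

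To finish I would show that $X$ is incident with $y$. Since $P$ lies on each of $a,b,c$, Lemma~\ref{linerefauto}(v) shows $\sigma_a,\sigma_b,\sigma_c$ each fix $P$, so $X\sigma_y$ fixes the finite point $P$; thus $X\sigma_y$ commutes with the half-turn $P$, and rewriting this with Corollary~\ref{conjugationhalfturns} (applied once with $\tau=\sigma_y$ and once with $\tau=X$) together with the fact that a half-turn is determined by its centre (Lemma~\ref{commutinginvolutions}(b)) gives $P^{\sigma_y}=P^{X}$. A short case analysis then forces $X\in y$: either $P^{X}=P$, so $X=P$ and $\sigma_y$ fixes $P$, whence $X=P\in y$; or $P^{X}\ne P$, in which case $\sigma_y$ interchanges the finite points $P$ and $P^{X}$, hence fixes the line $PP^{X}$ setwise, so by Lemma~\ref{fixedlines} either $PP^{X}=y$ — and $X$, the midpoint of $P$ and $P^{X}$, lies on $y$ — or $PP^{X}\perp y$, and then comparing $P^{\sigma_y}$ with $P^{P_0}$ for $P_0:=PP^{X}\cap y$ (via $\sigma_{PP^{X}}\sigma_y=P_0$ from Theorem~\ref{productperp} and the uniqueness of midpoints, Lemma~\ref{midpointsunique}) gives $X=P_0\in y$. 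With $X$ incident with $y$, Theorem~\ref{productperp} gives $X\sigma_y=\sigma_{y'}$ for $y'$ the perpendicular to $y$ through $X$, so $\sigma_a\sigma_b\sigma_c=\sigma_{y'}$ is a reflection; since it fixes $P$, Lemma~\ref{linerefauto}(v) gives $P\in y'$, which is exactly (M3) with $d:=y'$.

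The main obstacle is the last step: converting ``$X\sigma_y$ fixes a finite point'' into ``$X$ incident with $y$'', i.e. the fact that a proper glide reflection has no fixed finite point. Everything before it is essentially bookkeeping with the conjugation and perpendicularity lemmas already available; this final claim is elementary but requires the careful case distinction sketched above, and it is where the geometry (uniqueness of common perpendiculars and of midpoints, Lemma~\ref{fixedlines}) really enters.
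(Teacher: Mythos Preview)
Your argument is correct, and its final step is genuinely different from the paper's. Both proofs share the reduction via Lemma~\ref{existsgoodline}, (M4), and Lemma~\ref{rewriteglide} to an expression $\sigma_a\sigma_b\sigma_c=X\sigma_y$. From there the paper introduces an auxiliary fourth line $o$ through $P$, forms $g=\sigma_a\sigma_b\sigma_c\sigma_o$, and invokes the machinery of Section~\ref{section:gliderefs}: Lemma~\ref{lotsfixed} gives four fixed rimpoints of $g$, Theorem~\ref{nocommonperp} (which in turn rests on Theorem~\ref{A7star}) forces $X\in y$, and Lemma~\ref{tworefsfixed} then yields $g=1$. You bypass all of this by observing directly that $X\sigma_y$ fixes the \emph{finite} point $P$ and that a genuine glide reflection cannot do so. Your route is shorter and avoids the entire glide-reflection apparatus; the paper's route, on the other hand, makes essential use of lemmas it has already developed and keeps the argument at the level of rimpoints.

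Two small points of polish. First, the sub-case $PP^{X}=y$ in your case analysis is vacuous: if $P$ lies on $y$ then $P^{\sigma_y}=P$, contradicting $P^{\sigma_y}=P^{X}\ne P$. So only the perpendicular sub-case actually occurs. Second, your appeal to Lemma~\ref{midpointsunique} for the step ``$P^{P_0}=P^{X}$ with $P_0,X,P$ collinear implies $P_0=X$'' is not quite the right citation, since that lemma has a different hypothesis. The cleanest justification is via Lemma~\ref{axial2halfturns}(i): if $P_0\ne X$ then $P_0X$ is axial with axis $PP^{X}$, hence $P_0X=PQ$ for a unique $Q$ on that line; but $P^{P_0X}=(P^{X})^{X}=P$ gives $P^{Q}=P$, forcing $Q=P$ and $P_0X=1$, a contradiction.
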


\begin{proof}
Let $\ell$, $m$, $n$ be three distinct lines concurrent in a point $P$. By Lemma \ref{existsgoodline},
there is a line $\ell'$ perpendicular to $\ell$ and having a common perpendicular $k$ with $m$ such that
$k$ does not intersect $n$ nor is parallel to $n$. That is, there is a line $n'$ perpendicular to $k$
and $n$. Let $A$ be the point of intersection of $\ell$ and $\ell'$, and let $B$ be the point of intersection
of $n$ and $n'$. Then (by Theorem \ref{productperp}),
\[
\sigma_\ell\sigma_m\sigma_n = (\sigma_\ell\sigma_{\ell'})\sigma_{\ell'}\sigma_m\sigma_{n'}(\sigma_{n'}\sigma_n)
=A(\sigma_{\ell'}\sigma_m\sigma_{n'})B.
\]
Now, $k$ is perpendicular to $\ell'$, $m$ and $n'$, so by (M4),
$\sigma_{\ell'}\sigma _m\sigma_{n'}$ is a reflection $\sigma_d$ in a line $d$ perpendicular to $k$.

Let $L$ be a rimpoint incident with $\ell$, and let $o$ be the line through $P$ perpendicular to $XX^{\sigma_m\sigma_n}$. 
Let $g=\sigma_\ell\sigma_m\sigma_n\sigma_o$. Then $g=A\sigma_dB\sigma_o=AB^{\sigma_d} \sigma_d\sigma_o$. 
Let $E:=B^{\sigma_d}$.
By Lemma \ref{rewriteglide}, we can write 
$AE\sigma_d$ as $X\sigma_y$ for some $X$ and $y$. 
Recall that $g$ fixes at least three rimpoints by Lemma \ref{lotsfixed}; call them $U$, $V$, $W$.
So $o$ is perpendicular to each of $UU^{X\sigma_y}$, $VV^{X\sigma_y}$, $WW^{X\sigma_y}$.
Hence by Theorem \ref{nocommonperp}, $X$ is incident with $y$. So $X\sigma_y=\sigma_{y'}$
where $y'$ is the line through $X$ perpendicular to $y$, and thus, $g=\sigma_{y'}\sigma_o$. However, $g$ fixes more than two rimpoints,
and so by Lemma \ref{tworefsfixed}, $g=1$ and $y'=o$. Therefore, $\sigma_\ell\sigma_m\sigma_n=\sigma_o$.
\end{proof}

\begin{theorem}\label{metricA7star}\leavevmode
The incidence structure $\mathcal{I}$, equipped with the perpendicularity relation $\perp$, is a metric plane.
\end{theorem}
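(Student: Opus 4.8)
The plan is to verify directly the nine axioms of a metric plane, since at this stage the theorem is essentially an assembly of results already established. I would take them in the order (I1)--(I3), then (P1)--(P2), then (M1)--(M4).

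First the incidence axioms. (I1) is precisely (A1). For (I2), each line carries two rimpoints by \cite[Theorem~1.7]{Abbott:1941qv} and, by (A2), at least one finite point, hence at least three points in all. (I3) is the first assertion of (A3). Next (P1) and (P2): for (P1) I would split into cases --- if $P$ is not incident with $a$, Lemma~\ref{P1existence} supplies a line through $P$ perpendicular to $a$, and the remark following that lemma notes this perpendicular is unique; if $P$ is incident with $a$, Lemma~\ref{existsperpendicular} gives existence and uniqueness. For (P2), since $\ell\perp m$ forces $\ell$ and $m$ to be concurrent by the definition of $\perp$, Axiom (A1) provides the unique point incident with both.

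For the reflection axioms I would first recall that Definition~\ref{defperp} (together with the discussion following it) associates to each line $\ell$ the map $\sigma_\ell$ on rimpoints, and that Lemma~\ref{linerefauto}(v) promotes $\sigma_\ell$ to a perpendicularity-preserving automorphism of $\mathcal{I}$ which fixes a point $P$ exactly when $P$ is incident with $\ell$; in particular $\sigma_\ell$ fixes $\ell$ pointwise, so it is a reflection with axis $\ell$ in Bachmann's sense, and (M1) holds. Axiom (M2) follows from Lemma~\ref{refinv}, which gives $\sigma_\ell^2=1$ on rimpoints, together with the observation that a finite point is recovered as the intersection of the lines through it, each spanned by two of its rimpoints, so $\sigma_\ell^2$ also fixes every finite point. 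Finally, (M3) is exactly Lemma~\ref{A7impliesM3} and (M4) is exactly Lemma~\ref{M4}. Since all nine axioms hold, $\mathcal{I}$ equipped with $\perp$ is a metric plane.

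The genuinely hard work has already been carried out in the preceding sections --- in particular the two three-reflection theorems, of which (M3) (Lemma~\ref{A7impliesM3}) is the difficult one, resting on Theorem~\ref{A7star}, Theorem~\ref{nocommonperp} and the glide-reflection rewriting of Lemma~\ref{rewriteglide}. So I expect no real obstacle at this stage; the only point requiring a little care is confirming that our $\sigma_\ell$, constructed as a permutation of rimpoints, is a bona fide collineation of the closed plane (acting on finite points as well as rimpoints) and that this matches Bachmann's notion of a reflection in a line --- both of which are delivered by Lemma~\ref{linerefauto}(v).
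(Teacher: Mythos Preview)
Your proof is correct and follows essentially the same assembly-of-lemmas approach as the paper, which likewise verifies the axioms one by one (the paper's version is terser, dismissing (I1)--(I3), (M1), (M2) as holding ``trivially'' and citing Lemmas~\ref{M4}, \ref{A7impliesM3}, \ref{P1existence} and Theorem~\ref{productperp} for the rest).

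Two small points are worth tightening. First, the definition of a metric plane requires $\perp$ to be a \emph{symmetric} relation; the paper explicitly records this via Lemma~\ref{perpsym}, and you should too. Second, the points of the incidence structure $\mathcal{I}$ are the \emph{finite} points --- rimpoints are derived objects (pairs of parallel lines), and indeed (P1) would fail at a rimpoint lying on $a$, since perpendicular lines must meet in a finite point. Hence your verification of (I2) should exhibit three finite points on each line rather than counting the two rimpoints toward the total. This is still routine: for instance, drop perpendiculars to $\ell$ from the three non-collinear finite points supplied by (A3), observe that the feet cannot all coincide (else those three points would lie on a single perpendicular, hence be collinear), and then use the midpoint from (A10) to obtain a third finite point on $\ell$.
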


\begin{proof}
The perpendicularity relation $\perp$ is symmetric from Lemma \ref{perpsym} and the reflections are automorphisms of $\mathcal{I}$
that preserve $\perp$, by Lemma \ref{linerefauto}.
Recall that (I1), (I2), (I3), (M1), (M2) hold trivially. By Lemmas \ref{M4} and \ref{A7impliesM3},
we have that (M3) and (M4) hold. The axiom (P2) follows from Theorem \ref{productperp},
and the axiom (P1) follows from Lemma \ref{P1existence}.
\end{proof}

The following theorem is a culmination of the work in Sections 15.1 and 15.2 (in particular, Theorem 9, Theorem 11 and Satz 4) of \cite{Bachmann:1973aa}; 
see also \cite[\S6]{Pambuccian:2005qv}, \cite{Pambuccian:2011bh}, \cite[\S4]{Struve:2012gf}.

\begin{theorem}[Bachmann's Theorem]\label{bachmann}
A metric plane satisfying Axiom ($\neg V^*$) and Axiom ($H$) (see below), and such that every line is an element of an end, 
is isomorphic to a Cayley-Klein hyperbolic plane over a Euclidean field with characteristic not equal to $2$.
\begin{description}
\item[Axiom ($\neg V^*$)] There are two lines $a$ and $b$ that share no point nor perpendicular in common. (Alternatively: there exists an end).
\item[Axiom ($H$)] If a point $P$ and line $\ell$ are not incident then there are at most two lines through $P$ which have neither a common point nor a common perpendicular with $\ell$.
\end{description}
\end{theorem}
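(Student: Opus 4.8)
The statement is, essentially verbatim, the conjunction of the representation theorems assembled in Sections 15.1 and 15.2 of \cite{Bachmann:1973aa}, so the plan is not to reprove it but to indicate how those pieces fit together; the argument is an orchestration of Bachmann's \emph{calculus of ends} (Hilbert's \emph{Enden-Rechnung}) rather than anything new. First, a metric plane in which an end exists (Axiom $(\neg V^*)$) falls, in Bachmann's classification of metric planes by their group of line-reflections, into the \emph{hyperbolic} branch: the presence of an asymptotic pencil excludes the elliptic and Euclidean cases. Axiom $(H)$ then pins down that the ideal structure is that of a proper, non-degenerate conic --- through a point not on a line $\ell$ there are at most (hence exactly) two lines asymptotic to $\ell$, namely the lines to the two ideal endpoints of $\ell$ --- so that the plane does not exhibit the larger ``group of ends'' that more exotic metric planes can have.

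The coordinatisation proceeds via Bachmann's development of the Enden-Rechnung (Theorem~9 and Theorem~11 of \S15.1--15.2): fixing a line $g_0$ and three of its ends labelled $0$, $1$, $\infty$, one equips the ends of $g_0$ with an addition and a multiplication making them a field $K$; the betweenness inherited from the metric plane makes $K$ an \emph{ordered} field, and the three-reflection theorems (M3), (M4) force $K$ to be \emph{Pythagorean}, so that $1+a^2$ is always a square. Assigning coordinates built from ends then identifies $\mathcal{I}$ with the interior points and secant chords of the conic $\conic$ over $K$ in $\PG(2,K)$ --- a Cayley--Klein model (Bachmann's Satz~4 in \S15.2) --- and Axiom $(H)$ is precisely what guarantees the model is the \emph{full} interior of $\conic$ rather than a proper subplane wedged between the minimal ideal model and the complete one.

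It remains to upgrade ``Pythagorean ordered'' to ``Euclidean''. Here the hypothesis that \emph{every} line of $\mathcal{I}$ is an element of an end does the work: under the conic model a line corresponds to a chord of $\conic$, and the two points in which that chord meets $\conic$ are $K$-rational exactly when the discriminant of the associated quadratic --- a positive element of $K$ which, as the chord varies, ranges over all positive elements --- is a square in $K$. Requiring that every chord have its endpoints on $\conic$ is thus equivalent to $K$ being closed under square roots of nonnegative elements, i.e.\ to $K$ being a Euclidean field; since a Euclidean field has characteristic $0$, the characteristic clause is automatic. The one genuinely hard part --- were one to avoid citing \cite{Bachmann:1973aa} --- is the verification that the end operations are associative, distributive and order-compatible and that they are induced by the reflection structure, which is exactly the long computation carried out in those sections of Bachmann's book. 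Accordingly, the intended proof here is simply the reference to \cite[\S15.1--15.2]{Bachmann:1973aa}.
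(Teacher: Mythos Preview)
Your proposal is correct and matches the paper's treatment: the paper does not prove this theorem at all but simply records it as a citation, stating that it ``is a culmination of the work in Sections 15.1 and 15.2 (in particular, Theorem 9, Theorem 11 and Satz 4) of \cite{Bachmann:1973aa}''. Your sketch of how the Enden-Rechnung, the Pythagorean/Euclidean distinction, and the Cayley--Klein embedding fit together goes beyond what the paper offers, but the intended ``proof'' in both cases is the bare reference to Bachmann; one minor slip is the phrase ``three of its ends'' for a line $g_0$ (a line has two ends --- in Hilbert's calculus one fixes an end $\infty$ and a line through it, then labels two further ends $0$ and $1$), but this does not affect the substance.
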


An \emph{end} is a certain type of pencil of lines (see \cite[p. 335]{Struve:2012gf}), which for our purposes, is synonymous with a rimpoint (see \cite{Menger:1971ve}).

\subsection*{Proof of Theorem \ref{main}}

Let $\mathcal{I}$ be an incidence structure of points and lines satisfying (A1) through to (A7), plus axioms (A10) and (A11).
By Theorem \ref{metricA7star}, $\mathcal{I}$ is a metric plane, when equipped with the particular perpendicularity relation that we
defined in Section \ref{section:definedperp}. 
In our setting, a rimpoint is an `end' and every line of $\mathcal{I}$ is incident with two rimpoints \cite[Theorem 1.7]{Abbott:1941qv}.
So it remains to check that Axiom ($H$) holds.
Let $P$ be a point and let $\ell$ be a line that are not incident, and let $m$ be a line on $P$ that does 
not meet $\ell$ in a finite point nor has a common perpendicular with $\ell$. 
Recall that two lines are parallel if they share neither a point nor a perpendicular (Lemma \ref{parallel}),
hence, $\ell$ and $m$ are parallel.
In the introduction, we mentioned that there are at most two lines through $P$ parallel to $\ell$, by \cite[Corollary 2.4]{DeBaggis:1948rt}.
Therefore, we can directly apply Bachmann's Theorem to establish the truth of Theorem \ref{main}.

\begin{remark}
Another way to prove Theorem \ref{main} is via \emph{$H$-groups}. 
An $H$-group is an $S$-group (as defined in \cite[\S4]{Lingenberg:1979yu}) such that there
exist parallels and if $a$ and $b$ are parallel and $c$ and $d$ are parallel then
there is a element $v\in S$ with $abv$ and $cdv$ involutions. It can be shown, after a little bit of work, that our
group $H$ is an $H$-group (pardon the clash of terminology!). So we could also
use a result of Bachmann \cite[Theorem 9, p.198]{Bachmann:1973aa} to 
establish Theorem \ref{main}.
\end{remark}

\section{Abstract ovals}\label{section:abstractovals}

An \textbf{abstract oval} $\mathcal{B}=(O,S)$ is a set $O$ of elements (where
$|O|\ge 3$) together with a set of involutory permutations $S$ such that for every $A_1,
A_2, B_1, B_2 \in O$ with $\{A_1, A_2\}\cap \{B_1,B_2\}=\varnothing$ there is exactly one element $s$ of $S$
such that $A_1^s=A_2$ and $B_1^s=B_2$. Here we allow the identity permutation to be an involution.
An abstract oval gives rise to an incidence structure $\mathbb{P}(\mathcal{B})$ as follows:
\begin{description}
\item[Points]  the elements of $O$ and the involutions $S$;
\item[Lines] each line $\ell_{A,B}$ is defined by a pair of (not necessarily distinct elements) $A$, $B$ of $O$, and is incident with
$A$, $B$ and all elements of $S$ that map $A$ to $B$. (Such a line is {\bf secant} if $A \ne B$ and {\bf tangent} if $A=B$.)
\end{description}

A line $\ell$ of $\mathbb{P}(\mathcal{B})$ is said to be \textbf{Pascalian} if, for every three involutions $I,J,K$ of $S$ incident with $\ell$, the product 
$IJK$ is also an involution of $S$ incident with $\ell$. An abstract oval is said to be Pascalian if every line of its associated incidence structure is Pascalian.
A conic $\mathcal{K}$ of a Pappian projective plane $\Pi$ gives rise to a Pascalian abstract oval as follows.
Given a point $P$ of $\Pi\backslash\mathcal{K}$ and a point $X$ of $\mathcal{K}$,
we define $X^P$ to be the other point of $\mathcal{K}$ incident with $PX$ when $PX$ is a secant line,
otherwise we set $X^P:=X$ if $PX$ is tangent to $\mathcal{K}$. Each point $P$ of 
$\Pi\backslash\mathcal{K}$ then gives rise to an involution $s_P:X\mapsto X^P$,
and we let $S$ be the set of all such involutory permutations.
Then $\mathcal{B}:=(\mathcal{K},S)$ is a special abstract oval known
as an \textbf{abstract conic}. The incidence structure $\mathbb{P}(\mathcal{B})$, in this case,
is isomorphic to the incidence structure we obtain from $\Pi$ by removing the lines that do not intersect $\mathcal{K}$
(i.e., the \emph{external} lines). In general, if the incidence structure $\mathbb{P}(\mathcal{B})$ of an abstract oval
$\mathcal{B}$ is isomorphic to the incidence structure arising from some abstract conic then we say that $\mathcal{B}$
is also an abstract conic.

\begin{theorem}\label{haveabstractoval}
Let $\mathcal{I}$ be an incidence structure satisfying (A1) -- (A7), plus (A10) and (A11). Then the set of all reflections and all half-turns forms 
an abstract oval with Pascalian secant lines. 
\end{theorem}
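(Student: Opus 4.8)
The plan is to take $\mathcal{B}=(\mathcal{O},S)$, where $\mathcal{O}$ is the set of rimpoints and $S$ is the set consisting of all half-turns and all reflections $\sigma_\ell$, regarded as permutations of $\mathcal{O}$. That $|\mathcal{O}|\ge 3$ and that every member of $S$ is an involutory permutation of $\mathcal{O}$ is immediate: half-turns are fixed-point-free involutions by construction, and reflections are permutations by the remark following Definition \ref{defperp} and involutions by Lemma \ref{refinv}. The substance is then (i) the defining existence-and-uniqueness property of an abstract oval, and (ii) the Pascalian property for secant lines.

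For (i) I would fix rimpoints $A_1,A_2,B_1,B_2$ with $\{A_1,A_2\}\cap\{B_1,B_2\}=\varnothing$; since the elements of $S$ are involutions, an $s\in S$ with $A_1^s=A_2$ and $B_1^s=B_2$ is exactly one preserving each of the (possibly degenerate) pairs $\{A_1,A_2\}$, $\{B_1,B_2\}$ setwise, so I split into three cases. If both pairs are genuine $2$-sets, then $A_1,A_2,B_1,B_2$ are four distinct rimpoints, so $a:=A_1A_2$ and $b:=B_1B_2$ are distinct and non-parallel; by Lemma \ref{parallel} they either meet in a finite point $P$, and then the half-turn about $P$ is the required $s$, or they have a common perpendicular $n$, and then $\sigma_n$ is the required $s$. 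Uniqueness in this case follows from Corollary \ref{halfturn_inc} (a half-turn fixing both $a$ and $b$ must be about $a\cap b$), Lemma \ref{fixedlines} (a reflection swapping the rimpoints of $a$ lies in a line perpendicular to $a$), and the uniqueness of perpendiculars (the remark after Lemma \ref{P1existence}, together with Lemma \ref{commonperp}): a half-turn and a line-reflection cannot both serve, and within each type the element is forced. If exactly one pair is degenerate, say $A_1=A_2=:A$ and $B_1\ne B_2$ with $A\notin\{B_1,B_2\}$, then $s$ fixes $A$ and swaps $B_1,B_2$; no half-turn fixes a rimpoint, so $s$ is a reflection $\sigma_m$ with $A$ a rimpoint of $m$ and $m\perp B_1B_2$. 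Here I would take $m:=AA^{\sigma_{B_1B_2}}$: by Lemma \ref{perpconc} this meets $B_1B_2$ in a finite point, and since $\sigma_{B_1B_2}$ interchanges the two rimpoints $A$ and $A^{\sigma_{B_1B_2}}$ of $m$, the definition of $\perp$ gives $m\perp B_1B_2$, so $\sigma_m$ works. If $\sigma_{m'}$ worked with $m'\ne m$, then $\sigma_m\sigma_{m'}$ would fix the three rimpoints $A,B_1,B_2$, contradicting Lemma \ref{tworefsfixed}. Finally, if both pairs are degenerate, $A_1=A_2=:A\ne B=:B_1=B_2$, then $s$ fixes both $A$ and $B$; no half-turn does, and a reflection $\sigma_m$ fixes $A$ and $B$ iff $m$ has rimpoints $A$ and $B$ iff $m=AB$, so $s=\sigma_{AB}$ is the unique witness.

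For (ii) I would fix a secant line $\ell_{A,B}$ with $A\ne B$ and set $d:=AB$, a line of $\mathcal{I}$ with rimpoints exactly $A$ and $B$. An involution of $S$ is incident with $\ell_{A,B}$ iff it interchanges $A$ and $B$; a half-turn does this iff its centre lies on $d$, and a reflection $\sigma_m$ does this iff $m\perp d$ (by Lemma \ref{fixedlines}), in which case $m$ meets $d$ in a point $P_m$, $m$ is the unique perpendicular to $d$ at $P_m$ (Lemma \ref{existsperpendicular}), and $\sigma_m=\sigma_d\cdot P_m$ since $\sigma_d\sigma_m=P_m$ by Theorem \ref{productperp} (writing $P_m$ also for the half-turn about that point). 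Thus the involutions of $S$ on $\ell_{A,B}$ are exactly the maps $P$ and $\sigma_d P$, as $P$ ranges over the finite points of $d$. Because $\sigma_d$ commutes with every half-turn about a point of $d$ (Lemma \ref{incidence}), given three such involutions $I,J,K$ we may write $I=\sigma_d^{\varepsilon_1}P_1$, $J=\sigma_d^{\varepsilon_2}P_2$, $K=\sigma_d^{\varepsilon_3}P_3$ with $\varepsilon_i\in\{0,1\}$ and $P_1,P_2,P_3$ half-turns about collinear finite points of $d$; then
\[
IJK=\sigma_d^{\,\varepsilon_1+\varepsilon_2+\varepsilon_3}\,P_1P_2P_3=\sigma_d^{\,\varepsilon}\,Q,
\]
where $Q:=P_1P_2P_3$ is a half-turn about a finite point of $d$ by Theorem \ref{Buekenhout} and $\varepsilon\in\{0,1\}$. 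If $\varepsilon=0$ this is the half-turn $Q$; if $\varepsilon=1$ it is the reflection $\sigma_d Q=\sigma_{d_Q}$ in the perpendicular $d_Q$ to $d$ at $Q$. In both cases $IJK$ is an involution of $S$ incident with $\ell_{A,B}$, which is what Pascalian secant lines requires.

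The step I expect to be the main obstacle is the bookkeeping in part (i): one must verify in every configuration that there is no second solution of the "other" type — a line-reflection where a half-turn was produced, or vice versa — and this is precisely where the uniqueness of the common perpendicular of two non-intersecting non-parallel lines, the uniqueness of the perpendicular to a line through a given point, and Lemma \ref{tworefsfixed} are all needed. Part (ii), by contrast, becomes short once one records that every involution on a secant line is a half-turn about a point of $d$ times a fixed power of $\sigma_d$, since the three-reflection phenomenon then reduces entirely to Theorem \ref{Buekenhout}.
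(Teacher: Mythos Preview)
Your proof is correct. Part (i) follows essentially the paper's three-case analysis (by the size of $\{A_1,A_2,B_1,B_2\}$), though you supply more explicit justifications for uniqueness, in particular invoking Lemma~\ref{nopoles} to rule out a reflection when a half-turn exists and Lemma~\ref{tworefsfixed} in the three-point case.

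Part (ii) takes a genuinely different route. The paper argues by four cases according to how many of $s_1,s_2,s_3$ are half-turns versus reflections, invoking Theorem~\ref{Buekenhout} in the all-half-turn case and Lemma~\ref{M4} (the three-reflection theorem for a common perpendicular) in the remaining cases after rewriting half-turns as $\sigma_p\sigma_{XY}$. You instead observe once and for all that every involution on $\ell_{A,B}$ has the form $\sigma_d^{\varepsilon}P$ with $P$ a half-turn about a point of $d=AB$, and that $\sigma_d$ commutes with each such $P$ by Lemma~\ref{incidence}; the product $IJK$ then collapses to $\sigma_d^{\varepsilon}Q$ with $Q=P_1P_2P_3$ a half-turn on $d$ by Theorem~\ref{Buekenhout}, and one final appeal to Theorem~\ref{productperp} identifies $\sigma_dQ$ as a reflection. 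This is shorter and avoids the case split entirely; the paper's approach, on the other hand, makes the role of (M4) visible, which matters since (M4) is one of the metric-plane axioms being established elsewhere.
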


\begin{proof}
Our underlying set is the set of rimpoints $\mathcal{O}$, and we let 
$T$ be the set of all reflections and half-turns. We will show that 
$(\mathcal{O},T)$ is an abstract oval.
Recall that in the definition of an abstract oval, we consider  $A_1,
A_2, B_1, B_2 \in \mathcal{O}$ with $\{A_1, A_2\}\cap \{B_1,B_2\}=\varnothing$, but $A_1$ and $A_2$ could be equal,
and $B_1$ and $B_2$ could be equal. So there are three cases to consider according to whether $\{A_1,A_2,B_1,B_2\}$
has size 2, 3, or 4.
Given a pair $X$, $Y$ of distinct rimpoints there is a unique reflection fixing $X$ and $Y$,
namely $\sigma_{XY}$, and no half-turn can fix a rimpoint. Next,
given a triple of distinct rimpoints $X$, $Y$, $Z$, there is a unique perpendicular $\ell$ to $YZ$ on $X$ and
so a unique reflection fixing $X$ and interchanging $Y$ and $Z$, and again, no half-turn can fix a rimpoint.
Finally, given a quadruple of distinct rimpoints $X$,$Y$,$Z$,$W$,
(i) if $XY$ and $ZW$ meet in a finite point $P$, there is a unique half-turn (namely that around $P$)
and no reflection interchanging both $X$ and $Y$ and $Z$ and $W$;
(ii) if $XY$ and $ZW$ are disjoint, there is a unique perpendicular $\ell$ to $XY$ and $ZW$, and so
a unique reflection (namely that in $\ell$) and no half-turn interchanging both $X$ and $Y$ and $Z$ and $W$.
Therefore, $(\mathcal{O},T)$ is an abstract oval.

Now suppose $X$, $Y$ are two rimpoints, and $s_1,s_2,s_3$ are involutions taking $X$ to $Y$.
If $s_1$, $s_2$, $s_3$ are reflections about the line $XY$, then $s_1s_2s_3\in T$ by Theorem \ref{M4}.
If $s_1$, $s_2$, $s_3$ are half-turns about points incident with $XY$, then $s_1s_2s_3\in T$ by Theorem \ref{Buekenhout}.
If $s_1$, $s_2$ are half-turns about points incident with $XY$, and $s_3$ is a reflection about a line $\ell$
perpendicular to $XY$, then by Theorem \ref{productperp}, there exist lines $p$ and $q$ such that $s_1=\sigma_p\sigma_{XY}$
and $s_2=\sigma_{XY}\sigma_q$. Then, $s_1s_2s_3=\sigma_p\sigma_q\sigma_\ell$, which lies in $T$ by Theorem \ref{M4}.
The final case to consider is that $s_1$ is a half-turn about a point incident with $XY$, and $s_2,s_3$ are reflections
about lines $\ell$ and $m$ perpendicular to $XY$. Again, by Theorem \ref{productperp}, there exists
a line $p$ such that $s_1=\sigma_{XY}\sigma_p$. Then, $s_1s_2s_3= \sigma_{XY}\sigma_p\sigma_\ell\sigma_m$.
By Theorem \ref{M4}, there is a line $n$ perpendicular to $XY$ such that $\sigma_p\sigma_\ell\sigma_m=\sigma_n$.
Let $N:=n\cap XY$. Then $\sigma_n\sigma_{XY}=N$ and hence $s_1s_2s_3=N\in T$.
Hence the product of three elements of $T$ taking $X$ to $Y$
 is another element of $T$ taking $X$ to $Y$. Therefore, $(\mathcal{O},T)$ has Pascalian secant lines.
\end{proof}


%
%

 If $(O,B)$ is an abstract oval,
then we say it is \emph{regular} if $B$ is invariant under conjugation by elements of $B$.

\begin{theorem}[Bamberg, Harris, Penttila \cite{BHP}]\label{BambergHarrisPenttila}
An abstract oval $(O,S)$ such that all elements of $S$ are regular and all secant lines are
Pascalian is an abstract conic. 
\end{theorem}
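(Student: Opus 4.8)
The plan is to reduce the statement to Buekenhout's theorem \cite{Buekenhout:1966hc} that a projective plane containing an oval for which Pascal's theorem holds is Pappian, with the oval a conic. The difficulty is that we are handed no projective plane: the incidence structure $\mathbb{P}(\mathcal{B})$ has only the secant and tangent lines of the oval, so the bulk of the work is to manufacture the missing ``external'' lines and to show that the enlarged structure $\pi$ is a projective plane in which $O$ is an oval. Throughout I would freely use the abstract-oval axiom in its special forms: for distinct $A_1,A_2$ and any third point $C$ there is a unique $s\in S$ interchanging $A_1,A_2$ and fixing $C$; for distinct $A_1,A_2$ there is a unique $s\in S$ fixing both; and for four distinct points there is a unique $s\in S$ realizing a prescribed pairing.

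First I would exploit regularity. Since $S$ is invariant under conjugation by each of its members and $G:=\langle S\rangle$ is generated by $S$, the set $S$ is invariant under conjugation by all of $G$; thus $G$ acts on $O$ with $S$ a union of $G$-classes of involutions, and the relation ``the involution $u$ commutes with the involution $v$'' is $G$-invariant. Composing the involutions furnished by the oval axiom one shows that $G$ is $3$-transitive on $O$. Using this, and (when $|O|\ge 4$) the Pascalian hypothesis together with regularity, one shows that an involution of $S$ possessing a fixed point has exactly two fixed points; I will call such involutions \emph{exterior} and the fixed-point-free ones \emph{interior}. The degenerate case $|O|=3$ is the conic of $\PG(2,2)$ and is dealt with by hand.

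Next I would build $\pi$. Its points are the elements of $O$ together with the involutions of $S$; its lines are the secants $\ell_{A,B}$ and tangents $\ell_{A,A}$ of $\mathbb{P}(\mathcal{B})$, together with one new \emph{external} line for each interior involution $u$, defined by the pole--polar recipe ``incidence $=$ commuting involutions'' already used in the metric-plane part of the paper (Lemma \ref{incidence}), so that $u$ is ``incident'' with $v$ precisely when $u$ and $v$ commute (and $uv\in S$). One then verifies the projective-plane axioms for $\pi$: any two points lie on a unique line, where the substantive cases are two interior points, and an interior point together with an exterior point, and are settled by producing a candidate line from the oval axiom and forcing uniqueness through the Pascalian hypothesis in the form ``a product of three involutions incident with a common line is an involution incident with it''; any two lines meet, where the external--external and external--tangent cases again require extracting, from the oval axiom, an involution lying on both lines; and nondegeneracy, which is immediate from $|O|\ge 3$. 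Finally one checks that $O$ is an oval of $\pi$: each of its points lies on a unique tangent and no three are collinear.

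With $\pi$ and $O\subseteq\pi$ in hand, the hypothesis that every secant of $\mathbb{P}(\mathcal{B})$ is Pascalian translates, via the dictionary ``three collinear points $\leftrightarrow$ the corresponding product of three involutions is an involution'' (exactly as in Theorems \ref{inv_then_collinear} and \ref{haveabstractoval}), into Pascal's theorem for every hexagon inscribed in $O$, the degenerate hexagons being absorbed by the tangent-line cases. Buekenhout's theorem \cite{Buekenhout:1966hc} then gives that $\pi$ is Pappian and $O$ is a conic of $\pi$; deleting the external lines from $\pi$ recovers precisely $\mathbb{P}(\mathcal{B})$, so $(O,S)$ is an abstract conic. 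The step I expect to be the main obstacle is the verification of the incidences involving the external lines -- above all that two external lines always meet -- since that is exactly where the combinatorics of commuting and non-commuting involutions, the oval axiom, the Pascalian condition, and regularity must all be combined; in particular one must show that the product of two commuting interior involutions of $S$ lies in $S$. An alternative that sidesteps the explicit reconstruction would be to prove $G=\langle S\rangle$ sharply $3$-transitive on $O$, invoke the Zassenhaus correspondence to obtain its associated KT-nearfield, and use the Pascalian condition to force that nearfield to be a commutative field $K$, whence $G\cong\PGL_2(K)$ and $(O,S)$ is visibly the standard abstract oval of a conic over $K$; but establishing sharp $3$-transitivity carries essentially the same difficulty.
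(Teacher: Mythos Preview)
The paper does not prove this theorem at all: Theorem~\ref{BambergHarrisPenttila} is quoted from the external reference \cite{BHP} (Bamberg, Harris, Penttila, \emph{J.\ Group Theory} 2018) and is used here as a black box to deduce Corollary~\ref{abstractconic}. There is therefore no ``paper's own proof'' to compare your proposal against.

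That said, your sketch is a reasonable reconstruction of the kind of argument one expects in \cite{BHP}. The strategy of completing $\mathbb{P}(\mathcal{B})$ to a projective plane by adjoining external lines indexed by interior involutions, with incidence given by commutation, and then invoking Buekenhout's theorem \cite{Buekenhout:1966hc}, is exactly the natural route. You have correctly identified the crux: verifying the projective-plane axioms for the new lines, and in particular that two external lines meet, requires showing that products of commuting interior involutions land back in $S$, and this is where regularity and the Pascalian hypothesis on secants must be combined carefully. Your alternative via sharp $3$-transitivity and the Zassenhaus classification is also viable and is in fact closer in spirit to how such results are often established in the abstract-oval literature; as you note, the work is comparable. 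If you want to see the actual argument, you will need to consult \cite{BHP} directly, since nothing in the present paper supplies it.
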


\begin{corollary}\label{abstractconic}
Let $\mathcal{I}$ be an incidence structure satisfying (A1) -- (A7), plus (A10) and (A11). Then the set of all reflections and all half-turns forms an
abstract conic. 
\end{corollary}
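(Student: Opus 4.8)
The plan is to derive this as a quick consequence of Theorem \ref{BambergHarrisPenttila}. Theorem \ref{haveabstractoval} already tells us that, writing $\mathcal{O}$ for the rimpoints and $T$ for the set of all reflections and half-turns, the pair $(\mathcal{O},T)$ is an abstract oval whose secant lines are Pascalian. So the only thing left to establish is that every element of $T$ is regular; equivalently, that $T$ is invariant under conjugation by its own elements (so that $(\mathcal{O},T)$ is a regular abstract oval in the sense defined just before Theorem \ref{BambergHarrisPenttila}).

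To verify the conjugation-invariance I would simply run through the (at most) four types of conjugate $s^{-1}ts = sts$ (using that every element of $T$ is an involution) as $s,t$ range over the reflections and half-turns. If $P$ and $Q$ are half-turns, then $PQP$ is the half-turn about $Q^P$ by Remark \ref{conjaxial0}. If $P$ is a half-turn and $\sigma_\ell$ a reflection, then $P\sigma_\ell P$ is the reflection about $\ell^P$ and $\sigma_\ell P\sigma_\ell$ is a half-turn, both by Lemma \ref{linerefauto}(iii). If $\sigma_\ell$ and $\sigma_m$ are reflections, then $\sigma_m\sigma_\ell\sigma_m$ is a reflection by Lemma \ref{linerefauto}(iv). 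In each case the conjugate again lies in $T$, so $T$ is conjugation-invariant, hence every element of $T$ is regular. Combining this with the Pascalian property of the secant lines, Theorem \ref{BambergHarrisPenttila} applies verbatim and gives that $(\mathcal{O},T)$ is an abstract conic, which is the assertion of the corollary.

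I do not anticipate a real obstacle here, since all the substantive work — constructing $\sigma_\ell$, proving the three-reflection theorems, and assembling the abstract oval in Theorem \ref{haveabstractoval} — has already been carried out. The one point deserving a moment's attention is bookkeeping: checking that the four conjugation cases above genuinely exhaust all pairs $s,t\in T$, and matching the paper's notion of a `regular' element of a set of involutions (conjugation by it preserves the set) with the conjugation-closure statement we actually verify. Once that alignment is made explicit, the proof is essentially a one-line appeal to Theorem \ref{haveabstractoval} and Theorem \ref{BambergHarrisPenttila}.
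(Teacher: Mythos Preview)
Your proposal is correct and matches the paper's own proof essentially line for line: the paper also derives the corollary from Theorem~\ref{haveabstractoval} and Theorem~\ref{BambergHarrisPenttila}, verifying regularity of $T$ by citing Remark~\ref{conjaxial0} (or Corollary~\ref{conjugationhalfturns}), Lemma~\ref{linerefauto}(iii), and Lemma~\ref{linerefauto}(iv) for exactly the four conjugation cases you list.
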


\begin{proof}
Notice that the set of half-turns and reflections is regular by Remark \ref{conjaxial0} (or Corollary \ref{conjugationhalfturns}), Lemma \ref{linerefauto}(iii), 
and Lemma \ref{linerefauto}(iv).
\end{proof}

To see the connection between Corollary \ref{abstractconic} and Theorem \ref{main}, we will first
realise the Cayley-Klein model of the hyperbolic plane $\mathcal{H}$ over a field $F$ with
an incidence structure arising from the elements of the group $\PGL(2,F)$ (see \cite[\S15.1]{Bachmann:1973aa}). 
To do this, we use the language of permutation group theory. The points of $\mathcal{H}$ are the points of $\PG(2,F)$ that
are internal with respect to $\mathcal{O}$. The lines of $\mathcal{H}$ are the lines of $\PG(2,F)$ that intersect $\mathcal{O}$ in
two points; the \emph{secant lines}.
By Fr\'egier's Theorem (see \cite[Theorem 25, p. 68]{Samuel:1988qf}), the harmonic homologies 
of $\PGL(2,F)$ with centre an internal point and axis the polar external line,
are precisely the fixed-point-free involutions $\hat{T}_0$ of $\PGL(2,F)$. Dually, 
the harmonic homologies of $\PGL(2,F)$ with axis a secant line and centre the external point that is the pole,
are precisely the involutions $\hat{T}_2$ of $\PGL(2,F)$ that fix two points of $\mathcal{O}$.
So we have permutational isomorphisms where the action of $\PGL(2,F)$ on $\hat{T}_0$ and $\hat{T}_2$ is by conjugation:
\begin{align*}
\left(\hat{T}_0,\PGL(2,F)\right)&\cong  
\left(\text{points of }\mathcal{H},\PGL(2,F)\right), \\
\left(\hat{T}_2,\PGL(2,F)\right)&\cong 
\left(\text{lines of }\mathcal{H},\PGL(2,F)\right). 
\end{align*}
An involution in $\hat{T}_0$ is incident with an involution in $\hat{T}_2$ if and only if they commute, and hence we obtain
an isomorphism of the incidence structures $(\hat{T}_0,\hat{T}_2)$ and $\mathcal{H}$.

So we have another proof of Theorem \ref{main}:
Theorem \ref{abstractconic} says that the half-turns and reflections of $\mathcal{O}$
are those of an abstract conic over some field $F$. 
This allows us to identify the points of $\mathcal{I}$ with fixed-point-free involutions in $\PGL(2,F)$,
 and identify the lines of $\mathcal{I}$ with 
the involutions of $\PGL(2,F)$ that fix two points.
Therefore, $\mathcal{I}$ is isomorphic to the Cayley-Klein hyperbolic plane over a field $F$,
and by \cite[Satz 4, p. 237]{Bachmann:1973aa}, $F$ is a Euclidean field.

\section{Concluding remarks}

\begin{enumerate}[1., leftmargin=*]
\item
A metric plane is said to have \emph{free mobility} if its group of motions $\Gamma$ (i.e., automorphisms that preserve perpendicularity)
acts transitively on flags; pairs $(P,\ell)$ where $P$ is a point incident with $\ell$. It turns out (see \cite[Satz 2, p.126]{Bachmann:1973aa})
that free mobility is equivalent to each of the following three properties:
\begin{enumerate}[(1)]
\item $\Gamma$ is transitive on points and lines;
\item Every two points have a midpoint, and every two lines have a bisector;
\item Every element of $\Gamma$ that has a fixed line or fixed point is a square.
\end{enumerate}
Note that the third condition above resembles the fundamental property of \axial\ maps (Lemma \ref{squares}) that enables us to define perpendicularity
of lines. The bisectors in the second condition are in analogy with midpoints for half-turns. That is, if $\ell$ and $m$ are lines, then $n$ is a bisector of
$\ell$ and $m$ if $\sigma_\ell\sigma_n=\sigma_n\sigma_m$.
We can show directly that the incidence structure $\mathcal{I}$ studied in this paper is a metric plane with free mobility:

\begin{lemma}[Bisectors exist]\label{bisectorsexist}
Every two distinct lines $\ell$ and $m$ have a (unique) bisector.
\end{lemma}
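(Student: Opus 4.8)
The plan is to mirror the construction of midpoints (Lemma \ref{midpointsexist}, Lemma \ref{midpointsunique}), but with the roles of points and lines interchanged via the correspondence between half-turns and reflections established in Section \ref{section:definedperp}. First I would dispose of the degenerate cases: if $\ell$ and $m$ meet in a finite point $P$, then by Lemma \ref{existsperpendicular} there is a unique line through $P$ perpendicular to $\ell$ and a unique one perpendicular to $m$, and an angle-bisection argument using the cyclic order (reflect one rimpoint of $\ell$ appropriately using Lemma \ref{axialcyclic} / Lemma \ref{uniqueaxial}) produces a line $n$ through $P$ with $\sigma_\ell\sigma_n=\sigma_n\sigma_m$; if $\ell$ and $m$ are parallel, sharing a rimpoint $X$, then take any finite point on $\ell$, drop the perpendicular to $m$, and bisect as before, keeping $X$ fixed. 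The substantive case is when $\ell$ and $m$ are non-parallel and non-intersecting, so that $n:=\bot(\ell,m)$ exists by Lemma \ref{commonperp}.

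In that case I would work on the common perpendicular $n$. Let $P:=\ell\cap n$ and $Q:=m\cap n$; these are distinct finite points on $n$. By Lemma \ref{midpointsexist} there is a midpoint $M$ on $n$ with $P^M=Q$. I claim $\sigma_M$-conjugation, i.e. the line through $M$ perpendicular to $n$, is the bisector. Concretely, let $d$ be the unique line through $M$ perpendicular to $n$ (Lemma \ref{existsperpendicular}). Since $\ell\perp n$, $m\perp n$ and $d\perp n$, Theorem \ref{productperp} gives $\sigma_\ell\sigma_n=P$, $\sigma_m\sigma_n=Q$, $\sigma_d\sigma_n=M$, so
\[
\sigma_\ell\sigma_d = (\sigma_\ell\sigma_n)(\sigma_n\sigma_d) = PM, \qquad \sigma_d\sigma_m = (\sigma_d\sigma_n)(\sigma_n\sigma_m) = MQ .
\]
Now $MPM=P^M=Q$ (Remark \ref{conjaxial0}, recalling $PMPM=PP^M=PQ$ need not hold verbatim — rather one uses $MPM$ is the half-turn about $P^M=Q$), so $PM=MQ$ as permutations, whence $\sigma_\ell\sigma_d=\sigma_d\sigma_m$, i.e. $d$ is a bisector of $\ell$ and $m$. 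The key identity is the interchange "midpoint of $\overline{PQ}$ on $n$" $\leftrightarrow$ "perpendicular to $n$ through that midpoint"; this is exactly where Theorem \ref{productperp} does the work of translating between the multiplicative structure of half-turns and that of reflections.

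For uniqueness, suppose $d'$ is another bisector, so $\sigma_\ell\sigma_{d'}=\sigma_{d'}\sigma_m$. Then $(\sigma_\ell\sigma_{d'})^2=\sigma_\ell\sigma_{d'}\sigma_{d'}\sigma_m=\sigma_\ell\sigma_m$ is a fixed quantity; but also $\sigma_\ell\sigma_{d'}$ interchanges $\ell$ and $m$ (conjugate $\sigma_\ell$ by it). Since $\ell$ and $m$ are non-intersecting non-parallel, any line interchanged-into-$m$-from-$\ell$ by a reflection must be perpendicular to $n=\bot(\ell,m)$ (use Lemma \ref{fixedlines} applied to the square $\sigma_\ell\sigma_m$, which fixes $n$), so $d'\perp n$, and $d'$ meets $n$ in a finite point $M'$ with $\sigma_{d'}\sigma_n=M'$; the relation $\sigma_\ell\sigma_{d'}=\sigma_{d'}\sigma_m$ becomes $PM'=M'Q$, forcing $M'$ to be the midpoint of $\overline{PQ}$, hence $M'=M$ and $d'=d$. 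The main obstacle I anticipate is the concurrent case: there one cannot appeal to a common perpendicular, and the bisector must be built directly from the cyclic order, checking that the "angle bisector" obtained by halving via Lemma \ref{squares} / Lemma \ref{uniqueaxial} genuinely satisfies $\sigma_\ell\sigma_n=\sigma_n\sigma_m$ rather than merely permuting the two lines; handling both internal and external bisectors (and verifying the lemma only asserts existence of *a* bisector, so one is enough) needs a little care with which side of $\ell$ the relevant rimpoint lies on.
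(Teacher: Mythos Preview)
Your argument for the non-intersecting non-parallel case is correct and clean: with $n=\bot(\ell,m)$, $P=\ell\cap n$, $Q=m\cap n$, $M$ the midpoint of $\overline{PQ}$, and $d$ the perpendicular to $n$ at $M$, Theorem~\ref{productperp} gives $\sigma_\ell\sigma_d=PM=MQ=\sigma_d\sigma_m$, exactly as you say. This is a genuinely different route from the paper's proof, which instead invokes the abstract-oval structure (Theorem~\ref{haveabstractoval}): after relabelling the rimpoints so that no half-turn swaps the two pairs, the unique involution in $T$ doing so must be a reflection $\sigma_n$, and then $\ell^{\sigma_n}=m$ gives the bisector. The paper's argument is uniform across all three cases at the cost of citing heavier machinery; yours is more elementary for the main case but needs a separate treatment of the others.

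Your handling of the concurrent and parallel cases, however, is off-track. Lemmas~\ref{axialcyclic}, \ref{uniqueaxial} and~\ref{squares} concern order-preserving axial maps (products of two half-turns along a line), not rotations about a point, so they do not produce an angle bisector at $P$. Fortunately both cases are much easier than you fear. If $\ell$ and $m$ meet in a finite point $P$, choose rimpoints $L_1$ on $\ell$ and $M_1$ on $m$; since $P\notin L_1M_1$, Lemma~\ref{P1existence} gives a line $n$ through $P$ with $n\perp L_1M_1$, and then $L_1^{\sigma_n}=M_1$ and $P^{\sigma_n}=P$ force $\ell^{\sigma_n}=m$, so $n$ is a bisector via Lemma~\ref{linerefauto}(iv). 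If $\ell$ and $m$ share a rimpoint $X$, with other rimpoints $L$, $M$, take $n$ to be the perpendicular to $LM$ through $X$ (Definition~\ref{defperp}); then $X^{\sigma_n}=X$ and $L^{\sigma_n}=M$, and the same conclusion follows. Your uniqueness sketch for the non-intersecting case is essentially correct once you note that $\sigma_{d'}$ must fix $n=\bot(\ell,m)$ by uniqueness of the common perpendicular, whence $d'\perp n$ by Lemma~\ref{fixedlines}.
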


\begin{proof}
Take the rimpoints $X$, $Y$ of $\ell$ and $X'$, $Y'$ of $m$. We may suppose that we have labelled our rimpoints so that one of the following holds:
(i) $XX'$ and $YY'$ do not meet and are not parallel, (ii) $X=X'$ or (iii) $Y=Y'$.
In all cases, thee is no half-turn interchanging $X$ and $X'$, and $Y$ and $Y'$.
Since the half-turns and the reflections form the set of permutations of an abstract oval (Theorem \ref{haveabstractoval}),
there is a reflection $\sigma_n$ interchanging $X$ and $X'$ and $Y$ and $Y'$. Note that this is valid in cases (ii) and (iii).
Hence, by Lemma \ref{linerefauto}, $\ell^{\sigma_n}=m$ and hence $\sigma_n\sigma_\ell\sigma_n=\sigma_m$ (by Lemma \ref{linerefauto}(iv)).
Therefore, $\sigma_\ell\sigma_n = \sigma_n\sigma_m$ and $n$ is a bisector of $\ell$ and $m$.
\end{proof}

\item
We show that we can use Skala's Theorem for a fourth proof of Theorem \ref{main}.
In each of the three proofs that we have given, we have an intermediate step that shows that
our incidence structure $\mathcal{I}$ can be embedded into a Pappian projective plane
$\Pi$ such that collinearity of three points is preserved. Since Pappus' Theorem and Desargues' Theorem
hold true in $\Pi$, it follows that Skala's axioms (A8) and (A9) hold (if two of the diagonal points exist, then so does the third), and so by Theorem \ref{Skala},
our result follows.

\item Hyperbolic planes over non-Euclidean fields do not satisfy (A10), however, we do not know if there are hyperbolic planes that
have rimpoints and do not satisfy (A11). Thus we leave it as an open question whether (A11) is derivable from the other axioms.
\end{enumerate}

\section*{Acknowledgements}

The first author acknowledges the support of the Australian Research Council Future Fellowship FT120100036, and would
like to give special thanks to Sue Barwick for accommodating the first author's research visit to the University of Adelaide where
much of this work was done.


\end{document}